\renewcommand{\(}{\left(}
\renewcommand{\)}{\right)}
\newcommand{\spmod}[1]{\ensuremath{\,(#1)}}
\newcommand{\Shl}{\operatorname{Sh}}
\renewcommand{\|}{\big |}
\def\Z{\mathbb{Z}}
\def\Q{\mathbb{Q}}
\def\R{\mathbb{R}}
\def\H{\mathbb{H}}
\def\N{\mathbb{N}}
\def\C{\mathbb{C}}
\DeclareMathOperator{\im}{Im}
\DeclareMathOperator{\re}{Re}
\DeclareMathOperator{\Tr}{Tr}
\def\SL{{\rm SL}}
\def\PSL{{\rm PSL}}
\def\GL{{\rm GL}}
\newcommand{\pfrac}[2]{\left(\frac{#1}{#2}\right)}
\newcommand{\pmfrac}[2]{\left(\mfrac{#1}{#2}\right)}
\newcommand{\ptfrac}[2]{\left(\tfrac{#1}{#2}\right)}
\newcommand{\pMatrix}[4]{\left(\begin{matrix}#1 & #2 \\ #3 & #4\end{matrix}\right)}
\renewcommand{\pmatrix}[4]{\left(\begin{smallmatrix}#1 & #2 \\ #3 & #4\end{smallmatrix}\right)}
\renewcommand{\bar}[1]{\overline{#1}}
\newcommand{\Sh}{\mathcal{S}}
\DeclareMathOperator{\He}{He}
\renewcommand{\hat}{\widehat}
\renewcommand{\tilde}{\widetilde}
\renewcommand{\sl}{\big| }
\DeclareMathOperator{\new}{new}
\DeclareMathOperator{\sgn}{sgn}
\def\ep{\varepsilon}
\newtheorem{theorem}{Theorem}[section]
\newtheorem{lemma}[theorem]{Lemma}
\newtheorem{corollary}[theorem]{Corollary}
\newtheorem{proposition}[theorem]{Proposition}
\theoremstyle{remark}
\newtheorem*{remark}{Remark}
\newtheorem*{definition}{Definition}
\numberwithin{equation}{section}
\newcommand{\Frob}{\operatorname{Frob}}
\newcommand{\Gal}{\operatorname{Gal}}
\title[The Shimura lift for modular forms with  the eta multiplier]{The Shimura lift and congruences for modular forms with  the eta multiplier}
\date{\today}
\author{Scott Ahlgren}
\address{Department of Mathematics\\
University of Illinois\\
Urbana, IL 61801} 
\email{sahlgren@illinois.edu} 
 \author{Nickolas Andersen}
 \address{Department of Mathematics, Brigham Young University, Provo, UT 84602}
 \email{nick@math.byu.edu}
\author{Robert Dicks}
\address{Department of Mathematics\\
Clemson University\\
Clemson, SC 29634} 
\email{rdicks@clemson.edu}  
\thanks{The first author was  supported by a grant from the Simons Foundation (\#963004 to Scott Ahlgren).
The second author was  supported by a grant from the Simons Foundation (\#854098 to Nickolas Andersen)}
\begin{document}

\begin{abstract}
The Shimura correspondence is a fundamental tool in the study  of half-integral weight modular forms. 
In this paper, we prove a Shimura-type correspondence for  spaces of half-integral weight cusp forms which transform with a power of the Dedekind eta multiplier  twisted by a Dirichlet character. 
We prove that the lift of a cusp form of weight $\lambda+1/2$ and level $N$ has weight $2\lambda$ and level $6N$, and   is new at the primes $2$ and $3$ with specified Atkin-Lehner eigenvalues.
This precise information leads to arithmetic applications.  For a wide family of spaces of half-integral weight modular forms we prove the existence of infinitely many primes $\ell$
which give rise to quadratic congruences modulo arbitrary powers of $\ell$.
\end{abstract}

\maketitle

\setcounter{tocdepth}{1}
\tableofcontents

\section{Introduction and statement of results}

The \emph{Shimura correspondence} \cite{shimura} is a family of maps taking  modular forms of half-integral weight to modular forms of integral weight and preserving the action of the Hecke algebras.  Since its introduction it has been a ubiquitous tool in the study of half-integral weight modular forms.
Works of Waldspurger \cite{Waldspurger} \cite{Waldspurger2} and Kohnen and Zagier \cite{Kohnen-Zagier} establish a connection between the coefficients of half-integral weight forms and the $L$-functions of their Shimura lifts.

Shimura's  construction relies on Weil's converse theorem.
Niwa \cite{niwa} gave a more direct construction of the Shimura lift by integrating a given half-integral weight form against a suitable theta kernel.  This work was refined by Cipra \cite{cipra}, who in particular  extended the results to all positive half-integral weights.
These works concern modular forms 
whose multiplier is a power of $\nu_\theta$ twisted by a Dirichlet character,
where $\nu_\theta$ is the multiplier on $\Gamma_0(4)$ attached to the usual theta function.
If $f$ is such a form on $\Gamma_0(4N)$, then the Shimura lift of $f$ is on $\Gamma_0(2N)$.

Here we will consider modular forms of half-integral weight transforming with a power of the Dedekind eta  multipler $\nu$ twisted by a Dirichlet character (see Section~\ref{sec:background} for precise definitions).
In the simplest case, suppose  that $(r,6)=1$ and that $f$ is a cusp form of weight $\lambda+1/2$ on $\SL_2(\Z)$ with multiplier $\nu^r$.
If $V_m$ denotes the map $z\mapsto mz$, then $f \sl V_{24}$ is a half-integral weight form in the sense of Shimura on $\Gamma_0(576)$ (see Lemma~\ref{lem:eta_to_theta} for details). 
 For a  positive squarefree integer $t$, we can apply the usual Shimura lift $\Shl_t$  to $f(24z)$, which gives a form of weight $2\lambda$ on $\Gamma_0(288)$.
Yang \cite{Yang} showed that in fact we have 
\begin{equation}
\Shl_t(f\sl V_{24}) \in S^{\new}_{2\lambda}\(6, -\ptfrac 8r,-\ptfrac {12}r\) \otimes \ptfrac{12}{\bullet},
\end{equation}
i.e.~there exists a cusp form $g$ of weight $2\lambda$ on $\Gamma_0(6)$ (in the new subspace) with Atkin-Lehner eigenvalues $-\pfrac 8r$ and $-\pfrac {12}r$ at $2$ and $3$, respectively, such that $\Shl_t(f \sl V_{24}) = g \otimes \pfrac{12}{\bullet}$.
Thus $\Shl_t(f \sl V_{24})$ is a cusp form of level $144$
(a similar result holds for the Shimura lift of $f\sl V_8$ when $(r,6)=3$).

Given Yang's result, it is natural to suspect that there exists a modification of the Shimura lift which maps $f$ directly into $S_{2\lambda}^{\new}(6,-\pfrac 8r, -\pfrac{12}r)$.
Here we construct such a lift in a much more general setting.
In particular, if $\psi$ is a Dirichlet character modulo $N$,  we construct a family of lifts which map forms of half-integral weight with multiplier $\psi\nu^r$ on $\Gamma_0(N)$
to forms of integral weight and character $\psi^2$ on  $\Gamma_0(6N)$ and which provide precise information at the primes $2$ and $3$.

The  statement of our results requires some notation. 
Let $\lambda$ and $N$ be positive integers and let $r$ be an odd integer.
Let $\psi$ be a Dirichlet character modulo $N$.
Denote by $S_{\lambda+1/2}(N,\psi\nu^r)$ the space of cusp forms of weight $\lambda+1/2$ on $\Gamma_0(N)$ transforming with multiplier system $\psi\nu^r$
(by \eqref{eq:krcond1} these spaces are trivial unless 
 $\psi(-1) = \pfrac{-1}{r}(-1)^\lambda$). When $\psi$ is trivial we omit it from the notation.
 
In weight $3/2$, we need to avoid unary theta series, so when $(r,6)=1$ we define $S_{3/2}^c(N,\psi\nu^r)$ as the subspace of $S_{3/2}(N,\psi\nu^r)$ comprising forms $f$ which satisfy
\begin{equation}\label{eq:orthogtheta}
    \left\langle f\sl V_{24}, g \right\rangle = 0 \ \text{ for all theta functions }\  g\in S_{\frac 32} \left(576N, \psi \ptfrac{-1}{\bullet}^{\frac{r+1}{2}}\ptfrac{12}\bullet \nu_\theta^{3}\right),
\end{equation}
where $\langle \cdot, \cdot \rangle$ is the usual Petersson inner product.   
We make a similar definition for $S_{3/2}^c(N,\psi\nu^r)$ when $(r,6)=3$ (see Lemma~\ref{lem:eta_to_theta}).

If $(N,6)=1$  then we denote  by $S_{2\lambda}^{\new 2, 3}(6N,\psi^2, \ep_2, \ep_3)$ the space of cusp forms of weight $2\lambda$ on $\Gamma_0(6N)$ with character $\psi^2$ which are new at $2$ and $3$, and with
Atkin-Lehner eigenvalues $\ep_2$ and $\ep_3$ at those primes
(see Section~\ref{sec:intweight} for details).
We make a similar definition for $S_{2\lambda}^{\new 2}(2N,\psi^2, \ep_2)$.
Define 
\begin{equation}\label{eq:epdef}
\ep_{2, r, \psi}:=-\psi(2)\pmfrac{8}{r/(r,3)}, \ \ \ \ 
\ep_{3, r, \psi}:=-\psi(3)\pmfrac{12}{r}.
\end{equation}
Here, and throughout, $\pfrac \bullet\bullet$ denotes the extended quadratic symbol.
For primes $p$, we denote the Hecke operators on $S_{\lambda+1/2}(N,\psi\nu^r)$ and $S_{2\lambda}^{\new 2, 3}(6N,\psi^2, \ep_{2, r, \psi}, \ep_{3, r,\psi})$ by $T_{p^2}$ and $T_p$, respectively (see Section~\ref{sec:background} for details).
Finally, let $L(s,\chi)$ denote the Dirichlet $L$-function.

We can now state the main results, which are slightly different in the cases $(r, 6)=1$ and $(r, 6)=3$.
We note that versions of each theorem can be given without the hypothesis on $N$; see Theorems~\ref{thm:transform} and \ref{thm:transform3}  for details.

\begin{theorem} \label{thm:shimura-lift}
Let $r$ be an integer with $(r,6)=1$ and let  $t$ be  a squarefree positive integer.  Suppose that $\lambda, N \in \Z^{+}$, that $(N, 6)=1$, and that 
 $\psi$ is a Dirichlet character modulo $N$.
Suppose that
\begin{equation}\label{eq:Fdeff}
		F(z) = \sum_{n\equiv r\spmod{24}} a(n) q^\frac n{24} \in  S_{\lambda+\frac12}(N,\psi\nu^r)
	\end{equation}
and if  $\lambda=1$ suppose further that $F\in S_{3/2}^{\rm{c}}(N,\psi\nu^r)$.
Define coefficients $b(n)$ by the relation
\begin{equation}\label{eq:stdef}
		\sum_{n=1}^\infty \frac{b(n)}{n^s} = L\(s-\lambda+1,\psi \ptfrac \bullet t\) \sum_{n=1}^\infty \pfrac{12}{n} \frac{a(tn^2)}{n^s}.
	\end{equation}
Then we have 
	\begin{equation}
		\Sh_t(F) := \sum_{n=1}^\infty b(n)q^n \in S_{2\lambda}^{\new 2, 3}(6N,\psi^2, \ep_{2, r, \psi}, \ep_{3, r,\psi}).
	\end{equation}
Furthermore we have
\begin{equation}\label{equivariance3nmidr}
\Sh_t(T_{p^2}F)=\pmfrac{12}pT_p\,\Sh_t(F)\ \ \ \  \text{for each prime $p\geq 5$}.
\end{equation}
\end{theorem}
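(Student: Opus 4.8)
The plan is to reduce to the classical Shimura correspondence via the scaling $V_{24}$ and then transport the resulting integral-weight form back through the twist by $\ptfrac{12}\bullet$. First I would apply Lemma~\ref{lem:eta_to_theta} to realize $F\sl V_{24}$ as a cusp form of weight $\lambda+\tfrac12$ in the sense of Shimura on $\Gamma_0(576N)$, with theta multiplier $\nu_\theta^{2\lambda+1}$ twisted by the explicit character $\omega:=\psi\,\ptfrac{-1}\bullet^{(r+1)/2}\ptfrac{12}\bullet$. Its $q$-expansion is $\sum_{n\equiv r(24)}a(n)q^n$, so the construction of $\Shl_t$ due to Niwa and Cipra \cite{niwa,cipra} produces a cusp form $G:=\Shl_t(F\sl V_{24})$ of weight $2\lambda$ and nebentypus $\omega^2=\psi^2$ on $\Gamma_0(288N)$, whose coefficients $B(n)$ satisfy $\sum B(n)n^{-s}=L(s-\lambda+1,\chi_G)\sum a(tn^2)n^{-s}$ for the explicit quadratic twist $\chi_G=\omega\,\ptfrac{(-1)^\lambda t}\bullet$ of $\psi$.

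Next I would identify $\Sh_t(F)$ with the untwist $G\otimes\ptfrac{12}\bullet$. The key observation is that $tn^2\equiv r\spmod{24}$ together with $(r,6)=1$ forces $t$ odd and $(n,6)=1$: a factor $2$ or $3$ of $n$ would make $tn^2$ even or divisible by $3$, contradicting $(r,6)=1$. Hence the Dirichlet series $\sum a(tn^2)n^{-s}$ is supported on $n$ coprime to $12$, so twisting by $\ptfrac{12}\bullet$ is harmless there. Comparing Euler factors at primes $p\geq5$, this twist converts $L(s-\lambda+1,\chi_G)$ into $L\(s-\lambda+1,\chi_G\ptfrac{12}\bullet\)$ and inserts $\ptfrac{12}n$ into the inner sum; using the sign relation $\psi(-1)=\ptfrac{-1}r(-1)^\lambda$ of \eqref{eq:krcond1} together with quadratic reciprocity, one checks that $\chi_G\ptfrac{12}\bullet=\psi\ptfrac\bullet t$, so the series becomes exactly the right-hand side of \eqref{eq:stdef}. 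This forces the coefficients $b(n)$ of $\Sh_t(F)$ to agree with those of $G\otimes\ptfrac{12}\bullet$ at every $n$ prime to $6$, and in particular shows $\Sh_t(F)$ is the untwist of $G$.

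The remaining, and \textbf{most delicate}, step is to pin down the behavior at $2$ and $3$: one must show that the level descends from $288N$ to $6N$, that the lift is new at $2$ and $3$, and that the Atkin--Lehner eigenvalues are exactly $\ep_{2,r,\psi}$ and $\ep_{3,r,\psi}$ of \eqref{eq:epdef}. When $\psi$ is trivial and $N=1$ this is Yang's theorem \cite{Yang}; in general I would generalize his local computation, analyzing the action of the Atkin--Lehner involutions at $2$ and $3$ on $G$ (equivalently, the local components of $\Sh_t(F)$ at those primes). The hypothesis $(N,6)=1$ ensures that $\psi$ decouples from this analysis, so the eigenvalues acquire only the predicted factors $\psi(2)$ and $\psi(3)$. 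This is where the genuine content lies, and I expect it to rest on the explicit transformation law established in Theorem~\ref{thm:transform} rather than on the classical lift alone.

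Finally, for the Hecke equivariance \eqref{equivariance3nmidr} I would combine the classical relation $\Shl_t\(T_{p^2}(F\sl V_{24})\)=T_p\,\Shl_t(F\sl V_{24})$, valid because $V_{24}$ and $T_{p^2}$ commute for $p\geq5$, with the twisting identity $T_p\(h\otimes\ptfrac{12}\bullet\)=\ptfrac{12}p\,(T_ph)\otimes\ptfrac{12}\bullet$, which holds whenever $\ptfrac{12}p\neq0$, i.e.\ for $p\geq5$. Writing $\Sh_t(T_{p^2}F)\otimes\ptfrac{12}\bullet=\Shl_t\(T_{p^2}F\sl V_{24}\)$ and applying these identities yields the factor $\ptfrac{12}p$ and hence \eqref{equivariance3nmidr}. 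Alternatively, this can be checked directly from \eqref{eq:stdef} using the standard formula for $T_{p^2}$ on half-integral weight coefficients, where the factor $\ptfrac{12}p$ arises from the shift $n\mapsto pn$ in the character-twisted inner sum.
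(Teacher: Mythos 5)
Your reduction to the classical lift and the character bookkeeping are correct as far as they go: the identity $\Shl_t(F\sl V_{24}) = \Sh_t(F)\otimes\ptfrac{12}\bullet$, with $\chi_G\ptfrac{12}\bullet = \psi\ptfrac\bullet t$ via quadratic reciprocity, is exactly the paper's Proposition~\ref{prop:shcompare}, and your alternative proof of the Hecke equivariance by direct computation on \eqref{eq:stdef} is the paper's Proposition~\ref{prop:hecke_equiv}. But there is a genuine gap at the step you yourself flag as most delicate. First, the ``untwist'' is not a well-defined operation producing $\Sh_t(F)$ from $G=\Shl_t(F\sl V_{24})$: twisting $G$ by $\ptfrac{12}\bullet$ recovers $b(n)$ only for $(n,6)=1$ and annihilates everything else, while $\Sh_t(F)$ genuinely has nonvanishing coefficients at $n$ divisible by $2$ and $3$, because the Euler factors of $L\(s-\lambda+1,\psi\ptfrac\bullet t\)$ at $2$ and $3$ survive in \eqref{eq:stdef}. (In Example 1 of the paper, $\Sh_t(\eta^5)$ is a multiple of $\eta^2(z)\eta^2(2z)\eta^2(3z)\eta^2(6z)$, whose coefficients at $2$ and $3$ are $-2$ and $-3$.) So modularity of $G$ at level $288N$ says nothing directly about the completed series $\sum b(n)q^n$; the assertion that this series is modular of level $6N$, new at $2$ and $3$, with eigenvalues \eqref{eq:epdef}, amounts to the coefficient relations $b(2n)=-2^{\lambda-1}\ep_{2,r,\psi}b(n)$ and $b(3n)=-3^{\lambda-1}\ep_{3,r,\psi}b(n)$ together with modularity at the lower level --- which is precisely what must be proven and is nowhere established in your outline. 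Second, your plan for this step is to ``generalize Yang's local computation,'' resting on ``the explicit transformation law established in Theorem~\ref{thm:transform}'' --- but Theorem~\ref{thm:transform} is the paper's key intermediate result, itself a large part of the proof of the statement at hand, so invoking it is circular.

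For comparison, the paper does not argue through the classical lift at all for the main assertions. It constructs a new two-variable theta kernel $\vartheta(z,w)$ from a rank-$3$ lattice adapted to the eta multiplier (Section~\ref{sec:theta-kernels}), proves directly that $\overline{\vartheta(z,\cdot)}$ is an eigenfunction of $U_p$, $W_p^{6N}$ and $H_{6N}$ for $p\in\{2,3\}$ (Proposition~\ref{prop:theta_eigen}), and defines the lift as a Petersson integral of $F$ against this kernel. Level $6N$ and holomorphy come from the Fourier/Mellin computation (Theorem~\ref{thm:transform}), the Atkin--Lehner and $U_p$ eigenvalues are inherited from the kernel (Proposition~\ref{prop:shim_up}), cuspidality is the only place Cipra's classical results enter (through \eqref{eq:ffhat} and Lemma~\ref{lem:cusps} in Proposition~\ref{prop:cuspform}), and newness follows from a trace computation. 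To complete your route you would need to actually carry out the generalization of Yang's argument for arbitrary $N$ and $\psi$, including the restoration of the Euler factors at $2$ and $3$; as written, the core of the theorem is asserted rather than proved.
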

\begin{remark}
In this case $\Sh_t(F)=0$ unless $t \equiv r \pmod{24}$.  
\end{remark}
A similar result holds when  $(r,6)=3$; here it is most convenient to write the Fourier expansions in powers of $q^\frac18$.
\begin{theorem} \label{thm:shimura-lift-r=3}
Let $r$ be an integer with $(r,6)=3$ and let  $t$ be  a squarefree positive integer.  Suppose that $\lambda, N \in \Z^{+}$, that $N$ is odd,  and that 
 $\psi$ is a Dirichlet character modulo $N$.
	Suppose that
\begin{equation}\label{eq:Fdef3}
		F(z) = \sum_{n\equiv \frac r3\spmod 8} a(n) q^\frac n8 \in S_{\lambda+\frac12}(N,\psi\nu^r)
	\end{equation}
	and if  $\lambda=1$ suppose further that $F\in S_{3/2}^{\rm{c}}(N,\psi\nu^r)$.
	Define coefficients  $b(n)$ by the relation
\begin{equation}\label{eq:stdef3}
		\sum_{n=1}^\infty \frac{b(n)}{n^s} = L\(s-\lambda+1,\psi \ptfrac \bullet t\) \sum_{n=1}^\infty \pfrac{-4}n\frac{a(tn^2)}{n^s}.
	\end{equation}
	Then
	\begin{equation}
		\Sh_t(F) := \sum_{n=1}^\infty b(n)q^n \in S_{2\lambda}^{\new 2}(2N,\psi^2, \ep_{2, r, \psi}).
	\end{equation}
Furthermore,  we have
\begin{equation}\label{equivariance3midr}
\Sh_t(T_{p^2}F)=\pmfrac{-4}pT_p\Sh_t(F)\ \ \ \ \text{for each prime $p\geq 3$}.
\end{equation}
\end{theorem}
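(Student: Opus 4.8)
The plan is to reduce Theorem~\ref{thm:shimura-lift-r=3} to the classical Shimura correspondence of Niwa and Cipra \cite{cipra} and then to sharpen the level and the local data at the prime $2$, exactly paralleling the mechanism behind Yang's result \cite{Yang}. First I would apply the operator $V_8$. Because $(r,6)=3$ confines the Fourier support of $F$ to exponents $n/8$ with $n\equiv r/3\pmod 8$, the form $F\sl V_8=\sum_{n\equiv r/3\,(8)}a(n)q^n$ has an integral $q$-expansion, and by Lemma~\ref{lem:eta_to_theta} it is a cusp form of weight $\lambda+\tfrac12$ in the sense of Shimura on $\Gamma_0(64N)$, whose multiplier is $\psi$ times an explicit quadratic character times a power of $\nu_\theta$. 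When $\lambda=1$, the hypothesis $F\in S_{3/2}^{\mathrm c}(N,\psi\nu^r)$ is designed so that, via the $(r,6)=3$ analogue of \eqref{eq:orthogtheta}, it becomes exactly the orthogonality of $F\sl V_8$ to the unary theta series that the Niwa--Cipra theory requires in weight $3/2$.

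Next I would apply the classical lift $\Shl_t$, available in all positive half-integral weights by Cipra's extension \cite{cipra}, to $F\sl V_8$. This yields a weight-$2\lambda$ cusp form with nebentypus $\psi^2$ on $\Gamma_0(32N)$, and its coefficients $\tilde b(n)$ obey the Niwa--Cipra relation
\[
\sum_{n\ge1}\frac{\tilde b(n)}{n^s}=L\!\left(s-\lambda+1,\chi\right)\sum_{n\ge1}\frac{a(tn^2)}{n^s},
\]
where $\chi$ is the nebentypus of $F\sl V_8$ multiplied by $\pfrac{\bullet}{t}$ and $\pfrac{-1}{\bullet}^{\lambda}$. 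I would then set $\Sh_t(F):=\Shl_t(F\sl V_8)\otimes\pfrac{-4}{\bullet}$; since twisting by the completely multiplicative character $\pfrac{-4}{\bullet}$ acts diagonally on both Dirichlet series, it multiplies $a(tn^2)$ by $\pfrac{-4}{n}$ and sends $\chi$ to $\chi\pfrac{-4}{\bullet}=\psi\pfrac{\bullet}{t}$, reproducing precisely \eqref{eq:stdef3}. The same bookkeeping of residue classes shows that $\Sh_t(F)$ vanishes unless $t$ lies in the class forced by the support of the $a(tn^2)$.

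The crux is the refinement to level $2N$ with the prescribed behaviour at $2$. Here I would prove the general-character, $(r,6)=3$ analogue of Yang's theorem: namely that $\Shl_t(F\sl V_8)$ is itself the twist by $\pfrac{-4}{\bullet}$ of a form new at $2$ of level $2N$, so that its a priori level $32N$ collapses after untwisting and $\Sh_t(F)\in S_{2\lambda}^{\new 2}(2N,\psi^2,\ep_{2,r,\psi})$. The Atkin--Lehner eigenvalue $\ep_{2,r,\psi}=-\psi(2)\pfrac{8}{r/(r,3)}$ of \eqref{eq:epdef} would be read off by computing the action of $W_2$ on the theta kernel, equivalently by tracking the $2$-adic local component through the combined effect of the $V_8$-shift, the power of $\nu_\theta$, and the $\pfrac{-4}{\bullet}$-twist, and checking that the oldform contributions at $2$ are annihilated. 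Disentangling these three ingredients at the prime $2$ to pin down a single sign together with newness is the step I expect to be the main obstacle. I would also note that, unlike the $(r,6)=1$ case where $V_{24}$ forces nontrivial data at both $2$ and $3$, using $V_8$ here leaves no special structure at $3$; this is why the target level is only $2N$ and $N$ need only be odd.

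Finally, for the Hecke equivariance \eqref{equivariance3midr} I would use that $\Shl_t$ intertwines $T_{p^2}$ with $T_p$ for every prime $p$ not dividing the level, hence for $p\ge3$ with $p\nmid N$; the twist by $\pfrac{-4}{\bullet}$ conjugates $T_p$ into $\pfrac{-4}{p}T_p$ on $\Sh_t(F)$, producing the stated factor. Extending the relation to the remaining primes $p\ge3$ dividing $N$ is a short compatibility check, carried out directly on the coefficient formula \eqref{eq:stdef3} once the level, character, and Atkin--Lehner eigenvalue of $\Sh_t(F)$ have been fixed.
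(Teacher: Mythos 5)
Your proposal gets the bookkeeping right: the relation $\Sh_t(F)=\Shl_t(F\sl V_8)\otimes\pfrac{-4}{\bullet}$ is exactly the paper's Proposition~\ref{prop:shcompare}, the $\lambda=1$ orthogonality hypothesis plays the role you describe, and the twisting argument for Hecke equivariance at primes $p\nmid 2N$ is sound (the paper instead proves \eqref{equivariance3midr} for all $p\geq 3$ by a direct computation with the coefficient formula, as in Proposition~\ref{prop:hecke_equiv}). But there is a genuine gap at what you yourself call the crux. You propose to ``prove the general-character, $(r,6)=3$ analogue of Yang's theorem'' --- that $\Shl_t(F\sl V_8)$, a priori of level $32N$, is the $\pfrac{-4}{\bullet}$-twist of a form of level $2N$ that is new at $2$ with eigenvalue $\ep_{2,r,\psi}$ --- but this statement \emph{is} the theorem being proved; it is not a lemma you can import or obtain by ``tracking the $2$-adic local component.'' Knowing only that $\Sh_t(F)$ has some level dividing $32N$ (from the classical lift plus twisting) gives no control of its local behavior at $2$: untwisting by a conductor-$4$ character does not automatically lower the level, and the classical Niwa--Cipra theory supplies no information about invariance under $\Gamma_0(2N)$, let alone Atkin--Lehner eigenvalues. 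Your suggestion to compute ``the action of $W_2$ on the theta kernel'' presupposes a kernel that transforms at level $2N$ in the $w$-variable; no such kernel exists in the classical setup attached to $F\sl V_8$.

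Supplying that missing object is precisely what the paper does. It constructs a new theta kernel $\vartheta(z,w)$ from the rank-$3$ lattice $L=N\Z\oplus 4N\Z\oplus 2N\Z$, adapted directly to the multiplier $\psi\nu^3$ at level $N$ in $z$ and to weight $2\lambda$, level $2N$, character $\psi^2$ in $w$; the lift is realized as the Petersson integral of $F$ against $\overline{\vartheta(z,w)}$, its Fourier expansion is computed via a Mellin-transform argument to match \eqref{eq:stdef3}, and the eigenvalue properties of $\overline{\vartheta(z,\cdot)}$ under $U_2$, $W_2^{2N}$, and $H_{2N}$ (Proposition~\ref{prop:theta_eigen3}) are what yield Proposition~\ref{prop:shim_up_3}, hence the sign $\ep_{2,r,\psi}$. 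Newness at $2$ then follows from the trace identities of Lemma~\ref{lem:trace}, and cuspidality (the only place the classical lift enters) from the relation $f\sl(1-U_2V_2)=\Shl_t(F\sl V_8)\otimes\pfrac{-4}{\bullet}$ together with Lemma~\ref{lem:cusps}. Without constructing this kernel, or providing some substitute argument that pins down the local data at $2$, your outline reduces the theorem to an unproved assertion of equal strength.
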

\begin{remark}
In this case we have $\Sh_t(F)=0$ unless $t \equiv  r/3 \pmod{8}$.  
The precise relationship between $\Sh_t$ and the standard Shimura lift $\Shl_t$ in both cases is described precisely in Section~\ref{subsec:compare}.
\end{remark}

As an application of these theorems we prove that quadratic congruences of a particular type hold for modular forms with the eta-multiplier in a wide range of spaces.  These congruences are motivated by some old examples of Atkin for the partition function \cite{Atkin_mult} which are described in \eqref{eq:atkincong} below.
The fact that the  lifts are new at $2$ is crucial to the arithmetic techniques which  we employ, which generalize recent results of the first author with Allen and Tang \cite{Ahlgren-Allen-Tang}.

For the application we assume that $\psi$ is real and that $\ell\geq 5$ is prime.
Our first result on congruences relies on the assumption that the pair $(2\lambda,\ell)$ is \emph{suitable} for the triple $(N,\psi,r)$.   This is a technical hypothesis on the mod $\ell$ reductions $\bar\rho_f$ of the $\ell$-adic Galois representations $\rho_f$ attached to newforms $f$.
However, we will see in Section~\ref{subsec:suitable} that $(2\lambda,\ell)$ is suitable for 
any triple $(N,\psi,r)$ if 
\begin{equation}\label{eq:suitablenumeric}
    \ell > 10\lambda-4 \ \ \ \text{and}\ \ \ 2^{2\lambda-1} \not \equiv 2^{\pm 1} \pmod{\ell}.
\end{equation}

\begin{definition}
\label{def:suitable}
Suppose that $\ell \geq 5$ is prime and that $r$ is an odd integer. Suppose that $N$ is a squarefree, odd, positive integer with $\ell \nmid N$, and $3 \nmid N$ if $3 \nmid r$. Suppose that $\psi$ is a real character modulo $N$, and  let $k$ be a positive even integer.
If $(r,6)=1$, then 
we say that the pair $(k,\ell)$ is \emph{suitable} for the triple $(N,\psi,r)$ if
for every normalized Hecke eigenform $f \in S^{\new 2,3}_{k}(6N,\ep_{2,r,\psi},\ep_{3,r,\psi})$, the image of $\bar{\rho}_f$ contains a conjugate of 
$\SL_2(\mathbb{F}_\ell)$. If $(r,6)=3$, then we make a similar definition for normalized Hecke eigenforms in $S^{\new 2}_{k}(2N,\ep_{2,r,\psi})$.
\end{definition}

  To make the statement of the next two results uniform, in the case when $3\mid r$ we choose to express the Fourier expansion \eqref{eq:Fdef3} by a change of variables in the form \eqref{eq:Fdeff}.
Let  $S_{\lambda+{1}/2}(N,\psi\nu^{r})_\ell  \subseteq S_{\lambda+{1}/2}(N,\psi\nu^{r})$ denote the subset of forms whose coefficients are algebraic numbers which are integral at all primes above $\ell$.

\begin{theorem}\label{thm:cong1}
Suppose that  $\ell \geq 5$ is prime and that $r$ is odd.
Suppose that $m$ and $\lambda$ are positive integers.
Let $N$ be a squarefree, odd positive integer such that $\ell \nmid N$, and $3 \nmid N$ if $3 \nmid r$. 
Let $\psi$ be a real character modulo $N$. 
Suppose that 
\begin{equation}
F(z) =\displaystyle \sum_{n \equiv r \spmod{24}} a(n)q^{\frac{n}{24}} \in S_{\lambda+\frac{1}2}(N, \psi\nu^{r})_\ell
\end{equation}
with $(2\lambda,\ell)$ suitable for $(N,\psi,r)$, and if $\lambda=1$, suppose further that $F \in S^{c}_{{3}/2}(N,\psi\nu^r)$. 
Then there is a positive density set $S$ of primes such that if $p \in S$, then $p \equiv 1 \pmod{\ell^m} $ and
\begin{equation}
a(p^2n) \equiv 0 \pmod{\ell^m}  \ \ \ \ \text{ if } \ \ \ \(\frac{n}{p}\)=\begin{cases} 
\(\frac{-1}{p}\)^{\frac{r-1}{2}}\psi(p) & \text{if } 3 \nmid r, \\
\(\frac{-3}{p}\)\(\frac{-1}{p}\)^{\frac{r-1}{2}}\psi(p) & \text{if }  3\mid r.
\end{cases}
\end{equation}
\end{theorem}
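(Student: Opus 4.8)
The plan is to move the question to the integral-weight side through the Shimura lift, where the relevant congruences are controlled by the mod $\ell^m$ Galois representations, and then to produce the primes by Chebotarev. The starting point is the action of $T_{p^2}$ on Fourier coefficients recorded in Section~\ref{sec:background}: for $f=\sum a(n)q^{n/24}\in S_{\lambda+\frac12}(N,\psi\nu^r)$, a prime $p\geq 5$, and $p\nmid n$, the $n$th coefficient of $T_{p^2}f$ equals
\[
a(p^2n)+\chi^*(p)\ptfrac np\,p^{\lambda-1}a(n),
\]
where $\chi^*$ is the explicit quadratic character attached to $\psi\nu^r$ (the third term drops because $p\nmid n$). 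Hence if $f$ is a normalized eigenform with $T_{p^2}f=\omega_p f$, then for $p\nmid n$
\[
a(p^2n)=\bigl(\omega_p-\chi^*(p)\ptfrac np\,p^{\lambda-1}\bigr)a(n),
\]
so \emph{every} coefficient $a(p^2n)$ with $\ptfrac np=c$ vanishes modulo $\ell^m$ as soon as $\omega_p\equiv\chi^*(p)\,c\,p^{\lambda-1}\pmod{\ell^m}$. The equivariance \eqref{equivariance3nmidr} (resp.\ \eqref{equivariance3midr}) identifies $\omega_p=\ptfrac{12}p a_p(g)$ (resp.\ $\ptfrac{-4}p a_p(g)$) with the $T_p$-eigenvalue of the lift $g=\Sh_t(f)$, and the value of $c$ in the statement is engineered so that $\ptfrac{12}p\chi^*(p)c$ (resp.\ $\ptfrac{-4}p\chi^*(p)c$) equals a sign $\delta\in\{\pm1\}$ depending only on $p\bmod 24N$. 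After imposing $p\equiv 1\pmod{\ell^m}$, so that $p^{\lambda-1}\equiv 1$, the target collapses to the single congruence $a_p(g)\equiv\delta\pmod{\ell^m}$, with $\delta$ \emph{the same for every eigenform}.

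Next I would install the Galois-theoretic input. Write $F=\sum_i c_i f_i$ as a combination of normalized Hecke eigenforms (enlarging the coefficient field if necessary), and for each $i$ choose a squarefree $t_i$ with $g_i:=\Sh_{t_i}(f_i)\neq 0$; the hypothesis $F\in S^c_{3/2}$ when $\lambda=1$ is exactly what guarantees that some lift is nonzero and is cut out by a two-dimensional representation. By Theorem~\ref{thm:shimura-lift} each $g_i$ is a newform in $S^{\new 2,3}_{2\lambda}(6N,\psi^2,\ep_{2,r,\psi},\ep_{3,r,\psi})$, to which one attaches the mod $\ell^m$ reduction $\bar\rho_{g_i}$ of its $\ell$-adic representation. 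Because $\psi$ is real, $\psi^2$ is the principal character, so $\det\bar\rho_{g_i}(\Frob_p)\equiv p^{2\lambda-1}$; thus $p\equiv 1\pmod{\ell^m}$ already forces every determinant to be $1$, and the remaining requirement is $\Tr\bar\rho_{g_i}(\Frob_p)=a_p(g_i)\equiv\delta\pmod{\ell^m}$ for each $i$ with $c_i\neq 0$. The suitability of $(2\lambda,\ell)$ (Definition~\ref{def:suitable}) says that the image of each $\bar\rho_{g_i}$ contains a conjugate of $\SL_2(\F_\ell)$; since $\ell\geq 5$ and $\SL_2(\F_\ell)$ is perfect, the mod $\ell^m$ image contains a conjugate of $\SL_2(\Z/\ell^m)$, inside which the locus $\{\det=1,\ \Tr=\delta\}$ is a non-empty union of conjugacy classes.

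Finally I would assemble everything by Chebotarev. The set of primes $p$ with $p\equiv 1\pmod{\ell^m}$, with $p$ in the fixed residue class modulo $24N$ producing the sign $\delta$ and the prescribed values of the quadratic symbols, and with $a_p(g_i)\equiv\delta\pmod{\ell^m}$ for every $i$, is a Frobenius condition in a finite Galois extension, so it has positive density provided it is non-empty. Non-emptiness is the main obstacle: one must show the image of the product $\bigoplus_i\bar\rho_{g_i}$, together with the cyclotomic character mod $\ell^m$ and the Dirichlet characters mod $24N$, is large enough to realize the \emph{common} trace value $\delta$ on every factor simultaneously. This is handled by a Goursat/Ribet-type analysis: after grouping Galois-conjugate newforms, the pairwise non-isomorphic factors contribute independently, so the image contains (a conjugate of) $\prod_i\SL_2(\Z/\ell^m)$, and the intersection of the per-factor positive-density conditions remains of positive density. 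For every $p$ in the resulting set $S$, the eigenform relation of the first paragraph gives $a(p^2n)\equiv 0\pmod{\ell^m}$ whenever $\ptfrac np=c$, which is the assertion. The case $3\mid r$ is identical after rewriting the expansion \eqref{eq:Fdef3} in the form \eqref{eq:Fdeff} and using \eqref{equivariance3midr} with $\ptfrac{-4}p$ in place of $\ptfrac{12}p$.
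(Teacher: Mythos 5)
Your reduction of the theorem to a Hecke-eigenvalue congruence is sound and is essentially the paper's strategy (your first paragraph is the content of the paper's Lemmas~\ref{5.1 analogue} and \ref{5.2 analogue}; note the sign $\delta$ you leave implicit works out to be identically $1$, independent of $p$). The genuine gap is in the Galois-theoretic core. You justify simultaneity across eigenforms by claiming that, after grouping Galois-conjugate newforms, the image of $\bigoplus_i \bar\rho_{g_i}$ modulo $\ell^m$ contains a conjugate of $\prod_i \SL_2(\Z/\ell^m\Z)$ by a Goursat/Ribet argument. This is false in general: two newforms that are \emph{not} Galois conjugates can nevertheless be congruent modulo $\ell$, in which case the image of the product representation lies in the proper subgroup of tuples that agree modulo $\lambda$ after conjugation; no grouping by Galois conjugacy repairs this, and such congruences cannot be ruled out under the hypotheses of the theorem. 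The paper (following \cite[Prop.~3.8]{Ahlgren-Allen-Tang}, which is invoked in Theorem~\ref{3.9 analogue}) circumvents exactly this entanglement: because the condition to be imposed is the \emph{same} conjugacy class in every factor, one can produce a single $\sigma\in\Gal(\bar\Q/\Q(\zeta_\ell))$ with $\bar\rho_f(\sigma)$ conjugate to $\pmatrix{1}{1}{-1}{0}$ for every relevant eigenform $f$ simultaneously, with no independence of the images required. Simultaneity comes from this one element, not from largeness of the product image.

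A second gap is the passage from mod $\ell$ to mod $\ell^m$: the assertion that perfectness of $\SL_2(\F_\ell)$ forces the mod $\ell^m$ image to contain a conjugate of $\SL_2(\Z/\ell^m\Z)$ is unjustified, and is delicate because $\rho_{g_i}$ takes values in $\GL_2(\mathcal{O}_\lambda)$ where $\mathcal{O}_\lambda$ may be ramified over $\Z_\ell$ with residue field larger than $\F_\ell$. The paper never needs such a statement: since $\pmatrix{1}{1}{-1}{0}$ has characteristic polynomial $x^2-x+1$ (roots are sixth roots of unity), the characteristic polynomial of $\rho_f(\sigma^{\ell^{w-1}})$ is congruent to $x^2-x+1$ modulo $\lambda^{w}$ and $\chi_\ell(\sigma^{\ell^{w-1}})\equiv 1\pmod{\ell^w}$, so Chebotarev applied to this single power already yields primes $p\equiv 1\pmod{\ell^w}$ with $T_pf\equiv f\pmod{\lambda^w}$ for all $f$ at once. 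Finally, a smaller but real omission: eigenvalue congruences for the eigencomponents only give a congruence for $F$ itself after controlling $\ell$-denominators in the decomposition, and a single lift $\Sh_t$ only sees the coefficients $a(tn^2)$, so one must use the whole family $\{\Sh_t(F)\}_t$; the paper spends the second half of Theorem~\ref{3.9 analogue} (integral basis, constants $c_1,c_2$, using Corollary~\ref{cor:upeigen} for Galois stability) and Lemma~\ref{5.1 analogue} on precisely these points, which your proposal waves at with ``enlarging the coefficient field if necessary.''
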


\begin{remark}
In the above theorem and those which follow, our definition of density is that of natural density.  The set $S$ depends on $F$, $\ell$ and $m$.
\end{remark}
\begin{remark}
It would also be possible to prove an analogue of \cite[~Theorem $1.2$]{RJD} by modifying the proof of Theorem~\ref{3.9 analogue} below. 
See the remark at the end of Section $7$.
\end{remark}

Our next result on congruences does not rely on the hypothesis of suitability.
 \begin{theorem}\label{thm:cong2}
 Suppose that $\ell \geq 5$ is prime and that $r$ is odd.
 Let $m$ and $\lambda$ be positive integers.
 Let $N$ be a squarefree, odd positive integer such that $\ell \nmid N$, and $3 \nmid N$ if $3 \nmid r$.
 Let $\psi$ be a real character modulo $N$. Suppose that there exists  $a \in \Z$ with the property that
 \begin{equation}\label{Hasse}
 2^{a} \equiv -2 \pmod\ell . 
 \end{equation}
   Let 
   \begin{equation}
   F(z) =\displaystyle \sum_{n \equiv r \spmod{24}} a(n)q^{\frac{n}{24}} \in S_{\lambda+\frac{1}2}(N, \psi\nu^{r})_\ell,
   \end{equation}
and if $\lambda=1$, suppose further that $F \in S^{c}_{{3}/2}(N,\psi\nu^r)$.
 Then there is a positive density set $S$ of primes such that if $p \in S$ then $p \equiv -2 \pmod{\ell^{m}}$ and for some $\ep_{p} \in \{\pm 1\}$ we have
 \begin{equation}
 a(p^2n) \equiv 0 \pmod{\ell^{m}} \ \ \ \text{ if } \ \ \ \(\frac{n}{p}\)=\ep_{p}.
 \end{equation}
 \end{theorem}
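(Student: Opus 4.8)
The plan is to transfer the desired congruence to the integral-weight side via the lifts of Theorem~\ref{thm:shimura-lift} and Theorem~\ref{thm:shimura-lift-r=3}, and then to produce the primes $p$ by a Chebotarev argument in which the required Frobenius class is manufactured \emph{not} from a large-image hypothesis but from the common Steinberg structure at the prime $2$. First I would record the action of $T_{p^2}$ on Fourier coefficients: for a prime $p\ge 5$ with $p\nmid n$, the operator $T_{p^2}$ sends the $n$-th coefficient of $F$ to
\begin{equation}
a(p^2n)+\psi(p)\pfrac{(-1)^\lambda n}{p}\,p^{\lambda-1}a(n),
\end{equation}
the term involving $a(n/p^2)$ dropping out. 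Combining this with the Hecke-equivariance \eqref{equivariance3nmidr} (resp.\ \eqref{equivariance3midr}), I would show that the target congruence $a(p^2n)\equiv 0\pmod{\ell^m}$ for all $n$ with $\pfrac np=\ep_p$ is \emph{implied} by the single statement that $T_p$ acts on $S_{2\lambda}^{\new 2,3}(6N,\psi^2,\ep_{2,r,\psi},\ep_{3,r,\psi})$ (resp.\ on the level-$2N$ space) as multiplication by an explicit scalar $c_p\pmod{\ell^m}$, with the sign $\ep_p\in\{\pm1\}$ then read off from $p$.

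Next I would reduce $T_p\equiv c_p\,\mathrm{Id}$ to a trace condition on Galois representations. Since $p\nmid 6N\ell$, the operator $T_p$ acts on each oldclass $\{g\sl V_d\}$ by the eigenvalue of the underlying newform, so it suffices to arrange $a_p(f)\equiv c_p\pmod{\ell^m}$ simultaneously for every normalized newform $f$ (of any level $6N'\mid 6N$) lying in the relevant Atkin--Lehner eigenspace. Writing $\bar\rho_f$ for the mod-$\ell^m$ reduction of the representation attached to $f$, this is the assertion that a single conjugacy class in the image of $\bigoplus_f\bar\rho_f$ realizes the common trace $c_p$ in every factor while mapping to $-2$ under the mod-$\ell^m$ cyclotomic character $\chi$, so that $\Frob_p$ produces $p\equiv-2$.

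The construction of this class without a suitability hypothesis is the step I expect to be the main obstacle. Each such $f$ is new at $2$ with the \emph{same} Atkin--Lehner eigenvalue $\ep_{2,r,\psi}$, so $\bar\rho_f$ is Steinberg at $2$: on a decomposition group it is upper triangular with unramified diagonal characters whose Frobenius eigenvalues form the common pair $\{-\ep_{2,r,\psi}2^{\lambda-1},\,-\ep_{2,r,\psi}2^{\lambda}\}$, and $\chi(\Frob_2)=2$. I would fix a lift $\tilde\phi$ of this Frobenius to the finite Galois group cut out by $\bigoplus_f\bar\rho_f$ and $\chi$, and set $g:=\tilde\phi^{\,a'}$, where $a'$ satisfies $2^{a'}\equiv-2\pmod{\ell^m}$. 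The existence of such $a'$ is exactly the content of hypothesis \eqref{Hasse}: the congruence $2^a\equiv-2\pmod\ell$ forces $2$ to have even order modulo $\ell$, whence $-1$, and therefore $-2$, lies in $\langle 2\rangle\subseteq(\Z/\ell^m)^\times$. Because the $a'$-th power of an upper-triangular matrix with distinct diagonal entries has trace equal to the sum of the $a'$-th powers of those entries, $\Tr\bar\rho_f(g)$ is well defined independently of the lift and equals the common value $(-\ep_{2,r,\psi})^{a'}2^{(\lambda-1)a'}(2^{a'}+1)\equiv c_p\pmod{\ell^m}$, while $\chi(g)=2^{a'}\equiv-2$.

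Finally I would apply the Chebotarev density theorem to the conjugacy class of $g$, obtaining a positive-density set $S$ of primes $p\nmid 6N\ell$ (in particular $p\ge 5$) with $\Frob_p\sim g$, hence $p\equiv-2\pmod{\ell^m}$ and $a_p(f)\equiv c_p\pmod{\ell^m}$ for all relevant $f$ at once. Unwinding the reductions of the first two paragraphs then yields $a(p^2n)\equiv0\pmod{\ell^m}$ whenever $\pfrac np=\ep_p$. The case $(r,6)=3$ is handled identically, using \eqref{equivariance3midr} and the level-$2N$ space; only the prime $2$ enters the construction of $g$, so hypothesis \eqref{Hasse} suffices in both cases. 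The essential difficulty is precisely the uniform-over-all-newforms construction of $g$ and the verification that its trace is forced (and common) purely by the shared Steinberg data at $2$ together with \eqref{Hasse}, since this is what lets us dispense with the suitability assumption of Theorem~\ref{thm:cong1}.
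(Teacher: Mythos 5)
Your proposal is correct and follows essentially the same route as the paper: you reduce the coefficient congruence to the assertion that $T_p$ acts as a fixed scalar modulo $\ell^m$ on the relevant new-at-$2$ (and $3$) space and transfer it through the lift (the paper's Lemmas~\ref{5.1 analogue} and \ref{5.2 analogue}), then manufacture the primes by applying Chebotarev to the class of $\Frob_2^{a'}$, whose trace is pinned down uniformly across all newforms by the Steinberg structure at $2$ (Theorem~\ref{bigGalThm}(2) together with Corollary~\ref{cor:upeigen}) — precisely the paper's Theorem~\ref{Theorem 4.2 analogue}. The only deviations are minor: you lift $2^a\equiv -2\pmod\ell$ to modulus $\ell^m$ by an elementary order argument where the paper cites \cite[Lemma~4.1]{Ahlgren-Allen-Tang}, and your displayed $T_{p^2}$ coefficient formula carries the wrong character factor ($\psi(p)\pfrac{(-1)^\lambda n}{p}$ should be $\pfrac{-1}{p}^{(r-1)/2}\pfrac{12n}{p}\psi(p)$ as in \eqref{eq:heckedef24}), which is harmless here because the theorem leaves $\ep_p$ unspecified.
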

\begin{remark}
The value of $\ep_{p}$ can be explicitly calculated using Theorem~\ref{Theorem 4.2 analogue} below.  By a result of Hasse \cite{Hasse},  the proportion of primes satisfying \eqref{Hasse} is $17/24$.
\end{remark}

As an example of an application of our main results, we consider congruences for colored generalized Frobenius partitions, which were introduced by 
Andrews \cite[\S 4]{Andrews}, and have been studied by many authors.
For a positive integer $m$, let $c\phi_{m}(n)$ be the number of generalized Frobenius partitions of $n$ with $m$ colors. 
By \cite[Theorem 5.2]{Andrews} we have (for $m\geq 2$)
\begin{equation}\label{eq:cphidef}
\sum c\phi_m\pmfrac{n+m}{24}q^\frac n{24}=\eta^{-m}(z)\sum_{n=0}^\infty r_m(n)q^n,
\end{equation}
where $r_m(n)$ is the number of representations of $n$ by the quadratic form
\begin{equation}\label{eq:r_Qdef}
    \sum_{i=1}^{m-1}x_i^2+\sum_{1\leq i<j\leq m-1}x_ix_j.
\end{equation}
By \cite[(5.15)]{Andrews}, $c\phi_1(n)$ agrees with the ordinary partition function $p(n)$.
Congruence properties of $c\phi_m$ have been studied by many people; see for example \cite{baruah-sarmah,cui-gu-huang,garvan-sellers,gu-wang-xia,lin,lovejoy-frob,petta,wang-zhang}.
Since the generating function  $\sum r_m(n)q^n$ is a holomorphic modular form of weight $(m-1)/2$ and level $m$ or $2m$,
it follows from the results of 
Treneer \cite{treneer_1} that if $\ell\geq 5$ is a prime with $\ell\nmid m$ and $j$ is a positive integer,  then there are infinitely many $Q$ giving rise to  congruences of the form
\begin{equation}\label{eq:treneerfrob}
c\phi_m\pmfrac{\ell^kQ^3 n+m}{24}\equiv 0\pmod {\ell^j} \ \ \ \text{if} \ \ \ (n, \ell Q)=1
\end{equation}
 where $k$ is sufficiently large (see \cite[Theorem~2.1]{Chan-Wang-Yang}, \cite[Lemma~1, Corollary~1]{jameson_wieczorek} for details).

For the partition function, Atkin \cite{Atkin_mult} found a number of examples of congruences
of the form 
\begin{equation}\label{eq:atkincong}
    p \pmfrac{\ell Q^2 n+1}{24}\equiv 0 \pmod{\ell}\ \ \ \ \text{if}\ \ \ \pmfrac n{Q}=\ep_Q,\\
\end{equation}
where  $\ell$ and $Q$ are distinct primes, $\ep_Q\in \{\pm1\}$,  and  $5\leq \ell\leq 31$.
In recent work of the first author with Allen and Tang \cite{Ahlgren-Allen-Tang} it is shown that for every prime $\ell\geq 5$, a positive proportion of primes $Q$ give rise to a congruence of the form \eqref{eq:atkincong}. 

Our main theorems open the door to proving congruences like Atkin's for the functions $c\phi_m$.  Since the constructions are somewhat involved,
we will develop this application fully in a forthcoming paper. In Section~\ref{sec:GFP} we give an extended example which illustrates the use of these  theorems in the case when $m=5$.  The simplest examples of the congruences which we obtain are
\begin{align}
 c\phi_{5}\pmfrac{13 \cdot97^2 n+5}{24} &\equiv 0 \pmod{13}\ \ \ \ \text{if}\ \ \ \pmfrac n{97}=-1,\\
  c\phi_{5}\pmfrac{13 \cdot103^2 n+5}{24} &\equiv 0 \pmod{13}\ \ \ \ \text{if}\ \ \ \pmfrac n{103}=1.
\end{align}
Note that selecting $n$ in appropriate residue classes  gives rise to many congruences of the form
\begin{equation}
    c\phi_5(\ell Q^3 n+\beta)\equiv 0\pmod {13};
\end{equation}
for example, choosing $n\equiv 199\pmod{24\cdot 97}$ in the first example above produces the congruence
\begin{equation}\label{eq:congex}
    c\phi_5(13\cdot97^3\, n+1014212)\equiv 0\pmod{13}.
\end{equation}
In the last section we give many other such examples; these can easily be checked numerically since $c\phi_5$ can be expressed in terms of the partition function using \cite[(1.13)]{Chan-Wang-Yang}.

We close the Introduction with a brief outline of the paper and a sketch of our methods.  Section~\ref{sec:background} contains background results on the various sorts of modular forms which we consider.
To prove Theorem~\ref{thm:shimura-lift} in the case $r=t=1$, we begin by constructing in Section~\ref{sec:theta-kernels} a two-variable theta function $\vartheta(z,w)$ from a lattice $L$ of rank 3 and a ternary quadratic form $Q$ (this construction follows the outline of Niwa and Cipra).
The function $\vartheta(z,w)$ transforms with weight $\lambda+ 1/2$ on $\Gamma_0(N)$ in the $z$-variable and with weight $2\lambda$ on $\Gamma_0(6N)$ in the $w$-variable.
We prove directly that $\vartheta(z,w)$ behaves nicely with respect to the Atkin-Lehner involutions $W_2$ and $W_3$ and the Fricke involution.
After checking analytic behavior, we see that the function
\begin{equation}
    \Phi(w)=\int_{\Gamma_0(N)\backslash \H} v^{\lambda+\frac 12}F(z) \overline{\vartheta(z,w)} \, \frac{dudv}{v^2}
\end{equation}
gives the lift $\Sh_1(F)$.    

A lengthy but reasonably straightforward calculation in Section~\ref{sec:lift} gives the  Fourier expansion of $\Phi(w)$.  We use various operators to prove the remaining assertions in Theorem~\ref{thm:shimura-lift} and to deduce the theorem for the remaining values of $r$ and $t$.
In particular, the Hecke equivariance of the lift at primes $\geq 5$ follows from the Fourier expansion, while
the behavior at the primes $2$ and $3$ is inherited from that of $\vartheta(z,w)$.  The proof of Theorem~\ref{thm:shimura-lift-r=3} parallels that of Theorem~\ref{thm:shimura-lift}; in Section~\ref{sec:3midr} we describe the construction of the theta kernel and give a sketch of the remainder of the proof since the details are similar.

 The arguments in Section~\ref{sec:quadcong} used to prove Theorems~\ref{thm:cong1} and \ref{thm:cong2} are Galois-theoretic.  We begin by showing that the condition of suitability is satisfied for most spaces (in particular, justifying the assertion \eqref{eq:suitablenumeric}).
 We require modifications of the arguments of \cite{Ahlgren-Allen-Tang}; the main technical results are Theorems~\ref{3.9 analogue} (which relies on suitability) and \ref{Theorem 4.2 analogue} (which does not); these give large sets of primes for which the Hecke operators act diagonally with prescribed eigenvalues on newforms in the relevant spaces  modulo arbitrary powers of a given prime $\ell \geq 5$. Filtered through the maps $\Sh_t$, these eigenvalues give the congruences described in the theorems.  
 Finally, in Section~\ref{sec:GFP} we give an extended example which produces the congruences for $c\phi_5$ described above.

 \subsection*{Acknowledgments}
We thank the referees for many helpful comments.

\section{Examples}\label{sec:Examples}
\subsection*{Example 1}
The space $S_4(6)=S_4^{\new 2, 3}(6, +1, +1)$ is one-dimensional, spanned by the newform
\begin{equation}
f(z)=\eta^2(z)\eta^2(2z)\eta^2(3z)\eta^2(6z)=\sum a(n)q^n= q-2 q^2-3 q^3+4 q^4+6 q^5+6 q^6+\cdots.
\end{equation}
Let 
\begin{equation}
F(z)=\eta^5(z)=q^\frac5{24}-5 q^\frac{29}{24}+5 q^\frac{53}{24}+10 q^\frac{77}{24}-15 q^\frac{101}{24}+\cdots\in S_\frac52(1, \nu^5).
\end{equation}
It follows from Theorem~\ref{thm:shimura-lift} that each lift 
$\Sh_t(F)$ is a constant multiple of $f$.  By \eqref{equivariance3nmidr} we conclude that for every square-free $t$ and each prime $p\geq 5$ we have 
\begin{equation*}
\Sh_t\(T_{p^2}F-\pmfrac{12}pa(p)F\)=0.
\end{equation*}
It follows from a standard argument (see e.g. \cite[(2.14)]{ahlgren-beckwith-raum} that 
\begin{equation}\label{eq:ex1}
T_{p^2}F=\pmfrac{12}pa(p)F.
\end{equation}
We remark that if 
\begin{equation}G(z)=\frac{\eta^3(2z)\eta^2(3z)\eta^2(12z)}{\eta^2(6z)}=q-3 q^3-2 q^4+6 q^6+6 q^7-3 q^9+\cdots\in S_\frac52(12, \nu_\theta^5),\end{equation}
then  each lift  $\Shl_t(G)$ is also a constant multiple of $f$.

\subsection*{Example 2}
Consider the modular form $f(z)\in S_2(14)=S_2^{\new 2}(14, +1)$ defined by 
\begin{equation}
f(z)=\eta(z)\eta(2z)\eta(7z)\eta(14z)=\sum a(n)q^n=q - q^2 - 2 q^3 + q^4 + 2 q^6 + q^7+\cdots .
\end{equation}
Define (see Corollary~\ref{cor:v-op} to compute the multipliers)
\begin{equation}
F_1(z)=\eta(7z)\eta^2(z)\in S_\frac32\(7, \ptfrac\bullet 7\nu^9\), \qquad
F_2(z)=\eta(7z)^2\eta(z)\in S_\frac32\(7, \nu^{15}\).
\end{equation}
Arguing as in the first example, we find that 
  each lift $\Sh_t(F_i)$ is a constant multiple of $f$, and that for  $p\geq 3$  we have
\begin{equation}\label{eq:ex2}
T_{p^2}F_i=\pmfrac{-4}pa(p)F_i.
\end{equation}

\subsection*{Example 3}  Let $f\in S_2^{\new 2}(26, -1)$ be
\cite[\href{https://www.lmfdb.org/ModularForm/GL2/Q/holomorphic/26/2/a/b/}{newform 26.2.a.b}]{lmfdb}; we have 
\begin{equation}
f=\sum a(n)q^n=q + q^2 - 3q^3 + q^4 - q^5 - 3q^6 + q^7+\cdots.
\end{equation}
Then the conclusions of Example 2 hold with 
\begin{equation}
F_1(z)=\eta(13z)\eta^2(z)+\tfrac{13}7\eta(13z)^3\in S_\frac32\(13, \ptfrac\bullet {13}\nu^{15}\), \ 
F_2(z)=7\eta(13z)^2\eta(z)+\eta^3(z)\in S_\frac32\(13, \nu^{3}\).
\end{equation}

\subsection*{Example 4}  Finally,  let $f\in S_2^{\new 2, 3}(66, -1, -1)$ be
\cite[\href{https://www.lmfdb.org/ModularForm/GL2/Q/holomorphic/66/2/a/c/}{newform 66.2.a.c}]{lmfdb}; we have
\begin{equation}
f=\sum a(n)q^n=q + q^2 + q^3 + q^4 - 4q^5 + q^6-2q^7+\cdots.
\end{equation}
We find that the two forms
\begin{equation}
F_1(z)=\eta(11z)\eta^2(z)\in S_\frac32\(11, \ptfrac\bullet {11}\nu^{13}\), \qquad
F_2(z)=\eta(11z)^2\eta(z)\in S_\frac32\(11, \nu^{23}\)
\end{equation}
each lift to $f$ and satisfy the relationship \eqref{eq:ex1}.

We note  in each of the last three examples that the modular forms $F_1$ and $F_2$ are (up to a constant multiple) interchanged by the Fricke involution.

\section{Background and preliminaries}\label{sec:background}

If $f$ is a function on the upper half-plane $\H$,  $k\in \frac12\Z$,  and $\gamma=\pmatrix abcd\in \GL_2^+(\R)$, 
we define
\begin{equation}
\(f\|_k\gamma\)(z)=(\det\gamma)^\frac k2(cz+d)^{-k}f(\gamma z)
\end{equation} and 
\begin{equation}\label{eq:fslashstar}
\(f\|^*_k\gamma\)(z)=(\det\gamma)^\frac k2\bar{(cz+d)^{-k}}f(\gamma z).
\end{equation}
If $N\in \N$ and $\omega$ is a multiplier on $\Gamma_0(N)$, then we denote by 
$\mathcal M_k(N, \omega)$ the $\C$-vector space of functions on $\H$  which  satisfy the transformation law
\begin{equation}
f\|_k\gamma=\omega(\gamma)f \quad \text{ for all } \gamma \in \Gamma_0(N).
\end{equation}
We denote by  $M_k(N, \omega)$ and  $S_k(N, \omega)$ the subspaces of holomorphic and cuspidal modular  forms, respectively.

If $f$ is a function on $\H$ and $m\in \N$ we define $f\sl V_m(z)=f(mz)$; in other words 
\begin{equation}\label{eq:vmdef}
 f \sl V_m=m^{-\frac{k}2} f\|_k\pMatrix m001.
 \end{equation}
We also define 
\begin{equation}\label{eq:umdef}
   f \sl U_m=m^{\frac k2-1} \sum_{v\spmod m} f\|_k\pMatrix 1v0m.
   \end{equation}
If $f$ is holomorphic and has period $1$, so that $f(z)=\sum a(n)q^n$, then we have
\begin{equation}
  f\sl U_m=\sum a(mn) q^n.
\end{equation}

\subsection{Integral weight modular forms}\label{sec:intweight}

Suppose that  $k\in \Z$, that $N\in \N$, and that $\chi$ is a Dirichlet character modulo $N$.    Define
\begin{equation}\label{eq:fricke_def}
H_N =\pMatrix 0{-1}N0.
\end{equation}
The Fricke involution $f\mapsto f\|_kH_N$ takes  $\mathcal{M}_k(N, \chi)$ to   $\mathcal{M}_k(N, \bar\chi)$.  For primes $p$, the map $f\mapsto f\|V_p$ takes   $\mathcal{M}_k(N, \chi)$ to   $\mathcal{M}_k(Np, \chi)$.  If $p\nmid N$ then the map 
$f\mapsto f\|U_p$ takes   $\mathcal{M}_k(N, \chi)$ to   $\mathcal{M}_k(Np, \chi)$, while if $p\mid N$ then the map preserves $\mathcal{M}_k(N, \chi)$.

 For each prime $p$ we have the Hecke operator $T_p: S_k(N, \chi)\to S_k(N, \chi)$ defined by 
\begin{equation}\label{eq:inthecke}
T_p=U_p+\chi(p)p^{k-1}V_p.
\end{equation}
If $p\mid N$ and $(p, N/p)=1$, then    the Atkin-Lehner matrix $W^N_p$ is any integral matrix with 
\begin{equation}\label{eq:atkinlehner}
W^N_p =\pMatrix{p\alpha }\delta{N\beta}{p }, \ \ \ \ \alpha,\beta,\delta \in \Z, \ \ \ \ \det(W^N_p)=p.
\end{equation}
If $\chi$ is defined modulo $N/p$ 
then the operator $\|_kW^N_p$ preserves the space $\mathcal{M}_k(N, \chi)$, and on this space is independent of the particular choice of matrix \cite[Lemma 2]{Li:1975}.  Note that we may  take $\delta=1$ in \eqref{eq:atkinlehner}.
Since scalar matrices act trivially under $\|_k$, it will sometimes be convenient to work with the scaled matrices
\begin{equation}\label{eq:fricke_def_1}
H_N=\pMatrix 0{-1/\sqrt{N}}{\sqrt N}0,
\end{equation}
\begin{equation}
W^N_p=\pMatrix{\sqrt p \, \alpha}{\delta/\sqrt p}{N\beta/\sqrt p}{\sqrt p }.
\end{equation}
All of the above statements  remain true with $\mathcal{M}_k$ replaced by $M_k$ or $S_k$.

If $p\mid N$ and $\chi$ is defined modulo $N/p$, then we denote by $S_k^{\new p}(N, \chi)$ the orthogonal complement (with respect to the Peterson inner product) of the subspace generated by 
$S_k\(N/p, \chi\)$ and $S_k\(N/p, \chi\)\|V_p$.
By \cite[Lemma 6]{Li:1975} and an argument as in the proof of \cite[Theorem~4]{Li:1975} we have 
\begin{equation}\label{eq:traceequiv}
f\in S_k^{\new p}(N, \chi) \iff 
\Tr^N_{N/p}f=\Tr^{N}_{N/p}\(f\sl_kH_{N}\)=0,
\end{equation}
where 
\begin{equation}
    \Tr^{N}_{N/p}: S_k\(N, \chi\)\to S_k\(N/p, \chi\)
\end{equation}
is the trace map defined by 
\begin{equation}
\Tr^N_{N/p}f=\sum f\|_kR_i
\end{equation}
where the $R_i\in \Gamma_1(N/p)$ are right coset representatives of $\Gamma_0(N)$ in $\Gamma_0(N/p)$. 
We record a standard lemma for convenience.
\begin{lemma}\label{lem:trace}
    Suppose that $p\mid N$ is a prime with $(p, N/p)=1$, that $\chi$ is a Dirichlet character modulo $N/p$, and that $f\in S_k(N, \chi)$.  Then 
    \begin{align*}\label{eq:trace_def}
\Tr^N_{N/p}f&= f+\bar\chi(p)p^{1-\frac k2}f\sl _k W_p^N\|U_p,
\\ \Tr^N_{N/p}\(f\sl_k H_N\)&= f\sl_kH_N+\chi(p)p^{1-\frac k2}f\sl_kH_{N}\sl_kW_p^N\|U_p.
\end{align*}
\end{lemma}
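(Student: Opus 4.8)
The plan is to establish the first identity directly and then obtain the second by applying it to the Fricke transform $f\sl_k H_N$. Since the Fricke involution carries $S_k(N,\chi)$ to $S_k(N,\bar\chi)$ and the Atkin--Lehner operator $\sl_k W_p^N$ is independent of the matrix choice (as $\bar\chi$ is again defined modulo $N/p$), substituting $g=f\sl_k H_N\in S_k(N,\bar\chi)$ into the first identity and using $(f\sl_k H_N)\sl_k W_p^N=f\sl_k H_N\sl_k W_p^N$ together with $\overline{\bar\chi}(p)=\chi(p)$ produces the second formula verbatim. Thus the entire content lies in the first identity.

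For the first identity, write $M=N/p$; the hypothesis $(p,N/p)=1$ gives $p\nmid M$ and $[\Gamma_0(M):\Gamma_0(N)]=p+1$. I would produce explicit right coset representatives. Taking $W_p^N=\pMatrix{p\alpha}{\delta}{N\beta}{p}$ with $\det W_p^N=p$, a direct computation gives $W_p^N\pmatrix{1}{v}{0}{p}=p\,\gamma_v$, where $\gamma_v=\pMatrix{\alpha}{\alpha v+\delta}{M\beta}{M\beta v+p}$. The relation $p\alpha-M\beta\delta=1$ (equivalent to $\det W_p^N=p$) shows $\gamma_v\in\SL_2(\Z)$, and its lower-left entry $M\beta$ puts $\gamma_v\in\Gamma_0(M)$; the same relation forces $p\nmid\beta$. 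A short computation with the lower-left entries of $\gamma_v\gamma_{v'}^{-1}$ and of $\gamma_v$ then shows that $I,\gamma_0,\dots,\gamma_{p-1}$ lie in pairwise distinct cosets of $\Gamma_0(N)$, and since there are $p+1$ of them they are a complete set of representatives. Because scalar matrices act trivially under $\sl_k$ and the slash is a right action of $\GL_2^+(\R)$, the operator computation collapses to $f\sl_k W_p^N\|U_p=p^{k/2-1}\sum_{v=0}^{p-1}f\sl_k\gamma_v$, so that $\bar\chi(p)\,p^{1-k/2}\,f\sl_k W_p^N\|U_p=\bar\chi(p)\sum_v f\sl_k\gamma_v$.

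The one point that needs care — and the place where the factor $\bar\chi(p)$ actually originates — is matching these representatives with the $\Gamma_1(M)$-representatives $R_i$ used to define $\Tr^N_{N/p}$. The identity coset already contains $I\in\Gamma_1(M)$ and contributes $f$. For each $v$ the coset of $\gamma_v$ has a representative $R_i=\sigma_v\gamma_v$ with $\sigma_v\in\Gamma_0(N)$; reducing modulo $M$ and imposing $R_i\in\Gamma_1(M)$ forces the lower-right entry $d_v$ of $\sigma_v$ to satisfy $d_v\,p\equiv1\pmod M$, whence $f\sl_k R_i=\chi(d_v)\,f\sl_k\gamma_v=\bar\chi(p)\,f\sl_k\gamma_v$ uniformly in $v$. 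Summing over the representatives gives $\Tr^N_{N/p}f=f+\bar\chi(p)\sum_v f\sl_k\gamma_v=f+\bar\chi(p)\,p^{1-k/2}\,f\sl_k W_p^N\|U_p$, as claimed. I expect this character bookkeeping to be the only genuinely delicate step; the coset enumeration and the slash computation are routine.
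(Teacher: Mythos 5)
Your proof is correct and follows essentially the same route as the paper's: you build the cosets from $\tfrac1p W_p^N\pmatrix 1v0p$, left-translate them into $\Gamma_1(N/p)$ by elements of $\Gamma_0(N)$ whose lower-right entry is $\equiv p^{-1}\pmod{N/p}$ (producing the factor $\bar\chi(p)$), identify the sum with $U_p$ via \eqref{eq:umdef}, and deduce the second identity by applying the first to $f\sl_k H_N\in S_k(N,\bar\chi)$. The only differences are cosmetic — the paper fixes $\delta=1$ and uses a single matrix $\gamma$ with $a\equiv p\pmod{N/p}$ to adjust all cosets at once, while you keep $\delta$ general, verify coset distinctness explicitly, and use one $\sigma_v$ per coset.
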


\begin{proof}  The second assertion follows from the first (recall that $f\big|_kH_N\in S_k(N, \bar\chi)).$ To prove the first, 
write $W_p^N=\pmatrix {p\alpha}1{N\beta}p$. 
The identity matrix together with the matrices
\begin{equation} S_j:=\tfrac1{p}W_p^N\pmatrix 1j0p=\pMatrix \alpha {1+j\alpha}{\frac{N\beta}p}{\frac{jN\beta}p+p}, \ \ \ 0\leq j< p
\end{equation}
are a set of right coset representatives for $\Gamma_0(N)$ in $\Gamma_0(N/p)$.
Choose a matrix $\gamma=\pmatrix a b c d\in \Gamma_0(N)$ with $a\equiv p\pmod{N/p}$.
Then the identity matrix together with the matrices  $\gamma S_j$, $0\leq j<p$ form a set of representatives $R_i$ of the required form.  The lemma follows from \eqref{eq:umdef}.
\end{proof}
\begin{corollary}\label{cor:upeigen}
     Suppose that $p\mid N$ is a prime with $(p, N/p)=1$, that $\chi$ is a Dirichlet character modulo $N/p$, and that $f\in S_k^{\new p}(N, \chi, \ep_p)$ (where $\ep_p$ denotes the $W_{p}^N$ eigenvalue).  Then 
    \begin{equation}
        f\sl U_p=-\ep_pp^{\frac k2-1} f.
    \end{equation}
    In particular if $f=\sum a(n)q^n$ is a newform then $a(p)=-\ep_pp^{\frac k2-1}$.
\end{corollary}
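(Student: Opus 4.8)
The plan is to read off the result from the characterization of the $p$-new subspace in \eqref{eq:traceequiv} together with the explicit trace formula of Lemma~\ref{lem:trace}. Since $f\in S_k^{\new p}(N,\chi,\ep_p)$ is new at $p$, \eqref{eq:traceequiv} gives $\Tr^N_{N/p}f=0$, so the first identity of Lemma~\ref{lem:trace} becomes
\begin{equation*}
f=-\bar\chi(p)\,p^{1-\frac k2}\,f\sl_k W_p^N\|U_p.
\end{equation*}
Because $f$ has $W_p^N$-eigenvalue $\ep_p$, I would replace $f\sl_k W_p^N$ by $\ep_p f$; as $\ep_p$ is a scalar it commutes past $\|U_p$, so this reads $f=-\bar\chi(p)\,\ep_p\,p^{1-\frac k2}\,f\|U_p$. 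Solving for $f\|U_p$ and using $(p,N/p)=1$ (so $\chi(p)$ is a root of unity and $\bar\chi(p)=\chi(p)^{-1}$) yields
\begin{equation*}
f\|U_p=-\chi(p)\,\ep_p^{-1}\,p^{\frac k2-1}\,f.
\end{equation*}

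It then remains only to identify $\chi(p)\ep_p^{-1}$ with $\ep_p$, that is, to show $\ep_p^2=\chi(p)$. This is the one non-formal input, and I expect it to be the main obstacle: it amounts to controlling the square of the Atkin--Lehner operator. Writing $W_p^N=\pmatrix{p\alpha}1{N\beta}p$ with $p\alpha-(N/p)\beta=1$, a direct computation shows $(W_p^N)^2=p\gamma$ for an integral $\gamma\in\Gamma_0(N)$ whose lower-right entry is congruent to $p$ modulo $N/p$. Since scalar matrices act trivially under $\sl_k$ and $\chi$ is defined modulo $N/p$, this gives $f\sl_k(W_p^N)^2=\chi(p)f$; comparing with $f\sl_k(W_p^N)^2=\ep_p^2 f$ forces $\ep_p^2=\chi(p)$. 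Substituting this into the previous display produces $f\|U_p=-\ep_p\,p^{\frac k2-1}f$, which is the first claim.

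Finally, for the ``in particular'' statement I would take $f=\sum a(n)q^n$ to be a normalized newform, so that $a(1)=1$ and $f\|U_p=\sum a(pn)q^n$. Comparing the coefficients of $q$ on the two sides of $f\|U_p=-\ep_p\,p^{\frac k2-1}f$ gives $a(p)=-\ep_p\,p^{\frac k2-1}$. Everything apart from the relation $\ep_p^2=\chi(p)$ is a purely formal manipulation of the operators $U_p$, $W_p^N$, and $\Tr^N_{N/p}$ already introduced, so the verification that $(W_p^N)^2=p\gamma$ with the asserted congruence on the entries of $\gamma$ is the only step requiring genuine care.
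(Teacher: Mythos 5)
Your proof is correct and follows essentially the same route as the paper's: both combine the trace criterion \eqref{eq:traceequiv} with the explicit formula of Lemma~\ref{lem:trace}, substitute the eigenvalue $f\sl_k W_p^N=\ep_p f$, and conclude via $\ep_p^2=\chi(p)$. The only difference is that the paper cites \cite[Prop.~13.3.14]{cohen} for $\ep_p^2=\chi(p)$, whereas you verify it directly by computing $(W_p^N)^2=p\gamma$ with $\gamma\in\Gamma_0(N)$ and lower-right entry $\equiv p\pmod{N/p}$ --- a computation which is correct and makes the argument self-contained.
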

\begin{proof}
From the lemma and \eqref{eq:traceequiv} we have 
$f\sl U_p=-\bar\ep_p\chi(p)p^{\frac k2-1} f$.
The claim follows since $\ep_p^2=\chi(p)$ (see e.g. \cite[Prop. 13.3.14]{cohen}).
\end{proof}
\subsection{Modular forms for the theta-multiplier}\label{sec:thetamult}
The standard theta function is given by 
 \begin{equation}
  \theta(z) = \sum_{n=-\infty}^\infty q^{n^2}.
  \end{equation}
  This is a modular form of weight $1/2$ on $\Gamma_0(4)$ with  multiplier $\nu_\theta$  defined by 
\begin{equation}
 \theta\|_\frac12\gamma=\nu_\theta(\gamma)\theta,\ \ \ \gamma\in \Gamma_0(4).
  \end{equation}
  If $\gamma=\pmatrix abcd\in \Gamma_0(4)$ then we have 
\begin{equation}\label{eq:nutheta}
\nu_\theta(\gamma)=  \pmfrac{c}{d} \ep_d^{-1},
\end{equation}
where
\begin{equation}
\ep_d = \begin{cases} 
1\ \ &\text{if}\ \ \   d\equiv 1 \pmod{4},\\
i\ \ \ &\text{if}  \ \ \ d \equiv 3 \pmod{4}.
\end{cases}
\end{equation}
For odd values of $d$, $d_1$, $d_2$ we recall the formulas
\begin{alignat}2
\label{eq:oneminusd-epd1d2}
 e\pmfrac{1-d}8&= \pmfrac2d\ep_d
\quad\text{and}\quad
  \ep_{d_1d_2} &= \ep_{d_1}\ep_{d_2}(-1)^{\frac{d_1-1}2\frac{d_2-1}2},
\end{alignat}
where $e(x) = e^{2\pi ix}$.

The spaces of modular forms of half integral weight in the sense of Shimura \cite{shimura} are 
\begin{equation}
M_{\lambda+\frac12}(N, \psi\nu_\theta^{2\lambda+1}),
\end{equation}
where $4\mid N$ and $\psi$ is a Dirichlet character modulo $N$.
On these spaces there are Hecke operators $T_{p^2}^{\rm s}$ for primes $p$. For $F=\sum a(n)q^n\in M_{\lambda+1/2}(N, \psi\nu_\theta^{2\lambda+1})$ we have the explicit description
\begin{equation}\label{eq:heckedefs}
T_{p^2}^{\rm s} F=\sum \(a(p^2n)+\ptfrac{-1}p^\lambda\ptfrac np \psi(p)p^{\lambda-1}a(n)+\psi^2(p)p^{2\lambda-1}a\ptfrac n{p^2}\)q^n.
\end{equation}

We recall the definition of the standard Shimura lift.  Suppose that $F=\sum a(n)q^n\in S_{\lambda+1/2}(N, \psi\nu_\theta^{2\lambda+1})$,
where $\lambda\geq 1$;  if $\lambda=1$ suppose further that $F$ is in the orthogonal complement of the space spanned by single variable theta series.
If $t$ is a positive squarefree integer, Shimura's lift is given by  $\Shl_t(F)=\sum c(n)q^n$, where the coefficients $c(n)$ are given by 
\begin{equation}\label{eq:shimliftold}
\sum_{n=1}^\infty \frac{c(n)}{n^s}=L\(s-\lambda+1, \psi\ptfrac{-1}\bullet^\lambda\ptfrac t\bullet\)\sum_{n=1}^\infty \frac{a(tn^2)}{n^s}.
\end{equation}
After the work of Shimura, Niwa, and Cipra \cite{shimura, niwa, cipra} we have $\Shl_t(F)\in S_{2\lambda}\(N/2, \psi^2\)$.
Moreover, the lift is  equivariant with respect to the Hecke operators $T_p$ and $T_{p^2}^{\rm s}$.

\subsection{Modular forms for the eta-multiplier}
The Dedekind eta function is given by 
 \begin{equation}
  \eta(z)=q^\frac1{24}\prod_{n=1}^\infty(1-q^n).
  \end{equation}
This is a modular form of weight $1/2$ on $\SL_2(\Z)$, and the eta-multiplier $\nu$ is defined by 
\begin{equation}\label{eq:etamult}
\eta\|_\frac12\gamma=\nu(\gamma)\eta, \qquad \gamma\in \SL_2(\Z).
\end{equation}
Note that we have $\nu(\gamma)^{24}=1$ for all $\gamma$.
We have the  explicit formulas \cite[\S 4.1]{knopp} for $c>0$:
\begin{equation}
\begin{aligned} \label{eq:nueta}
  &\nu(\gamma) = 
  \begin{dcases}
    \( \mfrac dc \) \, e\(\mfrac 1{24} \( (a+d)c-bd(c^2-1)-3c \) \),
  & \text{ if $c$ is odd}, \\
    \(\mfrac cd \) \, e\(\mfrac 1{24} \((a+d)c-bd(c^2-1)+3d-3-3cd\)\),
  & \text{ if $c$ is even,}
  \end{dcases}\\
  &\nu(-\gamma)=i\nu(\gamma).
  \end{aligned}
\end{equation}

Let $\psi$ be a Dirichlet character defined modulo $N$, and $r\in \Z$.
We will be concerned with the spaces 
$M_{\lambda+1/2}\(N, \psi \nu^r\)$ where $\lambda\in \Z_{\geq0}$.
When $\psi$ is trivial we omit it from the notation.
We recall  \cite[(2.6)]{ahlgren-beckwith-raum} that
\begin{equation}\label{eq:krcond}
M_{\lambda+\frac12}\(N, \psi \nu^r\)=\{0\}
\quad\text{unless}\quad
 \psi(-1)\equiv r-2\lambda \pmod{4}.
\end{equation}
In particular these spaces are trivial unless $r$ is odd.
This condition may be written in the  form 
\begin{equation}\label{eq:krcond1}
M_{\lambda+\frac12}\(N, \psi \nu^r\)=\{0\}
\quad\text{unless}\quad
\psi(-1)=\pmfrac{-1}r(-1)^\lambda.
\end{equation}

Each  $F\in M_{\lambda+1/2}\(N, \psi \nu^r\)$ has a Fourier expansion of the form
\begin{equation}F(z)=\sum_{n\equiv r\spmod{24}} a(n)q^\frac n{24};
\end{equation}
if $(r, 6)=3$ it will typically be more convenient to represent this expansion in the form
\begin{equation}F(z)=\sum_{n\equiv \frac r3\spmod{8}} a(n)q^\frac n8.
\end{equation}
The next lemma describes a connection between these two multipliers.
\begin{lemma}\label{lem:eta_to_theta}
If $(r, 6)=1$ then 
\begin{equation}\label{eq:etamultv24final}
F \in \mathcal M_{\lambda+\frac12}\(N, \psi\nu^r\)
\implies
F\sl  V_{24} \in \mathcal M_{\lambda+\frac12}\(576N,  \psi\ptfrac{-1}\bullet^{\lambda+\frac{r-1}2} \ptfrac{12}\bullet\nu_{\theta}^{2\lambda+1} \).
\end{equation}
If $(r, 6)=3$ then
\begin{equation}\label{eq:etamultv8final}
F \in \mathcal M_{\lambda+\frac12}\(N, \psi\nu^r\)
\implies
F\sl  V_{8} \in \mathcal M_{\lambda+\frac12}\(64N,  \psi\ptfrac{-1}\bullet^{\lambda+\frac{r-1}2} \nu_{\theta}^{2\lambda+1} \).
\end{equation}
\end{lemma}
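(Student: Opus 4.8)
The plan is to reduce both implications to identifying the multiplier of a single power of $\eta$ twisted by $V_m$. Set $A=\pmatrix{24}{0}{0}{1}$ in the first case and $A=\pmatrix{8}{0}{0}{1}$ in the second, so that by \eqref{eq:vmdef} we have $F\sl V_m=m^{-k/2}F\|_kA$ with $k=\lambda+\tfrac12$. For $\gamma=\pmatrix abcd\in\Gamma_0(576N)$ (resp.\ $\Gamma_0(64N)$) the divisibility $576N\mid c$ (resp.\ $64N\mid c$) guarantees that $A\gamma A^{-1}=\pmatrix a{24b}{c/24}d$ (resp.\ $\pmatrix a{8b}{c/8}d$) is integral and lies in $\Gamma_0(N)$. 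Hence $F\|_kA\|_k\gamma=\bigl(F\|_k(A\gamma A^{-1})\bigr)\|_kA$, and the transformation law for $F$ gives $(F\sl V_m)\|_k\gamma=(\psi\nu^r)(A\gamma A^{-1})\,(F\sl V_m)$. Membership in $\mathcal M$ is precisely this transformation law (and the holomorphy or cusp conditions, for the $M$ and $S$ variants, transfer automatically since $V_m$ is the substitution $z\mapsto mz$), while the character factor matches at once because $\psi(A\gamma A^{-1})=\psi(d)=\psi(\gamma)$. Everything therefore reduces to computing $\nu^r(A\gamma A^{-1})=\nu(A\gamma A^{-1})^r$.

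Next I would dispose of the base cases $r=1$ and $r=3$, which reflect the classical theta identities $\eta\sl V_{24}=\sum_n\pfrac{12}nq^{n^2}$ and $\eta^3\sl V_8=\sum_n\pfrac{-4}n\,n\,q^{n^2}$. To stay self-contained I would instead substitute $A\gamma A^{-1}$ directly into the explicit formula \eqref{eq:nueta}. Since $576\mid c$ (resp.\ $64\mid c$), the lower-left entry $c/24$ (resp.\ $c/8$) is even, so only the even branch of \eqref{eq:nueta} is needed; moreover the $b$-dependent term there contributes $e\bigl(-bd((c/24)^2-1)\bigr)=1$ and drops out. Using $\det(A\gamma A^{-1})=1$ (so $ad\equiv1$ modulo $24$, resp.\ modulo $8$) to simplify the surviving exponential, and comparing with \eqref{eq:nutheta} through the relation $\pfrac2d\ep_d=e\!\left(\tfrac{1-d}8\right)$ of \eqref{eq:oneminusd-epd1d2}, I expect the clean identities $\nu(A\gamma A^{-1})=\pfrac{12}d\,\nu_\theta(\gamma)$ when $r=1$ and $\nu(A\gamma A^{-1})^3=\nu_\theta(\gamma)^3$ when $r=3$. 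In the second case only the cube is pinned down, but since $3\mid r$ the resulting cube-root-of-unity ambiguity is irrelevant: from $\nu(A\gamma A^{-1})^3=\nu_\theta(\gamma)^3$ one gets $\nu(A\gamma A^{-1})^r=\nu_\theta(\gamma)^r$.

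Finally I would raise to the power $r$ and reconcile exponents. As $r$ is odd, $\pfrac{12}d^r=\pfrac{12}d$, so in the $V_{24}$ case $\nu^r(A\gamma A^{-1})=\pfrac{12}d\,\nu_\theta(\gamma)^r$, while in the $V_8$ case $\nu^r(A\gamma A^{-1})=\nu_\theta(\gamma)^r$. Reading $\nu_\theta^2=\pfrac{-1}\bullet$ off \eqref{eq:nutheta} (since $\ep_d^{-2}=\pfrac{-1}d$), I can write $\nu_\theta^r=\nu_\theta^{2\lambda+1}\pfrac{-1}\bullet^{(r-2\lambda-1)/2}$, and the exponent satisfies $(r-2\lambda-1)/2\equiv\lambda+\tfrac{r-1}2\pmod2$. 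This produces exactly the multipliers in \eqref{eq:etamultv24final} and \eqref{eq:etamultv8final}, with the factor $\pfrac{12}\bullet$ present precisely in the $V_{24}$ case. The step I expect to be the main obstacle is the base-case evaluation of \eqref{eq:nueta}: carrying it out without sign errors requires careful treatment of the extended quadratic symbol at negative entries and of the $\ep_d$ factors, which is exactly the bookkeeping that the reciprocity-type identities in \eqref{eq:oneminusd-epd1d2} are designed to manage. Everything else is formal.
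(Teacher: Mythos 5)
Your proposal is correct and takes essentially the same approach as the paper's proof: conjugate by $\pmatrix{24}{0}{0}{1}$ (resp.\ $\pmatrix{8}{0}{0}{1}$), evaluate the eta multiplier of the conjugated matrix via the explicit formulas \eqref{eq:nueta}, \eqref{eq:nutheta}, \eqref{eq:oneminusd-epd1d2} to get the intermediate multiplier $\psi\pfrac{12}\bullet\nu_\theta^{r}$ (resp.\ $\psi\nu_\theta^{r}$), and then rewrite $\nu_\theta^{r}=\nu_\theta^{2\lambda+1}\pfrac{-1}\bullet^{\lambda+\frac{r-1}{2}}$, which is exactly the paper's identity \eqref{eq:thetamultr}. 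The only imprecision is that in the $V_8$ case the $b$-dependent term is $e\bigl(-bd((c/8)^2-1)/3\bigr)$, which need not vanish before cubing; this is harmless because you only claim and use the cubed identity $\nu^3(A\gamma A^{-1})=\nu_\theta^3(\gamma)$ there, and cubing does kill that term.
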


\begin{proof}
Suppose that $(r, 6)=1$.  After a computation using  \eqref{eq:nueta}, \eqref{eq:nutheta} and  \eqref{eq:oneminusd-epd1d2}, we find that
\begin{equation}\label{eq:etamultv24}
F \in \mathcal M_{\lambda+\frac12}\(N, \psi\nu^r\)
\implies
F\sl  V_{24} \in \mathcal M_{\lambda+\frac12}\(576N,  \psi\ptfrac{12}\bullet \nu_{\theta}^r \).
\end{equation}
From \eqref{eq:krcond}  we see that 
\begin{equation}
\nu_\theta^r=\nu_\theta^{2\lambda+1}\nu_\theta^{\pfrac{-1}r(-1)^\lambda-1}=\nu_\theta^{2\lambda+1}\ptfrac{-1}\bullet^\frac{\pfrac{-1}r(-1)^\lambda-1}2.
\end{equation}
For all $\lambda$ we have
\begin{equation}\label{eq:lambdapar}
\mfrac{\pfrac{-1}r(-1)^\lambda-1}2\equiv \lambda+\mfrac{r-1}2\pmod 2,
\end{equation}
from which 
\begin{equation}\label{eq:thetamultr}
\nu_\theta^r=\nu_\theta^{2\lambda+1}\ptfrac{-1}\bullet^{\lambda+\frac{r-1}2}.
\end{equation}
The lemma follows in the case $(r, 6)=1$.

If $(r, 6)=3$ then a computation analogous to that which establishes \eqref{eq:etamultv24} shows that
\begin{equation}
F \in \mathcal M_{\lambda+\frac12}\(N, \psi\nu^r\)
\implies
F\sl  V_{8} \in \mathcal M_{\lambda+\frac12}\(64N,  \psi \nu_{\theta}^r \).
\end{equation}
The lemma follows from the relationship \eqref{eq:thetamultr}.
\end{proof}
 Hecke operators can be defined on $M_{\lambda+1/2}\(N, \psi\nu^r\)$ using the  definition \eqref{eq:heckedefs} on spaces with the theta multiplier together with \eqref{eq:etamultv24} and \eqref{eq:etamultv8final} (see e.g. \cite[Prop. 11]{Yang} for the case $N=1$).
Suppose that  $(r, 6)=1$, that  $p\geq 5$ is prime, and that  
\begin{equation}
		F(z) = \sum_{n\equiv r\spmod {24}} a(n) q^\frac n{24} \in  M_{\lambda+\frac12}(N,\psi\nu^r).
	\end{equation}
Then the action of $T_{p^2}$ is given by
\begin{equation}\label{eq:heckedef24}
T_{p^2}F=\sum_{n\equiv r(24)} \(a(p^2n)+\pmfrac{-1}p^\frac{r-1}2\pmfrac{12n}p\psi(p)p^{\lambda-1}a(n)+\psi^2(p)p^{2\lambda-1}a\pmfrac n{p^2}\)q^\frac n{24}.
\end{equation}
When   $(r, 6)=3$, $p\geq 3$ is prime, and   
\begin{equation}
		F(z) = \sum_{n\equiv \frac r3\spmod {8}} a(n) q^\frac n{8} \in  M_{\lambda+\frac12}(N,\psi\nu^r),
	\end{equation}
then the action is given by 
\begin{equation}\label{eq:heckedef8}
T_{p^2}F=\sum_{n\equiv \frac r3\spmod {8}} \(a(p^2n)+\pmfrac{-1}p^\frac{r-1}2\pmfrac{n}p\psi(p)p^{\lambda-1}a(n)+\psi^2(p)p^{2\lambda-1}a\pmfrac n{p^2}\)q^\frac n{8}.
\end{equation}
These operators preserve the spaces of cusp forms.
\begin{remark} If $(r, 6)=3$,
     then the definitions \eqref{eq:heckedef24} and \eqref{eq:heckedef8} disagree only when $p=3$.
\end{remark}

\begin{lemma} \label{lem:nu-v-t}
	Suppose that $r$ and $t$ are odd, and that $3\nmid t$ if $3\nmid r$.  
	If $\pmatrix abcd\in \Gamma_0(t)$ then
	\begin{equation}
		\nu^r\left(\pMatrix{a}{tb}{c/t}{d}\right) = \pfrac dt \nu^{rt} \left(\pMatrix{a}{b}{c}{d}\right).
	\end{equation}
\end{lemma}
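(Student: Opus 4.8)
The plan is to prove the identity directly from the explicit formula \eqref{eq:nueta}. First I would record that $\gamma' := \pMatrix{a}{tb}{c/t}{d}$ really lies in $\SL_2(\Z)$: since $\pMatrix abcd\in\Gamma_0(t)$ we have $t\mid c$, so $c/t\in\Z$ and $\det\gamma' = ad-bc = 1$. Reducing $ad-bc=1$ modulo $t$ and using $t\mid c$ gives $ad\equiv 1\spmod t$, so $\gcd(d,t)=1$ and the symbol $\pfrac dt$ on the right is a genuine sign. Because \eqref{eq:nueta} is stated for positive lower-left entry, I would first reduce to the case $c>0$: when $c=0$ the matrix $\gamma$ is $\pm$ a translation and the claim is immediate from $\eta(z+1)=e(1/24)\eta(z)$, while the case $c<0$ follows from the positive case applied to $-\gamma$ and $-\gamma'$ together with $\nu(-M)=i\nu(M)$ and a short parity computation using that $r$ and $t$ are odd.

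With $c>0$ I would split according to the parity of $c$; since $t$ is odd, $c$ and $c/t$ have the same parity, so $\gamma$ and $\gamma'$ always fall in the same branch of \eqref{eq:nueta}. Both sides being roots of unity, the argument separates into a Jacobi-symbol part and a phase part. Because $r$ and $rt$ are odd, the symbol $\pfrac dc^{rt}$ (resp. $\pfrac cd^{rt}$) collapses to $\pfrac dc$ (resp. $\pfrac cd$). In the odd-$c$ branch, multiplicativity $\pfrac dc=\pfrac dt\pfrac{d}{c/t}$ together with $\pfrac dt^2=1$ produces exactly the factor $\pfrac dt$ demanded on the right and cancels the symbol $\pfrac{d}{c/t}$ coming from $\nu(\gamma')$. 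In the even-$c$ branch the analogous manipulation $\pfrac cd=\pfrac td\pfrac{c/t}d$ plus quadratic reciprocity for $\pfrac td\pfrac dt$ again produces $\pfrac dt$, but at the cost of an extra sign $(-1)^{\frac{t-1}2\frac{d-1}2}$ which must be absorbed by the phase.

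The heart of the matter is the phase comparison. Writing $c=tc'$, I would substitute the entries of $\gamma'$ and $\gamma$ into the exponential in \eqref{eq:nueta} and simplify the combination $r\,(\text{phase of }\gamma')-rt\,(\text{phase of }\gamma)$. After collecting terms this reduces, in the odd case, to $r(t^2-1)c'\,[\,3-a-d+bcd\,]$, and in the even case to $r(t^2-1)c'\,[\,3d-a-d+bcd\,]+3r(1-t)(d-1)$. The required divisibility facts are that $t^2\equiv1\spmod 8$ for odd $t$, which controls the $2$-part, and that $3\mid r(t^2-1)$, which controls the $3$-part; the latter is precisely the hypothesis ``$3\nmid t$ if $3\nmid r$'', since $3\mid t^2-1\iff 3\nmid t$. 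Together these give $r(t^2-1)c'\,[\cdots]\equiv0\spmod{24}$, so in the odd case the phase difference is an integer and $e$ of it equals $1$. In the even case the leftover term $3r(1-t)(d-1)$ differs from $3(t-1)(d-1)$ by $-3(t-1)(d-1)(r+1)$, which is divisible by $24$ because $(t-1),(d-1),(r+1)$ are all even; hence the phase contributes exactly the reciprocity sign $(-1)^{\frac{t-1}2\frac{d-1}2}=e\big(\tfrac12\cdot\tfrac{t-1}2\cdot\tfrac{d-1}2\big)$ needed to cancel it.

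The main obstacle I anticipate is the even-$c$ bookkeeping: the reciprocity sign $(-1)^{\frac{t-1}2\frac{d-1}2}$ and the leftover phase terms $3d-3-3cd$ in \eqref{eq:nueta} must be shown to match modulo $24$, which requires tracking several even factors at once rather than relying on a single clean divisibility.
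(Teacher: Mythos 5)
Your proof is correct and follows essentially the same route as the paper's: reduce to $c>0$ via $\nu(-\gamma)=i\nu(\gamma)$, verify directly from the explicit formulas \eqref{eq:nueta}, and in the even-$c$ branch invoke quadratic reciprocity, which the paper states in the equivalent form $\pfrac td e\left(\frac{(1-t)(d-1)}{8}\right) = \pfrac dt$. Your write-up simply supplies the bookkeeping (the $24\mid r(t^2-1)$ divisibility and the phase-versus-reciprocity-sign cancellation) that the paper leaves to the reader.
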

\begin{proof}
	By the last identity in \eqref{eq:nueta} we may assume that $c>0$.  The lemma can then be checked using the explicit formulas in \eqref{eq:nueta}.  In the case when $c$ is even the  computation relies on    quadratic reciprocity in the form
	\begin{equation}
		\pfrac td e\left(\frac{(1-t)(d-1)}{8}\right) = \pfrac dt.
	\end{equation}
\end{proof}

\begin{corollary} \label{cor:v-op}
	Suppose that $r$ and $t$ are odd, and that $3\nmid t$ if $3\nmid r$.   If $F\in \mathcal{M}_{\lambda+1/2}(N,\psi \nu^r)$ then $F\sl V_t \in \mathcal{M}_{\lambda+1/2}(Nt,\psi \ptfrac\bullet t \nu^{rt})$.
\end{corollary}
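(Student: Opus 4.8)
The plan is to reduce the claim to a single application of Lemma~\ref{lem:nu-v-t} by means of an elementary matrix factorization. Set $k=\lambda+\tfrac12$ and recall from \eqref{eq:vmdef} that $F\sl V_t=t^{-k/2}\,F\|_k\pMatrix t001$. Since $\mathcal M_{k}\bigl(Nt,\psi\pfrac\bullet t\nu^{rt}\bigr)$ is defined purely by a transformation law, it suffices to fix $\gamma=\pMatrix abcd\in\Gamma_0(Nt)$ and to verify that $(F\sl V_t)\|_k\gamma=\psi(d)\pfrac dt\nu^{rt}(\gamma)\,(F\sl V_t)$.

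First I would record the factorization
\[
\pMatrix t001\gamma=\pMatrix{ta}{tb}{c}{d}=\gamma'\pMatrix t001,\qquad \gamma':=\pMatrix{a}{tb}{c/t}{d}.
\]
Because $\gamma\in\Gamma_0(Nt)$ we have $Nt\mid c$, so $c/t$ is an integer divisible by $N$, and $\det\gamma'=ad-bc=1$; hence $\gamma'\in\Gamma_0(N)$. Applying the composition law for the slash operator along this factorization gives
\[
(F\sl V_t)\|_k\gamma=t^{-k/2}\,\bigl(F\|_k\gamma'\bigr)\|_k\pMatrix t001 .
\]
Since $F\in\mathcal M_k(N,\psi\nu^r)$ and $\gamma'$ has lower-right entry $d$, the transformation law of $F$ yields $F\|_k\gamma'=\psi(d)\,\nu^r(\gamma')\,F$.

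It then remains only to rewrite $\nu^r(\gamma')$. As $\gamma\in\Gamma_0(Nt)\subseteq\Gamma_0(t)$ and the hypotheses ($r,t$ odd, and $3\nmid t$ when $3\nmid r$) are precisely those of Lemma~\ref{lem:nu-v-t}, that lemma gives $\nu^r(\gamma')=\pfrac dt\,\nu^{rt}(\gamma)$. Substituting this and using $F\sl V_t=t^{-k/2}F\|_k\pMatrix t001$ once more collapses the expression to $\psi(d)\pfrac dt\nu^{rt}(\gamma)\,(F\sl V_t)$, which is the asserted law. I expect the only point needing care to be the middle display: verifying that the half-integral-weight slash genuinely composes across the product $\pMatrix t001\gamma=\gamma'\pMatrix t001$ without an extraneous root of unity arising from the branch of $(cz+d)^{-k}$. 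This consistency is exactly what the explicit formulas \eqref{eq:nueta}—and hence Lemma~\ref{lem:nu-v-t}, which is proved from them via quadratic reciprocity—are designed to absorb, so no computation beyond tracking the branch conventions in the definition of $\|_k$ should be required.
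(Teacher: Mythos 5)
Your proof is correct and is essentially the paper's own route: the corollary is stated as an immediate consequence of Lemma~\ref{lem:nu-v-t}, obtained exactly via the factorization $\pmatrix t001\gamma=\gamma'\pmatrix t001$ with $\gamma'=\pmatrix a{tb}{c/t}d\in\Gamma_0(N)$ and the lemma's identity $\nu^r(\gamma')=\pfrac dt\nu^{rt}(\gamma)$. The branch-consistency worry you flag at the end is in fact vacuous here: the automorphy factor of $\pmatrix t001$ is identically $1$, and $(c/t)(tz)+d=cz+d$ exactly, so the two factors $(cz+d)^{\pm k}$ cancel literally and no extraneous root of unity can arise.
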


Finally we record some technical results which will be important later.
As in \cite[\S 3.4]{serre-stark} we define an involution which acts on functions on $\H$ for $\lambda\in \Z$:
\begin{equation}\label{eq:fricke_half}
( F\sl \mathcal{W}_{N, {\lambda+\frac12}})(z):=N^{-\frac\lambda2-\frac14}(-iz)^{-\lambda-\frac12}F(-1/Nz).
\end{equation}
If $\gamma=\pmatrix abcd\in \Gamma_0(N)$ define 
\begin{equation}
    \gamma'=\pmatrix d{-c/N}{-bN}a=H_N\gamma H_N^{-1},
\end{equation}
and define $\ep_\gamma\in \{\pm 1\}$ as follows: $\ep_\gamma=1$  if and only if any of the following  is satisfied:
\begin{enumerate}
    \item $a\geq 0$.
    \item $a<0$ and $bc<0$.
    \item $b=0$, $c\geq 0$, and $a=d=-1$.
    \item $c=0$, $b\leq 0$, and $a=d=-1$.
\end{enumerate}

\begin{lemma} \label{lem:nu-w-N}
	If  $\gamma\in \Gamma_0(N)$ and $F$ is a function on $\H$ then 
 \begin{equation}
     F\sl \mathcal{W}_{N, \lambda+\frac12}\sl_{\lambda+\frac12}\gamma=\ep_\gamma F\sl_{\lambda+\frac12}\gamma'\sl \mathcal{W}_{N, \lambda+\frac12}.
 \end{equation}
\end{lemma}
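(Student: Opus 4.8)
The plan is to expand both sides of the asserted identity explicitly, reduce to a single identity between automorphy factors, and then fix the sign by a continuity argument followed by a short case analysis. Write $k=\lambda+\tfrac12$ and $\gamma=\pmatrix abcd$. Applying the definition \eqref{eq:fricke_half} of $\mathcal{W}_{N,k}$ and then $\sl_k\gamma$, and using $\det\gamma=1$, a direct computation gives
\[
\big(F\sl\mathcal{W}_{N,k}\sl_k\gamma\big)(z)=N^{-k/2}(cz+d)^{-k}\big(-i\gamma z\big)^{-k}F\!\left(-\tfrac{cz+d}{N(az+b)}\right).
\]
On the other side, applying $\sl_k\gamma'$ with $\gamma'=\pmatrix d{-c/N}{-bN}a$ and then $\mathcal{W}_{N,k}$, and simplifying the nested M\"obius transformations (the matrix identity $H_N\gamma=\gamma'H_N$ is what forces the two inner arguments to coincide), gives
\[
\big(F\sl_k\gamma'\sl\mathcal{W}_{N,k}\big)(z)=N^{-k/2}(-iz)^{-k}\big(\tfrac{az+b}{z}\big)^{-k}F\!\left(-\tfrac{cz+d}{N(az+b)}\right).
\]
Thus the $F$-values and the factor $N^{-k/2}$ agree, and the lemma reduces to the identity $(cz+d)^{-k}(-i\gamma z)^{-k}=\ep_\gamma\,(-iz)^{-k}\big(\tfrac{az+b}{z}\big)^{-k}$.

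Next I would recognize both sides as branches of a common quantity. Since $(cz+d)(-i\gamma z)=-i(az+b)=(-iz)\cdot\tfrac{az+b}{z}$, each side is a product of two principal powers whose bases multiply to $-i(az+b)$, and comparing moduli shows that both sides have absolute value $|az+b|^{-k}$. Hence their ratio has modulus $1$ and, because $k\in\Z+\tfrac12$, equals $(-1)^m$, where $m\in\Z$ is fixed by $2\pi m=\big(\arg(cz+d)+\arg(-i\gamma z)\big)-\big(\arg(-iz)+\arg\tfrac{az+b}{z}\big)$; this is a multiple of $2\pi$ since each bracketed sum differs from $\arg\big(-i(az+b)\big)$ by an integer multiple of $2\pi$ (here $\arg$ denotes the principal argument). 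The key simplifying observation is that every base appearing — namely $cz+d$, $-i\gamma z$, $-iz$, and $\tfrac{az+b}{z}$ — lies, for all $z\in\H$, in a half-plane avoiding the cut $(-\infty,0]$: the quantities $cz+d$ and $\tfrac{az+b}{z}$ lie in the open upper or lower half-plane according to the signs of $c$ and $b$ (or are real constants if $c=0$ or $b=0$), while $-i\gamma z$ and $-iz$ lie in the open right half-plane because $\gamma z,z\in\H$. Consequently each principal power is continuous on the connected set $\H$, the ratio is a continuous $\{\pm1\}$-valued function, and therefore a constant $\pm1$.

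It then remains to identify this constant, which can be done by evaluating the principal arguments at a single convenient point (for instance letting $z\to i\infty$, treating $c\ne0$ and $c=0$ separately). The bookkeeping amounts to locating $\arg(cz+d)\in(0,\pi)$ or $(-\pi,0)$, $\arg(-i\gamma z),\arg(-iz)\in(-\tfrac\pi2,\tfrac\pi2)$, and $\arg\tfrac{az+b}{z}\in(0,\pi)$ or $(-\pi,0)$, and reading off the net integer $m$ as a function of the signs of $a,b,c,d$ (subject to $ad-bc=1$). I expect this to reproduce exactly the four clauses defining $\ep_\gamma$: the sign is $+1$ precisely when $a\ge0$, or when $a<0$ but $bc<0$, together with the two degenerate boundary configurations ($c=0$ or $b=0$ with $a=d=-1$) in which a base becomes a negative real constant.

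The main obstacle is this last step: the careful tracking of principal arguments, in particular handling the boundary configurations where $cz+d$ or $\tfrac{az+b}{z}$ degenerates to a real constant (cases $c=0$ or $b=0$) and where the limit $z\to i\infty$ places $-i\gamma z$ on the imaginary axis. These are precisely the situations governing clauses (3) and (4) in the definition of $\ep_\gamma$, and verifying that the half-plane conventions yield the stated signs there is the delicate point; everything else is the modulus computation and the continuity-plus-connectedness argument that reduces the problem to one evaluation per case.
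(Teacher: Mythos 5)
Your reduction is correct and is exactly the paper's: expanding the two compositions (the two arguments of $F$ coincide because $\gamma'=H_N\gamma H_N^{-1}$) and cancelling the unambiguous integer powers, your automorphy-factor identity is equivalent, via $(-iw)^{1/2}=e^{-i\pi/4}w^{1/2}$ for $w\in\H$, to the paper's \eqref{eq:branchpain}. Where you genuinely differ is the mechanism for pinning down the sign. The paper evaluates both sides of \eqref{eq:branchpain} exactly for all $z$, case by case in the signs of $a,b,c$, using the half-plane identities $(z/w)^{1/2}=z^{1/2}/w^{1/2}$ and $(-z)^{1/2}=-iz^{1/2}$; you instead show that the ratio of the two sides is a continuous $\{\pm1\}$-valued function on the connected set $\H$ (your modulus computation, and the observation that each base either stays in a fixed open half-plane or is a real constant, are correct), hence constant, so that it can be found by one evaluation per sign pattern. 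That is a valid and arguably tidier route; note that once constancy is known you may evaluate at a finite point such as $z=i$ instead of chasing limits at $i\infty$, which avoids bases degenerating onto the imaginary axis or to $0$.

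The gap is in the final identification, which is where the entire content of the lemma sits: you carry out no evaluation at all, and the answer you say you expect is wrong. Clauses (3) and (4) defining $\ep_\gamma$ are not merely ``$b=0$ or $c=0$ with $a=d=-1$''; they also require $c\geq 0$ and $b\leq 0$, respectively, and these conditions have content. For example, $\gamma=\pmatrix{-1}{1}{0}{-1}$ lies in $\Gamma_0(N)$ for every $N$, has $c=0$ and $a=d=-1$, yet $\ep_\gamma=-1$: at $z=i$ the left side of \eqref{eq:branchpain} equals $(i-1)^{1/2}(-1)^{1/2}=2^{1/4}e^{7\pi i/8}$, while $z^{1/2}\left(\tfrac{az+b}{z}\right)^{1/2}=i^{1/2}(-1-i)^{1/2}=2^{1/4}e^{-\pi i/8}$, so the ratio is $e^{i\pi}=-1$. (Similarly $b=0$, $a=d=-1$, $c<0$ gives $-1$.) So if you ran your single-point evaluations aiming to confirm the rule you wrote down, these configurations would refuse to match; done carefully, they recover the paper's four clauses, sign conditions included. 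Until those evaluations are actually performed --- the paper at least works one full case --- what you have is a correct reduction plus a sound strategy for the sign, not yet a proof of the lemma.
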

\begin{proof}
	For the proof the following facts are useful:
\begin{equation*}
\pmfrac zw^\frac12=\mfrac{z^{\frac12}}{w^{\frac12}}, \ \ \ \ \  (-z)^\frac12=-iz^\frac 12\ \ \ \text{for $z$, $w\in \H$}.
\end{equation*}
Computing each side and using the first fact, we see that it suffices to prove that 
\begin{equation}\label{eq:branchpain}
    \pmfrac{az+b}{cz+d}^\frac12(cz+d)^\frac12=\ep_\gamma z^\frac12\pmfrac{az+b}z^\frac12.
\end{equation}
This is established with a straightforward but tedious calculation.  For example, if $a<0$, $b>0$ and $c<0$, we find that the left side of \eqref{eq:branchpain} is 
$-i(-az-b)^\frac12$, and that 
\begin{equation*}
    z^\frac12\pmfrac{az+b}z^\frac12=-i z^\frac12\pmfrac{-az-b}z^\frac12=-i(-az-b)^\frac12,
\end{equation*}
from which  $\ep_\gamma=1$. The other cases are similar and we omit the details.
\end{proof}

Let $\nu_N$ be the multiplier on $\Gamma_0(N)$  associated to $\eta(Nz)$.  
\begin{corollary}
    \label{cor:eta_fricke}
    If $\psi$ is a Dirichlet character modulo $N$ and $F\in \mathcal{M}_{\lambda+1/2}\(N, \bar\psi\nu_N\)$ then 
    \[F\sl \mathcal{W}_{N, \lambda+\frac 12}\in \mathcal{M}_{\lambda+\frac12}\(N, \psi\nu\).\]
\end{corollary}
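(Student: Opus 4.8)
The plan is to verify directly that $G:=F\sl\mathcal{W}_{N,\lambda+\frac12}$ satisfies the transformation law $G\sl_{\lambda+\frac12}\gamma=\psi(d)\,\nu(\gamma)\,G$ for every $\gamma=\pmatrix abcd\in\Gamma_0(N)$, where a multiplier $\psi\nu$ is evaluated as $\gamma\mapsto\psi(d)\nu(\gamma)$. Since $\mathcal{M}_{\lambda+1/2}(N,\psi\nu)$ is defined purely by this transformation law, that is all that is needed. The engine of the argument is Lemma~\ref{lem:nu-w-N}, which rewrites
\begin{equation}
G\sl_{\lambda+\frac12}\gamma=\ep_\gamma\,F\sl_{\lambda+\frac12}\gamma'\sl\mathcal{W}_{N,\lambda+\frac12},
\qquad \gamma'=H_N\gamma H_N^{-1}=\pmatrix d{-c/N}{-bN}a.
\end{equation}

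First I would record that $\gamma'\in\Gamma_0(N)$: its lower-left entry $-bN$ is divisible by $N$, all entries are integral because $N\mid c$, and $\det\gamma'=ad-bc=1$. Hence the hypothesis $F\in\mathcal{M}_{\lambda+1/2}(N,\bar\psi\nu_N)$ applies to $\gamma'$ and gives $F\sl_{\lambda+\frac12}\gamma'=\bar\psi(a)\,\nu_N(\gamma')\,F$, the factor $\bar\psi(a)$ arising because the lower-right entry of $\gamma'$ is $a$. Substituting yields
\begin{equation}
G\sl_{\lambda+\frac12}\gamma=\ep_\gamma\,\bar\psi(a)\,\nu_N(\gamma')\,G,
\end{equation}
so it remains to identify the scalar $\ep_\gamma\bar\psi(a)\nu_N(\gamma')$ with $\psi(d)\nu(\gamma)$. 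The character part is immediate: since $ad-bc=1$ and $N\mid c$ we have $ad\equiv 1\pmod N$, so $a$ is coprime to $N$ and $\bar\psi(a)=\psi(a)^{-1}=\psi(d)$, the values of $\psi$ being roots of unity.

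The remaining, genuinely arithmetic, point is the multiplier identity $\ep_\gamma\,\nu_N(\gamma')=\nu(\gamma)$, and this is where I expect the real content to lie. One could grind it out from the explicit formulas \eqref{eq:nueta} for $\nu$ and $\nu_N$ together with the four-case definition of $\ep_\gamma$, but a cleaner route is to apply Lemma~\ref{lem:nu-w-N} to the single function $\eta(Nz)$ in weight $\tfrac12$ (that is, $\lambda=0$). A one-line computation using $\eta(-1/z)=(-iz)^{1/2}\eta(z)$ shows $\eta(Nz)\sl\mathcal{W}_{N,\frac12}=N^{-1/4}\eta(z)$; feeding this into the lemma and evaluating both sides via $\eta\sl_{1/2}\gamma=\nu(\gamma)\eta$ and $\eta(Nz)\sl_{1/2}\gamma'=\nu_N(\gamma')\eta(Nz)$ forces $\nu(\gamma)=\ep_\gamma\nu_N(\gamma')$ after cancelling the common factor $N^{-1/4}\eta$. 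Combining this with the character identity gives $G\sl_{\lambda+\frac12}\gamma=\psi(d)\nu(\gamma)G$, which is exactly the claim. Thus the main obstacle is confined to the multiplier identity, and the bootstrap from $\eta(Nz)$ sidesteps any explicit case analysis with \eqref{eq:nueta}.
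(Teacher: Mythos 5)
Your proof is correct, and it uses the same engine as the paper --- Lemma~\ref{lem:nu-w-N} applied twice, once to an eta function to extract the multiplier identity and once to $F$ itself --- but your choice of bootstrap function is different in a way that genuinely streamlines the argument. The paper applies the lemma to $F=\eta$, using $\eta\sl\mathcal{W}_{N,1/2}=N^{1/4}\eta(Nz)$ to get $\nu_N(\gamma)=\ep_\gamma\nu(\gamma')$; since this identity is oriented the wrong way, the paper must then substitute $\gamma\mapsto\gamma'$ (using $\gamma''=\gamma$) to get $\nu_N(\gamma')=\ep_{\gamma'}\nu(\gamma)$, and finally verify from the four-case definition that $\ep_\gamma=\ep_{\gamma'}$ so that the stray sign cancels. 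By instead applying the lemma to $\eta(Nz)$, with $\eta(Nz)\sl\mathcal{W}_{N,1/2}=N^{-1/4}\eta(z)$ (your computation of this is correct), you obtain $\nu(\gamma)=\ep_\gamma\nu_N(\gamma')$ directly in the orientation needed, so the involution step and the case check $\ep_\gamma=\ep_{\gamma'}$ disappear entirely. The character identity $\bar\psi(a)=\psi(d)$ is handled the same way in both arguments. The only point worth making explicit (which you do) is that Lemma~\ref{lem:nu-w-N} holds for arbitrary functions on $\H$ and that $\gamma'\in\Gamma_0(N)$, so $\eta(Nz)\sl_{1/2}\gamma'=\nu_N(\gamma')\eta(Nz)$ is legitimate; with that in place your version is, if anything, slightly cleaner than the paper's.
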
  
\begin{proof}
    Applying Lemma~\ref{lem:nu-w-N} with $F=\eta$ and using the fact that $\eta\sl\mathcal{W}_{N, 1/2}=N^\frac14\eta(Nz)$ shows that for $\gamma\in \Gamma_0(N)$ we have $\nu_N(\gamma)=\ep_\gamma\nu(\gamma')$.
    Since $\gamma''=\gamma$ it follows that $\nu_N(\gamma')=\ep_{\gamma'}\nu(\gamma)$.
   Applying the lemma with $F\in \mathcal{M}_{\lambda+1/2}\(N, \nu_N\bar\psi\)$ shows that for $\gamma\in \Gamma_0(N)$ we have 
 \begin{equation*}
      F\sl \mathcal{W}_{N, \lambda+\frac12}\sl_{\gamma+\frac12}\gamma=\ep_\gamma \nu_N(\gamma')\bar\psi(\gamma')F\sl \mathcal{W}_{N, \lambda+\frac12}.
 \end{equation*}
 Note that $\bar\psi(\gamma')=\psi(\gamma)$.
 It can be checked from the definition that $\ep_\gamma=\ep_{\gamma'}$, and
 the result follows from these facts.
\end{proof}

\subsection{Comparison of the lifts}\label{subsec:compare}
We prove a proposition describing the relationship between the lifts  $\Sh_t$  and $\Shl_t$.
\begin{proposition}\label{prop:shcompare}
Suppose that $(r, 6)=1$, that $t$ is a squarefree positive integer, and that
$F=\sum a(n)q^n \in  S_{\lambda+1/2}(N,\psi\nu^r)$ and $\Sh_t(F)=\sum b(n)q^n$ are as in \eqref{eq:Fdeff} and \eqref{eq:stdef}.
Let $\Shl_t(F\sl V_{24})=\sum c(n)q^n$ be the usual Shimura lift defined in \eqref{eq:shimliftold}.
Then for all $n$ we have 
\begin{equation}
c(n)= 
\pmfrac{12}nb(n).
\end{equation}

Suppose that $(r, 6)=3$, that $t$ is a squarefree positive integer,
 and that
$F=\sum a(n)q^n \in  S_{\lambda+1/2}(N,\psi\nu^r)$ and $\Sh_t(F)=\sum b(n)q^n$ are as in \eqref{eq:Fdef3} and \eqref{eq:stdef3}.
Let $\Shl_t(F\sl V_{8})=\sum c(n)q^n$.
Then for all $n$ we have 
\begin{equation}
c(n)= 
\pmfrac{-4}nb(n).
\end{equation}

\end{proposition}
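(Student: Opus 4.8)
The plan is to verify the coefficient identity by comparing the Dirichlet series that define the two lifts. Since the assertion $c(n)=\pmfrac{12}nb(n)$ for all $n$ is equivalent to the Dirichlet series identity $\sum c(n)n^{-s}=\sum \pmfrac{12}n b(n)n^{-s}$, and the right-hand side is the twist of $\sum b(n)n^{-s}$ by the completely multiplicative Kronecker symbol $\ptfrac{12}\bullet$, the main tool I would use is that twisting by a completely multiplicative function commutes with Dirichlet convolution: if $\chi_0$ is completely multiplicative then $\chi_0\cdot(f*g)=(\chi_0 f)*(\chi_0 g)$. Applying this to the product in \eqref{eq:stdef}, twisting by $\ptfrac{12}\bullet$ sends the factor $L\(s-\lambda+1,\psi\ptfrac\bullet t\)$ to $L\(s-\lambda+1,\psi\ptfrac\bullet t\ptfrac{12}\bullet\)$ and sends $\sum_{n}\pfrac{12}n a(tn^2)n^{-s}$ to $\sum_{n} \pfrac{12}n^2 a(tn^2)n^{-s}=\sum_{(n,6)=1}a(tn^2)n^{-s}$, since $\pfrac{12}n^2$ equals $1$ when $(n,6)=1$ and $0$ otherwise.

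Next I would compute the classical lift. By Lemma~\ref{lem:eta_to_theta}, $G:=F\sl V_{24}=\sum A(m)q^m$ has character $\psi\ptfrac{-1}\bullet^{\lambda+\frac{r-1}2}\ptfrac{12}\bullet$, so substituting into \eqref{eq:shimliftold} and using $\ptfrac{-1}\bullet^{2\lambda}=1$ gives
\begin{equation*}
\sum_{n=1}^\infty\frac{c(n)}{n^s}=L\(s-\lambda+1,\psi\ptfrac{-1}\bullet^{\frac{r-1}2}\ptfrac{12}\bullet\ptfrac t\bullet\)\sum_{n=1}^\infty\frac{A(tn^2)}{n^s}.
\end{equation*}
Because $F\sl V_{24}(z)=\sum_{n\equiv r(24)}a(n)q^n$, we have $A(m)=a(m)$ for $m\equiv r\spmod{24}$ and $A(m)=0$ otherwise. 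If $t\not\equiv r\spmod{24}$ then $A(tn^2)=0$ for every $n$ and the $a$-sum in \eqref{eq:stdef} also vanishes, so both lifts are $0$ and there is nothing to prove; otherwise $(n,6)=1$ forces $n^2\equiv1\spmod{24}$ and hence $tn^2\equiv r\spmod{24}$, while $(n,6)>1$ forces $tn^2\not\equiv r\spmod{24}$, so $\sum A(tn^2)n^{-s}=\sum_{(n,6)=1}a(tn^2)n^{-s}$. This matches the twisted $a$-sum from the first paragraph.

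It then remains to match the two $L$-function characters as functions on the positive integers, that is, to verify $\psi\ptfrac\bullet t\ptfrac{12}\bullet=\psi\ptfrac{-1}\bullet^{\frac{r-1}2}\ptfrac{12}\bullet\ptfrac t\bullet$. After cancelling $\psi\ptfrac{12}\bullet$ this reduces to the Kronecker reciprocity identity $\ptfrac\bullet t=\ptfrac{-1}\bullet^{\frac{t-1}2}\ptfrac t\bullet$ (equivalently $\ptfrac\bullet t=\ptfrac{t^*}\bullet$ with $t^*=(-1)^{\frac{t-1}2}t$), which holds at every positive integer, combined with the congruence $t\equiv r\spmod 4$ coming from $t\equiv r\spmod{24}$, which yields $\ptfrac{-1}\bullet^{\frac{t-1}2}=\ptfrac{-1}\bullet^{\frac{r-1}2}$. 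This establishes $c(n)=\pmfrac{12}nb(n)$.

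For $(r,6)=3$ I would run the identical argument with $V_{24}$ replaced by $V_8$, the symbol $\ptfrac{12}\bullet$ replaced by $\ptfrac{-4}\bullet$, and Lemma~\ref{lem:eta_to_theta} now furnishing the character $\psi\ptfrac{-1}\bullet^{\lambda+\frac{r-1}2}$ for $F\sl V_8$; here $A(tn^2)$ is supported on odd $n$ and the reduction requires $t\equiv r/3\spmod 8$. The step I expect to be the main obstacle is the final character match, where the parity bookkeeping is most delicate. Both characters now have even modulus (from $4\mid 64N$ and from the factor $\ptfrac{-4}\bullet$), so they vanish at even arguments and it suffices to compare them at odd $n$, where $\ptfrac{-4}n=\ptfrac{-1}n$ and reciprocity gives $\ptfrac nt=\ptfrac{-1}n^{\frac{t-1}2}\ptfrac tn$. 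The resulting requirement is $\tfrac{t+1}2\equiv\tfrac{r-1}2\spmod 2$, i.e.\ $r\equiv t+2\spmod 4$; this holds because $t\equiv r/3\spmod 8$ and $3\equiv-1\spmod 4$ give $r\equiv-r/3\equiv-t\spmod4$, and $-t\equiv t+2\spmod 4$ for odd $t$. Combining these identities yields $c(n)=\ptfrac{-4}nb(n)$, completing the proof.
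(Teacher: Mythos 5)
Your proof is correct and follows essentially the same route as the paper: reduce to $t\equiv r\pmod{24}$ (resp.\ $t\equiv r/3\pmod 8$), compute the Dirichlet series of $\Shl_t(F\sl V_{24})$ (resp.\ $\Shl_t(F\sl V_8)$) via Lemma~\ref{lem:eta_to_theta} and \eqref{eq:shimliftold}, convert $\ptfrac t\bullet$ to $\ptfrac\bullet t$ by quadratic reciprocity, and compare with \eqref{eq:stdef} (resp.\ \eqref{eq:stdef3}) using the support condition on $a(n)$. You simply spell out the twisting and parity bookkeeping that the paper leaves implicit in ``using Lemma~\ref{lem:eta_to_theta} and quadratic reciprocity'' and ``comparing with the definition of $b(n)$.''
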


\begin{proof}  
If $(r, 6)=1$ we may assume that $t\equiv r\pmod{24}$ (otherwise both lifts are zero).  Using Lemma~\ref{lem:eta_to_theta} and quadratic reciprocity we see that the coefficients of $\Shl_t(F\sl V_{24})$ are given by 

\begin{equation}
\sum \frac{c(n)}{n^s}=L\(s-\lambda+1, \psi\ptfrac{12}\bullet\ptfrac \bullet t\)
\sum\frac{a(tn^2)}{n^s}.
\end{equation}
The claim follows from comparing this with the definition of $b(n)$ (recall  that $a(n)=0$ if $(n, 6)\neq 1$).

If $(r, 6)=3$ and $t\equiv  r/3\pmod 8$,  then the coefficients of $\Shl_t(F\sl V_8)$ are given by 
\begin{equation}
\sum \frac{c(n)}{n^s}=L\(s-\lambda+1,\psi\ptfrac{-4}\bullet^\frac{1-r}2\ptfrac t\bullet\)
\sum\frac{a(tn^2)}{n^s}.
\end{equation}
Note that $\pfrac{-4}\bullet^\frac{1-r}2\pfrac t\bullet=\pfrac{-4}\bullet\pfrac\bullet t$.
The proposition follows in the same way.
\end{proof}

\section{Construction and properties of theta kernels}

\label{sec:theta-kernels}
In this section we modify the theta kernels of Niwa \cite{niwa} and Cipra \cite{cipra} to construct a theta function $\vartheta(z,w)$, $z,w\in \H$, with
\begin{equation}
    \vartheta(\cdot,w) \in \mathcal M_{\lambda+\frac 12}(N,\psi\nu) \quad \text{ and } \quad \vartheta(z,\cdot) \in \mathcal M_{2\lambda}(6N,\psi^2),
\end{equation}
and such that $\overline{\vartheta(z,\cdot)}$ is an eigenform of the operators $U_p$, $W_p^{6N}$, and $H_{6N}$.
Here $N$ and $\lambda$ are positive integers and $\psi$ is a Dirichlet character modulo $N$ with $\psi(-1)=(-1)^\lambda$ (see \eqref{eq:krcond1}).
In the next section we will use $\vartheta(z,w)$ to define the Shimura lift  and prove Theorem~\ref{thm:shimura-lift} in the case $r=t=1$.
The remaining cases are deduced by a separate argument.

Let $L$ be a lattice in $\R^n$ of rank $n$ and let $Q$ be an $n\times n$ symmetric matrix with rational entries and signature $(p,q)$, with $p+q=n$.
Define the bilinear form $\langle \cdot, \cdot \rangle:\R^n\times \R^n \to \R$ by $\langle x,y \rangle = x^T Q y$.
For $\sigma=\pmatrix abcd\in \SL_2(\R)$ the Weil representation $\sigma\mapsto r(\sigma)$ defined in \S 1 of \cite{cipra} (see also \S 1 of \cite{niwa}) acts on Schwartz functions $f:\R^n\to \C$ via
\begin{equation}
    \label{eq:weil-rep-def-integral}
	[r(\sigma)f](x) = 
	\begin{dcases}
		|a|^{\frac n2} e\left(\tfrac 12 ab\langle x,x \rangle\right) f(ax) & \text{ if }c=0, \\
		|\det Q|^{-\frac12}|c|^{-\frac n2}\int_{\R^n} e\left(\frac{a\langle x,x \rangle - 2\langle x,y \rangle + d \langle y,y \rangle}{2c}\right) f(y)\, dy & \text{ if }c\neq 0.
	\end{dcases}
\end{equation}
For $\mu\in \R$, a function $f:\R^n\to\C$ is said to have the weight $\mu$ spherical property if
\begin{equation} \label{eq:spherical-prop}
	r(\kappa(\phi))f = \ep(\kappa(\phi))^{p-q} e^{i\mu\phi}f
\end{equation}
where
\begin{equation} \label{eq:weil}
\kappa(\phi) = \pMatrix {\cos \phi}{\sin\phi}{-\sin\phi}{\cos\phi}
\qquad \text{ and } \qquad
	\ep(\sigma) = 
	\begin{cases}
		i^{\frac{\sgn c}2} & \text{ if }c\neq 0, \\
		i^{\frac{1-\sgn d}2} & \text{ if }c=0.
	\end{cases}
\end{equation}
Let $L^\ast = \left\{x\in \R^n : \langle x,y \rangle \in \Z \text{ for all }y\in L\right\}$ denote the dual lattice.
If $h\in L^\ast$ and $f$ is a Schwartz function with the weight $\mu$ spherical property, define
\begin{equation} \label{eq:theta-z-f-h-def}
	\theta(z,f,h) = v^{-\frac \mu2} \sum_{x\in L} [r(\sigma_z) f](x+h),
\end{equation}
where $z=u+iv$ and $\sigma_z\in \SL_2(\R)$ maps to $z$ under the map $\sigma\mapsto \sigma i$, that is,
\begin{equation}
	\sigma_z = \pMatrix{\sqrt v}{u/\sqrt v}{0}{1/\sqrt v}.
\end{equation}
Note that by \eqref{eq:weil-rep-def-integral} we have
\begin{equation} \label{eq:theta-z-f-h-expanded}
	\theta(z,f,h) = v^{\frac n4-\frac \mu2} \sum_{x\in L} e\left(\tfrac 12 u\langle x+h,x+h \rangle\right) f(\sqrt v(x+h)).
\end{equation}
Theorem~1.5 of \cite{cipra} (see also Corollary~0 of \cite{niwa}) gives the following transformation law for $\theta(z,f,h)$. 

\begin{proposition} \label{prop:theta-transform}
	Let $\gamma=\pmatrix abcd\in \SL_2(\Z)$. If $h\in L^*/L$ and $f$ satisfies the weight $\mu$ spherical property \eqref{eq:spherical-prop} then
	\begin{equation}
		\theta(\gamma z,f,h) = i^{\frac{q-p}2(\sgn c)} (cz+d)^\mu \sum_{k\in L^\ast/L} c(h,k)_\gamma \theta(z,f,k),
	\end{equation}
	where $c(h,k)_\gamma = \delta_{k,ah}e(\frac{ab}2\langle h,k \rangle)$ if $c=0$, and otherwise
	\begin{equation} \label{eq:c-h-k-formula}
		c(h,k)_\gamma = |\det Q|^{-\frac12}(\operatorname{vol} L)^{-1}|c|^{-\frac n2}\sum_{r\in L/cL} e\pfrac{a\langle h+r,h+r \rangle - 2\langle k, h+r \rangle + d\langle k,k \rangle}{2c}.
	\end{equation}
\end{proposition}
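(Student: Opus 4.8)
The plan is to reduce the transformation of $\theta(z,f,h)$ to the interplay between the geometric action of $\gamma$ on $\H$ and the algebraic action of $r(\gamma)$ on Schwartz functions, and then to evaluate the resulting lattice sum by Poisson summation. First I would record the basic compatibility between $\sigma_z$ and $\gamma$: writing $\sigma_{\gamma z}\in\SL_2(\R)$ for the standard point--stabilizer representative, there is an angle $\phi=\phi_\gamma(z)$ with $\gamma\sigma_z=\sigma_{\gamma z}\kappa(\phi)$. Since $\kappa(\phi)$ fixes $i$, both sides send $i$ to $\gamma z$, which forces this relation, and a short Iwasawa computation identifies $e^{i\phi}=(c\bar z+d)/|cz+d|$ up to sign. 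I would then invoke the fact that $r$ is a projective representation of $\SL_2(\R)$ whose cocycle is exactly accounted for by the factors $\ep(\sigma)$ of \eqref{eq:weil}; this is the composition law underlying the normalization in \eqref{eq:weil-rep-def-integral}, and is the place where the choices of Niwa and Cipra are used.

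Next I would substitute $z\mapsto\gamma z$ in \eqref{eq:theta-z-f-h-def}. Using $r(\sigma_{\gamma z})=r(\gamma)r(\sigma_z)r(\kappa(-\phi))$ (up to the $\ep$--cocycle) together with the spherical property \eqref{eq:spherical-prop}, namely $r(\kappa(-\phi))f=\ep(\kappa(-\phi))^{p-q}e^{-i\mu\phi}f$, the rotation contributes the scalar $e^{-i\mu\phi}$. Combining this with the normalization identity $\bigl(\operatorname{Im}(\gamma z)\bigr)^{-\mu/2}=v^{-\mu/2}|cz+d|^\mu$ and with the value of $e^{i\phi}$ yields $|cz+d|^\mu e^{-i\mu\phi}=(cz+d)^\mu$, while the accumulated powers of $\ep$ assemble into $i^{\frac{q-p}{2}\sgn c}$. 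This produces exactly the claimed automorphy factor and leaves the lattice sum of $[r(\gamma)g](x+h)$ with $g=r(\sigma_z)f$, which is again Schwartz since the Weil representation preserves Schwartz space.

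The heart of the argument, and the step I expect to be the main obstacle, is the evaluation of $\sum_{x\in L}[r(\gamma)g](x+h)$. When $c=0$ this is immediate from the first branch of \eqref{eq:weil-rep-def-integral} and gives the diagonal cocycle $\delta_{k,ah}\,e(\tfrac{ab}2\langle h,k\rangle)$. When $c\neq 0$ I would insert the integral from the second branch of \eqref{eq:weil-rep-def-integral}, decompose $L=\bigsqcup_{s\in L/cL}(s+cL)$, and apply Poisson summation to the inner sum over $cL$; the Schwartz decay of $g$ justifies interchanging sum and integral and guarantees convergence. Poisson summation converts the oscillatory integral into a sum over the dual lattice $L^\ast$ indexed by $k$, while the finite sum over residues $s\in L/cL$ collapses to the quadratic Gauss sum
\[
c(h,k)_\gamma=|\det Q|^{-\frac12}(\operatorname{vol}L)^{-1}|c|^{-\frac n2}\sum_{s\in L/cL}e\pfrac{a\langle h+s,h+s\rangle-2\langle k,h+s\rangle+d\langle k,k\rangle}{2c},
\]
matching \eqref{eq:c-h-k-formula}; reindexing $k$ modulo $L$ and recognizing the remaining $x$--sum as $\theta(z,f,k)$ then assembles the right-hand side. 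The delicate bookkeeping is tracking the branch of the square root and the powers of $i$ through the Poisson step so that the constant lands precisely as $i^{\frac{q-p}{2}\sgn c}$, with the signature $(p,q)$ entering through the Gaussian integral. \emph{Alternatively}, one can reduce to the generators $S$ and $T$ of $\SL_2(\Z)$, prove the law for each, and verify that $c(h,k)_\gamma$ composes correctly under multiplication; this trades the single Poisson computation for a cocycle verification.
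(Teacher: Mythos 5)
Your outline is sound, but note that the paper never proves this proposition: it is quoted verbatim from Cipra's Theorem~1.5 (see also Niwa's Corollary~0), so there is no in-paper argument to compare against. Your sketch --- extracting the automorphy factor $(cz+d)^\mu$ from the relation $\gamma\sigma_z=\sigma_{\gamma z}\kappa(\phi)$ combined with the spherical property, and then evaluating $\sum_{x\in L}[r(\gamma)g](x+h)$ by Poisson summation over the cosets of $cL$ in $L$ to produce the Gauss-sum coefficients $c(h,k)_\gamma$ --- is essentially a reconstruction of the argument in those references (going back to Shintani), including the acknowledged delicate point of tracking the metaplectic cocycle factors $\ep(\sigma)^{p-q}$ into $i^{\frac{q-p}2\sgn c}$, so it matches the proof the paper implicitly relies on.
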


As in Theorem~1.9 of \cite{cipra}, we can construct functions satisfying the spherical property by taking combinations of products of Hermite polynomials.
For each integer $\mu\geq 0$ let $H_\mu(x)$ denote the Hermite polynomial
\begin{equation} \label{eq:hermite-def}
	H_\mu(x) = (-1)^\mu \exp(\tfrac 12x^2) \frac{d^\mu}{dx^\mu} \exp(-\tfrac 12x^2).
\end{equation}
Then $H_0(x) = 1$, $H_1(x) = x$, $H_2(x) = x^2-1$, etc.
These coincide with the Hermite polynomials $\He_\mu(x)$ in \cite[(18.7.12)]{DLMF}.
By \cite[(18.12.16)]{DLMF} we have the generating function
\begin{equation} \label{eq:hermite-gen}
	\sum_{\mu=0}^\infty \frac{H_{\mu}(x)}{\mu!}z^\mu = e^{xz-\frac 12z^2}.
\end{equation}

The theta kernel $\vartheta(z,w)$ which we use in Section~\ref{sec:lift} to define the Shimura lift is constructed by starting with a lattice of rank $3$ with associated bilinear form $\langle x,y \rangle$ that splits as 
\begin{equation} \label{eq:splitting}
	\langle x,y \rangle = \langle x_2,y_2 \rangle_1 + \langle (x_1,y_1),(x_3,y_3) \rangle_2,
\end{equation}
where $x=(x_1,x_2,x_3)$, $y=(y_1,y_2,y_3)$, and $\langle \cdot,\cdot \rangle_1$ and $\langle \cdot,\cdot \rangle_2$ are bilinear forms on $\R$ and $\R^2$, respectively.
A key property of $\vartheta(z,w)$ is that $\vartheta(z,iy)$ splits into a linear combination of products of the form $\vartheta_{1,\mu}(z)\vartheta_{2,\lambda-\mu}(z,y)$ where $\vartheta_{1,\mu}$ and $\vartheta_{2,\lambda-\mu}$ are theta series associated to $\langle \cdot,\cdot \rangle_1$ and $\langle \cdot,\cdot \rangle_2$.
In the next two subsections, we construct these theta functions, using notation consistent with the splitting \eqref{eq:splitting}.

\subsection{A theta series of rank 1}

We first construct a family of theta series that transform with multiplier system $\nu_N$.
The first element of this family is $\vartheta_{1,0}(z) = 2\eta(Nz)$.
The other elements  differ from this distinguished element only by a choice of Schwartz function $f$. 

Define
\begin{align}
	L_1 &= 12N \Z, \\
	L_1' &= N \Z, \\
	L_1^\ast &= \Z.
\end{align}
Then $L_1^\ast$ is dual to $L_1$ for the bilinear form $\langle x,y \rangle = \frac{xy}{12N}$ associated to $Q=\frac{1}{12N}$.
For $h\in L_1'$ let $h_2=h/N\in \Z$.

We will use the Schwartz function 
\begin{equation}
	f_{1,\mu}(x_2) = H_\mu\left(\sqrt{\tfrac{\pi}{3N}} \, x_2\right) e^{-\tfrac{\pi}{12N} x_2^2}.
\end{equation}
By Theorem~1.9 of \cite{cipra} the function $f_{1,\mu}$ has the spherical property \eqref{eq:spherical-prop} for weight $\mu+1/2$.
Thus it makes sense to form the theta series
\begin{equation} \label{eq:theta-1-mu-def}
	\vartheta_{1,\mu}(z) = \sum_{h\in L_1'/L_1} \chi_{12}(h_2) \theta(z,f_{1,\mu},h),
\end{equation}
where $\chi_{12} = \pfrac {12}{\bullet}$.
By \eqref{eq:theta-z-f-h-expanded}
we have
\begin{equation} \label{eq:theta-1-mu-fourier-exp}
	\vartheta_{1,\mu}(z) 
	= v^{-\frac\mu2} \sum_{x_2\in \Z} \chi_{12}(x_2) H_{\mu}\left(\sqrt{\tfrac13{\pi N v}} \, x_2\right) e\left(\tfrac {1}{24}Nx_2^2 z\right).
\end{equation}
In the special case $\mu=0$ we have
\begin{equation} \label{eq:theta-1-mu-eq-0-transform}
	\vartheta_{1,0}(z) = \sum_{x_2\in \Z} \chi_{12}(x_2) e\pfrac{Nx_2^2 z}{24} = 2\eta(Nz).
\end{equation}
We  use this to obtain a formula for the coefficients $c(h,k)_\gamma$ in the transformation law.

\begin{lemma} \label{lem:sum-c-h-k-nu}
	Let $h\in L_1'/L_1$ and $\gamma\in \Gamma_0(N)$.
	For $k\in L_1^\ast/L_1$, let $c_1(h,k)_\gamma:=c(h,k)_\gamma$ be as in Proposition~\ref{prop:theta-transform}.
	\begin{enumerate}
		\item If $k\notin L_1'/L_1$ then $c_1(h,k)_\gamma = 0$.
		\item If $k=Nk_2\in L_1'/L_1$ then
		\begin{equation} \label{eq:sum-c-h-k-nu}
			i^{-\frac 12(\sgn c)}\sum_{h\in L_1'/L_1} \chi_{12}(h_2) c_1(h,k)_\gamma = \chi_{12}(k_2) \nu_N(\gamma).
		\end{equation}
	\end{enumerate}
\end{lemma}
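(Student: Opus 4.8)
The key structural fact is that the coefficients $c_1(h,k)_\gamma$ in Proposition~\ref{prop:theta-transform} depend only on the lattice $L_1$ and on $\gamma$, not on the Schwartz function used to build the theta series. The plan is therefore to compute the relevant sum by comparing the transformation law of Proposition~\ref{prop:theta-transform} with the \emph{known} transformation law of the distinguished member $\vartheta_{1,0}=2\eta(Nz)$ of the family. First I would insert the rank-one data $Q=1/(12N)$, $\operatorname{vol}L_1=12N$, and $\langle x,y\rangle=xy/(12N)$ into \eqref{eq:c-h-k-formula}. Since $\gamma\in\Gamma_0(N)$ forces $N\mid c$, I write $c=Nc'$ and parametrize $L_1/cL_1$ by $r=12Nj$ with $j$ running over $\Z/c\Z$; this exhibits $c_1(h,k)_\gamma$ as an explicit constant times a $j$-independent phase times the quadratic Gauss sum
\[
	S=\sum_{j\bmod c} e\!\left(\frac{6aNj^2+(aNh_2-k)j}{c}\right).
\]

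For part (1) I would show that $S$ vanishes unless $N\mid k$ by translating the summation variable. Replacing $j$ by $j+c'$ permutes $\Z/c\Z$ and changes the argument of $e(\cdot)$ by $12aj+6ac'+ah_2-k/N$; the first three terms are integers, so every summand is multiplied by the \emph{same} constant $e(-k/N)$. Reindexing then gives $S=e(-k/N)\,S$, forcing $S=0$ whenever $e(-k/N)\neq1$, that is, whenever $k\notin L_1'=N\Z$. The finitely many $\gamma$ with $c=0$, namely $\pm\pmatrix1b01$, are handled directly from the $c=0$ clause of Proposition~\ref{prop:theta-transform}.

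For part (2) I would apply Proposition~\ref{prop:theta-transform} to each $\theta(z,f_{1,0},h)$, multiply by $\chi_{12}(h_2)$, and sum over $h\in L_1'/L_1$. Using \eqref{eq:theta-1-mu-eq-0-transform} this writes $\vartheta_{1,0}(\gamma z)$ as $i^{-\frac12\sgn c}(cz+d)^{1/2}$ times $\sum_k\big(\sum_h\chi_{12}(h_2)c_1(h,k)_\gamma\big)\theta(z,f_{1,0},k)$, where by part (1) only $k\in L_1'/L_1$ contribute. On the other hand $\vartheta_{1,0}=2\eta(Nz)\in\mathcal M_{1/2}(N,\nu_N)$ gives $\vartheta_{1,0}(\gamma z)=\nu_N(\gamma)(cz+d)^{1/2}\sum_k\chi_{12}(k_2)\theta(z,f_{1,0},k)$. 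Equating the two expansions and comparing the coefficient of each $\theta(z,f_{1,0},k)$ yields \eqref{eq:sum-c-h-k-nu}. To compare coefficients I need the linear independence of the $\theta(z,f_{1,0},k)$; from \eqref{eq:theta-z-f-h-expanded} these satisfy $\theta(z,f_{1,0},k)=\theta(z,f_{1,0},-k)$ and distinct pairs $\{k,-k\}$ are independent because their leading $q$-exponents $k_0^2/(24N)$ (with $k_0$ the representative of $\pm k$ in $\{0,\dots,6N\}$) are distinct.

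The main obstacle is precisely this $k\mapsto -k$ identification: the comparison of expansions only pins down the coefficients attached to the genuinely distinct functions, i.e.\ the \emph{sum} of the contributions of $k$ and $-k$, so to recover the stated per-$k$ identity I must check that both coefficient functions are even in $k$. On the right this is the evenness of $\chi_{12}$; on the left it follows from the symmetry $c_1(-h,-k)_\gamma=c_1(h,k)_\gamma$ (obtained by the substitution $r\mapsto -r$ in \eqref{eq:c-h-k-formula}), combined with the evenness of $\chi_{12}$ and the reindexing $h\mapsto -h$. A secondary point is keeping the branch of $(cz+d)^{1/2}$ consistent between Proposition~\ref{prop:theta-transform} and the multiplier $\nu_N$, and confirming the degenerate $c=0$ cases match with the factor $i^{-\frac12\sgn c}=1$.
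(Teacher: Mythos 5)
Your proposal is correct and takes essentially the same route as the paper's proof: part (1) via the reindexing $r\mapsto r+12c$ (your $j\mapsto j+c'$), which forces the sum to satisfy $S=e(-k/N)S$, and part (2) by comparing the transformation law of Proposition~\ref{prop:theta-transform} applied to $\vartheta_{1,0}$ against the known $\nu_N$-transformation of $2\eta(Nz)$, then matching coefficients of the linearly independent theta components. In particular, you correctly identified the one genuine subtlety, namely that the comparison only determines coefficients up to the identification $k\mapsto -k$, which is resolved exactly as in the paper by the symmetry $c_1(-h,-k)_\gamma=c_1(h,k)_\gamma$ together with the evenness of $\chi_{12}$.
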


\begin{proof}
	If $c=0$ then (1) follows immediately from the formula given in Proposition~\ref{prop:theta-transform}.
	If $c\neq 0$, replace $r$ by $r+12c$ in \eqref{eq:c-h-k-formula}.
	Then, since $N\mid\gcd(c,r,h)$ we find that
	\begin{equation}
		c_1(h,k)_\gamma = e\pmfrac{-k}{N}c_1(h,k)_\gamma.
	\end{equation}
	Thus $c_1(h,k)_\gamma=0$ unless $N\mid k$, that is, $k\in L_1'/L_1$.

 By Proposition~\ref{prop:theta-transform}, \eqref{eq:theta-1-mu-def}, and \eqref{eq:theta-1-mu-eq-0-transform} we have
	\begin{equation}\label{eq:lin-comb-theta}
		\nu_N(\gamma) \sum_{h\in L_1'/L_1} \chi_{12}(h_2)\theta(z,f_{1,0},h) = i^{-\frac 12(\sgn c)} \sum_{k\in L_1'/L_1} \theta(z,f_{1,0},k) \sum_{h\in L_1'/L_1} \chi_{12}(h_2) c_1(h,k)_\gamma.
	\end{equation}
	For $h\in L_1'/L_1$, the Fourier expansion of $\theta(z,f_{1,0},h)$ is
	\begin{equation}
		\theta(z,f_{1,0},h) = \sum_{\ell\equiv h_2(12)} e\pfrac{N\ell^2z}{24},
	\end{equation}
	so $\theta(z,f_{1,0},h)$ and $\theta(z,f_{1,0},h')$ are linearly independent unless $h_2\equiv \pm h_2'\pmod{12}$.
    Equation \eqref{eq:sum-c-h-k-nu} now follows from \eqref{eq:lin-comb-theta}
    and the fact that $c_1(-h,-k)_\gamma = c_1(h,k)_\gamma$.
\end{proof}

The previous lemma gives a  transformation law for $\vartheta_{1,\mu}(z)$.

\begin{lemma} \label{lem:theta-1-transform}
	For $\mu\geq 0$ and for $\gamma=\pmatrix abcd\in \Gamma_0(N)$ we have
	\begin{equation}
		\vartheta_{1,\mu}(\gamma z) = \nu_N(\gamma)(cz+d)^{\mu+\frac12}\vartheta_{1,\mu}(z).
	\end{equation}
\end{lemma}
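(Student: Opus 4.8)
The plan is to reduce the transformation law for $\vartheta_{1,\mu}$ directly to Proposition~\ref{prop:theta-transform} together with the coefficient identity already established in Lemma~\ref{lem:sum-c-h-k-nu}. The crucial point, which I would emphasize at the outset to avoid any appearance of circularity, is that the coefficients $c(h,k)_\gamma$ in Proposition~\ref{prop:theta-transform} are given by the explicit formula \eqref{eq:c-h-k-formula}, which depends only on the lattice $L_1$, the quadratic form $Q=\frac{1}{12N}$, and the matrix $\gamma$; in particular they are \emph{independent} of the Schwartz function $f$. Consequently the computation carried out for $f_{1,0}$ in Lemma~\ref{lem:sum-c-h-k-nu} applies verbatim to every $f_{1,\mu}$, even though that lemma was proved by exploiting the identity $\vartheta_{1,0}=2\eta(Nz)$ from \eqref{eq:theta-1-mu-eq-0-transform}.

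With this observation in hand, I would first record that the quadratic form here has signature $(p,q)=(1,0)$, so that the prefactor $i^{\frac{q-p}{2}\sgn c}$ in Proposition~\ref{prop:theta-transform} equals $i^{-\frac12\sgn c}$, and that $f_{1,\mu}$ satisfies the weight $\mu+\frac12$ spherical property by Theorem~1.9 of \cite{cipra}. Applying Proposition~\ref{prop:theta-transform} to each term of the defining sum \eqref{eq:theta-1-mu-def} then gives
\begin{equation*}
\theta(\gamma z,f_{1,\mu},h) = i^{-\frac12\sgn c}(cz+d)^{\mu+\frac12}\sum_{k\in L_1^\ast/L_1} c_1(h,k)_\gamma\, \theta(z,f_{1,\mu},k).
\end{equation*}

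Next I would multiply by $\chi_{12}(h_2)$ and sum over $h\in L_1'/L_1$, interchanging the two finite sums (both index sets are finite quotients of $\Z$) to obtain
\begin{equation*}
\vartheta_{1,\mu}(\gamma z) = (cz+d)^{\mu+\frac12}\sum_{k\in L_1^\ast/L_1}\theta(z,f_{1,\mu},k)\left[i^{-\frac12\sgn c}\sum_{h\in L_1'/L_1}\chi_{12}(h_2)\,c_1(h,k)_\gamma\right].
\end{equation*}
By part (1) of Lemma~\ref{lem:sum-c-h-k-nu} the bracketed quantity vanishes unless $k=Nk_2\in L_1'/L_1$, and by part (2) it then equals $\chi_{12}(k_2)\,\nu_N(\gamma)$. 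Substituting this and recognizing the resulting sum as $\nu_N(\gamma)\,\vartheta_{1,\mu}(z)$ via \eqref{eq:theta-1-mu-def} completes the argument.

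Since every analytic ingredient is already packaged in the cited results, I do not expect a serious obstacle. The only step requiring genuine care is the justification that $c_1(h,k)_\gamma$ does not depend on $\mu$, which is precisely what licenses transferring Lemma~\ref{lem:sum-c-h-k-nu} from the weight-$\frac12$ case to all weights; everything else is bookkeeping with the explicit prefactor and the finite character sum.
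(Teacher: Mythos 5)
Your proposal is correct and follows essentially the same route as the paper: apply Proposition~\ref{prop:theta-transform} termwise to the sum defining $\vartheta_{1,\mu}$, then invoke Lemma~\ref{lem:sum-c-h-k-nu} to collapse the coefficient sum to $\chi_{12}(k_2)\nu_N(\gamma)$. Your explicit remark that $c_1(h,k)_\gamma$ is independent of the Schwartz function (hence of $\mu$) is exactly the point that makes the paper's two-line proof legitimate, since Lemma~\ref{lem:sum-c-h-k-nu} was derived from the $\mu=0$ case $\vartheta_{1,0}=2\eta(Nz)$; the paper leaves this implicit, and your spelling it out is a harmless (indeed clarifying) elaboration of the same argument.
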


\begin{proof}
    By Proposition~\ref{prop:theta-transform} we have
	\begin{equation}
\vartheta_{1,\mu}(\gamma z) = i^{-\frac 12(\sgn c)} (cz+d)^{\mu+\frac 12} \sum_{k\in L_1'/L_1} \theta(z,f_{1,\mu},k) \sum_{h\in L_1^*/L_1} \chi_{12}(h_2) c_1(h,k)_\gamma.
	\end{equation}
 Lemma~\ref{lem:sum-c-h-k-nu} yields the desired result.
\end{proof}

\begin{remark}
    Since $\nu_N(-I)=-i$ we see
    that $\vartheta_{1,\mu} = 0$ whenever $\mu$ is odd.
\end{remark}

We can also determine the behavior of these theta functions under $z\mapsto -1/Nz$.

\begin{lemma} \label{lem:theta-1-fricke}
	For every $\mu\geq 0$ we have
	\begin{equation} \label{eq:theta-1-ep-fricke}
		\vartheta_{1,\mu}(-1/Nz) = i^{-\frac12}z^{\mu+\frac12} \vartheta_{1,\mu}(z/N).
	\end{equation}
\end{lemma}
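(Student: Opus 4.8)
The plan is to reduce the Fricke-type substitution $z \mapsto -1/(Nz)$ to the action of an element of $\SL_2(\Z)$, so that Proposition~\ref{prop:theta-transform} applies directly. Writing $S = \pMatrix{0}{-1}{1}{0}$, one has $-1/(Nz) = S(Nz)$, so it suffices to evaluate $\vartheta_{1,\mu}$ under $S$ at the point $Nz$. Since $f_{1,\mu}$ has the weight $\mu+\tfrac12$ spherical property and the rank-$1$ form $Q=\tfrac1{12N}$ has signature $(p,q)=(1,0)$, Proposition~\ref{prop:theta-transform} (with $\sgn c = 1$, so that $i^{\frac{q-p}2(\sgn c)} = i^{-\frac12}$) gives
\begin{equation*}
\vartheta_{1,\mu}(S(Nz)) = i^{-\frac12}(Nz)^{\mu+\frac12}\sum_{k\in L_1^\ast/L_1}\Big(\sum_{h\in L_1'/L_1}\chi_{12}(h_2)\,c_1(h,k)_S\Big)\theta(Nz,f_{1,\mu},k),
\end{equation*}
where I have interchanged the two finite sums and used that $c\cdot(Nz)+d = Nz$.

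Next I would compute the inner coefficient sum. Because $c=1$ in $S$, the quotient $L_1/cL_1$ in \eqref{eq:c-h-k-formula} is trivial and the formula collapses to a single term, giving $c_1(h,k)_S = (12N)^{-1/2}\,e(-k h_2/12)$ after simplifying $\langle k,h\rangle = kh_2/12$. Summing against $\chi_{12}(h_2)$ over $h_2 \in \Z/12\Z$ produces the Gauss sum of $\chi_{12} = \pfrac{12}{\bullet}$; since this is the real primitive even character of conductor $12$ one has $\tau(\chi_{12}) = \sqrt{12}$ and $\chi_{12}(-1)=1$, whence
\begin{equation*}
\sum_{h\in L_1'/L_1}\chi_{12}(h_2)\,c_1(h,k)_S = \frac{\chi_{12}(k)}{\sqrt N}.
\end{equation*}
The remaining step is to identify $\sum_{k}\chi_{12}(k)\theta(Nz,f_{1,\mu},k)$ with a rescaling of $\vartheta_{1,\mu}$. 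Inserting the Fourier expansion \eqref{eq:theta-1-mu-fourier-exp} at the argument $Nz$ and comparing term by term with the expansion at $z/N$ — the Hermite--Gaussian factors coincide because $\sqrt{\tfrac{\pi Nv}{3N}} = \sqrt{\tfrac{\pi v}{3}} = \sqrt{\tfrac13\pi N\cdot\tfrac vN}$ and the exponentials both become $e(\ell^2 z/24)$ — shows that $\sum_k \chi_{12}(k)\theta(Nz,f_{1,\mu},k) = N^{-\mu}\vartheta_{1,\mu}(z/N)$.

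Combining the three displays and collecting the powers of $N$ finishes the proof: the factors $(Nz)^{\mu+\frac12} = N^{\mu+\frac12}z^{\mu+\frac12}$, $N^{-1/2}$, and $N^{-\mu}$ multiply to $z^{\mu+\frac12}$, yielding $\vartheta_{1,\mu}(-1/(Nz)) = i^{-\frac12}z^{\mu+\frac12}\vartheta_{1,\mu}(z/N)$. I expect the only delicate points to be the Gauss-sum evaluation (pinning down the value $\sqrt{12}$ and using $\chi_{12}(-1)=1$) and the consistent choice of the half-integral power branch; as a sanity check, the case $\mu=0$ should reduce to $\vartheta_{1,0}(z)=2\eta(Nz)$ together with $\eta(-1/z) = (-iz)^{1/2}\eta(z) = i^{-\frac12}z^{1/2}\eta(z)$.
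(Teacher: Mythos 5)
Your proposal is correct and follows essentially the same route as the paper's proof: both apply Proposition~\ref{prop:theta-transform} with $S=\pmatrix 0{-1}10$, compute $c_1(h,k)_S=(12N)^{-1/2}e(-kh_2/12)$, evaluate the Gauss sum to $\chi_{12}(k)\sqrt{12}$, and then identify the resulting $k$-sum at argument $Nz$ with $N^{-\mu}\vartheta_{1,\mu}(z/N)$ by unfolding the theta series (the paper does this by regrouping $k\pmod{12N}$ into residues $k_0\pmod{12}$, which is the same computation as your term-by-term comparison, both resting on the fact that $\chi_{12}$ depends only on $k\bmod 12$). The only difference is the trivial one of substituting $z\mapsto Nz$ before rather than after applying the transformation law.
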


\begin{proof}
    Write $f=f_{1,\mu}$ and $h=Nh_2$ for $h\in L_1'/L_1$.
    Then Proposition~\ref{prop:theta-transform} gives
	\begin{align}
	i^{\frac12}\vartheta_{1,\mu}(-1/z) 
		&= z^{\mu+\frac12} (12N)^{-\frac12} \sum_{h\in L_1'/L_1} \chi_{12}(h_2) \sum_{k\in L_1^\ast/L_1} e\pfrac{-h_2k}{12} \theta(z,f,k) \\
		&= z^{\mu+\frac12} (12N)^{-\frac12} \sum_{k(12N)} \theta(z,f,k) \sum_{h_2(12)} \chi_{12}(h_2) e\pfrac{-h_2k}{12}.
	\end{align}
	The inner Gauss sum evaluates to $\chi_{12}(k)\sqrt {12}$.
	Thus
	\begin{equation}
		i^{\frac12}\vartheta_{1,\mu}(-1/z) 
		= z^{\mu+\frac12}  N^{-\frac12} \sum_{k(12N)} \chi_{12}(k) \theta(z,f,k).
	\end{equation}
	Replacing $z$ by $Nz$ we obtain
	\begin{equation}
		i^{\frac12}\theta_{1,\mu}(-1/Nz) 
		= (Nv)^{-\frac\mu2} (Nz)^{\mu+\frac12} N^{-\frac12} \sum_{k(12N)} \chi_{12}(k) \sum_{x\equiv k(12N)} e\pfrac{ux^2}{24} f(\sqrt{Nv}\, x).
	\end{equation}
	Writing $k\equiv k_0 \pmod{12}$ with $k_0\in \Z/12\Z$ the latter equation becomes
	\begin{align}
		i^{\frac12}\theta_{1,\mu}(-1/Nz) 
		&= (Nv)^{-\frac\mu2} (Nz)^{\mu+\frac12} N^{-\frac12} \sum_{k_0(12)} \chi_{12}(k_0) \sum_{x\equiv k_0(12)} e\pfrac{ux^2}{24} f(\sqrt{Nv}\, x) \\
		&= N^{-\mu-\frac12} (Nz)^{\mu+\frac12} \sum_{k_0(12)} \chi_{12}(k_0) \theta(z/N,f,Nk_0) \\
		&= z^{\mu+\frac12} \sum_{h\in L_1'/L_1} \chi_{12}(h_2) \theta(z/N,f,k).
	\end{align}
	Equation \eqref{eq:theta-1-ep-fricke} follows.
\end{proof}

\subsection{A theta series of rank 2}
Next we construct a family of theta series that transform with (integral) weight $\lambda-\mu$, where $0\leq \mu \leq \lambda$.
We will eventually combine these with the theta series from the previous subsection to construct the two-variable theta kernel which  will be used in the Shimura lift.

Let
\begin{align}
	L_2 &= N\Z \oplus 6N\Z, \\
	L_2'&= \Z \oplus 6N\Z, \\
	L_2^\ast&= \Z \oplus 6\Z.
\end{align}
Then $L_2^\ast$ is dual to $L_2$ with respect to the bilinear form associated to $Q = \frac{1}{6N}\pmatrix{}{-1}{-1}{}$.
Let $\psi$ be a Dirichlet character modulo $N$ and for $x=(x_1,6Nx_3)\in L_2'$ define $\psi(x)=\psi(x_1)$.

\begin{lemma} \label{lem:theta-2-transform}
	Suppose that $f$ has the weight $\mu$ spherical property.
	Then for $\gamma=\pmatrix abcd\in \Gamma_0(N)$ we have
	\begin{equation}
		\sum_{h\in L_2'/L_2}\psi(h)\theta(\gamma z,f,h) = \psi(d)(cz+d)^\mu \sum_{h\in L_2'/L_2}\psi(h)\theta(z,f,h).
	\end{equation}
\end{lemma}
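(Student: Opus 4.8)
The plan is to substitute the vector-valued transformation law of Proposition~\ref{prop:theta-transform} into the twisted sum and to evaluate the resulting coefficients $c(h,k)_\gamma$ explicitly, exactly in the spirit of the rank-one Lemma~\ref{lem:sum-c-h-k-nu}. The first point is structural: the form $Q=\frac1{6N}\pmatrix{}{-1}{-1}{}$ has signature $(1,1)$, so $p=q$ and the prefactor $i^{\frac{q-p}2(\sgn c)}$ in Proposition~\ref{prop:theta-transform} is identically $1$. Hence for $\gamma=\pmatrix abcd\in\Gamma_0(N)$,
\begin{equation*}
\sum_{h\in L_2'/L_2}\psi(h)\,\theta(\gamma z,f,h)=(cz+d)^\mu\sum_{k\in L_2^\ast/L_2}\theta(z,f,k)\sum_{h\in L_2'/L_2}\psi(h)\,c(h,k)_\gamma,
\end{equation*}
and the whole lemma reduces to the coefficient identity: $\sum_{h}\psi(h)\,c(h,k)_\gamma$ equals $\psi(d)\psi(k)$ when $k\in L_2'/L_2$ and vanishes otherwise. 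The case $c=0$, where $\gamma=\pm\pmatrix 1b01$, is immediate from the $c=0$ clause of Proposition~\ref{prop:theta-transform} because $\langle h,h\rangle=0$ for representatives $h=(h_1,0)$ of $L_2'/L_2$; so I assume $c\neq0$, and since \eqref{eq:c-h-k-formula} depends only on $|c|$ I may take $c>0$.

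For the vanishing I would mimic the shift argument of Lemma~\ref{lem:sum-c-h-k-nu}: in \eqref{eq:c-h-k-formula} replace the summation variable $r$ by $r+(c,0)$, a legitimate permutation of $L_2/cL_2$ since $(c,0)\in L_2$ (here $N\mid c$). Writing $k=(k_1,k_3)$ and using the off-diagonal form, a short computation shows the phase changes only by $e\pmfrac{k_3}{6N}$, whence $c(h,k)_\gamma=e\pmfrac{k_3}{6N}c(h,k)_\gamma$ and therefore $c(h,k)_\gamma=0$ unless $6N\mid k_3$, that is, unless $k\in L_2'/L_2$.

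For $k\in L_2'/L_2$ I take representatives $h=(h_1,0)$, $k=(k_1,0)$ and $r=(Nj,6Nm)$ with $j,m$ running modulo $c$, and evaluate \eqref{eq:c-h-k-formula} directly. Because the form is hyperbolic, the phase collapses to $\frac{m\,(k_1-a(h_1+Nj))}{c}$; summing over $m$ forces $a(h_1+Nj)\equiv k_1\pmod c$, and then counting the solutions $j$ (using $\gcd(a,c)=1$ together with $N\mid c$) shows that $c(h,k)_\gamma=1$ precisely when $ah_1\equiv k_1\pmod N$ and is $0$ otherwise, the normalizing constant $\frac1{N|c|}$ cancelling the $N|c|$ surviving terms. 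Twisting by $\psi$ leaves the single term $h_1\equiv a^{-1}k_1\pmod N$, giving $\psi(a^{-1})\psi(k)=\psi(d)\psi(k)$, where the last equality uses $ad\equiv1\pmod N$ on $\Gamma_0(N)$. Substituting this coefficient identity back into the displayed transformation recovers the claim.

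The computations are routine once the split structure is exploited; the main obstacle is the careful bookkeeping of the two-dimensional Gauss sum over $L_2/cL_2$, so that the divisibility conditions and the count of surviving $j$ come out correctly, and in confirming that the emergent character is exactly $\psi(d)$ rather than $\psi(a)$ — this hinges on the congruence $d\equiv a^{-1}\pmod N$ valid on $\Gamma_0(N)$.
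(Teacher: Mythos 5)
Your proposal is correct and takes essentially the same route as the paper's proof: both apply Proposition~\ref{prop:theta-transform} (with trivial prefactor, since $Q$ has signature $(1,1)$), evaluate the Gauss sum \eqref{eq:c-h-k-formula} over $L_2/cL_2$ to conclude $c(h,k)_\gamma=\delta_{k,ah}$, and then reindex the sum over $h$ using $ad\equiv 1\pmod N$ to produce the factor $\psi(d)$. The only differences are organizational and immaterial: the paper obtains the vanishing for $k\notin L_2'/L_2$ and the evaluation from a single direct computation rather than a separate shift argument, and your justification for assuming $c>0$ is slightly off (the phase in \eqref{eq:c-h-k-formula} has $2c$, not $2|c|$, in the denominator), though the Gauss-sum evaluation is in fact independent of the sign of $c$.
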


\begin{proof}
	Let $h\in L_2'/L_2$.
	By Proposition~\ref{prop:theta-transform} we have
	\begin{equation}
		\theta(\gamma z,f,h) = (cz+d)^\mu \sum_{k\in L_2^\ast/L_2} c(h,k)_\gamma \theta(z,f,k).
	\end{equation}
	If $c=0$ then $c(h,k)_\gamma=\delta_{k,ah}$ because $k=ah$ implies that $k=(*,0)$.
	If $c\neq 0$ then
	\begin{equation}
		c(h,k)_\gamma = N^{-1}|c|^{-1} \sum_{r\in L_2/cL_2} e\pfrac{a\langle h+r,h+r \rangle - 2\langle k,h+r \rangle + d\langle k,k \rangle}{2c}.
	\end{equation}
	We write $h=(h_1,0)$, $k=(k_1,6k_3)$, and $r=(Nr_1,6Nr_3)$.
	Then
	\begin{equation}
		c(h,k)_\gamma = N^{-1}|c|^{-1} e\pfrac{h_1k_3-dk_1k_3}{Nc} \sum_{r_1(c)} \sum_{r_3(c)} e\pfrac{-ah_1r_3-aNr_1r_3+k_1r_3+k_3r_1}{c}.
	\end{equation}
	The latter expression equals zero unless
	\begin{equation}
            k_1 = ah_1 \quad \text{ and } \quad k_3=0
            \iff k=ah.
	\end{equation}
        Writing $c=Nc'$, we obtain
        \begin{equation}
            c(h,k)_\gamma = \delta_{k,ah}N^{-1} |c|^{-1} \sum_{r_1, r_3(c)} e\pfrac{-ar_1r_3}{c'} = \delta_{k,ah}.
        \end{equation}
	Thus, for all $\gamma\in \Gamma_0(N)$ we have $c(h,k)_\gamma = \delta_{k,ah}$.
	It follows that
	\begin{align}
		\sum_{h\in L_2'/L_2}\psi(h)\theta(\gamma z,f,h)
		&= (cz+d)^\mu\sum_{h\in L_2'/L_2} \psi(h) \theta(z,f,ah)  \\
		&= \psi(d) (cz+d)^\mu\sum_{h\in L_2'/L_2} \psi(h) \theta(z,f,h).
	\end{align}
	This completes the proof.
\end{proof}

We specialize to $f=f_{2,\mu,y}$ where
\begin{equation} \label{eq:f-2-ep-y-def}
	f_{2,\mu,y}(x) =  H_{\mu}\left( \sqrt\frac{\pi}{3N} (y^{-1}x_1-yx_3) \right) \exp\pfrac{-\pi(y^{-2}x_1^2+y^2x_3^2)}{6N},
\end{equation}
and we define
\begin{equation}
	\vartheta_{2,\mu}(z,y) = v^{-\frac \mu 2}\sum_{h\in L_2'/L_2} \bar\psi(h_1) 
 \sum_{x\in L} [r(\sigma_z) f_{2,\mu,y}](x+h).
\end{equation}
By Theorem~1.9 of \cite{cipra} the function $f_{2,\mu,1}$ has the spherical property in weight $\mu$.

It will be useful in the next section to have the following expression for $\vartheta_{2,\mu}(z,y)$.

\begin{lemma} \label{lem:theta-2-poisson}
	For any $\mu\geq 0$ we have
	\begin{equation}
		\vartheta_{2,\mu}(z,y) = \frac{i^\mu v^{-\mu}}{\sqrt{6N}} y^{-1-\mu} \pfrac{\pi}{3N}^{\frac\mu2} \sum_{x_1,x_3\in \Z} \bar\psi(x_1) (x_1\bar z+x_3)^\mu \exp\left( \frac{-\pi}{6Nvy^2}|x_1 z+x_3|^2 \right).
	\end{equation}
\end{lemma}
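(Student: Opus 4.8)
The plan is to expand everything into an explicit lattice sum and reduce the statement to a single Gaussian Poisson summation. First I would apply the expanded form \eqref{eq:theta-z-f-h-expanded} with $n=2$ and normalizing exponent $v^{\frac12-\frac\mu2}$ to write $\vartheta_{2,\mu}(z,y)=\sum_{h\in L_2'/L_2}\bar\psi(h_1)\theta(z,f_{2,\mu,y},h)$ as one sum. Since $L_2=N\Z\oplus 6N\Z$ and $L_2'=\Z\oplus 6N\Z$, merging the sum over coset representatives $h\in L_2'/L_2$ with the sum over $x\in L_2$ produces a sum over all of $L_2'$, i.e.\ over pairs $(m,6N\ell)$ with $m,\ell\in\Z$, weighted by $\bar\psi(m)$. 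A direct computation of the form attached to $Q=\frac1{6N}\pmatrix{0}{-1}{-1}{0}$ gives $\langle (m,6N\ell),(m,6N\ell)\rangle=-2m\ell$, so the oscillatory factor is $e(-um\ell)$, while $f_{2,\mu,y}$ contributes a Hermite polynomial in $\sqrt{\pi/(3N)}\,\sqrt v\,(y^{-1}m-6Ny\ell)$ times a Gaussian in $m$ and $\ell$.

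Second, to treat all $\mu$ at once I would introduce a generating parameter and apply \eqref{eq:hermite-gen}. Writing $t=v^{-1/2}s$ so that the powers $v^{-\mu/2}$ cancel, the series $\sum_\mu \vartheta_{2,\mu}(z,y)\,s^\mu/\mu!$ collapses the Hermite polynomials into the single exponential $\exp(wt-\tfrac12 t^2)$, leaving a function that (apart from the phase and an overall factor $\exp(-s^2/(2v))$) is purely Gaussian in the summation variables.

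Third, I would apply Poisson summation in the variable $\ell$. The summand is a Gaussian in $\ell$ with a complex linear term arising both from $e(-um\ell)$ and from the $s$-linear piece $-\sqrt{\pi/(3N)}\,6Ny\,\ell\,s$ of the Hermite argument; completing the square computes the Fourier transform and introduces the dual variable $x_3\in\Z$. With $a=6\pi Nvy^2$ one has $\sqrt{\pi/a}=1/(y\sqrt{6Nv})$, and combining with the external $v^{1/2}$ yields the overall constant $\tfrac1{y\sqrt{6N}}$. Finally I would organize the exponent of the transformed series as a polynomial in $s$: the $s^2$-terms cancel exactly (the contribution $+1/(2v)$ from completing the square against $\exp(-s^2/(2v))$), so the exponent is affine in $s$. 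The $s^0$-part reproduces $\exp(-\tfrac{\pi}{6Nvy^2}|x_1z+x_3|^2)$ with $x_1=m$ (exactly the $\mu=0$ check), and the $s^1$-coefficient equals $\tfrac{i}{vy}\sqrt{\pi/(3N)}\,(x_1\bar z+x_3)$; extracting the coefficient of $s^\mu$ and multiplying by $\mu!$ gives the claim, since $\bigl(i\sqrt{\pi/(3N)}/vy\bigr)^\mu=i^\mu v^{-\mu}y^{-\mu}(\pi/3N)^{\mu/2}$.

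I expect the delicate point to be the appearance of $\bar z$ rather than $z$ in the linear factor. This forces one to carry the $\ell$-independent term $\sqrt{\pi/(3N)}\,y^{-1}m\,s$ separately through the Poisson step and combine it with the cross-term from the Fourier transform only at the very end, where the identity $i(um+x_3)+mv=i(x_1\bar z+x_3)$ (using $\bar z=u-iv$) produces the antiholomorphic argument. Tracking the square-root branch in the Gaussian integral and the resulting factor $i^\mu$, together with the constant bookkeeping, is the main computational hurdle; the structural content is entirely the single Poisson summation in $\ell$.
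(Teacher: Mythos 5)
Your proposal is correct, and its skeleton coincides with the paper's proof: both expand $\vartheta_{2,\mu}$ via the explicit Weil action as a sum over pairs $(x_1,x_3)\in\Z^2$ with phase $e(-ux_1x_3)$, and both perform Poisson summation in the $6N\Z$-component. Your constants check out: $a=6\pi Nvy^2$, $\sqrt{\pi/a}=1/(y\sqrt{6Nv})$, the exact cancellation of the $s^2/(2v)$ terms, the $s^0$-part assembling to $\exp\bigl(-\tfrac{\pi}{6Nvy^2}|x_1z+x_3|^2\bigr)$, and the $s$-linear coefficient $\tfrac{i}{vy}\sqrt{\pi/(3N)}\,(x_1\bar z+x_3)$ via $i(ux_1+x_3)+x_1v=i(x_1\bar z+x_3)$. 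Where you genuinely differ from the paper is in the handling of the Hermite polynomial. The paper works at fixed $\mu$: after Poisson summation it changes variables $s=\sqrt{\pi v/3N}\,(y^{-1}x_1-6Nyt)$ in the Fourier integral and invokes the Hermite eigen-integral $\int_{-\infty}^\infty H_\mu(s)e^{-s^2/2+isw}\,ds=i^\mu\sqrt{2\pi}\,w^\mu e^{-w^2/2}$ from \cite[(18.17.23), (18.17.27)]{DLMF}, extended by analytic continuation to the complex argument $w$ proportional to $x_1\bar z+x_3$. You instead sum over $\mu$ against $s^\mu/\mu!$ first, so that \eqref{eq:hermite-gen} collapses the Hermite polynomials and the Poisson step involves only a bare Gaussian with a complex linear term; the statement for each $\mu$ is then recovered by Taylor-coefficient extraction. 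Your route is self-contained (it effectively reproves the cited DLMF identity) and makes the provenance of $i^\mu$ and of $\bar z$ transparent, both emerging from the $s$-linear coefficient; the small price is justifying the interchange of the $\mu$-sum with the lattice sums and the term-by-term coefficient extraction, which is routine since everything is dominated by Gaussians and the generating series is entire in $s$. One remark: your anticipated difficulty of ``tracking the square-root branch in the Gaussian integral'' is vacuous in your approach, since your Gaussian parameter $a=6\pi Nvy^2$ is real and positive; that complex-argument subtlety belongs to the paper's route, and your generating-function device is precisely what circumvents it.
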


\begin{proof}
	By \eqref{eq:weil-rep-def-integral} we have
	\begin{equation}
		\vartheta_{2,\mu}(z,y) = v^{\frac{1-\mu}2} \sum_{x_1,x_3\in \Z} \bar\psi(x_1) e(-ux_1x_3) f_{2,\mu,y}(\sqrt v \,x_1,6N\sqrt v \, x_3),
	\end{equation}
	where we have written $x\in L_2'$ as $x=(x_1,6Nx_3)$.
	Performing Poisson summation on $x_3$ we find that
	\begin{equation}
		\vartheta_{2,\mu}(z,y) = v^{\frac{1-\mu}2} \sum_{x_1,x_3\in \Z} \bar\psi(x_1) g(x_3),
	\end{equation}
	where
	\begin{equation}
		g(x_3) = \int_{-\infty}^\infty f_{2,\mu,y}(\sqrt v\, x_1, 6N\sqrt v\, t) e^{-2\pi i(ux_1+x_3)t} \, dt.
	\end{equation}
	Using \eqref{eq:f-2-ep-y-def} and making the change of variable $s=\sqrt{\pi v/3N}(y^{-1}x_1-6Nyt)$ we find that
 \begin{equation}
		g(x_3) = \frac{1}{2y\sqrt{3\pi N v}}  e\left(-\frac{x_1}{6N y^2}(x_1\bar z+x_3)\right) \int_{-\infty}^\infty H_\mu(s) e^{-\frac 12s^2+isw}\, ds,
	\end{equation}
	where
	\begin{equation}
		 w=\frac{\sqrt \pi}{y\sqrt{3N v}}(x_1 \bar z+x_3).
	\end{equation}
	By \cite[(18.17.23) and (18.17.27)]{DLMF} we have
	\begin{equation}
		\int_{-\infty}^\infty H_{\mu}(s) e^{-\frac 12s^2+isw} \, ds = i^\mu \sqrt{2\pi} w^{\mu} e^{-w^2/2},
	\end{equation}
 which holds for complex $w$ by analytic continuation.
	Thus
	\begin{equation}
		g(x_3) = \frac{i^\mu}{\sqrt{6N}} \pfrac{\pi}{3N}^{\frac\mu2}(vy^2)^{\frac{-1-\mu}2}(x_1\bar z+x_3)^\mu \exp\left( \frac{-\pi}{6Nvy^2}|x_1 z+x_3|^2 \right).
	\end{equation}
	The result follows.
\end{proof}

\subsection{A theta series of rank 3}
Combining the theta series of rank 1 with the theta series of rank 2 amounts to setting
\begin{align}
	L &= N\Z \oplus 12N\Z \oplus 6N\Z, \\
	L' &= \Z \oplus N\Z \oplus 6N\Z, \\
	L^* &= \Z \oplus \Z \oplus 6\Z.
\end{align}
Then $L^*$ is dual to $L$ with respect to the bilinear form
\begin{equation}
	\langle x,y \rangle = \frac{x_2y_2 - 2x_1y_3 - 2x_3y_1}{12N}
\end{equation}
of signature $(2,1)$ associated to $Q = \frac{1}{12N}\left(\begin{smallmatrix} &  & -2 \\  & 1 &  \\ -2 &  & \end{smallmatrix}\right)$.

For each $\lambda\geq 0$ we have the Hermite identity
\begin{equation}
	(x-iy)^\lambda = \sum_{\mu=0}^\lambda \binom{\lambda}{\mu}(-i)^\mu H_{\lambda-\mu}(x)H_{\mu}(y)
\end{equation}
which follows easily from the generating function identity \eqref{eq:hermite-gen}.
Let
\begin{align}
	f_{3}(x) &= (x_1-ix_2-x_3)^\lambda \exp\left(-\frac{\pi}{12N}(2x_1^2+x_2^2+2x_3^2)\right) \\
	&= \pfrac{\pi}{3N}^{-\frac\lambda2} \sum_{\mu=0}^\lambda \binom{\lambda}{\mu}(-i)^\mu f_{2,\lambda-\mu,1}(x_1,x_3)f_{1,\mu}(x_2).
\end{align}
It follows that $f_3$ has the spherical property in weight $\lambda+1/2$.
The next lemma is similar to the construction in Example~3 of \cite{niwa}.

\begin{lemma} \label{lem:theta3-transform-z}
Suppose that $f$ has the spherical property in weight $\lambda+1/2$, and define
\[
    \theta_{3}(z) = \sum_{h\in L'/L} \bar\psi(h_1) \chi_{12}(h_2) \theta(z,f,h).
\]
	Then for each $\gamma = \pmatrix abcd\in \Gamma_0(N)$ we have
	\begin{equation}
		\theta_3(\gamma z) = \nu_N(\gamma)\bar\psi(d)(cz+d)^{\lambda+\frac 12} \theta_3(z).
	\end{equation}
\end{lemma}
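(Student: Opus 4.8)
The plan is to apply Proposition~\ref{prop:theta-transform} directly to each summand $\theta(z,f,h)$ and then to exploit the orthogonal splitting \eqref{eq:splitting} of the rank-$3$ bilinear form to factor the transformation coefficients $c(h,k)_\gamma$ into the rank-$1$ and rank-$2$ pieces already analyzed in Lemmas~\ref{lem:sum-c-h-k-nu} and~\ref{lem:theta-2-transform}. Since $f$ has the spherical property in weight $\lambda+\tfrac12$ and the form has signature $(2,1)$, Proposition~\ref{prop:theta-transform} gives
\begin{equation*}
\theta_3(\gamma z) = i^{-\frac12 \sgn c}(cz+d)^{\lambda+\frac12}\sum_{k\in L^*/L}\theta(z,f,k)\sum_{h\in L'/L}\bar\psi(h_1)\chi_{12}(h_2)\,c(h,k)_\gamma,
\end{equation*}
so the entire content of the lemma is the evaluation of the inner double sum. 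The crucial point is that the coefficients $c(h,k)_\gamma$ of \eqref{eq:c-h-k-formula} depend only on the lattice and the bilinear form, not on $f$, so the argument works for any $f$ of the correct weight.

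Because the form splits as the orthogonal sum of the rank-$1$ form on the middle coordinate $x_2$ and the rank-$2$ form on $(x_1,x_3)$, and $L$ splits correspondingly as $(12N\Z)\oplus(N\Z\oplus 6N\Z)$, the residue sum $\sum_{r\in L/cL}$ in \eqref{eq:c-h-k-formula} factors as a product of sums over $L_1/cL_1$ and $L_2/cL_2$. First I would check that the normalizing constants split accordingly, namely that $\det Q = \det Q_1 \det Q_2$, that $\operatorname{vol}L = \operatorname{vol}L_1\cdot\operatorname{vol}L_2$, and that $|c|^{-3/2} = |c|^{-1/2}|c|^{-1}$, and that the signature is additive, $(2,1)=(1,0)+(1,1)$, so that the overall prefactor $i^{-\frac12\sgn c}$ is exactly the rank-$1$ prefactor while the rank-$2$ part contributes none. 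This yields the factorization $c(h,k)_\gamma = c_1(h_2,k_2)_\gamma\, c_2((h_1,h_3),(k_1,k_3))_\gamma$. Since the character also factors as $\bar\psi(h_1)\chi_{12}(h_2)$ and $h\in L'/L$ has $h_3=0$, the inner sum over $h$ becomes the product of the rank-$1$ character sum of Lemma~\ref{lem:sum-c-h-k-nu} and the rank-$2$ character sum from the proof of Lemma~\ref{lem:theta-2-transform}.

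With the factorization in hand I would invoke the two earlier lemmas. The rank-$1$ factor vanishes unless $N\mid k_2$ and otherwise equals $i^{\frac12\sgn c}\chi_{12}(k_2)\nu_N(\gamma)$ by Lemma~\ref{lem:sum-c-h-k-nu}, and the $i^{\frac12\sgn c}$ cancels the overall $i^{-\frac12\sgn c}$. The rank-$2$ coefficient computation in the proof of Lemma~\ref{lem:theta-2-transform} gives $c_2((h_1,0),(k_1,k_3))_\gamma = \delta_{k_1,ah_1}\delta_{k_3,0}$, so summing $\bar\psi(h_1)$ over $h_1$ forces $k_3=0$ and contributes $\bar\psi(a^{-1}k_1)=\bar\psi(d)\bar\psi(k_1)$, using $a^{-1}\equiv d\pmod N$. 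Together the constraints $N\mid k_2$ and $k_3=0$ pin $k$ down to $L'/L$, and the surviving weights $\bar\psi(k_1)\chi_{12}(k_2)$ reassemble the $k$-sum into $\theta_3(z)$, leaving exactly $\nu_N(\gamma)\bar\psi(d)(cz+d)^{\lambda+\frac12}\theta_3(z)$.

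I expect the main obstacle to be the bookkeeping in the factorization step: verifying that the normalizing constants and the $i$-power prefactor split precisely as the orthogonal decomposition predicts, and confirming that the constraints from the two factors combine to restrict $k$ to $L'/L$ with the correct character weights and no stray terms. Once the coefficients are shown to factor cleanly, the application of Lemmas~\ref{lem:sum-c-h-k-nu} and~\ref{lem:theta-2-transform} is immediate, and the two multipliers $\nu_N(\gamma)$ and $\bar\psi(d)$ emerge from the rank-$1$ and rank-$2$ pieces respectively.
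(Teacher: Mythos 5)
Your proposal is correct and follows essentially the same route as the paper's proof: apply Proposition~\ref{prop:theta-transform}, use the orthogonal splitting \eqref{eq:splitting} to factor $c(h,k)_\gamma$ into rank-$1$ and rank-$2$ pieces, and then invoke Lemma~\ref{lem:sum-c-h-k-nu} together with the $\delta_{k,ah}$ computation from the proof of Lemma~\ref{lem:theta-2-transform}. The only difference is that you verify explicitly the bookkeeping (determinants, covolumes, $|c|$-powers, and the signature additivity $(2,1)=(1,0)+(1,1)$ behind the $i^{-\frac12\sgn c}$ prefactor) that the paper compresses into the phrase ``we employ the splitting,'' and these checks are all accurate.
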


\begin{proof}
	By Proposition~\ref{prop:theta-transform} we have
	\begin{equation}
		\theta_3(\gamma z) = i^{-\frac 12(\sgn c)}  (cz+d)^{\lambda+\frac12} \sum_{k\in L^*/L} \theta(z,f,k) \sum_{h\in L'/L} \bar\psi(h_1)\chi_{12}(h_2) c(h,k)_\gamma.
	\end{equation}
	We employ the splitting \eqref{eq:splitting}, together with Lemma~\ref{lem:sum-c-h-k-nu} and the proof of Lemma \ref{lem:theta-2-transform} to evaluate $c(h,k)_\gamma$. 
	The only terms that are nonzero are those with $k\in L'/L$.
	For such $k$ we have
	\begin{align}
		i^{-\frac 12(\sgn c)} \sum_{h\in L'/L} &\bar\psi(h_1) \chi_{12}(h_2) c(h,k)_\gamma 
		\\
		&= \sum_{(h_1,0)\in L_2'/L_2} \delta_{k_1,ah_1} \bar\psi(h_1) \times i^{-\frac 12(\sgn c)}\sum_{Nh_2\in L_1'/L_1} \chi_{12}(h_2) c_1(h,k)_\gamma
		\\
		&= \bar\psi(dk_1)\nu_N(\gamma) \chi_{12}(k_2).
	\end{align}
	The lemma follows. 
\end{proof}

For $w=\xi+iy\in \H$ define
\begin{align}
	\vartheta^*(z,w) 
	= y^{-\lambda}\sum_{h\in L'/L} \bar\psi(h_1) \chi_{12}(h_2) \theta(z,\sigma_w f_{3},h),
\end{align}
where the action of $g\in \SL_2(\R)$ on functions is given by $gf(x) = f(g^{-1}x)$, and the action of $g$ on $x\in \R^3$ is given by
\begin{equation}\label{eq:SL2act}
	\pMatrix{x_1}{\frac 12 x_2}{\frac 12 x_2}{x_3} \mapsto g\pMatrix{x_1}{\frac 12 x_2}{\frac 12 x_2}{x_3}g^T.
\end{equation}
By \eqref{eq:weil-rep-def-integral} the Weil representation commutes with the action of $\operatorname{SO}(Q)$, that is, the group of matrices leaving $\langle \cdot, \cdot \rangle$ invariant.
Since the action \eqref{eq:SL2act} gives an isomorphism of $\operatorname{SO}(Q)$ with $\PSL_2(\R)$, the function $\sigma_w f_3$  has the spherical property.
Since 
\begin{equation}\label{eq:gxgx}
\langle gx, gx\rangle=\langle x,x\rangle
\end{equation}
we have
\begin{equation}
    \vartheta^*(z,w) = v^{\frac{1-\lambda}2}y^{-\lambda}\sum_{x\in L'}\bar\psi(x_1) \chi_{12}(x_2) e\left(\tfrac 12u\langle x,x \rangle\right) f_3\left(\sqrt v\, \sigma_w^{-1}x\right).\label{eq:thetastardef}
\end{equation}
It is straightforward to verify the relations
\begin{equation}\label{eq:sigma_gamma_w}
	\sigma_{\gamma w} = \gamma \sigma_w \kappa(\arg(cw+d)), \qquad \gamma = \pmatrix abcd \in \SL_2(\R),
\end{equation}
and
\begin{equation}
	f_3(\kappa(\phi)x) = e^{2i\lambda\phi}f_3(x).
\end{equation}
Thus
\begin{equation}\label{eq:f3spherical}
	f_3(\kappa(\arg(cw+d))^{-1}x) = \pfrac{c\bar w+d}{|cw+d|}^{2\lambda} f_3(x).
\end{equation}
Furthermore, for $\gamma\in \Gamma_0(6N)$ the map $x\mapsto \gamma x$ leaves the lattice $L'$ and the quantity $\chi_{12}(x_2)$ invariant and maps $\bar\psi(x_1)$ to $\psi^2(d)\bar\psi(x_1)$.
It follows that
\begin{equation} \label{eq:theta-star-transform-w}
	\vartheta^\ast(z,\gamma w) = \bar\psi^2(d) \pfrac{c\bar w+d}{|cw+d|}^{2\lambda}\im(\gamma w)^{-\lambda}y^{\lambda}\vartheta^\ast(z,w) = \psi^2(d) (c\bar w+d)^{2\lambda} \vartheta^\ast(z,w).
\end{equation}

As in  \cite{niwa} and \cite{cipra} $\vartheta^*(z,w)$ is not the correct theta kernel; instead we will use
\begin{equation}\label{eq:thetadef}
	\vartheta(z,w): = N^{-\frac\lambda2-\frac14}(-iz)^{-\lambda-\frac12}(6N)^{-\lambda}{\bar w}^{-2\lambda} \vartheta^*(-1/Nz,-1/6Nw).
\end{equation}
Then by Lemma~\ref{lem:theta3-transform-z}, Corollary~\ref{cor:eta_fricke} and equation \eqref{eq:theta-star-transform-w} we have
\begin{align} 
	\vartheta(\cdot,w)&\in \mathcal{M}_{\lambda+\frac12}\(N, \psi\nu\),  
 \label{eq:theta-bar-transform-z} 
 \\
	\overline{\vartheta(z, \cdot)}&\in \mathcal{M}_{2\lambda}\(6N, \psi^2\). 
 \label{eq:theta-bar-transform-w}
\end{align}
The following lemma provides a useful expression for $\vartheta(z,w)$ on the imaginary axis $w=iy$.
\begin{lemma} \label{lem:theta-z-iy-splitting}
	We have
	\begin{multline} \label{eq:theta-z-iy-splitting}
		\vartheta(z,iy) = 
		\sum_{\substack{\mu=0 \\ \mu \text{ even}}}^\lambda c_\mu  y^{1-\mu} \sum_{g\in \Z} \bar\psi(-g)g^{\lambda-\mu} 
		\\\times \sum_{\gamma\in \Gamma_\infty \backslash \Gamma_0(N)} \bar\psi(\gamma) (cz+d)^{-\lambda-\frac12} \exp\left(\frac{-6\pi y^2 g^2}{\im(\gamma z)}\right)\nu^{-1}(\gamma) \im(\gamma z)^{\mu-\lambda}  \vartheta_{1,\mu}\pfrac{\gamma z}{N},
	\end{multline}
	where
	\begin{equation}
		c_\mu = 
			\binom{\lambda}{\mu} 6^{\lambda-\mu+\frac12} \pfrac{3}{\pi}^{\frac\mu2} N^{\frac\lambda2-\frac\mu2+\frac14}.
	\end{equation}
\end{lemma}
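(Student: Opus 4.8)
The plan is to evaluate the definition \eqref{eq:thetadef} at $w=iy$ and to express the result through the rank-$1$ and rank-$2$ theta series built above. Since $-1/(6N\cdot iy)=i/(6Ny)$, the value $\vartheta(z,iy)$ is governed by $\vartheta^\ast(-1/Nz,\,iy')$ with $y'=1/(6Ny)$, and on the imaginary axis $\sigma_{iy'}$ is diagonal, acting on $\R^3$ by $(x_1,x_2,x_3)\mapsto(x_1/y',x_2,x_3y')$. Inserting the product decomposition of $f_3$ and using $f_{2,\lambda-\mu,1}(x_1/y',x_3y')=f_{2,\lambda-\mu,y'}(x_1,x_3)$ together with the splitting \eqref{eq:splitting}, the lattice sum \eqref{eq:thetastardef} factors into
\begin{equation*}
\vartheta^\ast(-1/Nz,iy')=(y')^{-\lambda}\pfrac{\pi}{3N}^{-\frac\lambda2}\sum_{\mu=0}^{\lambda}\binom{\lambda}{\mu}(-i)^\mu\,\vartheta_{1,\mu}(-1/Nz)\,\vartheta_{2,\lambda-\mu}(-1/Nz,\,y').
\end{equation*}
The terms with $\mu$ odd vanish because $\vartheta_{1,\mu}=0$ for odd $\mu$, which is the source of the parity restriction in the statement.

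I would then treat the two factors separately. For the rank-$1$ factor, Lemma~\ref{lem:theta-1-fricke} gives $\vartheta_{1,\mu}(-1/Nz)=i^{-1/2}z^{\mu+1/2}\vartheta_{1,\mu}(z/N)$, and the crucial point is that $\vartheta_{1,\mu}(z/N)$ is, up to the constant $N^{\mu/2}$, the level-one analogue of $\vartheta_{1,\mu}$ (the same series run with $N=1$; indeed $\vartheta_{1,0}(z/N)=2\eta(z)$ by \eqref{eq:theta-1-mu-eq-0-transform}). By Lemma~\ref{lem:theta-1-transform} applied at level one, this series transforms on all of $\SL_2(\Z)$ with weight $\mu+\tfrac12$ and multiplier $\nu$. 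Hence for each $\gamma=\pmatrix abcd\in\Gamma_0(N)\subset\SL_2(\Z)$ one has $\vartheta_{1,\mu}(z/N)=\nu^{-1}(\gamma)(cz+d)^{-\mu-\frac12}\vartheta_{1,\mu}(\gamma z/N)$, and this single identity is precisely what will supply the factor $\nu^{-1}(\gamma)$ and the argument $\gamma z/N$ appearing in the statement.

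For the rank-$2$ factor I would substitute the Poisson-summed formula of Lemma~\ref{lem:theta-2-poisson} for $\vartheta_{2,\lambda-\mu}(-1/Nz,\,y')$ and unfold the sum over $(x_1,x_3)\in\Z^2$. Using $\im(-1/Nz)=v/(N|z|^2)$, the Gaussian exponent collapses to $-6\pi y^2|Nx_3z-x_1|^2/v$ and the polynomial to $\big((Nx_3\bar z-x_1)/(N\bar z)\big)^{\lambda-\mu}$. Writing $(Nx_3,-x_1)=g\,(c,d)$ with $g\in\Z$ and $(c,d)$ primitive and $N\mid c$ --- which is automatic once $\bar\psi(x_1)\neq0$, i.e. $(x_1,N)=1$ --- these primitive pairs run over the bottom rows of $\Gamma_\infty\backslash\Gamma_0(N)$, the two signs of $(c,d)$ being absorbed by letting $g$ range over all of $\Z$. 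Under $x_1=-gd$, $x_3=gc/N$ the character becomes $\bar\psi(x_1)=\bar\psi(-g)\bar\psi(d)$, the polynomial produces $g^{\lambda-\mu}(c\bar z+d)^{\lambda-\mu}N^{-(\lambda-\mu)}\bar z^{-(\lambda-\mu)}$, and $|Nx_3z-x_1|^2=g^2|cz+d|^2$ turns the Gaussian into $\exp(-6\pi y^2g^2/\im(\gamma z))$ since $\im(\gamma z)=v/|cz+d|^2$.

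Finally I would reassemble. The $|z|^2=z\bar z$ from the rank-$2$ prefactor cancels the $\bar z^{-(\lambda-\mu)}$ from the polynomial and leaves a power of $z$ that, combined with $(-iz)^{-\lambda-1/2}$ and the $z^{\mu+1/2}$ from the rank-$1$ Fricke relation, cancels to a constant; the surviving $v^{\mu-\lambda}(c\bar z+d)^{\lambda-\mu}$ and the factor $(cz+d)^{-\mu-1/2}$ contributed by the level-one transformation of $\vartheta_{1,\mu}(z/N)$ combine into $\im(\gamma z)^{\mu-\lambda}(cz+d)^{-\lambda-1/2}$, while the $y$-powers give $y^{-\lambda}\cdot y^{1+\lambda-\mu}=y^{1-\mu}$. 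Collecting the scalars (one checks the accumulated phase is $(-1)^\mu=1$ for even $\mu$) yields the constant $c_\mu$. I expect the main obstacle to be this unfolding together with its bookkeeping: correctly parametrizing the lattice by the pair $(g,\gamma)$ and matching it to $\Gamma_\infty\backslash\Gamma_0(N)$ (including the signs and the exclusion of $g=0$, which is forced either by $g^{\lambda-\mu}$ or by $\bar\psi(0)=0$), and then marshalling the numerous constants, the powers of $v$, $y$ and $z$, and the half-integral branch choices so that they coalesce exactly into $c_\mu$, $\im(\gamma z)^{\mu-\lambda}$, and $(cz+d)^{-\lambda-\frac12}$.
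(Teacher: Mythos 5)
Your proposal follows the paper's own proof essentially step for step: the Hermite splitting of $f_3$ giving $\vartheta^*(-1/Nz,iy')$ as a sum of products $\vartheta_{1,\mu}\,\vartheta_{2,\lambda-\mu}$ over even $\mu$, the Poisson-summed expression of Lemma~\ref{lem:theta-2-poisson} unfolded over pairs $(g,\gamma)\in\Z\times\Gamma_\infty\backslash\Gamma_0(N)$ (with $N\mid c$ forced by the character), the Fricke relation of Lemma~\ref{lem:theta-1-fricke} combined with the fact that $\vartheta_{1,\mu}(z/N)$ transforms like $\eta$ in weight $\mu+\tfrac12$, and the final recombination into $\im(\gamma z)^{\mu-\lambda}(cz+d)^{-\lambda-\frac12}$ and $y^{1-\mu}$. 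The bookkeeping you defer at the end is precisely the part the paper itself dismisses as straightforward, and your accounting of the constants and phases is consistent with the stated $c_\mu$.
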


\begin{remark}
    Recall that $\vartheta_{1,\mu}=0$ for odd $\mu$.
\end{remark}

\begin{proof}
	Since $\sigma_{iy}^{-1}x = (x_1/y, x_2, yx_3)$ we have
	\begin{equation}
		\vartheta^*(z,iy) = \pfrac{\pi}{3N}^{-\frac\lambda2} y^{-\lambda} \sum_{\substack{\mu=0 \\ \mu \text{ even}}} ^\lambda \binom{\lambda}{\mu}(-i)^\mu \vartheta_{1,\mu}(z)  \vartheta_{2,\lambda-\mu}(z,y).
	\end{equation}
	By Lemma~\ref{lem:theta-2-poisson} we have
	\begin{multline}
		\vartheta_{2,\lambda-\mu}(-1/Nz,y) = \pfrac{\pi}{3N}^{\frac{\lambda-\mu}2} \frac{i^{\lambda-\mu}v^{\mu-\lambda}}{y^{\lambda-\mu+1}\sqrt{6N}}z^{\lambda-\mu} 
		\\ \times \sum_{x_1,x_3\in \Z} \bar\psi(-x_1)(x_1+Nx_3\bar z)^{\lambda-\mu} \exp\left(\frac{-\pi|x_1+Nx_3z|^2}{6N^2vy^2}\right).
	\end{multline}
	Let $g=\sgn(x_3)\gcd(x_1,Nx_3)$ and write $Nx_3=gc$ and $x_1 = gd$. 
	Then the latter sum equals
	\begin{multline}
		\sum_{g\in \Z} \bar\psi(-g)g^{\lambda-\mu} \sum_{\substack{N\mid c\geq 0 \\ \gcd(c,d)=1}} \bar\psi(d) (c\bar z+d)^{\lambda-\mu} \exp\left(\frac{-\pi g^2|cz+d|^2}{6N^2vy^2}\right)
		\\= \sum_{g\in \Z} \bar\psi(-g)g^{\lambda-\mu} \sum_{\gamma\in \Gamma_\infty \backslash \Gamma_0(N)} \bar\psi(\gamma) (c\bar z+d)^{\lambda-\mu} \exp\left(\frac{-\pi g^2}{6N^2\im(\gamma z)y^2}\right),
	\end{multline}
	where $\gamma = \pmatrix **cd$.
	By Lemma~\ref{lem:theta-1-fricke} we have
	\begin{equation}
		\vartheta_{1,\mu}(-1/Nz) = i^{-\frac12}z^{\mu+\frac12}\vartheta_{1,\mu}(z/N).
	\end{equation}
	Since $\vartheta_{1,\mu}(z/N)$ transforms like $\eta(z)$ in weight $\mu+1/2$ we have
	\begin{equation}
		v^{\mu-\lambda}(c\bar z+d)^{\lambda-\mu} \vartheta_{1,\mu}(-1/Nz) = i^{-\frac12}z^{\mu+\frac12}\nu^{-1}(\gamma) \im(\gamma z)^{\mu-\lambda} (cz+d)^{-\lambda-\frac12} \vartheta_{1,\mu}\pfrac{\gamma z}{N}.
	\end{equation}
	It follows that
	\begin{multline}
		(-iz)^{-\lambda-\frac12}\vartheta^*(-1/Nz,iy) = \frac{(-1)^\lambda}{\sqrt {6N}} \sum_{\substack{\mu=0 \\ \mu \text{ even}}}^\lambda \binom{\lambda}{\mu} \pfrac{\pi}{3N}^{-\frac\mu2} y^{-2\lambda+\mu-1}\sum_{g\in \Z} \bar\psi(-g)g^{\lambda-\mu} 
		\\\times \sum_{\gamma\in \Gamma_\infty \backslash \Gamma_0(N)} \bar\psi(\gamma) (cz+d)^{-\lambda-\frac12} \exp\left(\frac{-\pi g^2}{6N^2\im(\gamma z)y^2}\right)\nu^{-1}(\gamma) \im(\gamma z)^{\mu-\lambda}  \vartheta_{1,\mu}\pfrac{\gamma z}{N}.
	\end{multline}
	From here it is straightforward to obtain \eqref{eq:theta-z-iy-splitting}.
\end{proof}

\subsection{Further properties of the theta kernel}
\label{sec:theta_up}

Here we assume that $(N, 6)=1$ and take
 \begin{equation*}
 H_{6N}:= \(\begin{matrix}0 & -1/\sqrt{6N} \\\sqrt{6N} & 0\end{matrix}\),
 \end{equation*}
 \begin{equation}\label{eq:wp6N}
W_p^{6N}= \pMatrix{\sqrt p\alpha}{1/\sqrt p}{6N\beta/\sqrt p}{\sqrt p}, \qquad  p\alpha-6N\beta/p=1,\qquad p\in \{2, 3\}.
\end{equation} 
Recalling the notation \eqref{eq:fslashstar}, the definition \eqref{eq:thetadef}
can be written in the form
\begin{equation}\label{eq:theta_thetastar}
\vartheta(z, w)=\vartheta^*(z, w)\sl  \mathcal{W}_{N, \lambda+\frac12}\sl^*_{2\lambda}H_{6N}
\end{equation}
where the first operator acts on $z$ and the second on $w$.
The following result describes the action of $U_p$ and $W_p^{6N}$ on these theta functions for $p\in\{2, 3\}$.  It will be important to 
determining the properties of the lifts at these primes.
 \begin{proposition}\label{prop:theta_eigen}
Suppose that $(N, 6)=1$.  For $p \in \{2,3\}$ the following are true (where all operators act on the variable $w$).
 \begin{align}
&\bar{\vartheta(z,w)}\sl U_p=p^{\lambda-1}\psi(p)\bar{\vartheta(z,w)},\\
&\bar{\vartheta(z,w)}\sl_{2\lambda} W_p^{6N}=-\psi(p)\bar{\vartheta(z,w)},\\
&\bar{\vartheta(z,w)}\sl_{2\lambda}H_{6N}\sl U_p=p^{\lambda-1}\bar\psi(p)\bar{\vartheta(z,w)}\sl_{2\lambda}H_{6N},\\
&\bar{\vartheta(z,w)}\sl_{2\lambda}H_{6N}\sl_{2\lambda} W_p^{6N}=-\bar\psi(p)\bar{\vartheta(z,w)}\sl_{2\lambda}H_{6N}.
\end{align}
 \end{proposition}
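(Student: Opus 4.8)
The plan is to deduce all four identities from the behavior of the auxiliary kernel $\vartheta^*(z,w)$, using the factorization \eqref{eq:theta_thetastar} to split the four statements (all operators acting on $w$) into two pairs interchanged by the Fricke involution. Since $H_{6N}^2=-I$ acts trivially in weight $2\lambda$, an application of \eqref{eq:theta_thetastar} gives $\overline{\vartheta(z,w)}\sl_{2\lambda}H_{6N}=\overline{\vartheta^*(z,w)}$ (up to the fixed normalization in $z$), so the third and fourth identities are exactly
\[
\overline{\vartheta^*}\sl U_p = p^{\lambda-1}\bar\psi(p)\,\overline{\vartheta^*}, \qquad \overline{\vartheta^*}\sl_{2\lambda}W_p^{6N} = -\bar\psi(p)\,\overline{\vartheta^*},
\]
while the first and second are their Fricke transforms. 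I would prove the pair for $\overline{\vartheta^*}$ directly and then transport it through $H_{6N}$.

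The heart of the argument is the $W_p^{6N}$--eigenvalue. The transformation law \eqref{eq:theta-star-transform-w} was proved only for $\gamma\in\Gamma_0(6N)$, and I would extend it to the determinant--one Atkin--Lehner matrix \eqref{eq:wp6N} by rerunning that argument on the basis of \eqref{eq:sigma_gamma_w}, \eqref{eq:f3spherical}, and \eqref{eq:gxgx}. Writing $\sigma_{W_p^{6N}w}=W_p^{6N}\sigma_w\kappa(\arg(cw+d))$ and using the spherical property to extract $(c\bar w+d)^{2\lambda}$, the substitution $x\mapsto W_p^{6N}x$ in \eqref{eq:thetastardef} reduces matters to the action of $W_p^{6N}$ on $\R^3$ through \eqref{eq:SL2act}. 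Two things must be checked: that this action preserves the lattice $L'$, and that it scales the weight function $\bar\psi(x_1)\chi_{12}(x_2)$ by a constant. Both follow from a short congruence computation. Reducing the first coordinate modulo $N$ leaves $\bar\psi$ of $p\alpha^2$, which the relation $p\alpha-6N\beta/p=1$ turns into $\psi(p)$; reducing the second coordinate modulo $12$ leaves $\chi_{12}(1+6N\beta)$, and since $p\alpha-6N\beta/p=1$ forces $N\beta$ to be odd, this factor equals $\chi_{12}(7)=-1$. Thus the weight function is scaled by $-\psi(p)$, and conjugation yields the displayed relation $\overline{\vartheta^*}\sl_{2\lambda}W_p^{6N}=-\bar\psi(p)\,\overline{\vartheta^*}$.

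For $U_p$ I would argue in the same spirit, now via the $c=0$ branch of \eqref{eq:weil-rep-def-integral}: the coset matrices $\tfrac1{\sqrt p}\pmatrix 1v0p$ from \eqref{eq:umdef} act on $\R^3$ by a diagonal rescaling, so summing over $v\bmod p$ imposes a congruence that projects the theta series onto the appropriate sublattice, while the normalization $p^{\,2\lambda/2-1}=p^{\lambda-1}$ and the resulting character value produce the factor $p^{\lambda-1}\bar\psi(p)$. As a check, these two eigenvalues satisfy $U_p=-\ep_p\,p^{\lambda-1}$ with $\ep_p=-\bar\psi(p)$, in agreement with Corollary~\ref{cor:upeigen}; fed into Lemma~\ref{lem:trace} they give $\Tr^{6N}_{6N/p}\overline{\vartheta^*}=0$, which is the newness at $p$ that the Shimura lift will ultimately inherit via \eqref{eq:traceequiv}.

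Finally, to pass from the relations for $\overline{\vartheta^*}=\overline{\vartheta}\sl_{2\lambda}H_{6N}$ to those for $\overline{\vartheta}$, I would apply $H_{6N}$ and use $H_{6N}^2=-I$ together with the commutation of the Fricke involution with $U_p$ and $W_p^{6N}$ on $\mathcal{M}_{2\lambda}(6N,\overline{\psi^2})$; the character factor $\psi^2(p)$ contributed by this commutation is exactly what converts $-\bar\psi(p)$ and $p^{\lambda-1}\bar\psi(p)$ into $-\psi(p)$ and $p^{\lambda-1}\psi(p)$. I expect the main obstacle to be the bookkeeping in the $W_p^{6N}$ step---verifying that $L'$ is preserved and, above all, correctly extracting the minus sign from $\chi_{12}$---and then tracking the $\psi\leftrightarrow\bar\psi$ interchange cleanly through the Fricke involution.
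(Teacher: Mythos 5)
Your reduction through \eqref{eq:theta_thetastar} and your direct computations on $\vartheta^*$ are essentially the paper's treatment of two of the four identities: the substitution $x\mapsto W_p^{6N}x$ in \eqref{eq:thetastardef}, the check that $W_p^{6N}L'=L'$, and the character evaluation (giving $\psi(p)$ from the first coordinate and $-1$ from $\chi_{12}$) are exactly how \eqref{eq:theta*wp} is proved; two small inaccuracies are that for $p=3$ the relevant value is $\chi_{12}(5)=-1$, not $\chi_{12}(7)$, and that the $U_p$ coset matrices $\pmatrix{1/\sqrt p}{j/\sqrt p}{0}{\sqrt p}$ do \emph{not} act diagonally on $\R^3$ via \eqref{eq:SL2act} (one gets $\gamma_j x=\bigl(\tfrac{x_1+jx_2+j^2x_3}{p},\,x_2+2jx_3,\,px_3\bigr)$), so the crux of the $U_p$ computation is the character sum over $j$ subject to $x_1+jx_2+j^2x_3\equiv 0\pmod p$, which forces $p\mid x_3$ and which your sketch glosses over.

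The genuine gap is the transport of the $U_p$ identity through $H_{6N}$. The commutation you invoke is valid for the Atkin--Lehner operator: writing $H_{6N}W_p^{6N}=\gamma_0\,W_p^{6N}H_{6N}$ with $\gamma_0\in\Gamma_0(6N)$ and $d_{\gamma_0}\equiv \bar p\pmod{6N/p}$, one gets $f\sl_{2\lambda}H_{6N}\sl_{2\lambda}W_p^{6N}=\bar\chi(p)\,f\sl_{2\lambda}W_p^{6N}\sl_{2\lambda}H_{6N}$ on $\mathcal{M}_{2\lambda}(6N,\chi)$, so your derivation of the second identity from the fourth is sound. But no such scalar commutation exists for $U_p$: conjugation by the Fricke involution carries $U_p$ to (up to character) its Petersson adjoint, and these are proportional only on the $p$-new subspace, not on all of $\mathcal{M}_{2\lambda}(6N,\bar\psi^2)$ --- on the $p$-old span of $h$ and $h\sl V_p$ they genuinely differ. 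Knowing that $W_p^{6N}$ fixes the lines spanned by $\bar\vartheta$ and $\bar\vartheta\sl_{2\lambda}H_{6N}$ and that $U_p$ fixes the second line does not formally imply that $U_p$ fixes the first; and invoking newness to justify the step would be circular, since $p$-newness of the lift is exactly what Proposition~\ref{prop:theta_eigen} is used to prove in Section~\ref{subsec:newproof}. This is why the paper does not argue formally here: it proves Lemma~\ref{lem:Fricke}, an explicit theta-series formula for $\vartheta^*\sl^*_{2\lambda}H_{6N}$ in which the weight $\bar\psi(x_1)$ is replaced by $\psi(6)\bar\psi(x_3/N)$, and then reruns the lattice/character-sum computations on that kernel to obtain the remaining two identities. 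To close your argument you must do the same (or supply some other direct computation of $U_p$ applied to $\bar\vartheta$ itself); the formal Fricke transport cannot deliver the first and third identities.
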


 \begin{proof}
  Since  $\mathcal{W}_{N, \lambda+1/2}$ acts on $z$, it  commutes with  the   operators in $w$.
By \eqref{eq:umdef} we have $\bar f\sl U_p=\bar{f\sl U_p}$ for any $f$, and $\sl^*_{2\lambda}H_{6N}$  is an involution. So  in order to prove  the proposition it will suffice by \eqref{eq:theta_thetastar} to prove the equivalent statements
 \begin{align}
&\vartheta^*(z,w)\sl^*_{2\lambda}H_{6N}\sl U_p=p^{\lambda-1}\bar\psi(p)\vartheta^*(z,w)\sl^*_{2\lambda}H_{6N},
\label{eq:theta*hup}\\
&\vartheta^*(z,w)\sl^*_{2\lambda}H_{6N}\sl^*_{2\lambda} W_p^{6N}=-\bar\psi(p)\vartheta^*(z,w)\sl^*_{2\lambda}H_{6N},
\label{eq:theta*hwp}\\
&\vartheta^*(z,w)\sl U_p=p^{\lambda-1}\psi(p)\vartheta^*(z,w),
\label{eq:theta*up}\\
&\vartheta^*(z,w)\sl^*_{2\lambda} W_p^{6N}=-\psi(p)\vartheta^*(z,w).
\label{eq:theta*wp}\\
\end{align}
We begin with a lemma.
\begin{lemma}\label{lem:Fricke}We have 
\begin{equation*}
	\vartheta^*(z,w)\sl^*_{2\lambda} H_{6N} 
	= y^{-\lambda}v^\frac{1-\lambda}2\psi(6)\sum_{x\in L'}\bar\psi\ptfrac{x_3}{N} \chi_{12}(x_2) e\(\tfrac 12u\langle x,x \rangle\) f_3\(\sqrt v\, \sigma_w^{-1}x\).
\end{equation*}
\end{lemma}
\begin{proof}
From \eqref{eq:SL2act} we have
 \begin{equation}\label{eq:H6nx}
 H_{6N}x=\(\tfrac{x_3}{6N},-x_2,6Nx_1\),
 \end{equation}
from which it follows that  $H_{6N}L'=L'$.
From \eqref{eq:thetastardef}, 
\eqref{eq:sigma_gamma_w} and  \eqref{eq:f3spherical},
we obtain
\begin{align*}
	\vartheta^*(z,w)\sl^*_{2\lambda} H_{6N} 
	&= (\bar{\sqrt{6N}w})^{-2\lambda}\(\im H_{6N}w\)^{-\lambda}v^\frac{1-\lambda}2\sum_{x\in L'}\bar\psi(x_1) \chi_{12}(x_2) e\(\tfrac 12u\langle x,x \rangle\) f_3\(\sqrt v\, \sigma_{H_{6N}w}^{-1}x\)\\
	&= y^{-\lambda}v^\frac{1-\lambda}2\sum_{x\in L'}\bar\psi(x_1) \chi_{12}(x_2) e\(\tfrac 12u\langle x,x \rangle\) f_3\(\sqrt v\,\sigma_w^{-1} H_{6N}^{-1}x\),
\end{align*}
and the lemma follows (using \eqref{eq:gxgx} and \eqref{eq:H6nx})  after replacing $x$ by $H_{6N}x$.
\end{proof}

Let $p\in \{2, 3\}$.
We first consider the statements involving $U_p$.
  Define
\begin{equation*}
\gamma_j:=\pMatrix{1/\sqrt p}{j/\sqrt p}0{\sqrt p}, 
\end{equation*}
so that
\begin{equation}\label{eq:theta*u3}
 \vartheta^*(z,w)\sl U_p=\mfrac1p\sum^{p-1}_{j=0}\vartheta^*(z,\gamma_jw).
 \end{equation}
For each $j$,  we have
\begin{equation*}
\sigma_{\gamma_j w}=\gamma_j\sigma_w,\ \ \  \operatorname{Im}(\gamma_jw)=\mfrac{y}p, \ \ \  \text{and}\ \ \  \langle \gamma_jx,\gamma_jx \rangle=\langle x,x \rangle.
\end{equation*}
We find that
\begin{equation}
        \gamma_jx=\(\tfrac{x_1+jx_2+j^2x_3}p,x_2+2jx_3,p x_3\),\qquad
  \gamma^{-1}_jx=\(p x_1-j x_2+\tfrac{j^2 x_3}p,x_2-\tfrac{2jx_3}p,\tfrac{x_3}p\), 
    \end{equation}
    \begin{equation}
        \gamma^{-1}_jL'=\left\{x \in \Z \oplus N\Z\oplus \mfrac{6N}p\Z: \ \ x_1+jx_2+j^2x_3 \equiv 0 \pmod p\right\}.
    \end{equation}

From these facts and \eqref{eq:thetastardef} we obtain
\begin{equation}\label{eq:theta_gj}
\begin{aligned}
\vartheta^*(z,\gamma_jw)
	&=\(\mfrac{y}p\)^{-\lambda}v^{\frac{1-\lambda}2}\sum_{x\in L'}\bar\psi\(x_1\) \chi_{12}(x_2) e\(\tfrac 12u\langle x,x \rangle\) f_3\(\sqrt v\, \sigma_w^{-1}\gamma^{-1}_jx\)\\
	&=p^\lambda\psi(p)y^{-\lambda}v^{\frac{1-\lambda}2} \sum_{x \in \gamma^{-1}_jL'} \bar\psi(x_1) \chi_{12}(x_2+2jx_3) e\(\tfrac 12u\langle x,x \rangle\) f_3\(\sqrt v\, \sigma_w^{-1}x\).
\end{aligned}
\end{equation}
Let $F_{z,w}(x) = y^{-\lambda}v^{\frac{1-\lambda}2} \bar\psi(x_1) e\(\tfrac 12u\langle x,x \rangle\) f_3(\sqrt v\, \sigma_w^{-1}x)$ for the moment; then
\begin{equation}
    \vartheta^*(z,w) \sl U_p = p^{\lambda-1} \psi(p) \sum_{\substack{x_1\in \Z \\ x_2 \in N\Z \\ x_3 \in (6N/p)\Z}} F_{z,w}(x) \sum_{\substack{j\bmod p \\ x_1+jx_2+j^2x_3 \equiv 0\spmod p}} \chi_{12}(x_2+2jx_3).
\end{equation}
The inner sum is periodic in $x_1$ modulo $p$, in $x_2$ modulo $12$, and in $x_3$ modulo $6$, so we can compute its value in every case.
We find that the inner sum equals zero unless $x_3\equiv 0\pmod p$, in which case it equals $\chi_{12}(x_2)$.
Thus we can change the condition $x_3\in (6N/p)\Z$ to $x_3\in 6N\Z$ at the cost of multiplying by $p$; it follows that
\begin{equation}
    \vartheta^*(z,w) \sl U_p = p^\lambda \psi(p) \vartheta^*(z,w).
\end{equation}

To establish \eqref{eq:theta*hup}, 
write (for the moment) $G(z, w)=\vartheta^*(z,w)\sl^*_{2\lambda} H_{6N}$.
Using Lemma~\ref{lem:Fricke},
 we find in 
 analogy with \eqref{eq:theta_gj} 
that
\begin{equation*}
	G(z, \gamma_jw)
	= p^\lambda\bar\psi(p) y^{-\lambda}v^\frac{1-\lambda}2\psi(6)\sum_{x\in \gamma_j^{-1}L'}\bar\psi\ptfrac{x_3}{N} \chi_{12}(x_2+2jx_3) e\(\tfrac 12u\langle x,x \rangle\) f_3\(\sqrt v\, \sigma_w^{-1}x\).
\end{equation*}
 The rest of the computation proceeds exactly as  above.

  For $x\in L'$  and $p\in \{2,3\}$ we find using \eqref{eq:wp6N} that
\begin{multline*}\label{eq:w36N}
W_p^{6N}x=(x_1', x_2', x_3')\\
=\(p\alpha^2 x_1+\alpha x_2+\tfrac{x_3}p,12N\alpha\beta x_1+\(p\alpha+\tfrac{6N\beta}p\)x_2+2x_3,\tfrac{36N^2\beta^2}p x_1+6N\beta x_2+px_3\).
\end{multline*}
We  have  $x'_2 \equiv  (2p\alpha-1)x_2\pmod{12}$ and (since $p\alpha\equiv 1\pmod N$) we have  $x'_1 \equiv \alpha x_1 \pmod N$.   
We also have $W_p^{6N}L'=L'$: the containment $W_p^{6N} L' \subseteq L'$ is immediate, and since $W_p^{6N}$ is an involution we get the other containment.
Arguing as in the proof of Lemma~\ref{lem:Fricke} gives
 \begin{equation}
	\vartheta^*(z,w)\sl_{2\lambda}^*W_p^{6N} 
	= y^{-\lambda}v^\frac{1-\lambda}2\sum_{x\in L'}\bar\psi(\alpha x_1) \chi_{12}\((2p\alpha-1)x_2\) e\(\tfrac 12u\langle x,x \rangle\) f_3\(\sqrt v\, \sigma_w^{-1}x\).
\end{equation}
From \eqref{eq:wp6N} we see that 
\begin{equation*}
2p\alpha-1\equiv \begin{cases} 7\pmod{12}\ \ \ \text{if $p=2$},\\
						5\pmod{12}\ \ \ \text{if $p=3$},\\
			\end{cases}
\end{equation*}
which  gives \eqref{eq:theta*wp}.

An analogous argument using Lemma~\ref{lem:Fricke} shows that
\begin{equation*}
\vartheta^*(z,w)\sl^*_{2\lambda} H_{6N} \sl^*_{2\lambda} W_p^{6N}=v^\frac{1-\lambda}2 y^{-\lambda}\psi(6)\sum_{x \in L'} \bar\psi\pmfrac{px_3}{N}
\chi_{12}\((2p\alpha-1)x_2\)e\(\tfrac12u\langle x,x \rangle\)f_3\(\sqrt{v}\sigma^{-1}_wx\),
\end{equation*}
which gives \eqref{eq:theta*hwp} and  finishes the proof of Proposition~\ref{prop:theta_eigen}.
\end{proof}

\section{\texorpdfstring{The Shimura lift when $(r,6)=1$}{liftr61}}
\label{sec:lift}
\subsection{Fourier expansion and transformation properties of the Shimura lift}
Here we prove a  version of the main theorem in which we do not require $(N,6)=1$.
Recall that for
	\begin{equation}\label{eq:Fdef}
		F(z) = \sum_{n\equiv r\spmod{24}} a(n) q^\frac n{24} \in  S_{\lambda+\frac12}(N,\psi\nu^r),
	\end{equation}
we have the lift
\begin{equation}
    \Sh_t(F) = \sum_{n=0}^\infty b(n) q^n,
\end{equation}
where $b(n)$ is defined as in \eqref{eq:stdef} by
	\begin{equation}
		\sum_{n=1}^\infty \frac{b(n)}{n^s} = L\(s-\lambda+1,\psi \ptfrac \bullet t\) \sum_{n=1}^\infty \frac{\chi_{12}(n)a(tn^2)}{n^s}.
	\end{equation}

\begin{theorem}\label{thm:transform}
Let $r$ be an integer with $(r,6)=1$ and let $t$ be a squarefree positive integer.
Suppose that $\lambda,N\in \Z^+$ and let $\psi$ be a Dirichlet character modulo $N$.  If $\lambda \geq 2$ then
$\Sh_t(F)\in S_{2\lambda}(6N, \psi^2)$, while if $\lambda = 1$ then $\Sh_t(F)\in M_{2\lambda}(6N, \psi^2)$ and $\Sh_t(F)$ vanishes at $\infty$. Furthermore,  the Hecke equivariance \eqref{equivariance3nmidr} holds.
\end{theorem}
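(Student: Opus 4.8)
The plan is to prove the theorem first in the case $r=t=1$ by means of the theta integral, and then to remove the restrictions on $r$ and $t$ using the operators and comparisons already assembled. For $r=t=1$ we have $F\in S_{\lambda+\frac12}(N,\psi\nu)$ and, by \eqref{eq:theta-bar-transform-z}, $\vartheta(\cdot,w)\in\mathcal{M}_{\lambda+\frac12}(N,\psi\nu)$, so the integrand $v^{\lambda+\frac12}F(z)\overline{\vartheta(z,w)}$ is $\Gamma_0(N)$-invariant against the measure $du\,dv/v^2$; since $F$ is cuspidal it decays rapidly and dominates the polynomial growth of $\vartheta$, so
\[
\Phi(w)=\int_{\Gamma_0(N)\backslash\H} v^{\lambda+\frac12}F(z)\overline{\vartheta(z,w)}\,\frac{du\,dv}{v^2}
\]
converges absolutely. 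The transformation law \eqref{eq:theta-bar-transform-w} for $\overline{\vartheta(z,\cdot)}$ passes directly through the integral, giving $\Phi\|_{2\lambda}\gamma=\psi^2(\gamma)\Phi$ for $\gamma\in\Gamma_0(6N)$, so that $\Phi\in\mathcal{M}_{2\lambda}(6N,\psi^2)$ as an a priori real-analytic automorphic form. Note in particular that the sharp level $6N$ is built into the construction of $\vartheta$ and is not merely inherited from the standard lift.

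The heart of the argument is the computation of the Fourier expansion of $\Phi$, and this is the step I expect to be the \emph{main obstacle}. I would read the expansion off from the values $\Phi(iy)$ on the imaginary $w$-axis, inserting the conjugate of the splitting formula of Lemma~\ref{lem:theta-z-iy-splitting}, which writes $\vartheta(z,iy)$ as a finite sum over even $\mu$, a sum over $g\in\Z$, and a sum over $\gamma\in\Gamma_\infty\backslash\Gamma_0(N)$ of an explicit term built from $\vartheta_{1,\mu}$ and a Gaussian in $\im(\gamma z)$. Using $\Gamma_0(N)$-invariance of the full integrand, the sum over $\Gamma_\infty\backslash\Gamma_0(N)$ unfolds the $z$-domain $\Gamma_0(N)\backslash\H$ to $\Gamma_\infty\backslash\H$, reducing the computation to $\int_0^\infty\!\int_0^1$. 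Substituting the Fourier series $F=\sum a(n)q^{n/24}$ together with the expansion \eqref{eq:theta-1-mu-fourier-exp} of $\vartheta_{1,\mu}$, the $u$-integral over $[0,1]$ selects the matching frequencies and the remaining $v$-integral is evaluated by standard Hermite/Gaussian and $\Gamma$-integral identities. The genuine labor is in organizing the resulting arithmetic double sum into an Euler product: the sum over $g$ with the factor $\bar\psi(-g)g^{\lambda-\mu}$ produces $L(s-\lambda+1,\psi)$, while the pairing of the $a(n^2)$ with the $x_2^2$-frequencies of $\vartheta_{1,\mu}$ produces $\sum\chi_{12}(n)a(n^2)n^{-s}$. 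The outcome is a $q$-expansion whose coefficients are exactly those of \eqref{eq:stdef} with $t=1$, so $\Phi=\Sh_1(F)$; holomorphy is forced by the holomorphic-type Hermite factor $(x_1-ix_2-x_3)^\lambda$ in $f_3$, which leaves only pure exponentials $e(nw)$ with $n\geq0$, giving $\Phi\in M_{2\lambda}(6N,\psi^2)$.

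To settle holomorphic versus cuspidal behavior I would argue as follows. The computed expansion has no constant term at $\infty$ when $\lambda\geq2$, and for the remaining cusps I would invoke the comparison of lifts: by Lemma~\ref{lem:eta_to_theta} the form $F\sl V_{24}$ carries a theta multiplier, its standard Shimura lift $\Shl_t(F\sl V_{24})$ is cuspidal for $\lambda\geq2$ by the classical theory of Shimura, Niwa, and Cipra, and Proposition~\ref{prop:shcompare} identifies $\Sh_t(F)$ with the $\chi_{12}$-twist of $\Shl_t(F\sl V_{24})$. Since a Dirichlet twist of a cusp form is again a cusp form, $\Sh_t(F)$ is cuspidal, and combined with the level $6N$ from the theta integral this gives $\Sh_t(F)\in S_{2\lambda}(6N,\psi^2)$ for $\lambda\geq2$. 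For $\lambda=1$ the target weight is $2$, where the lift may acquire a non-cuspidal component away from $\infty$; the expansion still exhibits holomorphy and the vanishing of the constant term at $\infty$, yielding the weaker conclusion $\Sh_t(F)\in M_2(6N,\psi^2)$ with vanishing at $\infty$.

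Finally I would remove the restrictions $r=t=1$. For general squarefree $t$ with $r=1$, the case $t\not\equiv r\pmod{24}$ is vacuous since then $a(tn^2)=0$ identically; otherwise I would diagonalize $S_{\lambda+\frac12}(N,\psi\nu)$ by the operators $T_{p^2}$ for $p\geq5$ and use multiplicity one on the integral-weight side together with \eqref{equivariance3nmidr} to show that, for each eigenform, $\Sh_t(F)$ is a scalar multiple of $\Sh_1(F)$, hence lies in $S_{2\lambda}(6N,\psi^2)$; linearity then gives the claim for all $F$. For general $r$ with $(r,6)=1$ the identical construction applies with the multiplier $\nu^r$ in place of $\nu$: the $w$-side transformation is insensitive to $r$ (it reflects only the $\Gamma_0(6N)$-action on the lattice $L'$ and on $\chi_{12}(x_2)$, $\bar\psi(x_1)$), so the kernel again lies in $\mathcal{M}_{2\lambda}(6N,\psi^2)$ and the same unfolding yields \eqref{eq:stdef}, while modularity and cuspidality are cross-checked through Proposition~\ref{prop:shcompare} and Corollary~\ref{cor:v-op}. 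The Hecke equivariance \eqref{equivariance3nmidr} is then read directly off the Fourier expansion: applying \eqref{eq:heckedef24} to $F$ and tracking the effect on the Dirichlet series \eqref{eq:stdef} matches the $p$-Euler factor of $\Sh_t(T_{p^2}F)$ with that of $\pfrac{12}{p}\,T_p\,\Sh_t(F)$, where $T_p$ is the integral-weight operator \eqref{eq:inthecke}.
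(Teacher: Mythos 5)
Your skeleton for $r=t=1$ — absolute convergence of the theta integral, modularity in $w$ from \eqref{eq:theta-bar-transform-w}, unfolding against Lemma~\ref{lem:theta-z-iy-splitting}, and assembling the Dirichlet series \eqref{eq:stdef} — is exactly the paper's argument, and your reading of the Hecke equivariance off the coefficients is the paper's Proposition~\ref{prop:hecke_equiv}. But two of your remaining steps contain genuine gaps. Holomorphy of $\Phi$ is not ``forced by the holomorphic-type Hermite factor'': $\Phi$ is a priori only real-analytic in $w$, and the paper must first show (Lemma~\ref{lem:Phi-holomorphic}, using the differential equation \eqref{eq:PDE}, which holds \emph{because $F$ is cuspidal}) that each Fourier coefficient is a constant plus an incomplete-gamma term $\Gamma(1-2\lambda,4\pi ny)$, and then eliminate the incomplete-gamma terms by combining the decay estimate of Lemma~\ref{lem:phi_inf} with the inverse Mellin transform of $\Lambda(s)$. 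More seriously, your cuspidality argument for $\lambda\geq 2$ runs the twist in the wrong direction: Proposition~\ref{prop:shcompare} says $\Shl_t(F\sl V_{24})=\Sh_t(F)\otimes\pfrac{12}{\bullet}$, \emph{not} that $\Sh_t(F)$ is the $\chi_{12}$-twist of the classical lift. Twisting by $\chi_{12}$ annihilates every coefficient $b(n)$ with $(n,6)>1$, and these are generally nonzero — from \eqref{eq:bndef} one has $b(2)=\psi(2)\ptfrac{2}{t}2^{\lambda-1}a(t)$ — so cuspidality of the twist cannot be pulled back to $\Sh_t(F)$. (Recovering cusp conditions from exactly this twisted relation is the hard point that forces the paper to develop Lemma~\ref{lem:cusps} and Proposition~\ref{prop:shim_up}, and only under the extra hypothesis $(N,6)=1$.) For $\lambda\geq2$ the paper's proof is instead elementary: the Hecke bound $a(n)\ll n^{\frac\lambda2+\frac14}$ gives $b(n)\ll_{t,\epsilon}n^{\lambda+\frac12+\epsilon}$, and since $\lambda+\tfrac12+\epsilon<2\lambda-1$ a standard argument forces vanishing at every cusp; for $\lambda=1$ the same bound gives holomorphy at the cusps.

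Your reduction to general $r$ and $t$ also does not work as described. There is no ``identical construction with $\nu^r$ in place of $\nu$'': the $z$-multiplier of the kernel comes from the rank-one constituent $\vartheta_{1,\mu}$, which is built on $\eta(Nz)$, and for general $r$ coprime to $6$ there is no unary theta series with multiplier $\nu_N^r$ (among odd powers of $\eta$ only $\eta$ and $\eta^3$ are theta series, which is exactly why the paper needs a separate kernel and a separate theorem for $(r,6)=3$). The multiplicity-one argument for general $t$ is circular: to compare $\Sh_t(F)$ with $\Sh_1(F)$ inside $S_{2\lambda}(6N,\psi^2)$ you must already know that $\Sh_t(F)$ is a modular form of that level, which is the statement being proved (and when $a(1)=0$, so that $\Sh_1(F)=0$, proportionality yields nothing). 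The paper's device resolves both generalizations at once: for $t\equiv r\pmod{24}$, Corollary~\ref{cor:v-op} gives $F\sl V_t\in S_{\lambda+1/2}\bigl(Nt,\psi\ptfrac{\bullet}{t}\nu\bigr)$ because $\nu^{rt}=\nu$; the already-proved $r=t=1$ case applies to $F\sl V_t$ at level $Nt$; the identity \eqref{eq:s1_st}, $\Sh_1(F\sl V_t)=\chi_{12}(t)\,\Sh_t(F)\sl V_t$, is checked directly on Dirichlet series; and Li's Lemma~4 descends the modularity of $\Sh_t(F)\sl V_t$ from level $6Nt$ to the asserted level $6N$ for $\Sh_t(F)$ itself. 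Without some substitute for these steps, your proposal proves the theorem only for $r=t=1$, and even there only up to the holomorphy and cuspidality issues above.
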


We begin by assuming that $r=1$. Suppose that
\begin{equation}
	F(z) = \sum_{n\equiv 1\spmod{24}} a(n) q^{\frac n{24}} \in S_{\lambda+\frac12}(N,\psi\nu).
\end{equation}
We may  assume by \eqref{eq:krcond1} that $\psi(-1)=(-1)^\lambda$.
Define
\begin{equation}\label{eq:sh_lift_def}
	\Phi(w) = c^{-1}\int_{\Gamma_0(N)\backslash \H} v^{\lambda+\frac12} F(z) \overline{\vartheta(z,w)} \, \frac{dudv}{v^2},
\end{equation}
where $c:= 2(-12)^\lambda N^{\frac14 + \frac\lambda2}$. We will show that $\Phi=\Sh_1(F)$.

The proof of Proposition~2.8 of \cite{cipra}, with only cosmetic changes, shows that the integral defining $\Phi(w)$ converges absolutely.
The integral is well-defined because of \eqref{eq:theta-bar-transform-z}.
By \eqref{eq:theta-bar-transform-w} we have
\begin{equation}\label{eq:phi_trans}
	\Phi(w) \in \mathcal M_{2\lambda}(6N,\psi^2).
\end{equation}

Our next aim is to compute the Fourier expansion of $\Phi(w)$.
We first examine the behavior of $\Phi(iy)$ as $y\to\infty$.
By Lemma~\ref{lem:theta-z-iy-splitting}  we have
\begin{multline}
	\Phi(iy) = 2(-1)^\lambda c^{-1}\sum_{\substack{\mu=0 \\ \mu \text{ even}}}^\lambda c_\mu y^{1-\mu} \sum_{g=1}^\infty \psi(g) g^{\lambda-\mu} \\
	\times \sum_{\gamma\in \Gamma_\infty \backslash \Gamma_0(N)} \psi(\gamma)\nu(\gamma) \int_{\Gamma_0(N)\backslash \H} v^{\lambda+\frac12} (\overline{cz+d})^{-\lambda-\frac12} \overline{h(\gamma z,y)} F(z) \, \frac{dudv}{v^2},
\end{multline}
where
\begin{equation}
	h(z,y) = v^{\mu-\lambda}e^{-\frac{6\pi y^2 g^2}v}  \vartheta_{1,\mu}\pfrac{z}{N}.
\end{equation}
Since $\psi(\gamma)\nu(\gamma) F(z) = (cz+d)^{-\lambda-\frac12} F(\gamma z)$ we have
\begin{align}
	\Phi(iy) &= \frac{2(-1)^\lambda}{c}\sum_{\substack{\mu=0 \\ \mu \text{ even}}}^\lambda c_\mu y^{1-\mu} \sum_{g=1}^\infty \psi(g) g^{\lambda-\mu} \sum_{\gamma\in \Gamma_\infty \backslash \Gamma_0(N)} \int_{\Gamma_0(N)\backslash \H} \im(\gamma z)^{\lambda+\frac12} \overline{h(\gamma z,y)} F(\gamma z) \, \frac{dudv}{v^2} \\
		&= \frac{2(-1)^\lambda}{c}\sum_{\substack{\mu=0 \\ \mu \text{ even}}}^\lambda c_\mu y^{1-\mu} \sum_{g=1}^\infty \psi(g) g^{\lambda-\mu} \int_{\Gamma_\infty\backslash \H} v^{\lambda+\frac12} \overline{h(z,y)} F(z) \, \frac{dudv}{v^2}.
\end{align}
For fixed $v$ we have
\begin{equation} \label{eq:Phi-fourier-1}
	\int_0^1 \overline{h(u+iv,y)} F(u+iv) \, du = \sum_{n\equiv 1\spmod{24}} a(n) e^{-\frac{\pi n v}{12}} \int_0^1 \overline{h(u+iv,y)} e\pfrac{nu}{24} \, du.
\end{equation}
By \eqref{eq:theta-1-mu-fourier-exp}, when $\mu$ is even we have
\begin{equation}
	\vartheta_{1,\mu}\pfrac zN = 2N^\frac\mu2v^{-\frac\mu2} \sum_{n=1}^\infty \chi_{12}(n) H_{\mu}\left( \sqrt{\tfrac 13\pi n^2 v} \right) e\left(\tfrac{1}{24}n^2 z\right).
\end{equation}
Thus \eqref{eq:Phi-fourier-1} equals
\begin{equation}
	2 N^\frac\mu2 v^{\frac\mu2-\lambda} e^\frac{-6\pi y^2 g^2}v  \sum_{n=1}^\infty \chi_{12}(n) a(n^2) e^\frac{-\pi n^2v}6  H_{\mu}\left( \sqrt{\tfrac 13\pi n^2 v} \right)
\end{equation}
and we obtain
\begin{multline} \label{eq:phi-iy-mid}
	\Phi(iy) = \frac{4(-1)^\lambda}{c}\sum_{\substack{\mu=0 \\ \mu \text{ even}}}^\lambda c_\mu N^\frac\mu2 y^{1-\mu} \sum_{g=1}^\infty \psi(g) g^{\lambda-\mu} \sum_{n=1}^\infty \chi_{12}(n) a(n^2) \\
	\times \int_0^\infty v^\frac{\mu-3}2 e^{\frac{-6\pi y^2 g^2}v-\frac{\pi n^2v}6}  H_{\mu}\left( \sqrt{\tfrac 13\pi n^2 v} \right) \, dv.
\end{multline}
From this we can show that $\Phi(iy)$ decays polynomially, or better, as $y\to\infty$.

\begin{lemma}\label{lem:phi_inf}
As $y\to\infty$ we have $\Phi(iy)\ll_\lambda y^{-\lambda}$. In particular, $\Phi(i\infty)=0$.
\end{lemma}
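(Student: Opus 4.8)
The plan is to start from the explicit formula \eqref{eq:phi-iy-mid} for $\Phi(iy)$ and to show that each of the finitely many even values $\mu\in\{0,\dots,\lambda\}$ contributes a quantity which in fact decays exponentially in $y$; the stated bound $\Phi(iy)\ll_\lambda y^{-\lambda}$ is then immediate. The heart of the matter is the inner integral
\[
I_{\mu,g,n}(y)=\int_0^\infty v^{\frac{\mu-3}2}e^{-\frac{6\pi y^2g^2}v-\frac{\pi n^2v}6}H_\mu\(\sqrt{\tfrac13\pi n^2v}\)\,dv.
\]
First I would expand the Hermite polynomial: since $H_\mu$ is a polynomial of degree $\mu$ (and an even polynomial when $\mu$ is even), $H_\mu(\sqrt{\pi n^2v/3})$ is a finite $\C$-linear combination of monomials $(n^2v)^{i/2}$, $0\le i\le\mu$, with coefficients depending only on $\mu$. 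Substituting this expansion reduces $I_{\mu,g,n}(y)$ to a finite sum of integrals of the shape $\int_0^\infty v^{s-1}e^{-A/v-Bv}\,dv$ with $A=6\pi y^2g^2$, $B=\pi n^2/6$, and $s$ a half-integer bounded in terms of $\mu$.

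Next I would recognize each such integral as a $K$-Bessel function via the classical evaluation $\int_0^\infty v^{s-1}e^{-A/v-Bv}\,dv=2(A/B)^{s/2}K_s(2\sqrt{AB})$. The crucial computation is that here $2\sqrt{AB}=2\pi ygn$, so every term carries the factor $K_s(2\pi ygn)$. Using the standard bound $K_s(x)\ll_s x^{-1/2}e^{-x}$ valid for $x\ge 1$, together with $2\pi ygn\ge 2\pi y$ for $g,n\ge1$, I obtain for $y\ge1$ an estimate of the form $|I_{\mu,g,n}(y)|\ll_\mu P(y,g,n)\,e^{-2\pi ygn}$ for a fixed polynomial $P$.

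Finally I would insert this into \eqref{eq:phi-iy-mid}. Since $F$ is a cusp form of weight $\lambda+\frac12$, the trivial bound on cusp-form coefficients gives $a(n^2)\ll n^{\lambda+1/2}$, and the remaining factors $y^{1-\mu}$ and $g^{\lambda-\mu}$ are polynomial; the exponential factor $e^{-2\pi ygn}$ then guarantees absolute convergence of the double sum over $g,n\ge1$ for every $y$ and, as $y\to\infty$, forces it to be dominated by its $(g,n)=(1,1)$ term. Thus $\Phi(iy)\ll_\lambda (\text{polynomial in }y)\,e^{-2\pi y}$, which is certainly $\ll_\lambda y^{-\lambda}$; in particular $\Phi(i\infty)=0$. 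The only points requiring care are the justification of interchanging summation and integration (immediate from the exponential decay just established) and recording a uniform bound for $K_s$; neither is a genuine obstacle, so the estimate is really careful bookkeeping resting on the single observation that the $v$-integral is a $K$-Bessel function with argument $2\pi ygn$.
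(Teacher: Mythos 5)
Your proof is correct, but it takes a genuinely different route from the paper's. The paper never evaluates the $v$-integral: it bounds $H_\mu(x)\ll(1+x)^\mu$ and $a(n^2)\ll n^\alpha$, performs the $g$- and $n$-sums \emph{first} via the elementary estimate $\sum_m m^A e^{-2Bm^2}\ll_A B^{-(A+1)/2}e^{-B}$ (applied with $B=3\pi y^2/v$ and $B=\pi v/12$ respectively), and then observes that the resulting powers of $y$ collapse exactly to $y^{1-\mu}(y^2)^{(\mu-\lambda-1)/2}=y^{-\lambda}$, after which $e^{-3\pi y^2/v}\le e^{-3\pi/v}$ (for $y\ge1$) makes the leftover $v$-integral converge uniformly. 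You instead evaluate the $v$-integral exactly as $2(A/B)^{s/2}K_s(2\sqrt{AB})$ with $2\sqrt{AB}=2\pi ygn$, which exposes exponential decay $e^{-2\pi ygn}$ in every term and hence yields $\Phi(iy)\ll P(y)e^{-\pi y}$ — strictly stronger than the stated $y^{-\lambda}$. That stronger conclusion is true, and the paper does establish it, but only later and by a different mechanism (Lemma~\ref{lem:Phi-holomorphic}, via the differential equation \eqref{eq:PDE} and the incomplete-Gamma Fourier expansion); so your argument would actually let one bypass the "in particular $\Phi(iy)\ll e^{-cy}$" step there. Two small points of bookkeeping in your write-up: the exponent $s=(\mu-1)/2+j$ can be negative (e.g.\ $s=-1/2$ when $\mu=j=0$), so the prefactor $(6yg/n)^s$ is not literally polynomial in $(y,g,n)$, though for $y,g,n\ge1$ it is bounded by one, which is all you need; and to sum the double series it is cleanest to split $e^{-2\pi ygn}\le e^{-\pi y}e^{-\pi gn}$ rather than to argue domination by the $(g,n)=(1,1)$ term. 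Neither affects correctness.
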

\begin{proof}
	Since $F$ is a cusp form and $H_\mu(x)\ll (1+x)^\mu$, there exists a constant $\alpha>0$, depending only on $\lambda$, such that $a(n^2) H_{\mu}(\sqrt{\pi n^2 v/3}) \ll n^\alpha (1+v)^{\frac\mu2}$ for $\mu\leq \lambda$.
	Thus
	\begin{equation}
		\Phi(iy) \ll_{\lambda,N} \sum_{\mu=0}^\lambda y^{1-\mu} \int_0^\infty v^\frac{\mu-3}2(1+v)^{\frac\mu2} \sum_{g=1}^\infty g^{\lambda-\mu} e^\frac{-6\pi y^2 g^2}v \sum_{n=1}^\infty n^\alpha e^\frac{-\pi n^2 v}6 \, dv.
	\end{equation}
	For any $A,B>0$ we have
	\begin{equation}
		\sum_{m=1}^\infty m^A e^{-2B m^2} \leq e^{-B}\sum_{m=1}^\infty m^A e^{-B m^2} \ll e^{-B}\int_0^\infty x^A e^{-Bx^2}\, dx \ll_A B^{-\frac{A+1}2}e^{-B}.
	\end{equation}
	It follows that if $y\geq 1$ then
	\begin{align}
		\Phi(iy) 
		&\ll_{\lambda,N} \sum_{\mu=0}^\lambda y^{1-\mu} \int_0^\infty v^\frac{\mu-3}2(1+v)^{\frac \mu2} (y^2/v)^\frac{\mu-\lambda-1}2 e^{-3\frac{\pi y^2}v} v^{-\frac{\alpha+1}2} e^{-\frac{\pi v}{12}} \, dv \\
		&\ll_{\lambda,N} y^{-\lambda}\sum_{\mu=0}^\lambda \int_0^\infty v^{\frac{\lambda-\alpha-3}2}(1+v)^{\frac \mu2} e^{-\frac{3\pi}v -\frac{\pi v}{12}} \, dv \ll_{\lambda,N} y^{-\lambda}.
	\end{align}
	Thus $\Phi(i\infty)=0$.
\end{proof}

Now, following the argument  in the proof of Proposition 2.15 of \cite{cipra}, we can show that $\Phi(iy)$ decays exponentially as $y\to\infty$.

\begin{lemma} \label{lem:Phi-holomorphic}
There exist complex numbers $b(n)$ and $c(-n)$, $n\in \N$, such that
\begin{equation}
    \Phi(w) = \sum_{n>0} b(n) e(nw) + \sum_{n>0} c(-n) \Gamma(1-2\lambda,4\pi ny) e(-nw),
\end{equation}
where $\Gamma(a,z)$ is the incomplete gamma function (see \cite[\S 8.2]{DLMF}).
In particular, as $y\to \infty$ we have $\Phi(iy) \ll e^{-cy}$ for some $c>0$.
\end{lemma}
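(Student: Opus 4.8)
The plan is to identify $\Phi$ as a harmonic Maass form of weight $2\lambda$ on $\Gamma_0(6N)$ with character $\psi^2$: the transformation law and real-analyticity are already recorded in \eqref{eq:phi_trans}, so it remains to prove that $\Phi$ is annihilated by the weight-$2\lambda$ hyperbolic Laplacian $\Delta_{2\lambda,w}$ in the variable $w$, normalized so that holomorphic forms of weight $2\lambda$ lie in its kernel. Granting this, the two families of terms in the statement are exactly the exponentially decaying solutions of the second-order ordinary differential equation satisfied by the Fourier coefficients of a weight-$2\lambda$ harmonic form, and the growth input from Lemma~\ref{lem:phi_inf} will eliminate the remaining solutions and yield exponential decay.

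First I would prove the harmonicity $\Delta_{2\lambda,w}\Phi=0$. Since the integral \eqref{eq:sh_lift_def} converges absolutely (by the proof of Proposition~2.8 of \cite{cipra}, which also permits differentiation under the integral sign), it suffices to understand the action of $\Delta_{2\lambda,w}$ on $\overline{\vartheta(z,w)}$. The key is a differential identity, in the spirit of \cite{niwa} and \cite{cipra}, intertwining $\Delta_{2\lambda,w}$ acting on $\overline{\vartheta(z,w)}$ with the weight-$(\lambda+\tfrac12)$ Laplacian $\Delta_{\lambda+\frac12,z}$; both operators descend from the Casimir element acting on the Gaussian built into $f_3$. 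Using the self-adjointness of $\Delta_{\lambda+\frac12,z}$ with respect to the Petersson pairing, I would move it onto $v^{\lambda+\frac12}F(z)$; the boundary contributions in Stokes' theorem over $\Gamma_0(N)\backslash\H$ vanish because $F$ is cuspidal, and $\Delta_{\lambda+\frac12,z}F=0$ because $F$ is holomorphic, whence $\Delta_{2\lambda,w}\Phi=0$. This adaptation of the argument in Proposition~2.15 of \cite{cipra} — establishing the kernel identity and justifying the integration by parts with vanishing boundary terms — is the step I expect to be the main obstacle.

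Granting harmonicity, I would write $\Phi(w)=\sum_{m\in\Z}a(m,y)e(m\xi)$ with $w=\xi+iy$ and insert this into $\Delta_{2\lambda,w}\Phi=0$; for each $m$ the coefficient $a(m,y)$ then satisfies a second-order linear ODE. For $m\neq 0$ its solution space is spanned by one exponentially decaying and one exponentially growing solution, the decaying one being $e(mw)$ when $m>0$ and $\Gamma(1-2\lambda,4\pi|m|y)e(mw)$ when $m<0$ (see \cite[\S 8.2]{DLMF}); for $m=0$ it is spanned by the constant and by $y^{1-2\lambda}$. To select the decaying solutions I would sharpen Lemma~\ref{lem:phi_inf}: evaluating the $v$-integral in \eqref{eq:phi-iy-mid} as a $K$-Bessel function shows that $\Phi(iy)\ll e^{-cy}$ for some $c>0$. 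Since the various modes have pairwise distinct asymptotics as $y\to\infty$ (a constant, a power $y^{1-2\lambda}$, and exponentials $e^{\pm 2\pi|m|y}$), this exponential decay of $\Phi(iy)=\sum_m a(m,y)$ forces the coefficient of every growing, constant, or purely polynomial solution to vanish. What remains is precisely
\[
\Phi(w)=\sum_{n>0}b(n)e(nw)+\sum_{n>0}c(-n)\Gamma(1-2\lambda,4\pi ny)e(-nw),
\]
and since each surviving term is $O(e^{-2\pi ny})$ the asserted exponential decay follows.
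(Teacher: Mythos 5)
Your overall route is the same as the paper's, not a different one: your harmonicity statement $\Delta_{2\lambda,w}\Phi=0$ is literally the paper's PDE, since
$\Delta_{2\lambda}=-4y\bigl(y\tfrac{\partial}{\partial w}-i\lambda\bigr)\tfrac{\partial}{\partial\bar w}$,
and the paper obtains that PDE by citing Theorem~2.14 of Cipra (your intertwining-plus-Stokes derivation is a re-proof of the cited result, which is indeed how Cipra proves it). The per-mode ODE, its solution basis (including $1$ and $y^{1-2\lambda}$ in the zero mode), and the plan to eliminate the bad solutions by growth considerations all match the paper's proof. Your observation that one needs better-than-polynomial decay to exclude the $y^{1-2\lambda}$ solution is a fair point, but it only matters once individual modes have been isolated, and that is where your argument breaks.

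The genuine gap is the selection step. You claim that exponential decay of $\Phi(iy)=\sum_m a(m,y)$, together with the pairwise distinct asymptotics of the basis solutions, forces every growing coefficient to vanish. With infinitely many modes this inference is false, because growing modes can cancel along the single vertical line $\xi=0$. Concretely,
\begin{equation}
\Psi(w)=\sum_{m\ge 0}\frac{(-1)^m}{(m!)^2}\,e(-mw)=J_0\bigl(2e^{-\pi i w}\bigr)
\end{equation}
is holomorphic (hence satisfies the PDE), $1$-periodic, and each mode $m\ge 1$ grows like $e^{2\pi m y}$, yet $\Psi(iy)=J_0(2e^{\pi y})\ll e^{-\pi y/2}$ decays exponentially; the growth only reappears off the imaginary axis (e.g.\ at $\xi=\tfrac12$). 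So pointwise decay at $\xi=0$ can never rule out the solutions $b(n)e(nw)$, $n<0$, that your argument must exclude. What is actually needed is decay of $\Phi(\xi+iy)$ uniformly in $\xi$ (equivalently, of its $L^2$-norm over a period), so that Parseval or Fourier inversion isolates each $a(m,y)$; this is precisely the role of the orthogonality identity $\int_0^1|\Phi|^2\,d\xi=\sum_n|\beta(n,y)|^2$ in the paper's proof. Moreover, your proposed source of exponential decay --- evaluating the $v$-integral in \eqref{eq:phi-iy-mid} as $K$-Bessel functions --- cannot supply this missing input, because \eqref{eq:phi-iy-mid} rests on the splitting of Lemma~\ref{lem:theta-z-iy-splitting}, which is valid only at $w=iy$; like Lemma~\ref{lem:phi_inf}, it says nothing about $\Phi$ away from the imaginary axis. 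To repair the proof you must establish decay on whole horocycles (or of the horocycle $L^2$-norm) and then read off each mode by integrating in $\xi$.
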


\begin{proof}
	As in Theorem~2.14 of \cite{cipra}, we have
	\begin{equation} \label{eq:PDE}
		\left(y\frac{\partial}{\partial w} - \lambda i\right)\frac{\partial}{\partial \bar w} \Phi = 0
	\end{equation}
	because $F$ is a cusp form.
	Since $\Phi(w+1)=\Phi(w)$, there is a Fourier expansion of the form
	\begin{equation}
		\Phi(w) = \sum_{n\in \Z} b(n,y)e(nw),
	\end{equation}
	for some coefficients $b(n,y)$.
	Equation \eqref{eq:PDE} implies that
	\begin{equation}
		b''(n,y) = (4\pi n - 2\lambda/y) b'(n,y).
	\end{equation}
	A basis of solutions for this differential equation is $\{1, \Gamma(1-2\lambda, -4\pi n y)\}$.
	Thus we have
	\begin{equation}
		\Phi(w) = \sum_{n\in \Z} b(n) e(nw) + \sum_{n\in \Z} c(n) \Gamma(1-2\lambda, -4\pi n y) e(nw)
	\end{equation}
	for some $b(n), c(n)\in \C$.
	Writing $\beta(n,y) = \left(b(n) + c(n) \Gamma(1-2\lambda, -4\pi n y)\right) e^{-2\pi n y}$, we see that
	\begin{equation}
		\int_0^1 |\Phi(w)|^2 \, d\xi = \sum_{m,n\in \Z} \beta(m,y)\bar \beta(n,y) \int_0^1 e((m-n)\xi)\, d\xi = \sum_{n\in \Z} |\beta(n,y)|^2.
	\end{equation}
	Since $\Phi(iy) \to 0$ as $y\to\infty$, the functions $\beta(n,y)$ must also have that property.
	By \cite[(8.11.2)]{DLMF}, we have
	\begin{equation} \label{eq:inc-gamma-growth}
		e^{-2\pi n y} \left| \Gamma(1-2\lambda, -4\pi n y) \right| \asymp |ny|^{-2\lambda} e^{2\pi n y}.
	\end{equation}
	Thus $c(n)=0$ for $n>0$ and $b(n)=0$ for $n<0$.
	We also have $b(0)+c(0)\Gamma(1-2\lambda)=0$ since $\Phi(i\infty)=0$, so
	\begin{equation}
		\Phi(iy) = \sum_{n > 0} b(n) e^{-2\pi n y} + \sum_{n>0} c(-n) \Gamma(1-2\lambda,4\pi ny) e^{2\pi n y}.
	\end{equation}
 This, together with \eqref{eq:inc-gamma-growth}, shows that $\Phi(iy)$ decays exponentially as $y\to\infty$.
\end{proof}

Let $\Lambda(s)$ denote the Mellin transform
\begin{equation}
	\Lambda(s) = \int_0^\infty y^{s} \Phi(iy) \, \frac{dy}{y}.
\end{equation}
By Lemma~\ref{lem:Phi-holomorphic}, the integral defining $\Lambda(s)$ is absolutely convergent for $\re(s)>1$.
Recall the expression \eqref{eq:phi-iy-mid} for $\Phi(iy)$.
We have
\begin{equation}
	2\sum_{g=1}^\infty \psi(g)g^{\lambda-\mu} \int_0^\infty y^{s-\mu} e^\frac{-6\pi y^2 g^2}{v} \, dy = \pfrac{v}{6\pi}^{\frac{s-\mu+1}2} \Gamma\left(\tfrac{s-\mu+1}2\right) L(s-\lambda+1,\psi),
\end{equation}
from which it follows that
\begin{multline}
	\Lambda(s) = \frac{2(-1)^\lambda}{c}L(s-\lambda+1,\psi) \sum_{\substack{\mu=0 \\ \mu \text{ even}}}^\lambda c_\mu N^{\frac\mu2} (6\pi)^\frac{\mu-s-1}{2} \Gamma\left(\tfrac{s-\mu+1}2\right) \sum_{n=1}^\infty \chi_{12}(n) a(n^2) \\ 
	\times \int_0^\infty v^{\frac s2} e^\frac{-\pi n^2 v}{6} H_{\mu}\left( \sqrt{\tfrac 13\pi n^2 v} \right) \, \frac{dv}{v}.
\end{multline}
By \eqref{eq:hermite-def} and a straightforward inductive argument, the latter integral equals
\begin{equation}
	2(-1)^\mu\pfrac{3}{\pi}^{\frac s2}n^{-s} \int_0^\infty t^{s-1} \frac{d^\mu}{dt^\mu} e^\frac{-t^2}{2}\, dt = 2^{-\frac\mu2}\pfrac{6}{\pi}^{\frac s2}n^{-s} (s-1)\cdots (s-\mu) \Gamma\left(\tfrac{s-\mu}2\right).
\end{equation}
Thus
\begin{multline}
	\Lambda(s) = \frac{2(-1)^\lambda}{c}\pfrac{6}{\pi}^{\frac s2} L(s-\lambda+1,\psi) \sum_{n=1}^\infty \frac{\chi_{12}(n)a(n^2)}{n^s} \\
	\times \sum_{\substack{\mu=0 \\ \mu \text{ even}}}^\lambda c_\mu N^{\frac\mu2} 2^{-\frac\mu2} (6\pi)^\frac{\mu-s-1}{2} (s-1)\cdots (s-\mu) \Gamma\left(\tfrac{s-\mu+1}2\right) \Gamma\left(\tfrac{s-\mu}2\right).
\end{multline}
We have
\begin{equation}
	(s-1)\cdots (s-\mu) \Gamma\left(\tfrac{s-\mu+1}2\right) \Gamma\left(\tfrac{s-\mu}2\right) = 2^{\mu+1-s}\sqrt{\pi}\,\Gamma(s),
\end{equation}
so, using that $\sum_{\mu=0}^{\lfloor\frac\lambda2\rfloor} \binom \lambda{2\mu} = 2^{\lambda-1}$, we conclude that
\begin{equation} \label{eq:mellin-transform}
	\Lambda(s) = (2\pi)^{-s} \Gamma(s) L(s-\lambda+1,\psi) \sum_{n=1}^\infty \frac{\chi_{12}(n)a(n^2)}{n^s}.
\end{equation}

Taking the inverse Mellin transform of \eqref{eq:mellin-transform}, we find that
	\begin{equation}
		\Phi(iy) = \sum_{n>0} \tilde b(n) e^{-2\pi n y}
	\end{equation}
	for some coefficients $\tilde b(n)$. 
By Lemma~\ref{lem:Phi-holomorphic} and the lemma on page 89 of \cite{cipra}, we see that $b(n) = \tilde b(n)$ and $c(n) = 0$ for all $n$.
Thus $\Phi$ is holomorphic on $\H$ and has a Fourier expansion of the form
\begin{equation}
	\Phi(z) = \sum_{n=1}^\infty b(n) q^n.
\end{equation}
It follows that
\begin{equation}
	\Lambda(s) = \sum_{n=1}^\infty b(n) \int_0^\infty y^s e^{-2\pi n y} \, \frac{dy}{y} = (2\pi)^{-s}\Gamma(s) \sum_{n=1}^\infty \frac{b(n)}{n^s}.
\end{equation}
Therefore we have the relationship
\begin{equation}
	\sum_{n=1}^\infty \frac{b(n)}{n^s} = L(s-\lambda+1,\psi) \sum_{n=1}^\infty \frac{\chi_{12}(n)a(n^2)}{n^s},
\end{equation}
that is, $\Phi= \Sh_1(F)$.

We are now ready to prove Theorem~\ref{thm:transform}.

\begin{proof}[Proof of Theorem~\ref{thm:transform}]
We first show that
\begin{equation} \label{eq:StF-transform}
    \mathcal S_t(F) \in \mathcal M_{2\lambda}(6N,\psi^2).
\end{equation}
When $r=t=1$, this is \eqref{eq:phi_trans}.
In the general case we may assume that $t\equiv r\pmod{24}$ (otherwise $\Sh_t(F)$ is identically zero).
	Then by Corollary~\ref{cor:v-op} we have $F\sl V_t\in S_{\lambda+1/2}\(Nt,\psi\ptfrac\bullet t \nu\)$.
We claim that
\begin{equation}\label{eq:s1_st}
\Sh_1(F\sl V_t) = \chi_{12}(t)\Sh_t(F)\sl V_t.
\end{equation}
Indeed, the Fourier coefficients $c(n)$ of $\Sh_1(F\sl V_t)$ are given by
\begin{align}
	\sum_{n=1}^\infty \frac{c(n)}{n^s} 
	&= L\(s-\lambda+1,\psi\ptfrac\bullet t\) \sum_{n=1}^\infty \frac{\chi_{12}(n)a(n^2/t)}{n^s} \\
	&= \frac{\chi_{12}(t)}{t^s}L\(s-\lambda+1,\psi\ptfrac\bullet t\) \sum_{n=1}^\infty \frac{\chi_{12}(n)a(tn^2)}{n^s}
 =\frac{\chi_{12}(t)}{t^s}\sum_{n=1}^\infty\frac{b(n)}{n^s},
\end{align}
where $\Sh_t(F)=\sum b(n)q^n$.
It follows that $c(n)=\chi_{12}(t)b(n/t)$ for all $n$, which proves \eqref{eq:s1_st}.
We have  $\Sh_t(F)\sl V_t \in \mathcal M_{2\lambda}(6Nt,\psi^2)$, so 
\eqref{eq:StF-transform} follows 
by  \cite[Lemma 4]{Li:1975}.  $\Sh_t(F)$ is holomorphic on $\H$ and vanishes at $\infty$ by \eqref{eq:s1_st}.

We next show that $\mathcal S_t(F)$ is a cusp form when $\lambda\geq 2$ or a modular form when $\lambda=1$.
Since $F$ is a cusp form we have $a(n)\ll n^{\frac \lambda 2+\frac 14}$,
so the Fourier coefficients $b(n)$ of $\mathcal S_t(F)$ satisfy
\begin{equation}
    b(n) = \sum_{jk=n}\psi(j)\pmfrac jtj^{\lambda-1}\chi_{12}(k)a(tk^2) \ll_t \sum_{jk=n} j^{\lambda-1}k^{\lambda+\frac 12} \ll_{t,\epsilon} n^{\lambda+\frac 12+\epsilon}
\end{equation}
for any $\epsilon>0$.
If $\lambda\geq 2$ then for sufficiently small $\epsilon$ we have $\lambda+\frac 12+\epsilon<2\lambda-1$, and a standard argument shows that $\mathcal S_t(F)$ vanishes at the cusps.
If $\lambda=1$ then a similar argument shows that $\mathcal S_t(F)$ is holomorphic at the cusps.

To finish the proof we need only to establish \eqref{equivariance3nmidr}, which is the subject of the next result.
\begin{proposition}\label{prop:hecke_equiv} Let $F$, $N$, $r$ and $t$ be as in the statement of Theorem~\ref{thm:transform}. For any prime $p\geq 5$ we have
\begin{equation}
\Sh_t(T_{p^2}F)=\pmfrac{12}pT_p\Sh_t(F).
\end{equation}
\end{proposition}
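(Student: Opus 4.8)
The plan is to prove the identity at the level of Dirichlet series, exploiting the defining relation \eqref{eq:stdef} together with the explicit descriptions of the two Hecke actions in \eqref{eq:heckedef24} and \eqref{eq:inthecke}. Since both lifts vanish unless $t\equiv r\spmod{24}$, I may assume this throughout; in particular $t$ is odd and coprime to $3$. Writing $\Sh_t(F)=\sum_n b(n)q^n$ and setting $D(s)=\sum_{n\geq1}\chi_{12}(n)a(tn^2)n^{-s}$ and $L_t(s)=L\(s-\lambda+1,\psi\ptfrac\bullet t\)$, the relation \eqref{eq:stdef} reads $\sum_n b(n)n^{-s}=L_t(s)D(s)$. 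Denote by $a'(n)$ the Fourier coefficients of $T_{p^2}F$ and by $D'(s)$ the analogue of $D(s)$ with $a$ replaced by $a'$, so that $\Sh_t(T_{p^2}F)$ has Dirichlet series $L_t(s)D'(s)$.

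First I would substitute the three terms of \eqref{eq:heckedef24} into $D'(s)=\sum_n\chi_{12}(n)a'(tn^2)n^{-s}$. Using $\chi_{12}(p)^2=1$ and reindexing $n\mapsto pn$ in the first and third terms, a short computation expresses $D'(s)$ through $D(s)$ and the prime-to-$p$ partial sum $D^{(p)}(s):=\sum_{p\nmid n}\chi_{12}(n)a(tn^2)n^{-s}$, namely
\[
D'(s)=\chi_{12}(p)\,p^s\bigl(D(s)-D^{(p)}(s)\bigr)+\pmfrac{-1}p^{\frac{r-1}2}\pmfrac{12t}p\psi(p)\,p^{\lambda-1}D^{(p)}(s)+\chi_{12}(p)\psi^2(p)\,p^{2\lambda-1-s}D(s).
\]
Here the middle (cross) term of \eqref{eq:heckedef24} contributes only over the range $p\nmid n$, because $\pmfrac{12tn^2}p$ vanishes when $p\mid n$. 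On the other side, writing $T_p=U_p+\psi^2(p)p^{2\lambda-1}V_p$ from \eqref{eq:inthecke} and computing the Dirichlet series of $T_p\Sh_t(F)$ directly gives
\[
\sum_{n\geq1}\frac{\bigl(T_p\Sh_t(F)\bigr)(n)}{n^s}=p^s\bigl(L_t(s)D(s)-\mathcal D^{(p)}(s)\bigr)+\psi^2(p)\,p^{2\lambda-1-s}L_t(s)D(s),
\]
where $\mathcal D^{(p)}(s)=\sum_{p\nmid n}b(n)n^{-s}$.

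Comparing $L_t(s)D'(s)$ with $\chi_{12}(p)$ times the last display, the two outer terms match verbatim, and (after simplifying $\chi_{12}(p)\pmfrac{12t}p=\pmfrac tp$) the entire statement reduces to the single identity
\[
\mathcal D^{(p)}(s)=L_t(s)\Bigl(1-\pmfrac{-1}p^{\frac{r-1}2}\pmfrac tp\psi(p)\,p^{\lambda-1-s}\Bigr)D^{(p)}(s).
\]
Two facts finish this. First, since $p\nmid de$ is equivalent to $p\nmid d$ and $p\nmid e$, the prime-to-$p$ part of any product of Dirichlet series is the product of the prime-to-$p$ parts; applied to $\sum_n b(n)n^{-s}=L_t(s)D(s)$ this yields $\mathcal D^{(p)}(s)=L_t^{(p)}(s)D^{(p)}(s)$, where $L_t^{(p)}$ is $L_t$ with its Euler factor at $p$ removed. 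Second, quadratic reciprocity together with $t\equiv r\spmod{24}$ gives $\pmfrac pt=\pmfrac{-1}p^{\frac{r-1}2}\pmfrac tp$, which identifies the parenthesized factor above with the reciprocal Euler factor $1-\psi(p)\pmfrac pt p^{\lambda-1-s}$ of $L_t(s)$ at $p$; hence $L_t(s)\bigl(1-\cdots\bigr)=L_t^{(p)}(s)$ and the identity holds.

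I expect the main obstacle to be the careful bookkeeping of the cross term and its reassembly into the local Euler factor of $L_t$: one must track the symbols $\chi_{12}$, $\pmfrac\bullet t$, and $\pmfrac{-1}p$ through the reindexing, and check that the degenerate cases $p\mid N$ (where $\psi(p)=0$) and $p\mid t$ (where $\pmfrac tp=0$) are covered uniformly by the same formulas, the relevant Euler factor simply becoming trivial. The reciprocity step, where the hypothesis $(r,6)=1$ and the congruence $t\equiv r\spmod{24}$ enter decisively, is the crux that makes the quadratic twist by $\chi_{12}$ compensate exactly for the discrepancy between $\pmfrac tp$ and $\pmfrac pt$, and thereby produces the factor $\pmfrac{12}p$ in the final equivariance.
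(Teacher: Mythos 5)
Your proof is correct, and it organizes the argument differently from the paper's. The paper works entirely at the level of Fourier coefficients: starting from $b(n)=\sum_{jk=n}\psi(j)\pmfrac jt j^{\lambda-1}\pmfrac{12}k a(tk^2)$, it writes $n=p^\alpha n'$ with $p\nmid n'$, introduces auxiliary sums $S_\ell$ built from the coefficients $a(tp^{2\ell}k^2)$ with $jk=n'$, and checks by direct term-matching that both sides of the coefficient identity equal $S_0+\sum_{\ell=0}^{\alpha}S_{\ell+1}+p\sum_{\ell=1}^{\alpha}S_{\ell-1}$. You instead stay at the level of Dirichlet series, which lets you invoke two structural facts --- that the prime-to-$p$ part of a product of Dirichlet series is the product of the prime-to-$p$ parts, and that stripping the Euler factor of $L\(s-\lambda+1,\psi\ptfrac\bullet t\)$ at $p$ multiplies it by $1-\psi(p)\pmfrac pt p^{\lambda-1-s}$ --- and thereby compress the paper's ``involved computation'' into a single local identity. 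The crux is the same in both treatments: quadratic reciprocity in the form $\pmfrac pt=\pmfrac{-1}p^{\frac{r-1}2}\pmfrac tp$ (valid since $t\equiv r\spmod{24}$, where only $t\equiv r\spmod 4$ is actually used) is what makes the $\chi_{12}$-twist compensate for the discrepancy between $\pmfrac tp$ and $\pmfrac pt$ and produce the factor $\pmfrac{12}p$; this identity is buried inside the paper's verification that the cross term equals $S_0$, whereas your version isolates it as the identification of an Euler factor. Your route also handles the degenerate cases $p\mid N$ and $p\mid t$ uniformly (both sides' local factors become trivial), which is a presentational gain; the paper's route avoids any Dirichlet-series bookkeeping by staying with finite coefficient identities. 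Term for term, the two computations are the same.
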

\begin{proof}
We may assume that $r\equiv t\pmod{24}$.
Writing $\Sh_t(F)=\sum b(n)q^n$, we have 
\begin{equation}\label{eq:bndef}
b(n)=\sum_{jk=n}\psi(j)\pmfrac jtj^{\lambda-1}\pmfrac{12}ka(tk^2).
\end{equation}
  Write $T_{p^2}F=\sum A(n)q^\frac n{24}$ as in \eqref{eq:heckedef24}.  Our goal is to show that 
\begin{equation}\label{eq:heckeshow}
b(pn)+\psi^2(p)p^{2\lambda-1}b\pmfrac np=\pmfrac{12}p\sum_{jk=n}\psi(j)\pmfrac jtj^{\lambda-1}\pmfrac{12}kA(tk^2).
\end{equation}
Write $n=p^\alpha n'$ with $p\nmid n'$, and for $\ell\geq0$  define
\begin{equation}\label{eq:sldef}
S_\ell=\psi(p)^{\alpha+1-\ell}\pmfrac pt^{\alpha+1-\ell}\pmfrac{12}p^\ell p^{(\alpha+1-\ell)(\lambda-1)}
\sum_{jk=n'}\psi(j)\pmfrac jtj^{\lambda-1}\pmfrac{12}ka(tp^{2\ell} k^2).
\end{equation}
A computation shows that the left side of 
\eqref{eq:heckeshow} is given by
\begin{equation}\label{eq:lefthecke}
\sum_{\ell=0}^{\alpha+1}S_\ell+p\sum_{\ell=0}^{\alpha-1}S_\ell.
\end{equation}
To compute the right side of \eqref{eq:heckeshow} we consider separately the three terms in the sum \eqref{eq:heckedef24} defining $A(tk^2)$.
After an involved computation we find that these three terms are given by
\begin{equation}\label{eq:righthecke}
\sum_{\ell=0}^{\alpha}S_{\ell+1}+S_0+p\sum_{\ell=1}^{\alpha}S_{\ell-1}.
\end{equation}
The proposition follows.
\end{proof}
This completes the proof of Theorem~\ref{thm:transform}.
\end{proof}

\subsection{\texorpdfstring{Image of the lift under $U_p$ and $W_p$.}{UPWP}}
For the rest of Section~\ref{sec:lift} we assume that $(N, 6)=1$.
For $p\in \{2,3\}$ recall the definition \eqref{eq:epdef}:
\begin{equation}
\ep_{p, r, \psi}=-\psi(p)\pmfrac{4p}r.
\end{equation}

\begin{proposition}\label{prop:shim_up}
Suppose that  $(r, 6)=(N, 6)=1$, that $t$ is a positive squarefree integer,  and that $\psi$ is a character modulo $N$.  Let $F\in S_{\lambda+1/2}(N,\psi\nu^r)$ and let 
$f:=\Sh_t(F)$.
For $p\in \{2, 3\}$ the following are true.
\begin{enumerate}
\item $f\sl U_p=-p^{\lambda-1}\ep_{p, r, \psi} f$.
\item $f\sl _{2\lambda} W_p^{6N}=\ep_{p, r, \psi} f$.
\item $f\|_{2\lambda}H_{6N}\sl U_p=-p^{\lambda-1}\bar\ep_{p, r, \psi} f\|_{2\lambda}H_{6N}$.
\item $f\|_{2\lambda}H_{6N}\sl _{2\lambda} W_p^{6N}=\bar\ep_{p, r, \psi} f\|_{2\lambda}H_{6N}$.
\end{enumerate}

\end{proposition}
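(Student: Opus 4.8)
The plan is to reduce everything to the case $r=t=1$, where $f=\Sh_t(F)$ is literally the theta integral $\Phi(w)$ of \eqref{eq:sh_lift_def}, and then to quote Proposition~\ref{prop:theta_eigen}.

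First I would dispose of the base case $r=t=1$. Here $f=\Sh_1(F)=\Phi(w)$, and each of the operators $\sl U_p$, $\sl_{2\lambda}W_p^{6N}$ and $\sl_{2\lambda}H_{6N}$ acts only in the variable $w$. As each is a finite $\C$-linear combination of slash operators and the integral converges absolutely, I may interchange the operator with the integration over $z$ and apply it to $\overline{\vartheta(z,w)}$. The four statements of Proposition~\ref{prop:theta_eigen} then give (1)--(4) at once, once we record that $\ep_{p,1,\psi}=-\psi(p)$ and $\bar\ep_{p,1,\psi}=-\bar\psi(p)$ (the quadratic symbol being trivial for $r=1$).

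Next I would reduce the general case to the base case via the operator $V_t$. We may assume $t\equiv r\pmod{24}$, since otherwise $\Sh_t(F)=0$. Because $r$ is odd with $3\nmid r$ we have $(r,24)=1$, hence $rt\equiv r^2\equiv1\pmod{24}$, so $\nu^{rt}=\nu$; then by Corollary~\ref{cor:v-op}, $F\sl V_t\in S_{\lambda+\frac12}(Nt,\Psi\nu)$ with $\Psi:=\psi\ptfrac\bullet t$, which is an $r=1$ form of level $Nt$ with $(Nt,6)=1$. Applying the base case to $F\sl V_t$ (replacing $N,\psi$ by $Nt,\Psi$) gives (1)--(4) for $\Sh_1(F\sl V_t)$, with constants built from $\Psi(p)=\psi(p)\ptfrac pt$ and with the operators $\sl U_p$, $\sl_{2\lambda}W_p^{6Nt}$, $\sl_{2\lambda}H_{6Nt}$ at level $6Nt$. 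The identity \eqref{eq:s1_st}, namely $\Sh_1(F\sl V_t)=\chi_{12}(t)\,\Sh_t(F)\sl V_t$, then expresses each relation in terms of $\Sh_t(F)\sl V_t$.

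The final step is to transport these relations back through $\sl V_t$ to $\Sh_t(F)$, and I expect this bookkeeping to be the main obstacle. Since $(t,p)=1$, $U_p$ commutes with $V_t$; for the remaining operators I would use the elementary matrix identities $W_p^{6N}V_t=V_tW_p^{6Nt}$ and $H_{6Nt}=H_{6N}V_t$, $V_tH_{6Nt}=tH_{6N}$ (obtained by direct multiplication after choosing compatible Atkin--Lehner matrices). The first gives $(\Sh_t(F)\sl_{2\lambda}W_p^{6N})\sl V_t=(\Sh_t(F)\sl V_t)\sl_{2\lambda}W_p^{6Nt}$, while the others show that applying $\sl_{2\lambda}H_{6Nt}$ to $\Sh_t(F)\sl V_t$ returns a scalar multiple of $\Sh_t(F)\sl_{2\lambda}H_{6N}$, the $V_t$ being undone by the Fricke matrix. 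Using the injectivity of $V_t$, together with the fact that $\sl_{2\lambda}W_p^{6Nt}$ and $\sl_{2\lambda}W_p^{6N}$ agree on forms of level $6N$ (Atkin--Lehner at $p$ sees only the $p$-part of the level), I can cancel $V_t$ throughout and read off (1)--(4) for $\Sh_t(F)$. It remains only to match constants: since $t\equiv r\pmod{24}$ the Kronecker symbol gives $\ptfrac pt=\ptfrac pr=\ptfrac{4p}r$, so $\psi(p)\ptfrac pt=-\ep_{p,r,\psi}$, exactly as needed. The delicate points throughout are the tracking of the scalar factors $t^\lambda$ and $\chi_{12}(t)$ introduced by the weight-$2\lambda$ slash under $V_t$ and the Fricke matrices, and the verification that the relevant characters stay defined modulo the prime-to-$p$ part of the level, so that the Atkin--Lehner operators are well defined at both levels $6N$ and $6Nt$.
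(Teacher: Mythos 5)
Your proposal is correct and follows essentially the same route as the paper: the base case $r=t=1$ is read off from the integral representation \eqref{eq:sh_lift_def} together with Proposition~\ref{prop:theta_eigen}, and the general case is reduced to it via Corollary~\ref{cor:v-op}, the identity \eqref{eq:s1_st}, the commutation relations $U_pV_t=V_tU_p$, $\|V_t\|_{2\lambda}W_p^{6Nt}=\|_{2\lambda}W_p^{6N}\|V_t$, $\|V_t\|_{2\lambda}H_{6Nt}=\|_{2\lambda}H_{6N}$ (up to a harmless scalar), the injectivity of $V_t$, and the equality $\ep_{p,r,\psi}=\ep_{p,1,\psi\pfrac\bullet t}$. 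The bookkeeping details you flag (scalar factors, and that a $W_p^{6Nt}$ matrix is a valid $W_p^{6N}$ matrix so the two operators agree on level-$6N$ forms) are exactly the facts the paper uses, stated there more tersely.
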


\begin{proof}
In the case $r=1$ the proposition follows directly from \eqref{eq:sh_lift_def}
and Proposition~\ref{prop:theta_eigen}.
In the general case we may assume that    $t\equiv r\pmod{24}$ and therefore that $r=t$.  From Corollary~\ref{cor:v-op} we have
$F\|V_t\in S_{\lambda+1/2}\(Nt, \psi\ptfrac\bullet t\nu\)$.

From the first two assertions of the proposition with $r=1$  together with
 \eqref{eq:s1_st} we obtain
\begin{equation}
 \Sh_t(F)\|V_t\|U_p=-p^{\lambda-1}\ep_{p, 1, \psi\pfrac\bullet t} \Sh_t(F)\|V_t
 \end{equation}
and
 \begin{equation}
 \Sh_t(F)\|V_t\|_{2\lambda}W_p^{6Nt}=\ep_{p, 1, \psi\pfrac\bullet t} \Sh_t(F)\|V_t.
 \end{equation}
 Note that $V_t$ and $U_p$ commute, that $\|V_t\|_{2\lambda}W_p^{6Nt}=\sl_{2\lambda}W_p^{6N}\|V_t$, and that $h_1\|V_t=h_2\|V_t$ if and only if $h_1=h_2$.
It follows that 
\begin{equation}
 \Sh_t(F)\|U_p=-p^{\lambda-1}\ep_{p, 1, \psi\pfrac\bullet t} \Sh_t(F)
 \end{equation}
and
 \begin{equation}
 \Sh_t(F)\|_{2\lambda}W_p^{6N}=\ep_{p, 1, \psi\pfrac\bullet t} \Sh_t(F).
 \end{equation}
The first two assertions follow since  
$\ep_{p, r, \psi}=\ep_{p, 1, \psi\pfrac\bullet t}$.
 
We turn to the second two assertions.  From the $r=1$ case we have
 \begin{equation}
 \Sh_1(F\|V_t)\|_{2\lambda}H_{6Nt}\|U_p=-p^{\lambda-1}\bar\ep_{p, 1, \psi\pfrac\bullet t} \Sh_1(F\|V_t)\|_{2\lambda}H_{6Nt}
 \end{equation}
and
 \begin{equation}
 \Sh_1(F\|V_t)\|_{2\lambda}H_{6Nt}\|W_p^{6Nt}=\bar\ep_{p, 1, \psi\pfrac\bullet t} \Sh_1(F\|V_t)\|_{2\lambda}H_{6Nt}.
 \end{equation}
By  \eqref{eq:s1_st}
 we have 
\begin{equation} \Sh_1(F\|V_t)\|_{2\lambda}H_{6Nt}=\chi_{12}(t)\Sh_t(F)\|V_t\|_{2\lambda}H_{6Nt}
=\chi_{12}(t)\Sh_t(F)\|_{2\lambda}H_{6N}.
\end{equation}
The assertions follow from these facts.
\end{proof}
\subsection{Proof that the lift is a cusp form}  
After Theorem~\ref{thm:transform}, we need only to show that the lift of a form of weight $3/2$ which is orthogonal to all theta series is cuspidal.  It would be possible to modify the arguments of Cipra to prove this for general $N$.  Since these are quite involved, we choose instead to give an argument which leverages those results in the case when $(N, 6)=1$.  We make a general statement since the proof does not depend on the weight.

\begin{proposition}\label{prop:cuspform}
Suppose that $(r, 6)=(N, 6)=1$, that $t$ is a positive squarefree integer, and that  $\psi$ is a character modulo $N$.  Let $F\in S_{\lambda+1/2}(N,\psi\nu^r)$ where $\lambda\in \N$, and if $\lambda=1$ assume further that $F\in S_{3/2}^c(N,\psi\nu^r)$.
Then 
$\Sh_t(F)\in S_{2\lambda}(6N, \psi^2, \ep_{2,r, \psi},\ep_{3, r, \psi})$.
\end{proposition}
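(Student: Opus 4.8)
The plan is to reduce cuspidality to Cipra's theorem for the classical lift $\Shl_t$ and then transfer the conclusion to $\Sh_t(F)$ using the comparison of Proposition~\ref{prop:shcompare} together with the $U_p$-eigenform structure of Proposition~\ref{prop:shim_up}. First I note that the Atkin--Lehner assertions $f\sl_{2\lambda}W_p^{6N}=\ep_{p,r,\psi}f$ for $p\in\{2,3\}$ are exactly Proposition~\ref{prop:shim_up}(2), so only cuspidality requires proof. By Theorem~\ref{thm:transform} we already know that $f:=\Sh_t(F)=\sum_{n\ge1}b(n)q^n$ is a holomorphic element of $M_{2\lambda}(6N,\psi^2)$ vanishing at $\infty$ (so $b(0)=0$), and moreover that $f$ is already a cusp form when $\lambda\ge2$. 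Since the argument below is insensitive to the weight, I will run it uniformly, which is the point of stating the proposition for general $\lambda$.

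Next I invoke the comparison of the two lifts. By Proposition~\ref{prop:shcompare} the classical Shimura lift satisfies $\Shl_t(F\sl V_{24})=f\otimes\chi_{12}$, where $\chi_{12}=\pfrac{12}\bullet$. By Lemma~\ref{lem:eta_to_theta} the form $F\sl V_{24}$ is a cusp form of half-integral weight in the sense of Shimura, and when $\lambda=1$ the hypothesis $F\in S_{3/2}^c(N,\psi\nu^r)$ is precisely the orthogonality condition \eqref{eq:orthogtheta}, i.e. $F\sl V_{24}$ is orthogonal to the unary theta series in its space; when $\lambda\ge2$ there are no such theta series. Hence Cipra's theorem (\cite{cipra}, recalled in Section~\ref{sec:thetamult}) applies and shows that $g:=\Shl_t(F\sl V_{24})=f\otimes\chi_{12}$ is a cusp form.

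It remains to deduce that $f$ itself is cuspidal. I write $f=f_0+E$ for the decomposition of $f$ into its cuspidal and Eisenstein parts in $M_{2\lambda}(6N,\psi^2)$. The twisting map $\otimes\chi_{12}$ and the operators $U_2,U_3$ all preserve the splitting $M_{2\lambda}=S_{2\lambda}\oplus\mathcal E_{2\lambda}$, so the identity $f\otimes\chi_{12}=f_0\otimes\chi_{12}+E\otimes\chi_{12}$ exhibits the cusp form $g$ as the sum of a cusp form and an Eisenstein series; by directness of the splitting, $E\otimes\chi_{12}=0$. Writing $E=\sum b_E(n)q^n$, this forces $b_E(n)=0$ whenever $(n,6)=1$, since there $\chi_{12}(n)=\pm1$. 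Now Proposition~\ref{prop:shim_up}(1) shows that $f$, and hence its Eisenstein component $E$, is an eigenform of $U_2$ and $U_3$; the eigenform relations $b_E(pn)=-p^{\lambda-1}\ep_{p,r,\psi}\,b_E(n)$ for $p\in\{2,3\}$ propagate the vanishing from indices coprime to $6$ to every index $n=2^a3^bm$ with $(m,6)=1$, giving $b_E(n)=0$ for all $n\ge1$. Together with $b_E(0)=b(0)=0$ this shows that $E$ has trivial $q$-expansion at $\infty$, so $E=0$ and $f=f_0\in S_{2\lambda}(6N,\psi^2)$. Combined with the Atkin--Lehner eigenvalues this yields $\Sh_t(F)\in S_{2\lambda}(6N,\psi^2,\ep_{2,r,\psi},\ep_{3,r,\psi})$.

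The step I expect to be most delicate is the transfer from $g$ to $f$: the twist $f\otimes\chi_{12}$ only records the coefficients $b(n)$ with $(n,6)=1$, so Cipra's cuspidality controls $f$ only away from the primes $2$ and $3$. The essential input is therefore Proposition~\ref{prop:shim_up}, whose $U_2,U_3$-eigenform relations recover the remaining coefficients and force the Eisenstein part to vanish. In writing this up I would also take care to cite the standard stability of the cuspidal/Eisenstein splitting under twisting and under $U_p$, and to confirm that the orthogonality condition \eqref{eq:orthogtheta} is exactly the hypothesis needed to apply Cipra's theorem to $F\sl V_{24}$ in the weight $\tfrac32$ case.
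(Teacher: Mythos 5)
Your proof is correct, and it takes a genuinely different route from the paper at the crucial step. Both arguments begin identically: the Atkin--Lehner eigenvalues come from Proposition~\ref{prop:shim_up}(2), Theorem~\ref{thm:transform} already settles $\lambda\ge 2$, and Proposition~\ref{prop:shcompare} together with Cipra's theorem shows that the prime-to-$6$ part of $f=\Sh_t(F)$ (equivalently the twist $f\otimes\chi_{12}$) is cuspidal. The divergence is in how this is upgraded to cuspidality of $f$ itself. The paper proves a self-contained, cusp-by-cusp result (Lemma~\ref{lem:cusps}): using the fact that every cusp of $\Gamma_0(pM)$ is represented by $1/c$ or $1/pc$ with $p\nmid c$, it compares the expansions of $f$ and $f\sl U_pV_p$ at these cusps; this needs all three inputs --- the $W_p^{pM}$ eigenvalue, the $U_p$ eigenvalue, and cuspidality of $f-f\sl U_pV_p$ --- and is applied twice, first to $h=f-f\sl U_2V_2$ with $p=3$ and then to $f$ with $p=2$. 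You instead decompose $f=f_0+E$ into cuspidal and Eisenstein components, use stability of this splitting under twisting and under $U_2,U_3$ to force $E\otimes\chi_{12}=0$ and to make $E$ a $U_2,U_3$-eigenform, and then propagate the vanishing of $b_E(n)$ from $(n,6)=1$ to all $n$, so that $E$ has identically zero $q$-expansion at $\infty$ and hence $E=0$ by the identity theorem. Your route is shorter, avoids all computation with cusp expansions, and notably needs only Proposition~\ref{prop:shim_up}(1) (not the Atkin--Lehner relations) for the cuspidality step; its cost is the reliance on the standard but nontrivial facts that character twists and $U_p$ (for $p$ dividing the level) carry the Eisenstein subspace into the Eisenstein subspace --- facts you rightly flag as needing citation, and which deserve care precisely in the one case that matters here, weight $2\lambda=2$. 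The paper's Lemma~\ref{lem:cusps} buys self-containedness and reusability: it is invoked again in Section~\ref{sec:3midr} for the $(r,6)=3$ case, where your argument would also work verbatim with $\chi_{-4}$ in place of $\chi_{12}$ and $U_2$ alone.
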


\begin{proof} Let $f=\Sh_t(F)$.
  After Theorem~\ref{thm:transform} and Proposition \ref{prop:shim_up} we need only to show that $f$ vanishes at all cusps (as mentioned, this has already been shown when $\lambda>1$).
To this end let  $\hat f=\Shl_t(F\sl V_{24})$.
By Proposition~\ref{prop:shcompare} we have the relationship
\begin{equation}\label{eq:ffhat}
f\sl \(1-U_2V_2-U_3V_3+ U_6V_6\)=\hat f\otimes\pmfrac{12}\bullet.
\end{equation}
By  Theorem 4.3 and Corollary 4.5 of \cite{cipra} we know that $\hat f\otimes\pfrac{12}\bullet$ is a cusp form.
We will use the following lemma.
\begin{lemma}\label{lem:cusps}
Suppose that $k, M\in \N$,  that $p$ is a prime  with $p\nmid M$, and that $\chi$ is a character modulo $M$.
Suppose that $f\in \mathcal{M}_k(pM, \chi)$ and that $f$ is holomorphic on $\H$.
  Suppose further that there exists $\ep_p$ with 
\begin{enumerate}
\item $f\sl _k W^{pM}_p=\ep_p f$,
\item $f\sl U_p=-\ep_p p^{\frac k2-1}f$, and
\item $f-f\sl U_p V_p\in S_k(p^2M, \chi)$.
\end{enumerate}
Then $f\in S_k(pM, \chi)$.
\end{lemma}
\begin{proof}[Proof of Lemma~\ref{lem:cusps}]  
Let the hypotheses be as in the statement.
Since $p\nmid M$,  it follows from Corollary~3.2 of \cite{kiral-young} that  each cusp of $\Gamma_0(pM)$  can be represented by a rational number of one of the forms
\begin{equation}\label{eq:cuspflavors}
\frac 1c \ \  \text{or} \ \   \frac1{pc} \ \ \ \text{where $p\nmid c$.}
\end{equation}
The lemma will follow from relating the expansions of $f$ and $f\sl U_pV_p$ at these cusps.
For each $c$ there is a positive integer $h_c$ for which we have an expansion of the form
\begin{equation}\label{eq:fat1c}
f\sl _k\pmatrix10c1=\sum_{n\in \Z}a(n)q_{h_c}^n, \qquad q_{h_c}:=e^\frac{2\pi i z}{h_c}.
\end{equation}

Suppose that $p\nmid c$.  By the assumptions and \eqref{eq:vmdef} we have
\begin{equation}\label{eq:cusp_pc2}
f\sl U_pV_p\sl _k\pmatrix10{pc}1
=-\ep_pp^{-1}f\sl _k\pmatrix p001\pmatrix10{pc}1
	=-\ep_pp^{-1}f\sl _k\pmatrix 10c1\pmatrix p001
	=-\ep_pp^{\frac k2-1}\sum a(n)q_{h_c}^{pn}.
\end{equation}
Writing $W^{pM}_p=\pMatrix{p\alpha }\delta{pM\beta}{p }$ as in 
\eqref{eq:atkinlehner}, 
we have
\begin{equation}\label{eq:cusp_pc1}
f\sl _k\pmatrix10{pc}1=\bar\ep_p f\sl _k  W_p^{pM}\pmatrix10{pc}1=\bar\ep_pf\sl _k\gamma\pmatrix p001,
\end{equation}
where 
\begin{equation}
\gamma=\pmatrix{\alpha+c\delta}\delta{cp+M\beta}{p}\in \SL_2(\Z).
\end{equation}

For any integer $j$ we can write
\begin{equation}\label{eq:gagp}
\gamma=\gamma'\pmatrix10{c}1\pmatrix1j01
\end{equation} 
 where 
 \begin{equation}
 \gamma'=\pMatrix**{M\beta(1+cj) + c^2j p}{p - c j p - j M \beta}.
 \end{equation}
 Choosing $j$ with  $j\equiv 0\pmod M$ and $cj\equiv -1\pmod p$ (which is possible since $p\nmid Mc$),
we can ensure that  $\gamma'\in \Gamma_0(pM)$.

Using \eqref{eq:cusp_pc1} and  \eqref{eq:gagp}, we compute 
(where $\zeta_{h_c}=e^\frac{2\pi i}{h_c}$)
\begin{equation}\label{eq:fatpc}
\begin{aligned}f\sl _k\pmatrix10{pc}1&=\bar\ep_pf\sl _k\gamma'\pmatrix10c1\pmatrix1j01\pmatrix p001\\
&=\bar\ep_p\chi(p)\sum a(n)\zeta_{h_c}^{nj}q_{h_c}^n\sl _k\pmatrix p001
=\bar\ep_p\chi(p)p^\frac k2\sum a(n)\zeta_{h_c}^{nj}q_{h_c}^{pn}.
\end{aligned}
\end{equation}
From \eqref{eq:cusp_pc2} and \eqref{eq:fatpc} we conclude that
\begin{equation}\label{eq:cusp_pc3}
\(f-f\sl U_pV_p\)\sl _k\pmatrix10{pc}1=\sum a(n) \left[\bar\ep_p\chi(p)\zeta_{h_c}^{nj}p^\frac k2+\ep_pp^{\frac k2-1}\right]q_{h_c}^{pn}.
\end{equation}
By assumption, $f-f\sl U_pV_p$ is a cusp form. Since the quantity in brackets is non-zero, it follows that $a(n)=0$ for all $n\leq 0$.
By \eqref{eq:fat1c} and \eqref{eq:fatpc} we conclude that $f$ vanishes both at  $1/c$ and at $1/pc$.  The lemma follows in view of \eqref{eq:cuspflavors}.
\end{proof}

Returning to the proof of Proposition~\ref{prop:cuspform},  consider the form $h:=f-f\sl U_2 V_2\in\mathcal{M}_{2\lambda}(12N, \psi^2)$.
We apply  Lemma~\ref{lem:cusps} to $h$ with $p=3$ and $M=4N$.
By \eqref{eq:ffhat} we have 
\begin{equation}
h-h\sl U_3V_3=f\sl \(1-U_2V_2-U_3V_3+ U_6V_6\)\in S_{2\lambda}(36N, \psi^2),
\end{equation}
so the third condition in Lemma~\ref{lem:cusps} is satisfied.
Since
\begin{equation}
\pmatrix2001W^{12N}_3=W^{6N}_3\pmatrix2001,
\end{equation}
 Proposition~\ref{prop:shim_up} gives
\begin{equation}
\begin{aligned}
h\sl _{2\lambda}W^{12N}_3&=f\sl _{2\lambda}W^{12N}_3+\tfrac12\ep_{2, r, \psi}f\sl_{2\lambda}\pmatrix2001\sl_{2\lambda}W_3^{12N}\\
&=f\sl _{2\lambda}W^{6N}_3+\tfrac12\ep_{2, r, \psi}f\sl _{2\lambda}W_3^{6N}\sl_{2\lambda}\pmatrix2001=\ep_{3, r, \psi} h.
\end{aligned}
\end{equation}
Since $U_3$ commutes with $U_2$ and $V_2$ we have
$h\sl U_3=-3^{\lambda-1}\ep_{3, r, \psi}h$.  
Lemma~\ref{lem:cusps} shows that $h\in S_{2\lambda}(12N, \psi^2)$.  A second application of the lemma with $p=2$ and $M=3N$ shows that $f\in S_{2\lambda}(6N, \psi^2)$ as desired.
\end{proof}

\subsection{Proof that the lifts are new}\label{subsec:newproof}
After Proposition~\ref{prop:cuspform} and Theorem~\ref{thm:transform} it suffices to prove that the lifts are  new at $2$ and $3$ in order to finish the proof of Theorem~\ref{thm:shimura-lift}.

Let $F$, $N$, $r$ and $t$ be as in the statement, and 
let $f=\Sh_t(F)\in S_{2\lambda}\(6N, \psi^2\)$.
From Lemma~\ref{lem:trace} we find for $p\in \{2,3\}$ that 
\begin{align*}\label{eq:trace_def}
\Tr^{6N}_{6N/p}f&= f+\bar\psi^2(p)p^{1-\lambda}f\sl _{2\lambda}W_p^{6N}\|U_p,\\
\Tr^{6N}_{6N/p}\(f\sl_{2\lambda}H_{6N}\)&= f\sl_{2\lambda}H_{6N}+\psi^2(p)p^{1-\lambda}f\sl_{2\lambda}H_{6N}\sl_{2\lambda}W_p^{6N}\|U_p.
\end{align*}
By Proposition~\ref{prop:shim_up} both of these expressions are zero, and we conclude by \eqref{eq:traceequiv}
that $\Sh_t(F)$ is new at $2$ and $3$.  This
 concludes the proof of Theorem~\ref{thm:shimura-lift}.

\section{The Shimura lift when \texorpdfstring{$(r,6)=3$}{(r,6)=3}}\label{sec:3midr}

In this section we sketch the proof of Theorem~\ref{thm:shimura-lift-r=3}.
Since the construction of the theta kernel and the Shimura lift in the case $(r,6)=3$ are  similar to the case $(r,6)=1$ we will omit most of the details.
As before we begin with  a proposition describing the transformation properties of the  lift.
Recall that for
	\begin{equation}\label{eq:F3def}
		F(z) = \sum_{n\equiv \frac r3\spmod{8}} a(n) q^\frac n{8} \in  S_{\lambda+\frac12}(N,\psi\nu^r),
	\end{equation}
the  lift is given by 
\begin{equation}
    \Sh_t(F) = \sum_{n=0}^\infty b(n) q^n,
\end{equation}
where the $b(n)$ are defined as in \eqref{eq:stdef3} by
	\begin{equation}
		\sum_{n=1}^\infty \frac{b(n)}{n^s} = L\(s-\lambda+1,\psi \ptfrac\bullet t\) \sum_{n=1}^\infty \frac{\chi_{-4}(n)a(tn^2)}{n^s}.
	\end{equation}
Here $\chi_{-4}=\pfrac{-4}{\bullet}$ and $\psi(-1)=\pfrac{-1}r(-1)^{\lambda}$ (recall \eqref{eq:krcond1}).

In the next two subsections we sketch the proof of the analogue of Theorem~\ref{thm:transform}.
\begin{theorem}\label{thm:transform3}
Let $r$ be an integer with $(r,6)=3$ and let $t$ be a squarefree positive integer.
Suppose that $\lambda,N\in \Z^+$ and let $\psi$ be a Dirichlet character modulo $N$.  If $\lambda \geq 2$ then
$\Sh_t(F)\in S_{2\lambda}(2N, \psi^2)$, while if $\lambda = 1$ then $\Sh_t(F)\in M_{2\lambda}(2N, \psi^2)$ and $\Sh_t(F)$ vanishes at $\infty$. Furthermore,  the Hecke equivariance \eqref{equivariance3midr} holds.
\end{theorem}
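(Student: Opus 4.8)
The plan is to mirror the proof of Theorem~\ref{thm:transform}: realize $\Sh_t(F)$ as a theta integral in the base case $r=3$, $t=1$, and reduce the general case to it. For the reduction, if $t\not\equiv r/3\pmod 8$ then $\Sh_t(F)\equiv 0$, so I may assume $t\equiv r/3\pmod 8$. Writing $r=3m$ with $m$ odd, Corollary~\ref{cor:v-op} gives $F\sl V_t\in S_{\lambda+\frac12}(Nt,\psi\ptfrac\bullet t\nu^{rt})$, and since $t\equiv m\pmod8$ forces $rt=3mt\equiv3\pmod{24}$, the form $F\sl V_t$ has multiplier $\psi\ptfrac\bullet t\nu^3$. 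Comparing Dirichlet series exactly as in \eqref{eq:s1_st} gives $\Sh_1(F\sl V_t)=\chi_{-4}(t)\,\Sh_t(F)\sl V_t$, and \cite[Lemma~4]{Li:1975} then lets me descend from level $2Nt$ to $2N$. Hence it suffices to treat the base case, where I may take $r=3$, $t=1$ and a general character $\psi$.

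In the base case I would construct the kernel $\vartheta(z,w)$ following Section~\ref{sec:theta-kernels} verbatim, replacing the rank-$1$ data by $L_1=4N\Z\subset L_1'=N\Z\subset L_1^\ast=\Z$ with $Q=\tfrac1{4N}$ and the character $\chi_{-4}$ in place of $\chi_{12}$, and the rank-$2$ form by $Q=\tfrac1{2N}\pmatrix0{-1}{-1}0$ in place of $\tfrac1{6N}\pmatrix0{-1}{-1}0$. The effect of $\chi_{-4}$ is that the distinguished rank-$1$ series becomes a multiple of $\eta^3(Nz)$: since $\eta^3(z)=\sum_{m\text{ odd}}\chi_{-4}(m)\,m\,q^{m^2/8}$ carries a factor $m$, only odd Hermite indices $\mu$ survive (the distinguished case $\mu=1$ giving $\eta^3$), the opposite parity from the remark following Lemma~\ref{lem:theta-1-transform}. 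Splicing via the splitting \eqref{eq:splitting} yields $\vartheta(\cdot,w)\in\mathcal M_{\lambda+\frac12}(N,\psi\nu^3)$ and $\overline{\vartheta(z,\cdot)}\in\mathcal M_{2\lambda}(2N,\psi^2)$, the analogues of \eqref{eq:theta-bar-transform-z} and \eqref{eq:theta-bar-transform-w}; the level in $w$ is now $2N$ because the factor of $3$ is carried by $\nu^3$ rather than by the lattice. Lemmas~\ref{lem:theta-1-transform}--\ref{lem:theta-z-iy-splitting} transcribe with $8,4,\chi_{-4}$ in place of $24,12,\chi_{12}$.

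I would then define $\Phi(w)$ as in \eqref{eq:sh_lift_def} and run the Mellin-transform computation of Section~\ref{sec:lift}. The estimates of Lemmas~\ref{lem:phi_inf} and \ref{lem:Phi-holomorphic} show $\Phi$ is holomorphic and vanishes at $\infty$, and evaluating $\int_0^\infty y^s\Phi(iy)\,\tfrac{dy}y$---now summing the Hermite contributions over odd indices, where $\sum_\mu\binom{\lambda}{2\mu+1}=2^{\lambda-1}$ plays the role of $\sum_\mu\binom{\lambda}{2\mu}=2^{\lambda-1}$---produces $\sum_n b(n)n^{-s}=L(s-\lambda+1,\psi)\sum_n\chi_{-4}(n)a(n^2)n^{-s}$, i.e.\ $\Phi=\Sh_1(F)$. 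The cusp bound $a(n)\ll n^{\lambda/2+1/4}$ gives $b(n)\ll n^{\lambda+\frac12+\ep}$, so $\Sh_t(F)$ vanishes at every cusp when $\lambda\ge2$ and is holomorphic there when $\lambda=1$, yielding membership in $S_{2\lambda}(2N,\psi^2)$, resp.\ $M_{2\lambda}(2N,\psi^2)$ with vanishing at $\infty$.

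Finally, for the Hecke equivariance \eqref{equivariance3midr} I would repeat the Fourier-coefficient identity of Proposition~\ref{prop:hecke_equiv}, comparing \eqref{eq:heckedef8} against the definition \eqref{eq:stdef3} of $b(n)$ with $\chi_{-4}$ in place of $\chi_{12}$. The genuinely new feature---and the step I expect to require the most care---is that the identity must now hold for every prime $p\ge3$, not merely $p\ge5$: since $3\mid r$, the prime $3$ is generic for this family (only $2$ is exceptional, consistent with the level $2N$), and one must feed the $\ptfrac np$-version \eqref{eq:heckedef8} of $T_{p^2}$---which differs from \eqref{eq:heckedef24} exactly at $p=3$---into the telescoping sums $\sum_\ell S_\ell$ for the identity to close at $p=3$. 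Verifying this $p=3$ bookkeeping, together with pinning down the rank-$1$ kernel so that it reproduces $\eta^3$, are the two points where the real work lies; the remainder is a routine translation of the $(r,6)=1$ argument.
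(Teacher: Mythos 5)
Your proposal is correct and follows essentially the same route as the paper's own (sketched) proof: the reduction to the base case $r=3$, $t=1$ via $\Sh_1(F\sl V_t)=\chi_{-4}(t)\,\Sh_t(F)\sl V_t$ and \cite[Lemma 4]{Li:1975}, the same replacement lattices ($4N$, $2N$ in place of $12N$, $6N$) with $\chi_{-4}$ in place of $\chi_{12}$, the parity flip to odd Hermite indices so that $\eta^3(Nz)$ plays the role of $\eta(Nz)$, and the same Mellin-transform and telescoping Hecke computations. Your two flagged points of care are exactly the ones the paper addresses: its Lemma on the imaginary-axis splitting records that $\vartheta_{1,\mu}$ vanishes for even $\mu$ and transforms like $\eta^3$, and the equivariance at $p=3$ goes through precisely because \eqref{eq:heckedef8} carries $\pfrac np$ rather than $\pfrac{12n}p$, so only $p=2$ is excluded.
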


\subsection{The theta kernel}
We begin with the rank 3 lattices
\begin{align}
	L &= N\Z \oplus 4N\Z \oplus 2N\Z, \\
	L' &= \Z \oplus N\Z \oplus 2N\Z, \\
	L^* &= \Z \oplus \Z \oplus 2\Z.
\end{align}
Note that $L^*$ is dual to $L$ with respect to the bilinear form
\begin{equation}
	\langle x,y \rangle = \frac{x_2y_2 - 2x_1y_3 - 2x_3y_1}{4N}
\end{equation}
of signature $(2,1)$ associated to $Q = \frac{1}{4N}\left(\begin{smallmatrix} &  & -2 \\  & 1 &  \\ -2 &  & \end{smallmatrix}\right)$.
Let
\begin{align} 
	f_{3}(x) &= (x_1-ix_2-x_3)^\lambda \exp\left(-\frac{\pi}{4N}(2x_1^2+x_2^2+2x_3^2)\right) \\
	&= \pfrac{\pi}{N}^{-\frac\lambda2} \sum_{\mu=0}^\lambda \binom{\lambda}{\mu}(-i)^\mu f_{2,\lambda-\mu,1}(x_1,x_3)f_{1,\mu}(x_2), \label{eq:f3-decomp-r=3}
\end{align}
where
\begin{align}
	f_{1,\mu}(x_2) &= H_\mu\left(\sqrt{\tfrac{\pi}{N}} \, x_2\right) e^{-\tfrac{\pi}{4N} x_2^2}, \\
	f_{2,\mu,y}(x_1,x_3) &= H_{\mu}\left( \sqrt{\tfrac{\pi}{N}} (y^{-1}x_1-yx_3) \right) \exp\pfrac{-\pi(y^{-2}x_1^2+y^2x_3^2)}{2N}.
\end{align}
By slightly modifying the proofs of Lemmas~\ref{lem:sum-c-h-k-nu}, \ref{lem:theta-2-transform}, and \ref{lem:theta3-transform-z}, we obtain the following analogue of Lemma~\ref{lem:theta3-transform-z}.

\begin{lemma} \label{lem:theta3-transform-z-r=3}
Suppose that $f$ has the spherical property in weight $\lambda+1/2$.
For $h\in L'/L$, write $h=(h_1,Nh_2,2Nh_3)$ and define
\begin{equation}
	\theta_{3}(z) = \sum_{h\in L'/L} \bar\psi(h_1) \chi_{-4}(h_2) \theta(z,f,h).
\end{equation}
	Then for each $\gamma = \pmatrix abcd\in \Gamma_0(N)$ we have
	\begin{equation}
		\theta_3(\gamma z) = \nu_N^3(\gamma)\bar\psi(d)(cz+d)^{\lambda+\frac 12} \theta_3(z).
	\end{equation}
\end{lemma}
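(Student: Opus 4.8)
The plan is to transcribe the proof of Lemma~\ref{lem:theta3-transform-z} almost verbatim, using the splitting \eqref{eq:splitting} to factor the coefficients $c(h,k)_\gamma$ of Proposition~\ref{prop:theta-transform} through a rank-$1$ analogue of Lemma~\ref{lem:sum-c-h-k-nu} and a rank-$2$ analogue of Lemma~\ref{lem:theta-2-transform}.  These are now attached to the lattices $L_1=4N\Z$, $L_1'=N\Z$, $L_1^\ast=\Z$ (with $Q_1=\tfrac1{4N}$) and $L_2=N\Z\oplus 2N\Z$, $L_2'=\Z\oplus 2N\Z$, $L_2^\ast=\Z\oplus 2\Z$ (with the rank-$2$ part of $Q$).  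Since the $c(h,k)_\gamma$ depend only on the lattice and the quadratic form, once these two lower-rank laws are established the final summation---plugging the Hermite decomposition \eqref{eq:f3-decomp-r=3} into $\theta_3(\gamma z)$ and collecting terms---is identical to the one in Lemma~\ref{lem:theta3-transform-z}.

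The one genuinely new point, and the main obstacle, is the rank-$1$ analogue, because the relevant character is now the \emph{odd} character $\chi_{-4}$ rather than the even character $\chi_{12}$.  From the analogue of \eqref{eq:theta-1-mu-fourier-exp},
\[
\vartheta_{1,\mu}(z)=v^{-\mu/2}\sum_{x_2\in\Z}\chi_{-4}(x_2)\,H_\mu\!\left(\sqrt{\pi N v}\,x_2\right)e\!\left(\tfrac18 Nx_2^2 z\right),
\]
one sees that the summand is odd in $x_2$ when $\mu$ is even, so $\vartheta_{1,\mu}=0$ for even $\mu$; the parity of the surviving theta series is thus opposite to the $(r,6)=1$ case.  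In particular the distinguished element is no longer the weight-$\tfrac12$ series $\vartheta_{1,0}$ but the weight-$\tfrac32$ series $\vartheta_{1,1}$.  Taking $\mu=1$ (so that $H_1(x)=x$) and invoking Jacobi's identity $\eta^3(z)=\sum_{m>0\text{ odd}}\chi_{-4}(m)\,m\,q^{m^2/8}$ gives
\[
\vartheta_{1,1}(z)=\sqrt{\pi N}\sum_{x_2\in\Z}\chi_{-4}(x_2)\,x_2\,e\!\left(\tfrac18 Nx_2^2 z\right)=2\sqrt{\pi N}\,\eta^3(Nz),
\]
which transforms on $\Gamma_0(N)$ with multiplier $\nu_N^3$ in weight $\tfrac32$.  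This is exactly what replaces $\nu_N$ by $\nu_N^3$ in the conclusion.

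With $\vartheta_{1,1}$ in hand, I would repeat the argument of Lemma~\ref{lem:sum-c-h-k-nu}: comparing the transformation law of $\vartheta_{1,1}$ with the expansion supplied by Proposition~\ref{prop:theta-transform}, and using the linear independence of the $\theta(z,f_{1,1},h)$ for distinct $h_2\bmod 4$ (the identity $c_1(-h,-k)_\gamma=c_1(h,k)_\gamma$ now pairing against $\chi_{-4}(-h_2)=-\chi_{-4}(h_2)$, the two sign changes cancelling since $f_{1,1}$ is odd), to obtain
\[
i^{-\frac12(\sgn c)}\sum_{h\in L_1'/L_1}\chi_{-4}(h_2)\,c_1(h,k)_\gamma=\chi_{-4}(k_2)\,\nu_N^3(\gamma).
\]
Because the $c_1(h,k)_\gamma$ are independent of $\mu$, this identity holds for every $\mu$ and yields the rank-$1$ law $\vartheta_{1,\mu}(\gamma z)=\nu_N^3(\gamma)(cz+d)^{\mu+1/2}\vartheta_{1,\mu}(z)$, the analogue of Lemma~\ref{lem:theta-1-transform} with $\nu_N$ replaced by $\nu_N^3$.

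The rank-$2$ analogue of Lemma~\ref{lem:theta-2-transform} is routine: the computation showing $c(h,k)_\gamma=\delta_{k,ah}$ carries over with $6N$ replaced by $2N$, again producing the factor $\psi(d)$.  Combining the two laws through \eqref{eq:f3-decomp-r=3} and \eqref{eq:splitting} exactly as in Lemma~\ref{lem:theta3-transform-z} gives, for $k\in L'/L$,
\[
i^{-\frac12(\sgn c)}\sum_{h\in L'/L}\bar\psi(h_1)\chi_{-4}(h_2)\,c(h,k)_\gamma=\bar\psi(dk_1)\,\nu_N^3(\gamma)\,\chi_{-4}(k_2),
\]
with all remaining terms vanishing, whence $\theta_3(\gamma z)=\nu_N^3(\gamma)\bar\psi(d)(cz+d)^{\lambda+1/2}\theta_3(z)$.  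In short, the only substantive change from the $(r,6)=1$ proof is the parity/Jacobi-identity observation of the second paragraph, which upgrades $\nu_N$ to $\nu_N^3$; every other step is a transcription with updated lattice constants.
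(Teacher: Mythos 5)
Your proof is correct and is essentially the paper's intended argument: the paper omits the proof of this lemma, stating only that it follows by slightly modifying Lemmas~\ref{lem:sum-c-h-k-nu}, \ref{lem:theta-2-transform}, and \ref{lem:theta3-transform-z}, and your write-up supplies exactly those modifications with the correct lattice constants. In particular, your key observation---that the odd character $\chi_{-4}$ kills the even-$\mu$ rank-one theta series, so the distinguished element becomes $\vartheta_{1,1}=2\sqrt{\pi N}\,\eta^3(Nz)$ by Jacobi's identity, which upgrades the multiplier from $\nu_N$ to $\nu_N^3$ (with the sign changes of $\chi_{-4}$ and of the odd function $f_{1,1}$ cancelling in the linear-independence step)---is precisely the ``slight modification'' the paper has in mind.
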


For $w=\xi+iy\in \H$ define
\begin{equation}
	\vartheta^*(z,w) 
	= y^{-\lambda}\sum_{h\in L'/L} \bar\psi(h_1) \chi_{-4}(h_2) \theta(z,\sigma_w f_{3},h).
\end{equation}
For $\gamma\in \Gamma_0(2N)$ the map $x\mapsto \gamma x$ leaves the lattice $L'$ and the quantity $\chi_{-4}(x_2)$ invariant and maps $\bar\psi(x_1)$ to $\psi^2(d)\bar\psi(x_1)$.
In analogy with \eqref{eq:theta-star-transform-w} we have
\begin{equation} \label{eq:theta-star-transform-w-r=3}
	\vartheta^\ast(z,\gamma w) = \psi^2(d) (c\bar w+d)^{2\lambda} \vartheta^\ast(z,w) \qquad \text{ for  }\gamma=\pmatrix abcd\in \Gamma_0(2N).
\end{equation}
The theta kernel is
\begin{equation}
	\vartheta(z,w) = N^{-\frac\lambda2-\frac14}(-iz)^{-\lambda-\frac12}(2N)^{-\lambda}{\bar w}^{-2\lambda} \vartheta^*(-1/Nz,-1/2Nw),
\end{equation}
which satisfies
\begin{align}
    \vartheta(\cdot,w) &\in \mathcal M_{\lambda+\frac 12} (N, \psi \nu^3), \\
    \overline{\vartheta(z,\cdot)} &\in \mathcal M_{2\lambda}(2N,\psi^2). \label{eq:theta-bar-transform-r=3}
\end{align}
In analogy with Lemma~\ref{lem:theta-z-iy-splitting}, $\vartheta(z,w)$ takes the following shape on the imaginary axis.

\begin{lemma} \label{lem:theta-z-iy-splitting-r=3}
	We have
	\begin{multline} \label{eq:theta-z-iy-splitting-r=3}
		\vartheta(z,iy) = 
		\sum_{\substack{\mu=0 \\ \mu \text{ odd}}}^\lambda c_\mu  y^{1-\mu} \sum_{g\in \Z} \bar\psi(-g)g^{\lambda-\mu} 
		\\\times \sum_{\gamma\in \Gamma_\infty \backslash \Gamma_0(N)} \bar\psi(\gamma) (cz+d)^{-\lambda-\frac12} \exp\left(\frac{-2\pi y^2 g^2}{\im(\gamma z)}\right)\nu^{-3}(\gamma) \im(\gamma z)^{\mu-\lambda}  \vartheta_{1,\mu}\pfrac{\gamma z}{N},
	\end{multline}
	where
	\begin{equation}
		\vartheta_{1,\mu}(z) = v^{-\frac\mu2} \sum_{x_2\in \Z} \chi_{-4}(x_2) H_\mu\left(\sqrt{\pi N v} \, x_2\right)e\left(\tfrac 18Nx^2z\right)
	\end{equation}
	and
	\begin{equation}
		c_\mu = 
			i\binom{\lambda}{\mu} 2^{\lambda-\mu+\frac12} \pi^{-\frac\mu2} N^{\frac\lambda2-\frac\mu2+\frac14}.
	\end{equation}
\end{lemma}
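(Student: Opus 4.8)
The plan is to follow the proof of Lemma~\ref{lem:theta-z-iy-splitting} step by step, substituting the rank-$1$, rank-$2$, and rank-$3$ data of the $(r,6)=3$ construction for those of the $(r,6)=1$ case. First I would use $\sigma_{iy}^{-1}x=(x_1/y,x_2,yx_3)$ and insert the decomposition \eqref{eq:f3-decomp-r=3} of $f_3$ to obtain the splitting
\begin{equation}
\vartheta^*(z,iy)=\pfrac{\pi}{N}^{-\frac\lambda2}y^{-\lambda}\sum_{\mu=0}^\lambda\binom{\lambda}{\mu}(-i)^\mu\vartheta_{1,\mu}(z)\,\vartheta_{2,\lambda-\mu}(z,y),
\end{equation}
where $\vartheta_{1,\mu}$ is now the rank-$1$ series attached to $\chi_{-4}$. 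Because $\chi_{-4}$ is odd, the substitution $x_2\mapsto-x_2$ forces $\vartheta_{1,\mu}=0$ for \emph{even} $\mu$; this collapses the sum to odd $\mu$ and accounts for the parity condition in the statement, in contrast to the even-$\mu$ condition of Lemma~\ref{lem:theta-z-iy-splitting}.

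Next I would invoke the straightforward analogues of Lemmas~\ref{lem:theta-2-poisson} and \ref{lem:theta-1-fricke} for the present lattices. The Poisson analogue expresses $\vartheta_{2,\lambda-\mu}(-1/Nz,y)$ as a lattice sum with Gaussian $\exp(-\pi|x_1+Nx_3z|^2/(2N^2vy^2))$, the $6N$ of the original being replaced throughout by $2N$. Writing $g=\sgn(x_3)\gcd(x_1,Nx_3)$, $Nx_3=gc$, $x_1=gd$ folds this into a sum over $\Gamma_\infty\backslash\Gamma_0(N)$ exactly as before. I would then use the Fricke analogue, which turns $\vartheta_{1,\mu}(-1/Nz)$ into a constant multiple of $z^{\mu+\frac12}\vartheta_{1,\mu}(z/N)$, together with the fact that $\vartheta_{1,\mu}(z/N)$ transforms like $\eta^3$ in weight $\mu+\tfrac12$, i.e.\ with multiplier $\nu_N^3$. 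This is precisely the source of the factor $\nu^{-3}(\gamma)$ in \eqref{eq:theta-z-iy-splitting-r=3}, in place of $\nu^{-1}(\gamma)$.

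Finally I would assemble the pieces, apply the definition of $\vartheta(z,w)$ at $w=iy$, and perform the substitution $y\mapsto 1/(2Ny)$ forced by the inner Fricke variable $-1/(2Nw)$; this converts the intermediate Gaussian $\exp(-\pi g^2/(2N^2\im(\gamma z)y^2))$ into $\exp(-2\pi y^2g^2/\im(\gamma z))$, matching the statement. The main obstacle is pure bookkeeping: collecting the constants and, above all, the powers of $i$. The two new features are that $\mu$ is now odd, so $(-i)^\mu$ no longer reduces to a real sign, and that the Gauss sum of the \emph{odd} character $\chi_{-4}$ is purely imaginary (equal to $2i$) rather than the real value $\sqrt{12}$ of $\chi_{12}$. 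Tracking these against the $i^{\lambda-\mu}$ produced by Poisson summation and the $i^{-1/2}$ produced by the Fricke transformation is exactly what leaves the overall factor of $i$ in $c_\mu=i\binom{\lambda}{\mu}2^{\lambda-\mu+\frac12}\pi^{-\mu/2}N^{\lambda/2-\mu/2+1/4}$, the one genuine difference from the constant in Lemma~\ref{lem:theta-z-iy-splitting}.
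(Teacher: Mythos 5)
Your proposal is correct and follows essentially the same route as the paper: the same decomposition of $\vartheta^*(z,iy)$ via \eqref{eq:f3-decomp-r=3}, the same Poisson and Fricke analogues with $6N$ replaced by $2N$, the same folding of the lattice sum into $\Gamma_\infty\backslash\Gamma_0(N)$ via $g=\sgn(x_3)\gcd(x_1,Nx_3)$, and the $\eta^3$-transformation law producing $\nu^{-3}(\gamma)$. Your added explanations (the oddness of $\chi_{-4}$ killing the even-$\mu$ terms, and the purely imaginary Gauss sum of $\chi_{-4}$ accounting for the overall factor $i$ in $c_\mu$) are precisely the mechanisms the paper leaves as a remark or as implicit bookkeeping.
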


\begin{remark}
    The function $\vartheta_{1,\mu}$ is zero whenever $\mu$ is even.
\end{remark}

\begin{proof}
	We begin by using \eqref{eq:f3-decomp-r=3} to decompose $\vartheta^*(z,iy)$ as
	\begin{equation}
		\vartheta^*(z,iy) = \pfrac{\pi}{N}^{-\frac\lambda2} y^{-\lambda} \sum_{\substack{\mu=0 \\ \mu \text{ odd}}}^\lambda \binom{\lambda}{\mu}(-i)^\mu \vartheta_{1,\mu}(z)  \vartheta_{2,\lambda-\mu}(z,y),
	\end{equation}
	where $\vartheta_{2,\mu}(z,y)$ is defined similarly to its counterpart in Section~\ref{sec:theta-kernels}.
	By a computation similar to that in the proof of Lemma~\ref{lem:theta-2-poisson} we have
	\begin{align}
		\pfrac{N}{\pi}^\frac{\lambda-\mu}2 & \frac{y^{\lambda-\mu+1}\sqrt{2N}}{i^{\lambda-\mu}v^{\mu-\lambda}}z^{\mu-\lambda} 
		\vartheta_{2,\lambda-\mu}(-1/Nz,y) \\
		&=  \sum_{x_1,x_3\in \Z} \bar\psi(-x_1)(x_1+Nx_3\bar z)^{\lambda-\mu} \exp\left(\frac{-\pi|x_1+Nx_3z|^2}{2N^2vy^2}\right) \\
		&=  \sum_{g\in \Z} \bar\psi(-g)g^{\lambda-\mu} \sum_{\gamma\in \Gamma_\infty \backslash \Gamma_0(N)} \bar\psi(\gamma) (c\bar z+d)^{\lambda-\mu} \exp\left(\frac{-\pi g^2}{2N^2\im(\gamma z)y^2}\right),
	\end{align}
	where $\gamma = \pmatrix **cd$.
	As in Lemma~\ref{lem:theta-1-fricke} we have
	\begin{equation}
		\vartheta_{1,\mu}(-1/Nz) = i^{-\frac32}z^{\mu+\frac12}\vartheta_{1,\mu}(z/N).
	\end{equation}
	Since $\vartheta_{1,\mu}(z/N)$ transforms like $\eta^3(z)$ in weight $\mu+1/2$ we have
	\begin{equation}
		v^{\mu-\lambda}(c\bar z+d)^{\lambda-\mu} \vartheta_{1,\mu}(-1/Nz) = i^{-\frac32}z^{\mu+\frac12}\nu^{-3}(\gamma) \im(\gamma z)^{\mu-\lambda} (cz+d)^{-\lambda-\frac12} \vartheta_{1,\mu}\pfrac{\gamma z}{N}.
	\end{equation}
	It follows that
	\begin{multline}
		(-iz)^{-\lambda-\frac12}\vartheta^*(-1/Nz,iy) = \frac{(-1)^\lambda i}{\sqrt {2N}} \sum_{\substack{\mu=0 \\ \mu\text{ odd}}}^\lambda \binom{\lambda}{\mu} \pfrac{\pi}{N}^{-\frac\mu2} y^{-2\lambda+\mu-1}\sum_{g\in \Z} \bar\psi(-g)g^{\lambda-\mu} 
		\\\times \sum_{\gamma\in \Gamma_\infty \backslash \Gamma_0(N)} \bar\psi(\gamma) (cz+d)^{-\lambda-\frac12} \exp\left(\frac{-\pi g^2}{2N^2\im(\gamma z)y^2}\right)\nu^{-3}(\gamma) \im(\gamma z)^{\mu-\lambda}  \vartheta_{1,\mu}\pfrac{\gamma z}{N}.
	\end{multline}
	From here it is straightforward to obtain \eqref{eq:theta-z-iy-splitting-r=3}.
\end{proof}

\subsection{The Shimura lift}
We begin with the case when $r=3$ and $t=1$.
Suppose that 
\begin{equation}
	F(z) = \sum_{n\equiv 1\spmod 8} a(n) q^\frac n 8 \in S_{\lambda+\frac12}(N,\psi\nu^3),
\end{equation}
and define
\begin{equation}
	\Phi(w) = c^{-1}\int_{\Gamma_0(N)\backslash \H} v^{\lambda+\frac12} F(z) \overline{\vartheta(z,w)} \, \frac{dudv}{v^2},
\end{equation}
where $c = \tfrac 12i(-4)^{\lambda+1} N^{\frac14 + \frac\lambda2}$. 
As before, the integral defining $\Phi(w)$ converges absolutely and by \eqref{eq:theta-bar-transform-r=3} we have
\begin{equation}
	\Phi(w)\in \mathcal{M}_{2\lambda}\(2N, \psi^2\).
\end{equation}
Following the analogous computation in Section~\ref{sec:lift}, we find that
\begin{equation}
	\Phi(z) = \sum_{n=1}^\infty b(n) q^n,
\end{equation}
where
\begin{equation}
	\sum_{n=1}^\infty \frac{b(n)}{n^s} = L(s-\lambda+1,\psi) \sum_{n=1}^\infty \frac{\chi_{-4}(n)a(n^2)}{n^s}.
\end{equation}
The form $\Phi(z)$ is the $t=1$ Shimura lift $\Sh_1(F)$; this proves Theorem~\ref{thm:transform3} when $r=3$ and $t=1$.
For the remaining cases we use the fact that
\begin{equation}
    \mathcal S_1(F \sl V_t) = \chi_{-4}(t) \mathcal S_t(F)\sl V_t.
\end{equation}
The remainder of the proof of Theorem~\ref{thm:transform3} follows the proof of Theorem~\ref{thm:transform}.  In particular, the proof of Hecke equivariance uses  a direct analogue of Proposition~\ref{prop:hecke_equiv}.

\subsection{Proof of Theorem~\ref{thm:shimura-lift-r=3}}

The next result can be proved using the method of Proposition~\ref{prop:theta_eigen}. 
\begin{proposition}\label{prop:theta_eigen3}
Suppose that $N$ is odd. Then the following are true (where all operators act on the variable $w$).
 \begin{align}
&\bar{\vartheta(z,w)}\sl U_2=2^{\lambda-1}\psi(2)\bar{\vartheta(z,w)},\\
&\bar{\vartheta(z,w)}\sl_{2\lambda} W_2^{2N}=-\psi(2)\bar{\vartheta(z,w)},\\
&\bar{\vartheta(z,w)}\sl_{2\lambda}H_{2N}\sl U_2=2^{\lambda-1}\bar\psi(2)\bar{\vartheta(z,w)}\sl_{2\lambda}H_{2N},\\
&\bar{\vartheta(z,w)}\sl_{2\lambda}H_{2N}\sl_{2\lambda} W_2^{2N}=-\bar\psi(2)\bar{\vartheta(z,w)}\sl_{2\lambda}H_{2N}.
\end{align}
 \end{proposition}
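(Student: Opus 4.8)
The plan is to follow the proof of Proposition~\ref{prop:theta_eigen} essentially verbatim, substituting the data of Section~\ref{sec:3midr}: the lattice $L'=\Z\oplus N\Z\oplus 2N\Z$, the bilinear form $\langle x,y\rangle=(x_2y_2-2x_1y_3-2x_3y_1)/4N$, and the character $\chi_{-4}$ in place of $\chi_{12}$. Since $N$ is odd, only the prime $p=2$ occurs. First I would record the factorization
\begin{equation*}
\vartheta(z,w)=\vartheta^*(z,w)\sl\mathcal{W}_{N,\lambda+\frac12}\sl^*_{2\lambda}H_{2N},
\end{equation*}
the exact analogue of \eqref{eq:theta_thetastar}. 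Because $\mathcal{W}_{N,\lambda+\frac12}$ acts only on $z$ it commutes with every operator in $w$, complex conjugation commutes with $U_2$, and $\sl^*_{2\lambda}H_{2N}$ is an involution; exactly as in Proposition~\ref{prop:theta_eigen} the four assertions for $\bar{\vartheta}$ are then equivalent to
\begin{equation*}
\vartheta^*\sl U_2=2^{\lambda-1}\psi(2)\vartheta^*,\qquad \vartheta^*\sl^*_{2\lambda}W_2^{2N}=-\psi(2)\vartheta^*,
\end{equation*}
together with the two $H_{2N}$-conjugated versions (in which $\psi$ is replaced by $\bar\psi$).

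Next I would establish the Fricke identity analogous to Lemma~\ref{lem:Fricke}. Reading $H_{2N}x=(x_3/2N,-x_2,2Nx_1)$ off from \eqref{eq:SL2act} shows $H_{2N}L'=L'$, and substituting $x\mapsto H_{2N}x$ in \eqref{eq:thetastardef} rewrites $\vartheta^*\sl^*_{2\lambda}H_{2N}$ with $\bar\psi(x_1)$ replaced by $\bar\psi(x_3/N)$ (producing a factor $\psi(2)$ from $\bar\psi(1/2)$) and $\chi_{-4}(x_2)$ by $\chi_{-4}(-x_2)$. For the $U_2$ assertions I would introduce the cosets $\gamma_j=\pmatrix{1/\sqrt2}{j/\sqrt2}{0}{\sqrt2}$, $j\in\{0,1\}$, so that $\vartheta^*\sl U_2=\tfrac12\sum_j\vartheta^*(z,\gamma_jw)$, compute $\gamma_j^{-1}L'$, use $\langle\gamma_jx,\gamma_jx\rangle=\langle x,x\rangle$, and reduce everything to the inner sum
\begin{equation*}
\sum_{\substack{j\,(2)\\ x_1+jx_2+j^2x_3\equiv0\,(2)}}\chi_{-4}(x_2+2jx_3).
\end{equation*}
A short case analysis (using $j^2\equiv j\,(2)$ together with the periodicities of $\chi_{-4}$) shows this sum vanishes unless $x_3$ is even, in which case it equals $\chi_{-4}(x_2)$; this exactly restores the lattice condition $x_3\in 2N\Z$ and yields $\vartheta^*\sl U_2=2^{\lambda-1}\psi(2)\vartheta^*$. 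The $H_{2N}$-twisted $U_2$ statement follows from the same computation applied to the Fricke expression.

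For the Atkin--Lehner assertions I would take $W_2^{2N}=\pmatrix{\sqrt2\,\alpha}{1/\sqrt2}{2N\beta/\sqrt2}{\sqrt2}$ with $2\alpha-N\beta=1$, compute $W_2^{2N}x$ from \eqref{eq:SL2act}, and note $W_2^{2N}L'=L'$ together with $x_1'\equiv\alpha x_1\,(N)$ and $x_2'\equiv(4\alpha-1)x_2\,(4)$. Since $4\alpha-1\equiv-1\,(4)$ and $\chi_{-4}(-1)=-1$, the character factor produces the eigenvalue $-\psi(2)$, giving $\vartheta^*\sl^*_{2\lambda}W_2^{2N}=-\psi(2)\vartheta^*$, and the conjugated statement follows via the Fricke identity. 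The only genuine departure from the proof of Proposition~\ref{prop:theta_eigen} is the sign bookkeeping forced by $\chi_{-4}$ being \emph{odd} rather than $\chi_{12}$ being even: the substitution $x_2\mapsto-x_2$ in the Fricke step and the shift by $2x_3$ inside $\chi_{-4}$ in the $U_2$ step each introduce sign changes. I expect this to be the main point requiring care, but---just as the oddness of $\chi_{-4}$ forces the restriction to odd $\mu$ in Lemma~\ref{lem:theta-z-iy-splitting-r=3}---these signs either cancel in the eigenvalue computation or supply precisely the $-1$ recorded in the $W_2^{2N}$ eigenvalue, so the stated eigenvalues hold unchanged.
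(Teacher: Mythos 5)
Your proposal is correct and takes exactly the route the paper intends: the paper's entire proof of this proposition is a one-line appeal to the method of Proposition~\ref{prop:theta_eigen}, and your adaptation (the lattice $L'=\Z\oplus N\Z\oplus 2N\Z$, the form $\langle x,y\rangle=(x_2y_2-2x_1y_3-2x_3y_1)/4N$, the character $\chi_{-4}$, and only the prime $p=2$) is precisely that method carried out. Your sign bookkeeping is also the right resolution of the one genuine subtlety: the factor $-\psi(2)$ produced in the Fricke identity by $\chi_{-4}(-x_2)=-\chi_{-4}(x_2)$ is an overall constant that cancels from both sides of each conjugated eigenvalue identity, while the Atkin--Lehner eigenvalue $-\psi(2)$ comes from $\bar\psi(\alpha)\,\chi_{-4}(4\alpha-1)=\psi(2)\cdot(-1)$, matching the stated conclusions.
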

 
The $r=3$ case of the next proposition follows from Proposition~\ref{prop:theta_eigen3}.  In the general case we argue as in  Proposition~\ref{prop:shim_up}, using the fact that 
\begin{equation*}
\ep_{2, 3, \psi\pfrac\bullet{r/3}}=\ep_{2, r, \psi}.
\end{equation*}
\begin{proposition}\label{prop:shim_up_3}
Suppose that $N$ is odd, that $(r, 6)=3$, that $t$ is a positive squarefree integer,  and that  $\psi$ is a character modulo $N$.  Let $F\in S_{\lambda+1/2}(N,\psi\nu^r)$ and let 
$f=\Sh_t(F)$.
Then we have the following:
\begin{enumerate}
\item $f\sl U_2=-2^{\lambda-1}\ep_{2, r, \psi} f$.
\item $f\sl _{2\lambda} W_2^{2N}=\ep_{2, r, \psi} f$.
\item $f\|_{2\lambda}H_{2N}\sl U_2=-2^{\lambda-1}\bar\ep_{2, r, \psi} f\|_{2\lambda}H_{2N}$.
\item $f\|_{2\lambda}H_{2N}\sl _{2\lambda} W_2^{2N}=\bar\ep_{2, r, \psi} f\|_{2\lambda}H_{2N}$.
\end{enumerate}
\end{proposition}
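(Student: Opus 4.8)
The plan is to mirror the proof of Proposition~\ref{prop:shim_up}, replacing the level $6N$ by $2N$, the pair of primes $p\in\{2,3\}$ by the single prime $p=2$, the multiplier $\chi_{12}$ by $\chi_{-4}$, and the base weight $r=1$ by $r=3$. First I would dispose of the base case $r=3$, $t=1$, where $f=\Sh_1(F)=\Phi$ is the theta integral $c^{-1}\int_{\Gamma_0(N)\backslash\H} v^{\lambda+\frac12}F(z)\overline{\vartheta(z,w)}\,\frac{dudv}{v^2}$ of Section~\ref{sec:3midr}, constructed exactly as in \eqref{eq:sh_lift_def}. Each operator $U_2$, $W_2^{2N}$, $H_{2N}$ acts in the $w$-variable, so it may be moved inside the integral to act on $\overline{\vartheta(z,w)}$; since Proposition~\ref{prop:theta_eigen3} asserts that $\overline{\vartheta(z,w)}$ is a simultaneous eigenform of these operators with the stated eigenvalues, $\Phi$ inherits the four relations, precisely as in the $r=1$ case of Proposition~\ref{prop:shim_up}. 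One checks here that $\ep_{2,3,\psi}=-\psi(2)\pmfrac{8}{1}=-\psi(2)$, which matches the eigenvalues in Proposition~\ref{prop:theta_eigen3}.

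Next I would reduce the general case to the base case. Since $\Sh_t(F)=0$ unless $t\equiv r/3\pmod 8$, I may assume $t\equiv r/3\pmod 8$; as $r$ is odd with $3\mid r$, this forces $t$ odd. Writing $r=3s$ with $s$ odd and noting $t\equiv s\pmod 8$, one has $rt\equiv 0\pmod 3$ and $rt\equiv 3s^2\equiv 3\pmod 8$, whence $rt\equiv 3\pmod{24}$ by the Chinese Remainder Theorem and $\nu^{rt}=\nu^3$. Thus by Corollary~\ref{cor:v-op} we have $F\sl V_t\in S_{\lambda+\frac12}(Nt,\psi\ptfrac\bullet t\nu^3)$, to which the base case applies with character $\psi\ptfrac\bullet t$ and level $2Nt$. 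Combining the resulting relations for $\Sh_1(F\sl V_t)$ with the identity $\Sh_1(F\sl V_t)=\chi_{-4}(t)\Sh_t(F)\sl V_t$ from Section~\ref{sec:3midr}, together with the facts that $V_t$ commutes with $U_2$, that $\sl V_t\sl_{2\lambda}W_2^{2Nt}=\sl_{2\lambda}W_2^{2N}\sl V_t$, and that $\sl V_t$ is injective, yields assertions (1) and (2) for $f=\Sh_t(F)$. For (3) and (4) I would invoke the Fricke-twisted base-case relations and the identity $\Sh_1(F\sl V_t)\sl_{2\lambda}H_{2Nt}=\chi_{-4}(t)\Sh_t(F)\sl_{2\lambda}H_{2N}$. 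The eigenvalues are finally identified with $\ep_{2,r,\psi}$ through $\ep_{2,3,\psi\pfrac\bullet t}=\ep_{2,r,\psi}$, which reduces to $\pfrac 2t=\pmfrac{8}{r/3}$; this holds since $t\equiv r/3\pmod 8$ gives $\pfrac 2t=\pfrac 2{r/3}$ while $\pmfrac{8}{r/3}=\pfrac 2{r/3}^3=\pfrac 2{r/3}$.

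The only genuinely substantive input is Proposition~\ref{prop:theta_eigen3}, which I take as given; it is proved by the method of Proposition~\ref{prop:theta_eigen}, tracking the images of the lattice $L'$ and of the factors $\chi_{-4}(x_2)$ and $\bar\psi(x_1)$ under the matrices $\gamma_j$ and $W_2^{2N}$. Granting it, the main obstacle is purely bookkeeping: verifying $rt\equiv 3\pmod{24}$ so that $F\sl V_t$ genuinely lands in the $r=3$ space, and confirming the multiplier identity $\ep_{2,3,\psi\pfrac\bullet t}=\ep_{2,r,\psi}$ so that the eigenvalues produced by the base case agree with those claimed.
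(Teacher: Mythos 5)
Your proposal is correct and takes essentially the same approach as the paper: the paper also establishes the base case $r=3$, $t=1$ directly from the theta-integral representation of $\Sh_1$ together with Proposition~\ref{prop:theta_eigen3}, and then handles general $(r,t)$ by arguing exactly as in Proposition~\ref{prop:shim_up}, using the identity $\ep_{2,3,\psi\pfrac{\bullet}{r/3}}=\ep_{2,r,\psi}$. The bookkeeping you carry out explicitly (that $t$ is odd, that $rt\equiv 3\pmod{24}$ so $F\sl V_t$ lies in an $\nu^3$-space, and that $\pfrac{2}{t}=\pmfrac{8}{r/3}$) is precisely what the paper's two-line proof leaves implicit.
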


Let $F, N, r$, and $t$ be as in the statement of Theorem~\ref{thm:shimura-lift-r=3} and let $f=\Sh_t(F)$.
With $\hat f=\Shl_t(f\sl V_8)$,  Proposition~\ref{prop:shcompare} gives the relationship
\begin{equation}
    f\sl\(1-U_2V_2\)=\hat f\otimes\pmfrac{-4}\bullet.
\end{equation}
It follows from Theorem~\ref{thm:transform3}
and Cipra's work that 
 $f\sl\(1-U_2V_2\)\in S_{2\lambda}\(4N,\psi^2\)$, and using Lemma~\ref{lem:cusps} we conclude that $f\in S_{2\lambda}\(2N,\psi^2\)$.

 As in Section~\ref{subsec:newproof} we find that $f$ is new at $2$.  Together, these facts complete the proof of Theorem~\ref{thm:shimura-lift-r=3}.
\section{Quadratic congruences}
\label{sec:quadcong}
In this section, we prove Theorems~\ref{thm:cong1} and \ref{thm:cong2} using a generalization of the arguments of \cite{Ahlgren-Allen-Tang}.

\subsection{Background on modular Galois representations}
We  summarize some facts about modular Galois representations. See \cite{Hida} and \cite{Edixhoven} for more details. 
We begin with some notation. Let $k$ be an even integer and $N$ be a positive integer.
Throughout, let $\ell \geq 5$ be a prime such that $\ell \nmid N$.
Let $\bar{\Q} \subseteq \C$ be the algebraic closure of $\Q$ in $\C$. 
If $p$ is prime, then let $\bar{\Q}_{p}$ be a fixed algebraic closure of $\Q_{p}$ and fix an embedding $\iota_{p}:\bar{\Q} \hookrightarrow \bar{\Q}_{p}$. 
 The embedding $\iota_\ell $ allows us to view the coefficients of forms in $S_{k}(N)$ as elements of $\bar{\Q}_\ell $, and for each prime $p$,
the embedding $\iota_{p}$ allows us to view $G_{p}:=\Gal(\bar{\Q}_{p}/\Q_{p})$ as a subgroup of $G_{\Q}:=\Gal(\bar{\Q}/\Q)$.
For any finite extension $K/\Q$, let $G_{K}:=\Gal(\bar{K}/K)$.
 If $I_{p} \subseteq G_{p}$ is the inertia subgroup, then we denote the coset of absolute Frobenius elements above $p$ in $G_{p}/I_{p}$ by $\text{Frob}_{p}$. 

We denote by $\chi_\ell :G_{\Q}\rightarrow \Z^{\times}_\ell $ and $\omega_\ell :G_{\Q}\rightarrow \mathbb{F}^{\times}_\ell $ the $\ell$-adic and mod $\ell$ cyclotomic characters, respectively.
 We let $\omega_2,\omega'_2:I_\ell \rightarrow\mathbb{F}^{\times}_{\ell^2}$ denote Serre's fundamental characters of level $2$ (see \cite[~$\mathsection$$2.1$]{DukeSerre}).
 Both characters have order $\ell^2-1$, and we have 
 $\omega^{\ell+1}_2=\omega'^{\ell+1}_2=\omega_\ell $.

The following theorem is due to Deligne, Fontaine, Langlands, Ribet, and Shimura (see also \cite[Theorem $2.1$]{Ahlgren-Allen-Tang}). 
\begin{theorem}\label{bigGalThm}
Let $f=\sum a(n)q^{n} \in S_{k}(N)$ be a normalized Hecke eigenform. There is a continuous irreducible representation $\rho_{f}:G_{\Q} \rightarrow \GL_2(\bar{\Q}_\ell )$ with semisimple mod $\ell$ reduction $\bar{\rho}_{f}:G_{\Q} \rightarrow \GL_2(\bar{\mathbb{F}}_\ell )$ satisfying the following properties.
\begin{enumerate}
\item
If $p \nmid \ell N$, then $\rho_{f}$ is unramified at $p$ and the characteristic polynomial of $\rho_{f}(\Frob_{p})$ is $X^2-\iota_\ell (a(p))X+p^{k-1}$. 
\item
If $f \in S^{\new Q}_k(N)$, where $Q$ is a prime with $Q\mid\mid N$ then we have
\begin{equation}
\rho_{f}|_{G_{Q}} \cong \left(\begin{matrix}\chi_\ell  \psi & * \\0 & \psi \end{matrix}\right),
\end{equation}
where $\psi:G_{Q} \rightarrow \bar{\Q}^{\times}_\ell $ is the unramified character with $\psi({\Frob}_{Q})=\iota_\ell (a(Q))$.
\item
Assume that $2 \leq k \leq \ell+1$. 
\begin{itemize}
\item
If $\iota_\ell (a(\ell)) \in \bar{\Z}_\ell ^{\times}$, then $\rho_{f} |_{G_\ell }$ is reducible and we have
\begin{equation}
\rho_{f}|_{I_\ell } \cong \left(\begin{matrix}\chi_\ell ^{k-1} & * \\0 & 1 \end{matrix}\right).
\end{equation}
\item
If $\iota_\ell (a(\ell)) \not \in \bar{\Z}_\ell ^{\times}$, then $\bar{\rho}_{f}\sl_{G_\ell }$ is irreducible and $\bar{\rho}_{f}|_{I_\ell } \cong \omega^{k-1}_2 \oplus \omega'^{(k-1)}_2$.
\end{itemize}
\end{enumerate}
\end{theorem}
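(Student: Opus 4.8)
Since this statement is an amalgamation of deep results of Deligne, Fontaine, Langlands, Ribet, and Shimura, the plan is not to prove it from scratch but to assemble the standard pieces, indicating the origin of each of the three parts.

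For the existence of $\rho_f$ in part~(1) I would follow Deligne and realize the representation inside $\ell$-adic \'etale cohomology. One forms the $(k-2)$-fold Kuga--Sato variety over the modular curve of level $N$, lets the Hecke algebra act on $H^{k-1}_{\mathrm{et}}$ together with its $G_\Q$-action, and observes that the system of Hecke eigenvalues attached to $f$ cuts out a two-dimensional $\bar\Q_\ell$-subspace carrying $\rho_f$. The Eichler--Shimura congruence relation identifies the characteristic polynomial of $\rho_f(\Frob_p)$ with $X^2-\iota_\ell(a(p))X+p^{k-1}$ for $p\nmid \ell N$, whence $\rho_f$ is unramified at such $p$. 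Irreducibility of $\rho_f$ is a theorem of Ribet, while $\bar\rho_f$ need only be semisimple, which is why one passes to its semisimplification; all of this is collected in \cite{Hida}.

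For part~(2) I would invoke local--global compatibility at a prime $Q\mid\mid N$, due to Langlands and Deligne and sharpened by Carayol. Since $f$ is new at $Q$, the local representation at $Q$ is an unramified twist of Steinberg, so the associated $\ell$-adic representation of $G_Q$ is the non-split extension of the unramified character $\psi$ by $\chi_\ell\psi$, which is exactly the upper-triangular shape in the statement, with $\psi(\Frob_Q)=\iota_\ell(a(Q))$.

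Part~(3) is the deepest and I expect it to be the main obstacle. In the range $2\le k\le \ell+1$ the restriction $\rho_f|_{G_\ell}$ is crystalline with Hodge--Tate weights $\{0,k-1\}$, and the dichotomy is governed by whether $\iota_\ell(a(\ell))$ is an $\ell$-adic unit. In the ordinary case a result of Deligne (see also Mazur--Wiles and Wiles) shows $\rho_f|_{G_\ell}$ is reducible, and matching Hodge--Tate weights forces $\rho_f|_{I_\ell}\cong \pmatrix{\chi_\ell^{k-1}}{*}{0}{1}$. In the supersingular case $\bar\rho_f|_{G_\ell}$ is irreducible, and I would apply Fontaine's theory of crystalline representations together with the classification of the mod~$\ell$ reduction of a two-dimensional crystalline representation of small weight to obtain $\bar\rho_f|_{I_\ell}\cong \omega_2^{k-1}\oplus(\omega'_2)^{k-1}$ in terms of the level-$2$ fundamental characters. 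This final computation, reproduced in \cite{Edixhoven} and \cite{DukeSerre}, is the delicate point: it requires translating the integral $\ell$-adic Hodge theory of the associated filtered module into the explicit action of tame inertia, and it is here that the Fontaine--Laffaille hypothesis $k\le \ell+1$ is indispensable.
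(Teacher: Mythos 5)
The paper gives no proof of this theorem at all: it is stated as a known result of Deligne, Fontaine, Langlands, Ribet, and Shimura, with pointers to \cite{Hida}, \cite{Edixhoven}, and \cite[Theorem 2.1]{Ahlgren-Allen-Tang}. Your outline --- Deligne's construction in the \'etale cohomology of Kuga--Sato varieties with Eichler--Shimura for part (1) and Ribet for irreducibility, Langlands--Deligne--Carayol local-global compatibility (unramified twist of Steinberg) for part (2), and the ordinary/supersingular dichotomy via Deligne--Mazur--Wiles and Fontaine's theory in the Fontaine--Laffaille range for part (3) --- is an accurate account of how the cited literature establishes exactly this statement, so your proposal is correct and consistent with the paper's treatment.
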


\begin{remark}
The Galois representations depend on the choice of embedding $\iota_\ell :\bar{\Q} \hookrightarrow \bar{\Q}_\ell $, but we have suppressed this from the notation.
\end{remark}

\subsection{Suitability}\label{subsec:suitable}
Recall the definition of suitability from the introduction.  Here we show that suitability holds for many spaces of forms.
\begin{proposition}\label{3.3 analogue}
Suppose that  $\ell \geq 5$ is prime and that $r$ is an odd integer.
Let $N$ be a squarefree, odd, positive integer with $\ell \nmid N$, and $3\nmid N$ if $3\nmid r$.
Let $\psi$ be a real Dirichlet character modulo $N$. Let $k$ be an even positive integer.
Then $(k,\ell)$ is suitable for every triple $(N,\psi,r)$ if the following conditions hold:
\begin{enumerate}
\item
$k \leq \ell-1$,
\item
$2^{k-1} \not \equiv 2^{\pm 1} \pmod\ell$,
\item
$k \neq \frac{\ell+1}2, \frac{\ell+3}2$,
\item
$\frac{\ell+1}{(\ell+1,k-1)}, \frac{\ell-1}{(\ell-1,k-1)} \geq 6$.
\end{enumerate}
 When $\ell > 5k-4$, we always have conditions $(1)$, $(3)$ and $(4)$.
\end{proposition}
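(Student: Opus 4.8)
The plan is to fix a normalized Hecke eigenform $f=\sum a(n)q^n$ in the relevant new space and to analyze the projective image of its residual representation $\bar\rho_f$, following the classical strategy of Serre, Ribet and Swinnerton--Dyer via Dickson's classification of the subgroups of $\operatorname{PGL}_2$. I treat the case $(r,6)=1$, where $f\in S_k^{\new 2,3}(6N,\ep_{2,r,\psi},\ep_{3,r,\psi})$; the case $(r,6)=3$ is identical with $6N$ replaced by $2N$ and only the prime $2$ playing a role. Since $\psi$ is real, $\psi^2$ is the principal character, so $f$ has trivial nebentypus and $\det\bar\rho_f=\omega_\ell^{k-1}$. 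Write $G\subseteq\GL_2(\bar{\mathbb F}_\ell)$ for the image of $\bar\rho_f$ and $\bar G\subseteq\operatorname{PGL}_2(\bar{\mathbb F}_\ell)$ for its projective image. By Dickson's theorem, either $\bar\rho_f$ is reducible, or $\bar G$ is dihedral, or $\bar G$ is one of $A_4,S_4,A_5$, or $\bar G\supseteq\PSL_2(\mathbb F_{\ell^s})$ for some $s\ge1$. The goal is to land in the last case with $s=1$ and then to upgrade to $\SL_2(\mathbb F_\ell)$.

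First I would record the input from inertia at $\ell$. Condition $(1)$ gives $2\le k\le\ell-1$, so Theorem~\ref{bigGalThm}(3) applies. In the ordinary case $\bar\rho_f|_{I_\ell}\cong\left(\begin{smallmatrix}\omega_\ell^{k-1}&*\\0&1\end{smallmatrix}\right)$, so $\bar G$ contains the image of a generator of tame inertia, an element of projective order $d_-=\frac{\ell-1}{(\ell-1,k-1)}$ lying in a split torus defined over $\mathbb F_\ell$; in the supersingular case $\bar\rho_f|_{I_\ell}\cong\omega^{k-1}_2\oplus\omega'^{\,k-1}_2$ gives an element of projective order $d_+=\frac{\ell+1}{(\ell+1,k-1)}$ lying in a nonsplit torus defined over $\mathbb F_\ell$. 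Condition $(4)$ says $d_\pm\ge6$. Since every element of $A_4,S_4,A_5$ has order at most $5$, this immediately excludes the exceptional cases, and the same $\mathbb F_\ell$-rational torus will later force $s=1$.

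The remaining obstructions, and the heart of the argument, are the reducible and dihedral cases. For reducibility I would use the prime $2$: since $2\mid\mid 6N$ and $f$ is new at $2$, Corollary~\ref{cor:upeigen} gives $a(2)=-\ep_{2,r,\psi}2^{k/2-1}$, while Theorem~\ref{bigGalThm}(2) pins the local shape of $\bar\rho_f$ at $2$. If $\bar\rho_f$ were reducible it would be congruent to an Eisenstein series, forcing the $U_2$-eigenvalue $a(2)$ to be congruent to a root of $(X-1)(X-2^{k-1})$; squaring $a(2)^2=2^{k-2}$ against $a(2)\equiv1$ or $a(2)\equiv2^{k-1}$ yields $2^{k-1}\equiv2^{\pm1}\pmod\ell$, which is excluded by condition $(2)$. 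For the dihedral case, if $\bar\rho_f$ is irreducible but induced from a quadratic field $K$, then the inertia analysis above shows that in the supersingular case $\ell$ must ramify in $K$, so $K=\Q(\sqrt{\ell^*})$ with $\ell^*=(-1)^{(\ell-1)/2}\ell$; writing out the self-twist $\bar\rho_f\cong\bar\rho_f\otimes\chi_K$ on $I_\ell$ and comparing with $\omega^{k-1}_2\oplus\omega'^{\,k-1}_2$ forces $k-1\equiv\frac{\ell+1}{2}\pmod{\ell+1}$, i.e.\ $k=\frac{\ell+3}{2}$, while the split (ordinary) variant forces $k=\frac{\ell+1}{2}$; both are excluded by condition $(3)$, with the residual prime-$2$ subcase handled as above using $(2)$.

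Having excluded the reducible, dihedral and exceptional possibilities, Dickson's theorem gives $\bar G\supseteq\PSL_2(\mathbb F_{\ell^s})$. The torus produced from $I_\ell$ is defined and nonscalar over $\mathbb F_\ell$ (split or nonsplit), which forces $s=1$; since $\det\bar\rho_f=\omega_\ell^{k-1}$ together with the eigenvalue ratios realizes the full scalar ambiguity, $G$ contains a conjugate of $\SL_2(\mathbb F_\ell)$, establishing suitability. Finally, for the closing assertion I would check directly that $\ell>5k-4$ forces $(1)$, $(3)$, $(4)$: then $\ell\ge5k-3\ge k+1$ gives $(1)$; $\frac{\ell+1}{2}>\frac{5k-3}{2}>k$ (using that $k\ge2$) gives $(3)$; and $\ell-1>5(k-1)$ with $(\ell-1,k-1)\le k-1$ gives $\frac{\ell-1}{(\ell-1,k-1)}\ge\frac{\ell-1}{k-1}>5$, hence $\ge6$, and likewise for $\ell+1$, giving $(4)$. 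I expect the main obstacle to be the dihedral analysis together with pinning down the prime field $\mathbb F_\ell$ (i.e.\ $s=1$) rather than a proper extension $\mathbb F_{\ell^s}$.
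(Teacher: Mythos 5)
Your overall framework matches the paper's: the paper also runs Dickson's classification (reducible, dihedral, exceptional, or image containing a conjugate of $\SL_2(\F_\ell)$) and kills the exceptional case exactly as you do, by producing from $I_\ell$ a projective element of order $\frac{\ell-1}{(\ell-1,k-1)}$ or $\frac{\ell+1}{(\ell+1,k-1)}\geq 6$ using conditions (1) and (4). The paper disposes of the reducible and dihedral cases by citing \cite[Lemma~3.2]{Ahlgren-Allen-Tang}, whose hypotheses are conditions (2) and (3); your reducibility argument via newness at $2$ is essentially that lemma's argument and is fine in outline. The genuine gap is in your dihedral paragraph. Your assertion that ``in the supersingular case $\ell$ must ramify in $K$'' is false: supersingularity is perfectly compatible with $\ell$ inert in $K$ (this is exactly what happens at inert primes for forms with CM by $K$), and ordinarity is compatible with $\ell$ split in $K$; in both of those situations the self-twist $\bar\rho_f\cong\bar\rho_f\otimes\chi_K$ restricted to $I_\ell$ is vacuous and yields no condition on $k$. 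Condition (3) only excludes the case where $K$ ramifies at $\ell$. The case you never address --- $K$ unramified at $\ell$ --- is the real content of the dihedral step, and it needs a different argument: the Steinberg structure at $2$ (and at $3$ when $3\nmid r$) coming from newness forces $2$ and $3$ to split in $K$ (otherwise the twist identity forces $\omega_\ell^2=1$ on $G_2$ or $G_3$, i.e.\ $\ell\mid 3$ or $\ell\mid 8$); and at primes $p\mid N$ one must rule out ramification of $K$ using that the level is squarefree (so the conductor exponent of $\bar\rho_f$ at $p$ is at most $1$, forcing $\bar\chi$ unramified above $p$) together with the trivial nebentypus (so $\det\bar\rho_f=\omega_\ell^{k-1}$ is unramified at $p$, while $\det\operatorname{Ind}_K^{\Q}\bar\chi=\chi_K\cdot(\bar\chi\circ\operatorname{Ver})$ would be ramified at $p$). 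Only then is $K$ an everywhere-unramified quadratic extension of $\Q$, which does not exist. Without this, the dihedral possibility is simply not excluded.

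Two secondary points. Your worry about pinning down $s=1$ in $\PSL_2(\F_{\ell^s})$ is unnecessary: suitability only requires the image to contain a conjugate of $\SL_2(\F_\ell)$, and $\SL_2(\F_{\ell^s})\supseteq\SL_2(\F_\ell)$; moreover the argument you offer (an $\F_\ell$-rational torus forces $s=1$) is not valid in any case --- forms with large coefficient fields produce $s>1$, harmlessly. Also, in the reducible case with $N>1$ you implicitly take the Jordan--H\"older characters to be $1$ and $\omega_\ell^{k-1}$; a priori they could be twisted by a character ramified at primes dividing $N$, and the same squarefree-conductor observation is needed to rule that out. Your verification of the closing numerical assertion (that $\ell>5k-4$ implies (1), (3), (4)) is correct and is the part the paper leaves as ``easy to check.''
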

\begin{proof}
The final assertion is easy to check.  Assume that $(r,6)=1$ and 
suppose that $f=\sum a(n)q^{n} \in S^{\new 2,3}_{k}(6N,\ep_{2,r,\psi},\ep_{3,r,\psi})$ is a normalized Hecke eigenform.
It follows from \cite[Theorem $2.47(b)$]{DDT} that there are four possibilities for the image of $\bar{\rho}_{f}$:
\begin{enumerate}
\item
$\bar{\rho}_{f}$ is reducible.
\item
$\bar{\rho}_{f}$ is dihedral, i.e. $\bar{\rho}_{f}$ is irreducible but $\bar{\rho}_{f} \sl_{G_{K}}$ is reducible for some quadratic $K/\Q$.
\item
$\bar{\rho}_{f}$ is exceptional, i.e. the projective image of $\bar{\rho}_{f}$ is conjugate to one of $A_{4}$, $S_{4}$, or $A_{5}$.
\item
The image of $\bar{\rho}_{f}$ contains a conjugate of $\SL_2(\mathbb{F}_\ell )$.
\end{enumerate}
We proceed by ruling out the first $3$ cases. 
By 
condition $(2)$ and \cite[Lemma $3.2$]{Ahlgren-Allen-Tang}, we see that $\bar{\rho}_{f}$ is irreducible. 
By condition $(3)$ and \cite[Lemma $3.2$]{Ahlgren-Allen-Tang}, we conclude that $\bar{\rho}_{f}$ is not dihedral.
 To rule out the exceptional case, it suffices to show that the projective image contains an element of order $\geq 6$.
 Suppose that $\iota_\ell (a(\ell)) \in \bar{\Z}^{\times}_\ell $.
  By Theorem~\ref{bigGalThm}, we know that
  \begin{equation}
  \rho_{f}\sl_{I_\ell } \cong \pMatrix{\chi_\ell ^{k-1}}{*}{0}{1}.
  \end{equation}
Since $\omega_\ell $ has order $\ell-1$, the projective image of $\bar{\rho}_{f}$ contains an element of order $\geq \frac{\ell-1}{(\ell-1,k-1)} \geq 6$.
If $\iota_\ell (a(\ell)) \not \in \bar{\Z}^{\times}_\ell $, then  Theorem~\ref{bigGalThm} implies that
  \begin{equation}
  \bar{\rho}_{f} \cong \pMatrix{\omega^{k-1}_2}{0}{0}{\omega'^{k-1}_2}.
  \end{equation}
  Since $\omega_2/\omega'_2$ has order $\ell+1$, we conclude by condition $(4)$ that the projective image of $\bar{\rho}_{f}$ contains an element of order $\frac{\ell+1}{(\ell+1,k-1)} \geq 6$. This completes the proof when $(r,6)=1$; the result when $(r,6)=3$ follows in a similar fashion.
\end{proof}

 \subsection{Preliminary results}
We begin by proving the main technical result used in the proof of Theorem~\ref{thm:cong1}. 
 \begin{theorem}\label{3.9 analogue}
 Suppose that  $\ell \geq 5$ is prime and that $r$ is an odd integer.
Let $N$ be a squarefree, odd, positive integer with $3\nmid N$ if $3\nmid r$.
Suppose that $\ell \nmid N$ and let  $\psi$ be a real Dirichlet character modulo $N$. Let $k$ be an even positive integer such that $(k,\ell)$ is suitable for $(N,\psi,r)$ and 
 let $m \geq 1$ be an integer.
  
  If $(r,6)=1$, then there exists a positive density set $S$ of primes such that if $p \in S$, then $p \equiv 1 \pmod{\ell^{m}}$ and $T_{p}f \equiv f \pmod{\ell^{m}}$ for each normalized Hecke eigenform $f \in S^{\new 2,3}_{k}(6N,\ep_{2,r,\psi},\ep_{3,r,\psi})$. If $(r,6)=3$, then we have the same result for $S^{\new 2}_{k}(2N,\ep_{2,r,\psi})$. 
 \end{theorem}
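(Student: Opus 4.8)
The plan is to realize the desired congruences as a Chebotarev condition on Frobenius elements acting through the residual and $\ell^m$-adic Galois representations attached to the finitely many normalized Hecke eigenforms $f_1,\dots,f_n$ spanning the relevant newform space. I treat the case $(r,6)=1$; the case $(r,6)=3$ is identical with $S^{\new 2}_{k}(2N,\ep_{2,r,\psi})$ in place of $S^{\new 2,3}_{k}(6N,\ep_{2,r,\psi},\ep_{3,r,\psi})$. Since $\psi$ is real we have $\psi^2=\mathbf 1$, so each $f_i$ has trivial nebentypus and $\det\rho_{f_i}=\chi_\ell^{k-1}$ by Theorem~\ref{bigGalThm}(1). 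For $p\nmid \ell N$ we have $T_pf_i=a_i(p)f_i$ with $a_i(p)=\Tr\rho_{f_i}(\Frob_p)$, so the conclusions $T_pf_i\equiv f_i$ and $p\equiv 1\pmod{\ell^m}$ amount to requiring that $\Frob_p$, viewed in $\Gal(K/\Q)$ for the finite extension $K$ cut out by $\chi_\ell\bmod\ell^m$ together with the $\rho_{f_i}\bmod\ell^m$, satisfy $\chi_\ell(\Frob_p)\equiv 1$ and $\Tr\rho_{f_i}(\Frob_p)\equiv 1\pmod{\ell^m}$ for every $i$. The subset of $\Gal(K/\Q)$ with these properties is stable under conjugation, since trace, determinant and $\chi_\ell$ are class functions; hence once I produce a single such element $\sigma$, the Chebotarev density theorem furnishes a positive density set of primes $p$ with $\Frob_p$ in its class.

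To produce $\sigma$ I work inside $G_L:=\Gal(\bar\Q/L)$ with $L=\Q(\zeta_{\ell^m})$, which is exactly the locus $\chi_\ell\equiv 1\pmod{\ell^m}$; there $\det\rho_{f_i}\equiv 1$, so $\rho_{f_i}(G_L)\bmod\ell^m$ lands in $\SL_2(\Z/\ell^m)$. First I check that each factor is as large as possible. Suitability gives that $\bar\rho_{f_i}(G_\Q)$ contains a conjugate of $\SL_2(\F_\ell)$; since $\bar\rho_{f_i}(G_L)$ is normal in $\bar\rho_{f_i}(G_\Q)$ with abelian quotient (a quotient of $(\Z/\ell^m)^\times$), and $\PSL_2(\F_\ell)$ is nonabelian simple for $\ell\ge5$, the intersection of $\bar\rho_{f_i}(G_L)$ with that copy of $\SL_2(\F_\ell)$ must be all of it. The standard full-level lifting lemma for $\ell\ge5$, that a closed subgroup of $\SL_2(\Z_\ell)$ surjecting onto $\SL_2(\F_\ell)$ equals $\SL_2(\Z_\ell)$, then upgrades this to the statement that $\rho_{f_i}(G_L)\bmod\ell^m$ contains a conjugate of $\SL_2(\Z/\ell^m)$.

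The main obstacle is to control the \emph{joint} image of $\prod_i\rho_{f_i}$ on $G_L$ inside $\prod_i\SL_2(\Z/\ell^m)$ well enough to prescribe trace $1$ in every coordinate simultaneously. Modulo $\ell$ a Goursat--Ribet argument shows the residual joint image is the full product over those $f_i$ with pairwise non-isomorphic residual representations: a Goursat linking of two factors would give an isomorphism of the two copies of $\PSL_2(\F_\ell)$ intertwining the projectivized representations, and as $\mathrm{Out}\,\PSL_2(\F_\ell)$ contains no field automorphism over the prime field this forces $\bar\rho_{f_i}\cong\bar\rho_{f_j}$, contradicting distinctness. The delicate point --- and the place where the argument of \cite{Ahlgren-Allen-Tang} must be modified --- is the presence of \emph{congruent} eigenforms, where $\bar\rho_{f_i}\cong\bar\rho_{f_j}$ and the residual factors are necessarily linked. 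For such pairs the residual trace conditions coincide, so it remains only to separate them at the higher levels $\ell^2,\dots,\ell^m$; since $f_i\ne f_j$ as $\ell$-adic forms one has $\rho_{f_i}\not\cong\rho_{f_j}$ over $\bar\Q_\ell$, and a refinement of the product lemma --- tracking the reductions through the congruence kernels $\ker\bigl(\SL_2(\Z/\ell^{s+1})\to\SL_2(\Z/\ell^{s})\bigr)$, which are $\F_\ell[\PSL_2(\F_\ell)]$-modules --- shows the joint image still surjects onto each trace coordinate independently. Granting this, I select a trace-$1$ element of $\SL_2(\Z/\ell^m)$ in each factor (such elements are plentiful), lift the resulting tuple to some $\sigma\in G_L$, and conclude via Chebotarev exactly as above; the case $(r,6)=3$ is verbatim.
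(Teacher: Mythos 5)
Your Chebotarev framing matches the paper's, but your argument has a genuine gap at exactly the point you flag as delicate: you never prove the joint-image statement modulo $\ell^m$ that your construction requires, and in the form you state it ("the joint image still surjects onto each trace coordinate independently") it is false. If two distinct eigenforms satisfy a deep congruence $f_i\equiv f_j\pmod{\lambda^{s}}$ with $s>1$, then $\Tr\bar{\rho}_{f_i}$ and $\Tr\bar{\rho}_{f_j}$ agree modulo $\lambda^{s}$ on all of $G_{\Q}$, so the tuple of traces modulo $\ell^2,\dots,\ell^{\min(s,m)}$ is confined to a diagonal and cannot be prescribed independently; the weaker statement you actually need (one element with trace $\equiv 1$ in every coordinate simultaneously) may hold, but your sketch via the congruence kernels $\ker\bigl(\SL_2(\Z/\ell^{s+1})\to\SL_2(\Z/\ell^{s})\bigr)$ is not a proof, and controlling joint images of congruent $\ell$-adic representations is genuinely hard. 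The paper sidesteps this problem entirely with a powering trick: by suitability and \cite[Proposition~3.8]{Ahlgren-Allen-Tang} there is a \emph{single} $\sigma\in\Gal(\bar{\Q}/\Q(\zeta_\ell))$ with $\bar{\rho}_f(\sigma)$ conjugate to $\pmatrix{1}{1}{-1}{0}$ --- an order-six element with characteristic polynomial $x^2-x+1$ --- for every eigenform $f$ at once (only the mod-$\ell$ joint image is needed, where congruent forms are harmless because their residual traces coincide). Then for any $w$ the element $\sigma^{\ell^{w-1}}$ automatically has characteristic polynomial $\equiv x^2-x+1\pmod{\lambda^{w}}$ under every $\rho_f$, since the eigenvalues are units congruent to primitive sixth roots of unity modulo $\lambda$ and raising to the $\ell^{w-1}$-st power upgrades that congruence to modulus $\lambda^{w}$; likewise $\chi_\ell(\sigma^{\ell^{w-1}})\equiv 1\pmod{\ell^{w}}$. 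No control of any image modulo $\ell^m$, individual or joint, is ever needed.

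Two further problems. First, your appeal to the lifting lemma does not apply as stated: the $\rho_{f_i}$ are valued in $\GL_2(\mathcal{O}_\lambda)$ for the ring of integers of a finite, possibly ramified, extension of $\Q_\ell$, and a closed subgroup of $\SL_2(\mathcal{O}_\lambda)$ whose residual image contains $\SL_2(\F_\ell)$ need not be (or contain a conjugate of) $\SL_2(\mathcal{O}_\lambda)$ --- $\SL_2(\Z_\ell)$ itself is a counterexample --- so "the image mod $\ell^m$ contains $\SL_2(\Z/\ell^m)$" requires a separate argument. Second, even granting everything, your Chebotarev step only yields congruences modulo powers of the single prime $\lambda$ determined by $\iota_\ell$, whereas the theorem asserts congruences modulo $\ell^m$, i.e.\ at all primes above $\ell$ (this is how it is used later, e.g.\ in Lemma~\ref{5.1 analogue}). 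The paper performs this descent by constructing a basis $\{g_1,\dots,g_t\}$ of the newform space with integer coefficients (Galois stability of the Atkin--Lehner data follows from Corollary~\ref{cor:upeigen}), expressing the eigenforms in that basis, and absorbing denominators via constants $c_1,c_2$ and the choice $w=e(m+c_2)+c_1$; this step is absent from your proof.
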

 \begin{proof} 
 We assume that $(r,6)=1$; the proof is similar when $(r,6)=3$.
 Choose a number field $E$ containing all  coefficients of all  normalized Hecke eigenforms in $S^{\new 2,3}_{k}(6N,\ep_{2,r,\psi},\ep_{3,r,\psi})$. 
 If $\lambda$ is the prime of $E$ induced by the embedding  $\iota_\ell$ then let $E_{\lambda}$ be the completion of $E$ at $\lambda$ with ring of integers $\mathcal{O}_{\lambda}$ and ramification index $e$.
 
 By Proposition~\ref{3.3 analogue} and \cite[Proposition $3.8$]{Ahlgren-Allen-Tang}, there exists $\sigma \in \Gal(\bar{\Q}/\Q(\zeta_\ell))$ such that $\bar{\rho}_f(\sigma)$ is conjugate to $\pmatrix{1}{1}{-1}{0}$ for every normalized Hecke eigenform $f \in S^{\new 2,3}_k(6N,\ep_{2,r,\psi},\ep_{3,r,\psi})$. 
 This implies that the characteristic polynomial of $\rho_f(\sigma)$ is congruent to $x^2-x+1 \pmod{\lambda}$. For a positive integer $w$, we argue as in the proof of \cite[Theorem $1.5$]{Ahlgren-Allen-Tang} to conclude that the characteristic polynomial of $\rho_f(\sigma^{\ell^{w-1}})$ is congruent to $x^2-x+1 \pmod{\lambda^{w}}$. 
We then apply the Chebotarev density theorem and Theorem~\ref{bigGalThm} as in the proof of \cite[Theorem $1.5$]{Ahlgren-Allen-Tang}. 
Our conclusion is
that for every positive integer $w$, there exists a positive density set $S_w$ of primes such that if $p \in S_w$, then $p \equiv 1 \pmod{\ell^w}$ and we have 
 \begin{equation}
 T_{p}f \equiv f \pmod{\lambda^w}
 \end{equation}
 for each normalized Hecke eigenform $f \in S^{\new 2,3}_k(6N,\ep_{2,r,\psi},\ep_{3,r,\psi})$.

We  claim that $S^{\new 2,3}_k(6N,\ep_{2,r,\psi},\ep_{3,r,\psi})$ has a basis $\{g_1, \dots, g_t\}$ consisting of forms whose coefficients are integers.
To see this, let $\{f_1,\dots,f_s\}$ be the set  of  normalized Hecke eigenforms in this space.
Then there is a basis composed of the $f_i$ and their images under various $V_d$ with $d\mid N$ 
(\cite[Lemma 2]{Li:1975} describes the interaction between the $V_d$ and the Atkin-Lehner operators).
The claim follows by a standard argument (see \cite[Corollary $6.5.6$]{Diamond-Shurman} or 
\cite[\S 6.1]{new2integerbasis}) once we know that this basis is stable under the action of the Galois group.  
But this is clear, since the Galois action commutes with  $V_d$ and since for $p\in \{2, 3\}$ the Atkin-Lehner eigenvalues at $p$ are determined by the $p$th Fourier coefficients of the $f_i$, which are integers by Corollary~\ref{cor:upeigen}.

Write
\begin{equation}
g_i=\sum^s_{j=1}\sum_{d \mid N} \alpha_{i,j,d}f_j\sl V_d
\end{equation}
and
\begin{equation}\label{eq:fjtogi}
f_j=\sum^t_{i=1} \beta_{i,j}g_i.
\end{equation}
 Enlarge $E$ to contain all of the coefficients $\alpha_{i,j,d}$ and $\beta_{i,j}$ and
  let $\pi \in \mathcal{O}_{\lambda}$ be a uniformizer. Choose $c_1\geq 0$ such that $\pi^{c_1}\alpha_{i,j,d} \in \mathcal{O}_{\lambda}$ for all   $\alpha_{i,j,d}$. Finally, for $M \in \Z^{+}$, let $w=eM+c_1$. Since the operators $T_p$ and $V_d$ commute when $d \mid N$ and $p \nmid N$, it follows for $p \in S_w$ with $p \nmid N$ that $p \equiv 1 \pmod{\ell^M}$ and
 \begin{equation}
 T_pg_i \equiv g_i \pmod{\lambda^{eM}} \ \ \ \ \text{ for  $i \in \{1,\dots,t\}$},
 \end{equation}
 which implies that
 \begin{equation}
 T_pg_i \equiv g_i \pmod{\ell^M} \ \ \ \ \text{ for  $i \in \{1,\dots,t\}$}.
 \end{equation}

 Let $\mathcal{O}_{\ell} \subset E$ be the subring of elements which are integral at all primes above $\ell$.  If we 
  set $M=m+c_2$, where $c_2 \in \Z^{+}$ is chosen so that $\ell^{c_2}\beta_{i,j} \in \mathcal{O}_{\ell}$ for all coefficients $\beta_{i,j}$, then \eqref{eq:fjtogi} shows that for 
  $p \in S_w$ we have $p \equiv 1 \pmod{\ell^m}$ and 
 \begin{equation}
T_pf_j \equiv f_j \pmod{\ell^m} \ \ \ \text{ for  $j \in \{1,\dots,s\}$}.
 \end{equation}
 The result follows.
 \end{proof}

In order to prove Theorem~\ref{thm:cong2}, we require the following analogue of \cite[Theorem $4.2$]{Ahlgren-Allen-Tang}.
 
 \begin{theorem}\label{Theorem 4.2 analogue}
 Let $k \in \Z^{+}$ be even.
  Suppose that $\ell \geq 5$ is prime and that there exists an integer $a$ for which $2^{a} \equiv -2 \pmod\ell $. Let $m \geq 1$ be an 
  integer. Let $N \in \Z^{+}$ be odd and squarefree with $\ell \nmid N$, and $3\nmid N$ if $3\nmid r$.
  If $(r,6)=1$, then there exists a positive density set $S$ of primes such that if $p \in S$, then $p \equiv -2 \pmod{\ell^{m}}$ and
  for each normalized Hecke eigenform $f= \sum a(n)q^{n} \in S^{\new 2,3}_{k}(6N, \ep_{2,r,\psi},\ep_{3,r,\psi})$, we have 
 \begin{equation} 
   T_{p}f \equiv -(-\ep_{2, r, \psi})^{a}p^{\frac k2-1}f \pmod{\ell^{m}}.
\end{equation}
If $(r,6)=3$, then the same result holds for $S^{\new 2}_{k}(2N, \ep_{2,r,\psi})$.
 \end{theorem}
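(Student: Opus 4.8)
The plan is to follow the strategy of \cite{Ahlgren-Allen-Tang}, replacing the global ``big image'' input of Theorem~\ref{3.9 analogue} by the \emph{local} structure of $\rho_f$ at the prime $2$, at which every form in the space is new. I will treat the case $(r,6)=1$; the case $(r,6)=3$ is identical with $6N$ replaced by $2N$ and with $3$ playing no role. Since $\psi$ is real the nebentypus $\psi^2$ is trivial, so the forms in $S^{\new 2,3}_k(6N,\ep_{2,r,\psi},\ep_{3,r,\psi})$ have trivial character and $\det\rho_f=\chi_\ell^{k-1}$. Write $\ep_2:=\ep_{2,r,\psi}\in\{\pm1\}$. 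As $2\|6N$ and $f$ is new at $2$, Corollary~\ref{cor:upeigen} gives $a(2)=-\ep_2\,2^{k/2-1}$, and Theorem~\ref{bigGalThm}(2) (with $Q=2$) gives
\[
\rho_f|_{G_2}\cong\pMatrix{\chi_\ell\psi_2}{*}{0}{\psi_2},\qquad \psi_2(\Frob_2)=\iota_\ell(a(2)).
\]
Thus the semisimple part of $\rho_f(\Frob_2)$ has the two \emph{unramified} eigenvalues $\chi_\ell\psi_2(\Frob_2)=2a(2)$ and $\psi_2(\Frob_2)=a(2)$; these depend only on the space, not on the individual eigenform, so a single Galois element will serve all eigenforms at once.

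The key device is to choose a Frobenius power at $2$ whose exponent simultaneously pins the cyclotomic character down to $-2$ modulo $\ell^m$; this avoids the root-of-unity ``raise to the $\ell^{w-1}$'' trick of Theorem~\ref{3.9 analogue}, which would spoil the determinant. Given $a$ with $2^a\equiv-2\pmod\ell$ we have $-1\in\langle2\rangle\subseteq\F_\ell^\times$, so $\ord_\ell(2)$ is even; since $\ord_{\ell^m}(2)=\ell^{\,j}\ord_\ell(2)$ is then also even, the unique element of order $2$ in the cyclic group $(\Z/\ell^m)^\times$ lies in $\langle2\rangle$, whence there is $a_m$ with $2^{a_m}\equiv-2\pmod{\ell^m}$. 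Moreover $a_m\equiv a\pmod{\ord_\ell(2)}$, so $a_m\equiv a\pmod2$ and $(-\ep_2)^{a_m}=(-\ep_2)^a$. Let $\sigma_0\in G_\Q$ be a Frobenius element at $2$ and set $\sigma=\sigma_0^{a_m}$. Because the local representation is upper triangular, $\rho_f(\sigma)$ has characteristic polynomial
\[
X^2-a(2)^{a_m}\bigl(1+2^{a_m}\bigr)X+2^{a_m(k-1)}.
\]
Reducing modulo $\ell^m$ and using $2^{a_m}\equiv-2$ together with $a(2)^{a_m}=(-\ep_2)^{a_m}(2^{a_m})^{k/2-1}$ gives $\chi_\ell(\sigma)\equiv-2$, determinant $\equiv(-2)^{k-1}$, and trace $\equiv-(-\ep_2)^a(-2)^{k/2-1}\pmod{\ell^m}$, which is precisely the target once $p\equiv-2$ is imposed.

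It remains to realize $\sigma$ by a positive density of Frobenius classes. Let $\{f_1,\dots,f_s\}$ be the normalized eigenforms of the space, let $E$ be a number field containing their coefficients with prime $\lambda\mid\ell$ and ramification index $e$, and set $w=em$. Let $K$ be the compositum of $\Q(\zeta_{\ell^m})$ with the fixed fields of the reductions $\rho_{f_i}\bmod\lambda^w$. The element $\sigma$ has a well-defined image in $G:=\Gal(K/\Q)$, and by the Chebotarev density theorem there is a positive density set $S$ of primes $p$, unramified in $K$, with $\Frob_p$ in the conjugacy class of $\sigma$. For $p\in S$ we obtain $\chi_\ell(\Frob_p)\equiv\chi_\ell(\sigma)\equiv-2\pmod{\ell^m}$, hence $p\equiv-2\pmod{\ell^m}$, and, comparing the characteristic polynomials of $\rho_{f_i}(\Frob_p)$ and $\rho_{f_i}(\sigma)$,
\[
a_i(p)\equiv-(-\ep_2)^a(-2)^{k/2-1}\equiv-(-\ep_2)^a\,p^{k/2-1}\pmod{\lambda^w}.
\]
Since $w=em$, this descends to a congruence $\pmod{\ell^m}$, giving $T_pf_i\equiv-(-\ep_2)^a p^{k/2-1}f_i$ for every eigenform, as claimed.

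The main obstacle is that $\rho_f$ is ramified at $2$, so $\Frob_2$, and hence $\sigma$, is only defined modulo the inertia group $I_2$, and one must make sense of the Chebotarev class. This is exactly where the local computation pays off: the two diagonal characters $\chi_\ell\psi_2$ and $\psi_2$ are \emph{unramified}, so the characteristic polynomial of $\rho_f(\sigma)$—the only datum I use—is independent of the choice of Frobenius lift, while the image of $\sigma$ in the abelian quotient $\Gal(\Q(\zeta_{\ell^m})/\Q)$ is the honest value $2^{a_m}\equiv-2$ because $2$ is unramified in $\Q(\zeta_{\ell^m})$. Fixing one genuine lift $\sigma_0\in G_\Q$ therefore names a bona fide conjugacy class in the finite group $G$, to which Chebotarev applies. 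I expect the remaining care to be purely routine bookkeeping: converting the $\lambda^w$-congruence to an $\ell^m$-congruence, and checking that the single element $\sigma$ works uniformly across the finitely many eigenforms—both immediate once one observes that $a_i(2)=-\ep_2\,2^{k/2-1}$ is common to the entire space.
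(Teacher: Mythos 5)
Your core argument is the same as the paper's: use newness at $2$ so that Corollary~\ref{cor:upeigen} pins down $a(2)=-\ep_{2,r,\psi}2^{k/2-1}$ uniformly across the space, feed this into the local description of Theorem~\ref{bigGalThm}(2), take a power of Frobenius at $2$ whose cyclotomic image is $-2\pmod{\ell^m}$, and spread this to a positive density set of primes by Chebotarev. Your replacement for the paper's citation of Lemma~4.1 of \cite{Ahlgren-Allen-Tang} --- observing that $-2\in\langle 2\rangle$ forces $\ord_\ell(2)$, hence $\ord_{\ell^m}(2)$, to be even, so that $-2$ is a power of $2$ modulo $\ell^m$ with exponent of the same parity as $a$ --- is correct and self-contained, and your characteristic-polynomial computation agrees with the paper's.

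The one place you diverge is the final descent, which you dismiss as ``routine bookkeeping,'' and this is where a real gap sits. Taking $w=em$ only converts $a_i(p)\equiv -(-\ep_{2,r,\psi})^a p^{k/2-1}\pmod{\lambda^{em}}$ into a congruence modulo $\ell^m$ \emph{at the single prime} $\lambda$ fixed by $\iota_\ell$. The paper instead invokes the argument ending the proof of Theorem~\ref{3.9 analogue}: one passes to a basis of the space consisting of forms with rational integer coefficients (whose existence itself requires an argument --- Galois stability of the eigenforms and their $V_d$-translates, using that the Atkin--Lehner eigenvalues at $2$ and $3$ are read off from integer Fourier coefficients via Corollary~\ref{cor:upeigen}), turns the $\lambda$-adic congruence into a genuine rational-integer congruence modulo a power of $\ell$, and then returns to the eigenforms at the cost of constants $c_1,c_2$ controlling denominators. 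That extra work buys the congruence $T_pf\equiv-(-\ep_{2,r,\psi})^ap^{k/2-1}f$ with the difference divisible by $\ell^m$ at \emph{every} prime above $\ell$, which is the sense in which the result is used downstream: Lemma~\ref{5.1 analogue} chooses $c$ so that $\ell^c c_{i,j,d}$ is integral at all primes above $\ell$, and the space $S_{\lambda+1/2}(N,\psi\nu^r)_\ell$ is defined by integrality at all primes above $\ell$. When the eigenforms have irrational coefficients the single-prime and all-primes statements genuinely differ, so as written your proof establishes a weaker form of the theorem; to recover the paper's statement you need the integral-basis step you skipped.
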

 \begin{proof}
 We assume that $(r,6)=1$; the proof is similar if $(r,6)=3$.
 Let $E$ and $\lambda$ be defined as in the proof of Theorem~\ref{3.9 analogue}, and 
 let $w \in \Z^{+}$.
By \cite[Lemma $4.1$]{Ahlgren-Allen-Tang} we have
$2^{\ell^{w-1}(a-1)+1} \equiv -2 \pmod{\ell^{w}}.$ 
Since $\ep_{2, r, \psi} \in \{\pm 1\}$, and $a$ and $\ell^{w-1}(a-1)+1$ have the same parity, we may assume that $2^{a}\equiv -2 \pmod{\ell^{w}}$. 
It follows from Corollary~\ref{cor:upeigen} that $a(2)=-\ep_{2, r, \psi}2^{\frac k2-1}$.

We  apply part $(2)$ of Theorem~\ref{bigGalThm} and the Chebotarev density theorem as in the proof of \cite[~Theorem~$4.2$]{Ahlgren-Allen-Tang} to conclude that there is a positive density set $S_w$ of primes such that if $p \in S_w$, then
 \begin{equation}
 p= \chi_\ell(\Frob_{p}) \equiv \chi_\ell(\Frob^{a}_2) \equiv 2^{a} \equiv -2 \pmod{\ell^{w}}
 \end{equation}
 and that for all normalized Hecke eigenforms $f \in S^{\new 2,3}_{k}(6N, \ep_{2,r,\psi},\ep_{3,r,\psi})$, we have
 \begin{multline}
 a(p) =\Tr \rho_{f}(\Frob_{p}) \equiv \Tr \rho_{f}(\Frob^{a}_2)
   \equiv (-\ep_{2, r, \psi}2^{\frac k2-1})^a2^a+(-\ep_{2, r, \psi}2^{\frac k2-1})^a \\
   =(-\ep_{2, r, \psi})^{a}2^{a\(\frac k2-1\)}(2^{a}+1) \equiv -(-\ep_{2, r, \psi})^a p^{\frac k2-1} \pmod{\lambda^{w}}.
 \end{multline}
 Thus if $p\in S_w$ then for 
 all normalized Hecke eigenforms $f \in S^{\new 2,3}_{k}(6N, \ep_{2,r,\psi},\ep_{3,r,\psi})$, 
 we have
 \begin{equation}
 T_pf \equiv -(-\ep_{2, r, \psi})^{a}p^{\frac k2-1}f \pmod{\lambda^{w}}.
 \end{equation}
 
 We  then argue as in the end of the proof of Theorem~\ref{3.9 analogue} to see that for any $m \in \Z^{+}$, there exists $w \geq m$ such that if $p \in S_w$, then $p \equiv -2 \pmod{\ell^m}$ and
  \begin{equation}
 T_pf \equiv -(-\ep_{2, r, \psi})^{a}p^{\frac k2-1}f \pmod{\ell^{m}}
 \end{equation}
 for all normalized Hecke eigenforms $f \in S^{\new 2,3}_{k}(6N, \ep_{2,r,\psi},\ep_{3,r,\psi})$.
 \end{proof}

 \subsection{Proofs of Theorems~\ref{thm:cong1} and \ref{thm:cong2}}
 Let $\ell \geq 5$ be prime and $r$ be odd. Let $m \geq 1$ be an integer.
 Let $\lambda$ be a positive integer and $N \geq 1$ be an odd, squarefree integer  such that $\ell \nmid N$, and $3 \nmid N$ if $3 \nmid r$.
 Let $\psi$ be a real Dirichlet character modulo $N$.
 Recall that 
 $S_{\lambda+1/2}(N,\psi\nu^{r})_\ell  \subset S_{\lambda+1/2}(N,\psi\nu^{r})$
 consists of  forms with algebraic coefficients which are integral at all primes above $\ell$.
 Suppose that 
 \begin{equation}
 F(z)=\displaystyle \sum_{n \equiv r \spmod{24}} a(n)q^{\frac{n}{24}} \in S_{\lambda+\frac{1}2}(N, \psi\nu^{r})_\ell.
 \end{equation}
 Furthermore, if $\lambda=1$, suppose that $F \in S^c_{3/2}(N,\psi\nu^r)$. 
 
 For each squarefree $t$ we have
 $\mathcal{S}_{t}(F) \in S^{\new 2,3}_{2\lambda}(6N,\ep_{2,r,\psi},\ep_{3,r,\psi})$ if $(r,6)=1$ and  $\mathcal{S}_{t}(F) \in S^{\new 2}_{2\lambda}(2N,\ep_{2,r,\psi})$ if $(r,6)=3$.
 As $t$ ranges over all squarefree integers, there are only finitely many non-zero possibilities for $\mathcal{S}_t(F)$ modulo $\ell^m$; let $\{g_1,\dots,g_k\}$ be a set of representatives for these possibilities.  Let
  $\{f_1,\dots,f_s\}$ be the set of normalized Hecke eigenforms in $S^{\new 2,3}_{2\lambda}(6N,\ep_{2,r,\psi},\ep_{3,r,\psi})$ if $(r,6)=1$ and in $S^{\new 2}_{2\lambda}(2N,\ep_{2,r,\psi})$ if $(r,6)=3$.
 Write
 \begin{equation}\label{linearcombination}
 g_{j}=\sum^{s}_{i=1}\sum_{d \mid N}c_{i,j,d}f_{i} \sl V_d
 \end{equation}
 with $c_{i,j,d} \in \bar{\Q}$.
 Choose $c \geq 0$ such that $\ell^{c}c_{i,j,d}$ is integral at all primes above $\ell$ for all $c_{i,j,d}$.
 We require two short lemmas. 
Define
\begin{equation}\label{chir}
 \chi^{(r)}= \begin{cases} 
\(\frac{-4}{\bullet}\) & \text{if } 3 \mid r, \\
\(\frac{12}{\bullet}\) & \text{if }  3\nmid r.
\end{cases}
\end{equation} 
 
\begin{lemma}\label{5.1 analogue}
Suppose that $p, \ell \geq 5$ are prime, and that  $r$ is odd.
Let $N$ be a squarefree, odd, positive integer such that $p \nmid N$, $\ell \nmid N$, and $3 \nmid N$ if $3 \nmid r$.
Let $\psi$ be a real character modulo $N$ and
suppose that $F \in S_{\lambda+1/2}(N, \psi\nu^{r})_\ell$. 
Let $m \geq 1$ be an integer, and 
let the forms $f_{i}$ and the integer $c$ be defined as above. If  $\lambda_{p}$ is an integer such that $T_pf_i \equiv \lambda_{p}f_{i} \pmod{\ell^{m+c}}$ for all $i$, then
\begin{equation}
T_{p^2}F \equiv \chi^{(r)}(p)\lambda_{p}F \pmod{\ell^m} .
\end{equation}
\end{lemma}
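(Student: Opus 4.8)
The plan is to transport the congruence $T_pf_i\equiv\lambda_pf_i$ through the Shimura lift and then to recover $F$ from its lifts $\Sh_t(F)$. I treat $(r,6)=1$; the case $(r,6)=3$ is identical after replacing $\chi_{12}$ by $\chi_{-4}$ and \eqref{equivariance3nmidr} by \eqref{equivariance3midr}, so that in both cases the relevant equivariance for $p\ge 5$ reads
\begin{equation}\label{eq:uniform-equiv}
\Sh_t(T_{p^2}F)=\chi^{(r)}(p)\,T_p\Sh_t(F).
\end{equation}
Set $G:=T_{p^2}F-\chi^{(r)}(p)\lambda_p F$; since $\Sh_t$ is linear, it suffices to show that $\Sh_t(G)\equiv 0\pmod{\ell^m}$ for every squarefree $t$ and then to deduce $G\equiv 0\pmod{\ell^m}$.

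First I would show that $T_pg_j\equiv\lambda_pg_j\pmod{\ell^m}$ for each $j$. Writing $g_j$ as in \eqref{linearcombination} and using that $T_p$ commutes with $V_d$ for $p\nmid N$, $d\mid N$, the hypothesis $T_pf_i\equiv\lambda_pf_i\pmod{\ell^{m+c}}$ gives $(T_pf_i)\sl V_d-\lambda_pf_i\sl V_d\equiv 0\pmod{\ell^{m+c}}$; multiplying by $c_{i,j,d}$ and invoking the choice of $c$, namely that $\ell^c c_{i,j,d}$ is integral at all primes above $\ell$, clears the extra factor $\ell^c$, and summing over $i,d$ yields the claim. Now for each squarefree $t$ with $\Sh_t(F)\not\equiv 0$ there is a $j$ with $\Sh_t(F)\equiv g_j\pmod{\ell^m}$; since $T_p=U_p+\psi^2(p)p^{2\lambda-1}V_p$ has coefficients integral at $\ell$ (here $\psi$ is real) and hence preserves congruences modulo $\ell^m$, we obtain $T_p\Sh_t(F)\equiv T_pg_j\equiv\lambda_pg_j\equiv\lambda_p\Sh_t(F)\pmod{\ell^m}$, and this relation holds trivially when $\Sh_t(F)\equiv 0$ as well. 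Feeding it into \eqref{eq:uniform-equiv} gives $\Sh_t(G)\equiv 0\pmod{\ell^m}$ for all squarefree $t$.

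The main obstacle is the final step: deducing $G\equiv 0\pmod{\ell^m}$ from the vanishing of all of its lifts modulo $\ell^m$. Here I would invert the defining relation \eqref{eq:stdef} coefficientwise. Writing $G=\sum_{n\equiv r\,(24)}a_G(n)q^{n/24}$ and $\Sh_t(G)=\sum_n b_t(n)q^n$, expanding the Dirichlet series product gives, for $t$ squarefree with $t\equiv r\pmod{24}$,
\begin{equation}\label{eq:bt-recursion}
b_t(n)=\sum_{d\mid n}\psi(d)\pmfrac dt d^{\lambda-1}\chi^{(r)}(n/d)\,a_G\bigl(t(n/d)^2\bigr),
\end{equation}
whose $d=1$ term is $\chi^{(r)}(n)\,a_G(tn^2)$. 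Because $\gcd(r,6)=1$ forces every such $n$ to be prime to $6$, the symbol $\chi^{(r)}(n)=\pm1$ is a unit modulo $\ell$ whenever $a_G(tn^2)$ can be nonzero, so \eqref{eq:bt-recursion} is triangular with unit diagonal. An induction on $n$ by divisibility, recovering $a_G(tn^2)$ from $b_t(n)$ and the previously determined $a_G(t(n/d)^2)$ with $d>1$, then shows $a_G(tn^2)\equiv 0\pmod{\ell^m}$ for every squarefree $t$ and every $n$; the delicate point is that this inversion never requires division by $\ell$, precisely because the diagonal quadratic symbols are $\pm1$. Finally, every $n\equiv r\pmod{24}$ factors as $n=tn_0^2$ with $t$ squarefree, and since $n_0$ is prime to $6$ we have $n_0^2\equiv 1\pmod{24}$, whence $t\equiv r\pmod{24}$; thus $a_G(n)\equiv 0\pmod{\ell^m}$ for all $n$, which is exactly the assertion $T_{p^2}F\equiv\chi^{(r)}(p)\lambda_p F\pmod{\ell^m}$.
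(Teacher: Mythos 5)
Your proof is correct and follows the paper's argument step for step: you establish $T_pg_j\equiv\lambda_pg_j\pmod{\ell^m}$ from the hypothesis via the choice of $c$ and the commutation of $T_p$ with $V_d$, push this through the Hecke equivariance \eqref{equivariance3nmidr}/\eqref{equivariance3midr} to get $\Sh_t(T_{p^2}F)\equiv\chi^{(r)}(p)\lambda_p\Sh_t(F)\pmod{\ell^m}$ for all squarefree $t$, and then recover the congruence for $F$ from its lifts. The only difference is that where the paper invokes ``a standard argument'' for the equivalence $F\equiv 0\pmod{\ell^m}\iff\Sh_t(F)\equiv 0\pmod{\ell^m}$ for all squarefree $t$, you supply that argument explicitly — the unit-diagonal triangular inversion of \eqref{eq:stdef}, using that $\chi^{(r)}(n)=\pm1$ whenever the corresponding coefficient can be nonzero — which is exactly the standard argument being cited.
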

\begin{proof}
Since $T_pf_i \equiv \lambda_pf_i \pmod{\ell^{m+c}}$ for all $i$, it follows from \eqref{linearcombination} and the fact that $T_p$ and $V_d$  commute for $d \mid N$ that 
\begin{equation}
T_pg_j \equiv \lambda_{p}g_j \pmod{\ell^m}\ \ \ \ \text{ for  $j \in \{1,\dots,k\}$} .
\end{equation}
Thus, for each squarefree $t$, it follows from \eqref{equivariance3nmidr} and \eqref{equivariance3midr} that
\begin{equation}
\mathcal{S}_{t}(T_{p^2}F)=\chi^{(r)}(p)T_p\mathcal{S}_{t}(F) \equiv \chi^{(r)}(p)\lambda_{p}\mathcal{S}_{t}(F) \pmod{\ell^m} .
\end{equation}
A standard argument shows that
\begin{equation}
F \equiv 0 \pmod{\ell^{m}} \iff \Sh_t(F)\equiv 0 \pmod{\ell^{m}} \ \ \ \text{ for all squarefree $t$.}
\end{equation}
The result follows.
\end{proof}
The next lemma explains how to produce congruences from Lemma~\ref{5.1 analogue}.
\begin{lemma}\label{5.2 analogue}
Suppose that  $p, \ell \geq 5$ are prime and that $r$ is odd.
Let $N$ be a squarefree, odd, positive integer such that $p \nmid N$, $\ell \nmid N$, and $3 \nmid N$ if $3 \nmid r$.
Let $m \geq 1$ be an integer.
Let  $\psi$ be a real character modulo $N$ and suppose that 
\begin{equation}\label{exp24thpowers}
F=\displaystyle\sum_{n \equiv r \spmod{24}} a(n)q^{\frac{n}{24}} \in S_{\lambda+\frac{1}2}(N,\psi\nu^{r})_\ell.
\end{equation} 
Suppose that  there exists $\alpha_{p} \in \{\pm 1\}$ with
\begin{equation}
T_{p^2}F \equiv \alpha_{p}p^{\lambda-1}F \pmod{\ell^m} .
\end{equation}
Then we have
\begin{equation}
a(p^2n) \equiv 0 \pmod{\ell^m}  \ \ \ \text{ if } \ \ \ \pmfrac{n}{p} =\alpha_{p} \pmfrac{12}p \pmfrac{-1}p^{\frac{r-1}{2}}\psi(p).
\end{equation}
\end{lemma}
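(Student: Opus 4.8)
The plan is to read the conclusion directly off the explicit Hecke formula \eqref{eq:heckedef24}. Since $p\ge 5$ is coprime to $24$, that formula computes $T_{p^2}F$ in the $q^{n/24}$ expansion for every odd $r$ — recall that \eqref{eq:heckedef24} and \eqref{eq:heckedef8} coincide away from $p=3$ — so it applies here whether or not $3\mid r$. First I would compare the $q^{n/24}$ coefficients on the two sides of the hypothesis $T_{p^2}F\equiv \alpha_p p^{\lambda-1}F\pmod{\ell^m}$. For each $n\equiv r\pmod{24}$ this gives
\begin{equation*}
a(p^2n)+\pfrac{-1}p^{\frac{r-1}2}\pfrac{12n}p\psi(p)p^{\lambda-1}a(n)+\psi^2(p)p^{2\lambda-1}a\pfrac n{p^2}\equiv \alpha_p p^{\lambda-1}a(n)\pmod{\ell^m}.
\end{equation*}

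The crucial observation is that the hypothesis $\pfrac np=\alpha_p\pfrac{12}p\pfrac{-1}p^{\frac{r-1}2}\psi(p)$ has right-hand side in $\{\pm1\}$ (here $\psi(p)\ne 0$ because $p\nmid N$ and $\psi$ is real), so it forces $\pfrac np\ne 0$, i.e.\ $p\nmid n$. Consequently $a(n/p^2)=0$ and the last term of the displayed congruence drops out. Moreover $a(p^2n)=0$ unless $p^2n\equiv r\pmod{24}$, and since $p^2\equiv 1\pmod{24}$ this is equivalent to $n\equiv r\pmod{24}$, so it suffices to treat such $n$. Solving for $a(p^2n)$ then yields
\begin{equation*}
a(p^2n)\equiv p^{\lambda-1}a(n)\left[\alpha_p-\pfrac{-1}p^{\frac{r-1}2}\pfrac{12}p\pfrac np\psi(p)\right]\pmod{\ell^m}.
\end{equation*}

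Finally I would substitute the hypothesis on $\pfrac np$ into the bracket. Using that $r$ is odd (so $\pfrac{-1}p^{r-1}=1$) together with $\pfrac{12}p^2=\psi(p)^2=1$, the product $\pfrac{-1}p^{\frac{r-1}2}\pfrac{12}p\pfrac np\psi(p)$ collapses to $\alpha_p$, so the bracket vanishes and $a(p^2n)\equiv 0\pmod{\ell^m}$. There is no analytic obstacle here; the only points needing care are recording that the Legendre-symbol hypothesis eliminates the $a(n/p^2)$ term by forcing $p\nmid n$, and the $\pm1$ bookkeeping that makes the bracket vanish.
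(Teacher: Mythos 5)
Your proof is correct and takes essentially the same route as the paper: the paper's proof likewise reads the conclusion directly off the explicit Hecke formula \eqref{eq:heckedef24} (valid for all odd $r$ when $p\geq 5$ by the remark following \eqref{eq:heckedef8}), observing that the quadratic hypothesis forces $p\nmid n$ so the $a(n/p^2)$ term vanishes and the middle term collapses to $\alpha_p p^{\lambda-1}a(n)$. Your write-up merely makes explicit the $\pm 1$ bookkeeping and the support observation ($p^2\equiv 1\pmod{24}$) that the paper leaves implicit.
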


\begin{proof}
This follows from the definition of the Hecke operator in \eqref{eq:heckedef24}, which is 
valid for $p\geq 5$ in all cases by the remark following \eqref{eq:heckedef8}.
 If we write $T_{p^2}F=\sum c(n)q^{\frac{n}{24}}$ and $n$ satisfies the quadratic condition above, then the third term defining $c(n)$ does not contribute and the middle term  becomes 
 $\alpha_{p}p^{\lambda-1}a(n)$.
\end{proof}
We now prove Theorem~\ref{thm:cong1} and Theorem~\ref{thm:cong2}.
\begin{proof}[Proof of Theorem~\ref{thm:cong1}]
Let $c$ be the integer defined in Lemma~\ref{5.1 analogue}.
If $(r,6)=1$, we apply Theorem~\ref{3.9 analogue} to conclude that there exists a positive density set $S$ of primes such that if $p \in S$, then we have $p \equiv 1 \pmod{\ell^m} $, $p \nmid 6N$ and
\begin{equation}
T_pf  \equiv f \pmod{\ell^{m+c}}
\end{equation}
 for each normalized Hecke eigenform $f \in S^{\new 2,3}_{2\lambda}(6N,\ep_{2,r,\psi},\ep_{3,r,\psi})$.
If $(r, 6)=3$ we obtain the same conclusion for 
 $S^{\new 2}_{2\lambda}\(2N,\ep_{2,r,\psi}\)$.
For such $p$, Lemma~\ref{5.1 analogue} 
implies that
\begin{equation}
T_{p^2}F \equiv \chi^{(r)}(p)F \pmod{\ell^{m}} .
\end{equation}
The result follows from Lemma~\ref{5.2 analogue}.
\end{proof}

 \begin{proof}[Proof of Theorem~\ref{thm:cong2}]
 Suppose that $\ell \geq 5$ is a prime such that $2^{a} \equiv -2 \pmod{\ell} $ for some integer $a$. 
 If  $(r,6)=1$, then
 by Theorem~\ref{Theorem 4.2 analogue}, there exist  $\beta \in \{\pm 1\}$ and a positive density set $S$ of primes such that if $p \in S$, then $p \equiv -2 \pmod{\ell^{m}}$, $p \nmid 6N$, and for each normalized Hecke eigenform $f \in S^{\new 2,3}_{2\lambda}(6N,\ep_{2,r,\psi},\ep_{3,r,\psi})$, we have
 \begin{equation}
 T_pf \equiv \beta p^{\lambda-1}f \pmod{\ell^{m+c}}.
 \end{equation}
 If $(r, 6)=3$ the same holds for $S^{\new 2}_{2\lambda}\(2N,\ep_{2,r,\psi}\)$.
 
 In either case, for  such $p$ Lemma~\ref{5.1 analogue} implies that
 \begin{equation}
T_{p^2}F \equiv \beta\chi^{(r)}(p)p^{\lambda-1}F \pmod{\ell^m} .
\end{equation}
The result follows from Lemma~\ref{5.2 analogue}. 
 \end{proof}

 \begin{remark}
 We could also prove a theorem similar to \cite[Theorem $5.1$]{RJD}. 
 For a fixed $\alpha \in \Z$, we can assume without loss of generality that $\mathcal{O}_{\lambda}$ has the property that
 \begin{equation}
 \text{ the polynomial $x^2-\alpha x+1$ factors in $\mathcal{O}_{\lambda}$ with roots $\alpha_{1}$ and $\alpha_2$.}
 \end{equation}
 If $(k,\ell)$ is suitable for $(N,\psi,r)$, $\alpha \not \equiv \pm 2 \pmod\ell $ and $(r,6)=1$, then one could show that there exists a positive density set $S$ of primes such that if $p \in S$, then $p \equiv 1 \pmod{\ell^{m}}$ and for each normalized Hecke eigenform $f \in S^{\new 2,3}_{k}(6N,\ep_{2,r,\psi},\ep_{3,r,\psi})$, we have
 \begin{equation}
 T_{p}f  \equiv (\alpha_{1}^{\ell^{m-1}}+\alpha_2^{\ell^{m-1}})f \pmod{\lambda^{m}}.
 \end{equation}
 A similar result holds when $(r,6)=3$.

With this tool in hand, we can prove congruences similar to those given in \cite[Theorem $1.2$]{RJD}.
In particular, if $\alpha$ is an integer which satisfies $\alpha \not \equiv -2 \pmod\ell $, then one could show that there is a positive density set $S$ of primes such that if $p \in S$, then $p \equiv 1 \pmod{\ell^m} $ and
\begin{equation}
a(p^2n) \equiv (\alpha-1)\chi^{(r)}(p)a(n) \pmod{\pi^m}  \ \ \ \ \text{ if } \ \ \ \(\frac{n}{p}\)=\begin{cases} 
\(\frac{-1}{p}\)^{\frac{r-1}{2}}\psi(p) & \text{if } 3 \nmid r, \\
\(\frac{-3}{p}\)\(\frac{-1}{p}\)^{\frac{r-1}{2}}\psi(p) & \text{if }  3\mid r,
\end{cases}
\end{equation}
where $\pi$ is a prime above $\ell$ in a large enough number field.
 \end{remark}

\section{Congruences for colored generalized Frobenius partitions}\label{sec:GFP}
Here we give an extended example which illustrates the use of our main results to prove congruences for the colored Frobenius partitions described in the Introduction.  A complete treatment will be the subject of a future paper.

  As described in the Introduction, a result of  Chan, Wang and Yang \cite[Theorem $2.1$]{Chan-Wang-Yang}  shows that if $m$ is a positive odd integer, then 
\begin{equation}\label{eq:cwyfrob}
 A_m(z):=\prod_{n \geq 1}(1-q^n)^m\sum^\infty_{n=0} c\phi_m(n)q^n \in M_\frac{m-1}2\(m, \ptfrac\bullet m\).
\end{equation}
Here we discuss the case $m=5$.
Letting $\Delta$ denote the normalized cusp form of weight $12$ on $\SL_2(\Z)$, 
  we define (for primes $\ell\geq 7$)
\begin{equation}\label{eq:c5gen}
F_\ell = \eta^{-5\ell}\, T_\ell\(\Delta^\frac{5(\ell^2-1)}{24}A_5\)\in M_\frac{5\ell^2-5\ell-1}2^!\(5,\ptfrac\bullet 5\nu^{-5\ell}\).
\end{equation}
Then 
\begin{equation}\label{eq:flcong}
F_\ell \equiv  \sum c\phi_5\pfrac{\ell n+5}{24}q^{\frac n {24}} \pmod\ell  
\end{equation}
(with additional work it can  be shown that      $F_\ell$ is congruent modulo $\ell$ to an element of $S_{(\ell-2)/2}\(5,\ptfrac\bullet 5\nu^{-5\ell}\)$).  Here we will discuss only the primes $\ell=7, 11$, and $13$.

When $\ell=7, 11$, we find  that $F_\ell\equiv 0\pmod\ell$.  In other words we have the congruences
\begin{align} c\phi_5(7n+4)&\equiv 0\pmod 7,\\
                    c\phi_5(11n+8)&\equiv 0\pmod {11}.		
\end{align}
We note that there are similar congruences for $c\phi_7$ and $c\phi_{11}$, which can also be deduced from (1.13)--(1.15) of \cite{Chan-Wang-Yang}. These are analogues of Ramanujan's well known congruences for $p(n)$.

When $\ell=13$ the situation is  more interesting.
Define
\begin{equation}\label{eq:Ftildef}
\tilde F_{13}(z):=6\frac{\eta^{12}(z)}{\eta(5z)}+7\eta^5(5z)\eta^6(z)+9\eta^{11}(5z)=
6q^\frac7{24}-65 q^\frac{31}{24}+291 q^\frac{55}{24}+\cdots\in S_\frac{11}2\(5,\ptfrac\bullet 5\nu^{7}\).
\end{equation}
We find that 
\begin{equation}\label{eq:F5mod13}
F_{13}\equiv \tilde F_{13}\pmod{13}.
\end{equation}
By  Theorem~\ref{thm:shimura-lift},  we have 
\begin{equation}\label{eq:liftf13}
\Sh_7(\tilde F_{13})=
6q-96q^2+486q^3+1536q^4+3376q^5+\cdots
\in S_{10}^{\new 2, 3}\(30, +1, -1\).
\end{equation}

There is a unique newform
\cite[\href{https://www.lmfdb.org/ModularForm/GL2/Q/holomorphic/30/10/a/c/}{30.10.a.c}]{lmfdb}
\begin{equation}
    g_{30} = q - 16q^2 + 81q^3 + 256q^4 - 625q^5+\cdots\in S_{10}^{\new}\(30, +1, -1\)
\end{equation}
and a unique newform \cite[\href{https://www.lmfdb.org/ModularForm/GL2/Q/holomorphic/6/10/a/a/}{6.10.a.a}]{lmfdb}
\begin{equation}\label{eq:gnewdef}
g_6=q - 16q^2 + 81q^3 + 256q^4 + 2694q^5+\cdots\in S_{10}^{\new}\(6, +1, -1\).
\end{equation}
We find by computing enough coefficients that
\begin{equation}\label{eq:f_13find}
\Sh_7(\tilde F_{13})=
\mfrac{2221}{537}\, g_6 - \mfrac{3544837}{537}\, g_6\|V_5 + \mfrac{1001}{537}\, g_{30}
\equiv 6g_6+4g_6\|V_5\pmod{13}.
\end{equation}

Suppose that $Q$ is a prime with 
\begin{equation}\label{eq:newform13}
T_Q g_6\equiv \beta_Q Q^4g_6\pmod{13}\ \ \ \ \text{with}\ \ \  \beta_Q\in \{\pm1\}.
\end{equation}
By \eqref{eq:f_13find} and the argument in Lemma~\ref{5.1 analogue}, we find that 
\begin{equation}\label{eq:heckefrobhalf}
T_{Q^2} \tilde F_{13}\equiv \pmfrac{12}Q\beta_QQ^4\,\tilde F_{13}\pmod{13}.
\end{equation}
Then Lemma~\ref{5.2 analogue} shows that 
\begin{equation}\label{eq:frob513cong}
 c\phi_{5}\pmfrac{13 Q^2 n+5}{24} \equiv 0  \pmod{13}\ \ \ \ \text{if}\ \ \ \pmfrac nQ=\beta_Q\pmfrac{-5}Q.
\end{equation}
By computing the eigenvalues of $g_6$, we find the following congruences for $Q<2000$ (this table can easily be expanded).
\setlength{\arrayrulewidth}{0.5mm}
\setlength{\tabcolsep}{18pt}
\renewcommand{\arraystretch}{2.2}

\begin{tabular}{ |p{1.9cm}|p{11cm}|  }
\hline
\multicolumn{2}{|c|}{$c\phi_5\pfrac{13 Q^2 n+5}{24}\equiv 0  \pmod{13}$ \ \  if \ \ \ $\pfrac nQ=\ep_Q$} \\
\hline
$\varepsilon_Q=1$& $Q=$ 103, 109, 283, 727, 769, 809, 991, 1063, 1223, 1231, 1259, 1291, 1307, 1367, 1409, 1543, 1733, 1789, 1831, 1861\\
\hline
$\varepsilon_Q=-1$& $Q=$ 97,
191, 241, 251, 397, 409, 439, 463, 751, 823, 839, 1229, 1277, 1321, 1361, 1621, 1657, 1933, 1979, 1993  \\
\hline
\end{tabular}

\vspace{1em}

As in \eqref{eq:congex}  each of these gives rise to many congruences of the form $c\phi_5(13 Q^3 n+\beta)\equiv 0\pmod{13}$ by selecting $n$ in residue classes modulo $24Q$.

\bibliographystyle{plain}
\bibliography{shimura-eta.bib}
\end{document}